\newtheorem{theorem}{Theorem}[section]
\newtheorem{lemma}[theorem]{Lemma}
\newtheorem{corollary}[theorem]{Corollary}
\newtheorem{proposition}[theorem]{Proposition}
\theoremstyle{definition}
\newcounter{assum}
\newtheorem{assumption}[assum]{Assumption}
\newtheorem{remark}[theorem]{Remark}
\newtheorem{example}[theorem]{Example}
\newtheorem{definition}[theorem]{Definition}
\renewcommand{\appendix}{\par
	\setcounter{section}{0}%
	\setcounter{subsection}{0}%
	\setcounter{subsubsection}{0}%
	\gdef\thesection{\@Alph\c@section}%
	\gdef\thesubsection{\@Alph\c@section.\@arabic\c@subsection}%
	\gdef\theHsection{\@Alph\c@section.}%
	\gd\theHsubsection{\@Alph\c@section.\@arabic\c@subsection}%
	\csname appendixmore\endcsname
}
\newtheorem{letterthm}{Theorem}
\numberwithin{equation}{section}
\begin{document}

\title{\vspace{-0cm}
\bf\Large Difference and Wavelet Characterizations
of Distances from
Functions in Lipschitz Spaces to Their Subspaces
\footnotetext{\hspace{-0.35cm} Dedicated to Hans 
Triebel on the occasion of his 90th birthday.
\endgraf 2020 {\it
Mathematics Subject Classification}. Primary 42B35;
Secondary 26A16, 42B25, 42C40, 46E35.
\endgraf {\it Key words and phrases.} Daubechies wavelet,
difference, Lipschitz Space, Besov--Triebel–-Lizorkin space,
Whitney-type inequality, distance, closure.
\endgraf This project is supported by the National Key Research
and Development Program of China
(Grant No. 2020YFA0712900), the National Natural Science
Foundation of China
(Grant Nos. 12431006 and 12371093),
and
the Fundamental Research Funds for the Central
Universities (Grant No. 2233300008).
The first author is also supported by
NSERC of Canada Discovery grant RGPIN-2020-03909.}}
\author{Feng Dai, Eero Saksman,
Dachun Yang, Wen Yuan and Yangyang Zhang}
\date{}
\maketitle

\vspace{-0.8cm}

\begin{center}
\begin{minipage}{13.8cm}
{\small {\bf Abstract}\quad
Let $\Lambda_s$ denote
the Lipschitz space of  order $s\in(0,\infty)$
on $\mathbb{R}^n$, which consists of all
$f\in\mathfrak{C}\cap L^\infty$ such that, for some constant $L\in(0,\infty)$
and some integer $r\in(s,\infty)$,
\begin{equation*}
\label{0-1}\Delta_r f(x,y):
=\sup_{|h|\leq y} |\Delta_h^r f(x)|\leq L y^s,
\ x\in\mathbb{R}^n,
\ y \in(0, 1].
\end{equation*}
Here (and throughout the article) $\mathfrak{C}$ refers to continuous functions, and
 $\Delta_h^r$ is the usual $r$-th
order difference operator
with step $h\in\mathbb{R}^n$. For each
$f\in \Lambda_s$ and $\varepsilon\in(0,L)$,
let $ S(f,\varepsilon):=
\{ (x,y)\in\mathbb{R}^n\times [0,1]:
\frac {\Delta_r f(x,y)}{y^s}>\varepsilon\}$, and let
$\mu: \mathcal{B}(\mathbb{R}_+^{n+1})\to
[0,\infty]$
be a suitably defined nonnegative
extended real-valued function on the Borel
$\sigma$-algebra of subsets of $\mathbb{R}_+^{n+1}$.
Let $\varepsilon(f)$ be the infimum
of all $\varepsilon\in(0,\infty)$ such that
$\mu(S(f,\varepsilon))<\infty$.
The main target of this article is to characterize 
the distance from $f$ to a subspace
$V\cap \Lambda_s$ of $\Lambda_s$
for various function spaces $V$ (including  Sobolev, Besov--Triebel--Lizorkin, 
and Besov--Triebel--Lizorkin-type spaces)
in terms of $\varepsilon(f)$,
showing that
\begin{equation*} \varepsilon(f)\sim \mathrm{dist}
(f, V\cap \Lambda_s)_{\Lambda_s}: =
\inf_{g\in \Lambda_s\cap V} \|f-g\|_{\Lambda_s}.\end{equation*}
These results extend the characterization of the least constant in the
John-Nirenberg inequality via the distance in BMO
to $L^\infty(\mathbb{R}^n)$ by Garnett and Jones
in [Ann. of Math. (2) 108 (1978)]
as well as the recent work of Saksman and Soler i Gibert
in [Canad. J. Math. 74 (2022)] on the distance in $\Lambda_s$ to the subspace
$\mathrm{J}_s(\mathop\mathrm{bmo})$ for any $s \in(0, 1]$.
Moreover, we present our results in a general framework based on quasi-normed
lattices of function sequences $X$ and
Daubechies $s$-Lipschitz $X$-based spaces.
}
\end{minipage}
\end{center}

\tableofcontents

\section{Introduction}

The analytical characterizations of both
the closures of subspaces of a
given function space, under the topology
of that function space,
and the related distance from any element
of that function space to its subspaces
play an important role in many applications
in analysis.
Here is a brief summary of some notable results
in this area. (Since we work on $\mathbb{R}^n$,
we omit the underlying space $\mathbb{R}^n$
in all the related symbols whenever
there is no risk of ambiguity.)
\begin{itemize}
\item
Anderson et al. \cite{acp}
provided a characterization of the closure of
polynomials via derivatives
in the Bloch space, which they used to explore
the dual properties of
the Bloch space and the zeros of Bloch functions.
They also posed an open problem regarding the
closure of bounded
functions in the Bloch space in \cite{acp},
which is still open.
\item  Sarason \cite{s75}
established  a  characterization
of the closure of smooth functions via finite
differences  under the norm of the space
$\mathop\mathrm{\,BMO\,}$
(introduced by John and Nirenberg  \cite{JN}),
identifying
this closure as the well-known
vanishing mean oscillation space
$\mathop\mathrm{\,VMO\,}$.
This  characterization was subsequently applied
to the  study of Riesz
transforms and the regularity for solutions
of certain equations
(see, for example, \cite{cfl,s}).
Let $L^1_{\mathrm{loc}}$
denote the set of all locally integrable
functions on $\mathbb{R}^n$. Recall that
the space $\mathop\mathrm{\,BMO\,}$ is defined
to be the set of all $f\in L^1_{\mathrm{loc}}$
such that $\|f\|_{\mathop\mathrm{\,BMO\,}}:=
\sup_{B}\frac{1}{|B|}
\int_{B} |f(x)-\frac{1}{|B|}
\int_{B}f(y)\,dy|\, dx
<\infty$, where the supremum is taken over
all
balls $B$ in $\mathbb{R}^n$.

\item  Neri \cite{n} provided a difference
characterization of
the closure of smooth functions with compact
support under the
norm of  $\mathop\mathrm{\,BMO\,}$.
This characterization has been used to study
the compactness of
Calder\'on--Zygmund commutators, the Beltrami
equation, Riesz transforms,
and the regularity of the Jacobian of certain
quasiconformal mappings (see, for instance, \cite{cc,i,is,u}).
\item
Ghatage and Zheng \cite{gz93} characterized
the distance from  a function
to the space of analytic functions of bounded
mean oscillation
under the norm of  the Bloch space,  addressing
an open problem
raised by  Axler \cite{a} in 1988 concerning
the Bloch space and
providing a derivative characterization of
the closure of
the space of analytic functions
of bounded mean oscillation
in the Bloch space. Furthermore, they
highlighted the importance
of distance characterizations of function spaces
in harmonic analysis and complex analysis, applying
their  characterizations to the study of Hankel
operators and operator algebras.

\item  In 1978,
Garnett and Jones \cite{gj78} established a
celebrated formula
characterizing  the distance from  any
$\mathop\mathrm{\,BMO\,}$
function on $\mathbb{R}^n$
to the subspace $L^\infty$ (the Lebesgue
space) under the
$\mathop\mathrm{\,BMO\,}$ norm, which
in particular
induces a difference characterization of the closure of
$L^\infty$ under the topology of
$\mathop\mathrm{\,BMO\,}$. Moreover,
as applications,
Garnett and Jones \cite{gj78} also established the
higher dimensional
Helson--Szeg\"o theorem and, in 1980, Jones
\cite{j} gave the estimates
for solutions of the corona problem for
$H^\infty$ (the bounded analytic function
space)
of the upper half
plane or unit disk (the former was further refined
by Uchiyama
\cite{u82}) and Jones \cite{j80} established the
factorization
of $A_p$ weights. In 1983, Bourgain \cite{b} used
this distance characterization
to study the embedding problem of the space of
integrable
functions to the predual of $H^\infty$. Two
different proofs of
\cite[Corollary 1.1]{gj78} (the formula of the
distance from
$\mathop\mathrm{\,BMO\,}$
to $L^\infty$) can be found,
respectively, in \cite{gj82,v}. This formula was
recently
generalized by Chen et al. \cite{cdlsy} to the
function space,
on metric spaces with doubling measure, associated
with
nonnegative self-adjoint operators with heat
kernels
having the Gaussian upper bound.
\end{itemize}

The work in this article  is largely  inspired
by the recent research of Saksman and Soler i Gibert
\cite{ss}. They studied an approximation
problem in the Lipschitz space
$\Lambda_s=(\Lambda_s, \|\cdot\|_{\Lambda_s})$ of
smooth order $s \in(0,1]$ on $\mathbb{R}^n$, seeking  to
determine  the  distance (up to a positive constant
multiple) in $\Lambda_s$ to the subspace $\mathrm{J}_s(\mathop\mathrm{\,bmo\,})$
for all $f\in \Lambda_s$ and $s \in(0,1]$.
Here,   $\mathrm{J}_s(\mathop\mathrm{\,bmo\,})$ is
the image  of the space $\mathop\mathrm{\,bmo\,}$ under
the Bessel potential  $\mathrm{J}_s:=(I-\Delta)^{-\frac s2}$,
$\Delta=\sum_{j=1}^n \partial_j^2$ is the Laplace operator,
and  $\mathop\mathrm{b m o}$
is  an  inhomogeneous version of the space
$\mathop\mathrm{\,BMO\,}$ consisting   of all functions $f\in
\mathop\mathrm{\,BMO\,}$ with
\[ \|f\|_{\mathop\mathrm{\,bmo\,}} :=\|f\|_{\mathop\mathrm{\,BMO\,}}
+\sup_{k\in\mathbb{Z}^n}
\left[ \int_{k+[0,1)^n} |f(x)|^2\, dx \right]^{\frac 12}
<\infty.
\]
It is well known that  $\mathrm{J}^s(\mathop\mathrm{\,bmo\,})$ is a
Banach space with norm
$\|f\|_{\mathrm{J}^s(\mathop\mathrm{\,bmo\,})}:=\|\mathrm{J}^{-s}
f\|_{\mathop\mathrm{\,bmo\,}},$
and  the strict inclusion $\mathrm{J}^s(\mathop\mathrm{\,bmo\,})
\subsetneqq \Lambda_s$ holds.

To describe  the results in \cite{ss}
more precisely,
we need to  introduce some necessary notation.
First, for any $k\in\mathbb N$ and $h\in \mathbb{R}^n$,
the $k$-th order  \emph{symmetric
difference operator} $\Delta_h^k$ is defined
recursively   as follows: for any
$f:\mathbb{R}^n\to\mathbb{R}$ and $x\in\mathbb{R}^n$,
$$
\Delta_h(f)(x) :=
f\left(x+\frac h2\right)-f\left(x-\frac h2\right),\
\Delta^{i+1}_h:
= \Delta^i_h(\Delta_h),\   i\in\mathbb{N}.
$$
Given  $s\in(0,\infty)$,
the \emph{Lipschitz space}
$\Lambda_s$ of order $s$ on $\mathbb{R}^n$
is the Banach space of  all continuous
functions $f$ on $\mathbb{R}^n$ with norm
\begin{align*}
\|f\|_{\Lambda_s}:=
\|f\|_{L^\infty(\mathbb{R}^n)}+
\sup_{x\in\mathbb{R}^n,\ y \in(0, 1]}
\frac {\Delta_r f(x,y)}{y^s}<\infty,
\end{align*}
where $r=\lfloor s\rfloor+1$, $\lfloor s\rfloor$
denotes the \emph{largest integer not greater than} $s$,
and
\[ \Delta_k f(x,y)=\sup_{h\in\mathbb{R}^n, |h|=y}
|\Delta_h^kf(x)|,\   x\in \mathbb{R}^n,\
\ y\in(0,\infty),\  k\in\mathbb{N}.\]
It is well known that replacing
$r=\lfloor s\rfloor+1$ with any positive
integer $r>s$ in the definition of
$\|\cdot\|_{\Lambda_s}$  results in an
equivalent norm.

Next, given a nonempty subset $E\subset \Lambda^s$,
we denote
the closure of $E$ under the topology
of $\Lambda_s$ by  $ \overline{E}^{\Lambda_s}$.
For any
$f\in \Lambda_s$ and any nonempty set
$E\subset \Lambda^s$, let
\begin{align*}
\mathop\mathrm{\,dist\,}\left(f,
E\right)_{\Lambda_s}:=&
\inf_{g\in E} \|f-g\|_{\Lambda_s}.
\end{align*}
Clearly, for any  $E\subset \Lambda^s(\mathbb{R}^n)$,
$$\overline{E}^{\Lambda_s}
=\left\{ f\in\Lambda_s:
\mathop\mathrm{\,dist\,}(f, E)_{\Lambda_s}
=0\right\}.$$
In the case when $E$ does not lie entirely
in the space $\Lambda_s$, we define
$$ 	\mathop\mathrm{\,dist\,}
\left(f, E\right)_{\Lambda_s}:=	
\mathop\mathrm{\,dist\,}\left(f, E\cap
\Lambda_s\right)_{\Lambda_s}\   \text{and}\
\overline{E}^{\Lambda_s}
:=\overline{E\cap \Lambda_s}^{\Lambda_s}. $$

One  of the main results of \cite{ss}
can now be described  as follows.
For any integer
$r>s>0$, any $f\in\Lambda_s$, and any $\varepsilon\in(0,\infty)$,
define
\begin{align*}
S_r(s,f, \varepsilon):&=\left\{ (x, y)\in\mathbb{R}_+^{n+1}:
\frac{\Delta_r f(x,y)}
 { y^s}>\varepsilon,\   \   x\in\mathbb{R}^n,\
y\in(0,1] \right\}.
\end{align*}
Clearly, $S_r(s,f,\varepsilon)=\emptyset$ if
$\varepsilon\ge \|f\|_{\Lambda_s}$. For convenience,
let us define $\varepsilon_{r,s}(f)$ to be the infimum
of all $\varepsilon\in(0,\infty)$ such that

\begin{equation}
	\sup_{I\in\mathcal D} \frac 1 {|I|} \int_0^{\ell(I)}
 \left|\left\{ x\in I: \frac{\Delta_r f(x,y)}
 { y^s}>\varepsilon
\right\}\right|\frac {dy} y<\infty.\label{1-1-2}
\end{equation}

Here and throughout the article,
$\mathcal D$ denotes the collection of all dyadic
cubes $I$ in $\mathbb{R}^n$ of edge length
$\ell(I)\leq 1$, and $|E|$ denotes the
$n$-dimensional  Lebesgue measure of
$E\subset \mathbb{R}^n$.
It is worth noting that condition
\eqref{1-1-2} is equivalent to requiring that,
for any $(x,y)\in\mathbb{R}_+^{n+1}$,
\begin{equation*} \label{1-2a}
d\gamma_{f,\varepsilon}(x,y)
:= \frac {\mathbf{1}_{S_r(s,f, \varepsilon)}
(x,y)\mathbf{1}_{[0, 1]}(y)\,dx\,dy}y
\end{equation*}
is a Carleson measure on $\mathbb{R}^{n+1}_+$.
Thus, $\varepsilon(f)$ is the \emph{critical index} for
which $\gamma_{f,\varepsilon}$ is a Carleson measure on
$\mathbb{R}^{n+1}_+$ for all $\varepsilon
\in(\varepsilon_{r,s}(f),\infty)$.
Very remarkably,   Saksman and Soler i Gibert
\cite{ss}  proved that, under certain conditions,
the  critical index $\varepsilon_{r,s}((f)$ characterizes
the distance   $\mathop\mathrm{\,dist\,}(f,
\mathrm{J}^s(\mathop\mathrm{\,bmo\,}))_{\Lambda_s}$
for any $f\in\Lambda_s$. More precisely, they showed
the following theorem.

\begin{letterthm}(\cite{ss})\label{fwfw}
If $s\in(0,1)$ and $r = 1$ or if $s\in(0,1]$
and $r = 2$, then,
for any $f\in\Lambda_s$,
\begin{equation}\label{1-3}
\mathop\mathrm{\,dist\,}(f, \mathrm{J}^s(\mathop\mathrm{\,bmo\,}))_{\Lambda_s}\sim
\varepsilon_{r,s}(f):= \inf\left\{ \varepsilon\in(0,\infty):\text{\eqref{1-1-2} holds} \right\}
\end{equation}
with the positive equivalence constants
depending only on $s$ and $n$.
\end{letterthm}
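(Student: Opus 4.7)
My plan is to prove the two inequalities in \eqref{1-3} separately. The easier direction, $\varepsilon_{r,s}(f) \lesssim \mathrm{dist}(f, \mathrm{J}^s(\mathop\mathrm{bmo}))_{\Lambda_s}$, rests on the Carleson-measure characterization of $\mathrm{J}^s(\mathop\mathrm{bmo})$: for the ranges of $s$ and $r$ considered, a function $g\in\Lambda_s$ belongs to $\mathrm{J}^s(\mathop\mathrm{bmo})$ if and only if $(\Delta_r g(x,y)/y^s)^2\mathbf{1}_{[0,1]}(y)\,dx\,dy/y$ is a Carleson measure on $\mathbb{R}^{n+1}_+$. Given such a $g$, the sublinearity of $h\mapsto|\Delta_h^r(\cdot)(x)|$ under the supremum yields $\Delta_r f(x,y)\le \|f-g\|_{\Lambda_s}\,y^s+\Delta_r g(x,y)$, and hence $S_r(s,f,\varepsilon)\subseteq S_r(s,g,\varepsilon-\|f-g\|_{\Lambda_s})$ whenever $\varepsilon>\|f-g\|_{\Lambda_s}$. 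A Chebyshev inequality applied to $\Delta_r g/y^s$ then transfers the $L^2$ Carleson bound for $g$ into the distributional Carleson condition \eqref{1-1-2} for $f$ at this $\varepsilon$; optimizing over $g$ yields the desired estimate.

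The genuinely hard direction is $\mathrm{dist}(f, \mathrm{J}^s(\mathop\mathrm{bmo}))_{\Lambda_s}\lesssim\varepsilon_{r,s}(f)$, and here I would exploit the Daubechies wavelet framework suggested by the paper's title. Pick a compactly supported Daubechies wavelet basis $\{\psi^\lambda_Q\}$ with smoothness and vanishing-moment order strictly larger than $s$, and expand $f=\sum_{Q,\lambda} c^\lambda_Q(f)\,\psi^\lambda_Q$ modulo a low-frequency piece. The classical wavelet characterizations give $\|f\|_{\Lambda_s}\sim\sup_{Q,\lambda}|c^\lambda_Q(f)|\,|Q|^{-s/n-1/2}$ and $\|g\|^2_{\mathrm{J}^s(\mathop\mathrm{bmo})}\sim\sup_{I\in\mathcal{D}}\frac{1}{|I|}\sum_{Q\subseteq I,\,\lambda}|c^\lambda_Q(g)|^2|Q|^{-2s/n}$. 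For any $\varepsilon>\varepsilon_{r,s}(f)$, fix a large constant $C_0$ and set
\begin{equation*}
\mathcal{A}_\varepsilon:=\bigl\{Q\in\mathcal{D}:|c^\lambda_Q(f)|>\varepsilon\,|Q|^{s/n+1/2}/C_0\ \text{for some}\ \lambda\bigr\},
\end{equation*}
and define $g$ to be the sum of $c^\lambda_Q(f)\psi^\lambda_Q$ over $Q\in\mathcal{A}_\varepsilon$ (plus the low-frequency part of $f$). By construction all high-frequency wavelet coefficients of $f-g$ are dominated by $\varepsilon\,|Q|^{s/n+1/2}/C_0$, which forces $\|f-g\|_{\Lambda_s}\lesssim\varepsilon$. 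Since moreover $|c^\lambda_Q(f)|^2|Q|^{-2s/n}\lesssim\|f\|_{\Lambda_s}^2\,|Q|$, the $\mathrm{J}^s(\mathop\mathrm{bmo})$ bound for $g$ reduces to the Carleson packing estimate $\sum_{Q\in\mathcal{A}_\varepsilon,\,Q\subseteq I}|Q|\lesssim|I|$ for every $I\in\mathcal{D}$.

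The main obstacle is to extract this packing estimate from the continuous Carleson condition \eqref{1-1-2}. The key tool is the well-known quantitative comparison between a Daubechies wavelet coefficient and the $r$-th difference of $f$ at a nearby point at scale $\ell(Q)$: when the wavelet has enough smoothness and vanishing moments, the condition $Q\in\mathcal{A}_\varepsilon$ forces $\Delta_r f(x,y)/y^s\gtrsim\varepsilon$ on a subset $E_Q\subseteq Q$ of measure $\gtrsim|Q|$ for every $y\in[\ell(Q)/2,\ell(Q)]$. Disjointness of the $Q$'s at each fixed scale, combined with integration against $dy/y$ over the dyadic annulus $[\ell(Q)/2,\ell(Q)]$ and summation over scales, then shows that $\sum_{Q\in\mathcal{A}_\varepsilon,\,Q\subseteq I}|Q|$ is bounded by a constant times the left side of \eqref{1-1-2} for $f$ with threshold $\varepsilon/(2C_0)$, and hence by $|I|$. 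The restrictions $s\in(0,1)$ with $r=1$ or $s\in(0,1]$ with $r=2$ are exactly what guarantee that a single $r$-th difference (rather than a finer linear combination of them) captures the wavelet coefficient with sufficient precision, so that the comparison argument closes cleanly; extending to larger $s$ and $r$ in the general framework of this paper requires the additional work done in the sequel.
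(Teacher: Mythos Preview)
Your strategy coincides with the paper's: the construction of $g$ as the large-coefficient part of the Daubechies expansion is exactly Theorem~\ref{thm-2-6}, and the reduction of $g\in\mathrm{J}^s(\mathrm{bmo})$ to a packing estimate for $\mathcal A_\varepsilon$, followed by a wavelet-to-difference comparison, is precisely how the paper proceeds via Theorems~\ref{cor-6-2-0} and~\ref{cor-6-7-0} and the abstract Theorem~\ref{thm-7-11} (specialized through Lemma~\ref{as12312} to the Carleson functional $M$ of \eqref{asdf}). Your easier direction, via the $L^2$-difference Carleson characterization of $\mathrm{J}^s(\mathrm{bmo})$ plus Chebyshev, is a legitimate shortcut that the paper forgoes in favor of the symmetric wavelet route (Theorem~\ref{cor-6-7-0}).

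Your key claim in the hard direction needs sharpening, however. The Whitney-type estimate (Lemma~\ref{thm-6-1} here) only yields a \emph{single} point $(x_0,y_0)\in A_0Q\times[\ell(Q)/A_0,\ell(Q)/2]$ where $\Delta_r f/y^s\gtrsim\varepsilon$; to upgrade this to a set of positive $\mu$-measure one must invoke the continuity of $(x,y)\mapsto\Delta_r f(x,y)$ (Lemmas~\ref{lem-4-5-0}--\ref{lem-4-545}), and the resulting hyperbolic ball has radius depending on $\varepsilon/\|f\|_{\Lambda_s}$, sits in a fixed dilate of $T(Q)$ rather than in $Q$, and does not cover all $y\in[\ell(Q)/2,\ell(Q)]$. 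None of this breaks the argument---the $f$-dependent constant is harmless since one only needs $g\in\mathrm{J}^s(\mathrm{bmo})$, and dilates at a fixed scale have bounded overlap---but it is exactly the bookkeeping that the paper systematizes via the hyperbolic metric $\rho$, Theorem~\ref{thm-3adf}, and the stability Lemma~\ref{de1}. Finally, your closing explanation of the restrictions on $(s,r)$ is incorrect: Theorem~\ref{bvcaso3}(ii) establishes \eqref{1-3} for every $s>0$ and every integer $r>s$, so those restrictions are artifacts of the original method in \cite{ss}, not of the wavelet approach you outline.
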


We point out  that the above
theorem was in turn influenced by the earlier work of Nicolau 
and Soler i Gibert \cite{ns}, which established the result
for $n=s=1$ with a different proof
that does not extend to higher-dimensional situation.
Several variants of these questions have
also been studied in \cite{ln,mn}.

As a direct consequence of Theorem A,
we obtain the following characterization
of  the closure of
$\mathrm{J}_s(\mathop\mathrm{\,bmo\,})$
under the topology of $\Lambda_s$:
\[ f\in\overline{\mathrm{J}_s
(\mathop\mathrm{\,bmo\,})}^{\Lambda_s}\iff
f\in\Lambda_s\  \text{and}\  \varepsilon_{r,s}(f)=0.\]
It is worthwhile to point out that the
following strict inclusions hold for any $s\in(0,\infty)$
(see Section \ref{2.1} for the proof):
$\mathrm{J}_s(\mathop\mathrm{\,bmo\,})
\subsetneqq\overline{\mathrm{J}_s(\mathop
\mathrm{\,bmo\,})}^{\Lambda_s}
\subsetneqq\Lambda_s.$

One of the fundamental tools used in the article \cite{ss} is the first-order
and the second-order difference operators. The starting point of the discussion
in \cite{ss} is the usual characterization of the Lipschitz space
$\Lambda_s$ for any $s\in(0,1]$ in terms of differences
(the first-order for any $s\in(0,1)$ and the second-order for $s=1$).
However, the proof of the equivalence \eqref{1-3} in \cite{ss}
is much  more technical and involved than the standard
proofs of the  difference characterization of many
classical function spaces. It  requires many delicate
pointwise estimates,  establishing  connections  between
differences, wavelet expansions, and Poisson integrals.

Note that  the space
$\mathrm{J}_s(\mathop\mathrm{\,bmo\,})$
coincides with the Triebel--Lizorkin space
$F^s_{\infty,2}$ (see, for instance,
\cite[Section 2.3]{T83}).
The main purpose of this article is to essentially
extend  the results of  \cite{ss} by
establishing similar characterizations of
the distances $\mathop\mathrm{\,dist\,}(f, V)_{\Lambda_s}$
in $\Lambda_s$  for  many different  subspaces
$V$ of $\Lambda_s$ that are not dense in $\Lambda_s$ and
for all $s\in(0,\infty)$ and  $f\in \Lambda_s$. In particular,
we show that Theorem A holds for the full range
of $r>s>0$. The spaces  $V$ considered
in this article  include the space
$\mathrm{J}^s(\mathop\mathrm{\,bmo\,})$ for  all $s\in(0,\infty)$,
the Besov spaces $B_{p,q}^s$  and the
Triebel--Lizorkin spaces $F_{p,q}^s$
for all $s\in(0,\infty)$ and $p, q\in (0, \infty]$,
as well as the Besov-type spaces
and  the Triebel--Lizorkin-type spaces.
The precise definitions of these spaces
are given in   Section \ref{sea8}. We also
refer to  \cite{T20,T83,ysy10,b5,b6,b7,b8,b9}
for various  results related to these spaces.
The precise formulations of our main results
are given  in Section \ref{2222}.
The proofs of these results, including Theorem A, as mentioned
above, strongly depend on
the wavelet and the difference characterizations of the function space under consideration.
It is well known that the wavelet characterization and the difference
characterization of function spaces have been an important topic in
the theory of function spaces for a long time and have found wide applications. We refer, for example,
to \cite{Me92,t06,t08,t10} for the wavelet characterization of Besov and Triebel--Lizorkin spaces,
to \cite{T83,t92,T20} for the difference characterization of Besov  and Triebel--Lizorkin spaces,
to \cite{ysy10,lsuyy12,s011,ht23,syy,FSyy} for the wavelet characterization of Besov-type  and Triebel--Lizorkin-type spaces,
and
to \cite{s08,ysy10,s011,hs20,h22} for the difference characterization of Besov-type  and Triebel--Lizorkin-type spaces;
see also \cite{b5,b6,b7,b8,b9,HS13,HMS16,hl,hms23,hlms}
for some further extensions of  these spaces.
We also point out that our results are
of wide generality and applicability.
In addition to being applicable to the aforementioned examples,
it can certainly be applied to more function spaces.

We conclude this section with  some
conventions on notation. Let $\mathbb{N}:=\{1,2,\ldots\}$
and $\mathbb{Z}_+:=\mathbb{N}\cup\{0\}$.
We use $\mathbf{0}$ to denote the
\emph{origin} of $\mathbb{R}^n$. All
functions on $\mathbb{R}^n$ and all
subsets of $\mathbb{R}^n$ are
assumed to be  Lebesgue measurable.
Given a set $E\subset \mathbb{R}^n$,
we use the notation $|E|$ to denote its
Lebesgue measure in $\mathbb{R}^n$ and
$\mathbf{1}_E$ to denote its characteristic
function.
For any  $E \subset \mathbb{R}^n$ with
$|E| < \infty$ and any
$f \in L^1_{{\mathop\mathrm{\,loc\,}}}$, let
$f_E=\fint_Ef(x)\,dx:=
\frac1{|E|}\int_{E}f(x)\,dx$.
We denote the upper half-space
$\mathbb{R}^n \times (0, \infty)$
by $\mathbb{R}_+^{n+1}$.
For any $x \in \mathbb{R}^n$ and
$r \in (0, \infty)$, let
$B(x, r) := \{y \in
\mathbb{R}^n :|x - y| < r\}.$
We denote by
$\mathbb{B}$ the collection of all
Euclidean balls $B(x,r)$
with $x\in\mathbb{R}^n$ and $r\in(0,\infty)$.
For any $\alpha \in (0, \infty)$,
we write $\alpha B := B(x_B, \alpha r_B)$
for any ball $B := B(x_B, r_B)$.
We use $C$ to  denote a  general
{positive constant} independent of the main
parameters involved, which may vary from
line to line. We also use $C_{(\alpha,\beta,\ldots)}$
to denote a positive constant depending
on indicated parameters $\alpha, \beta, \ldots$.
The symbol $f \lesssim g$ means $f \leq Cg$.
If $f \lesssim g$ and $g \lesssim f$, we
write $f \sim g$. If $f \leq Cg$ and
$g = h$ or $g \leq h$, we write
$f \lesssim g = h$ or $f \lesssim g \leq h$.
The symbol $\mathfrak{C}$ denotes the space
of all continuous functions
on $\mathbb{R}^n$, and the symbols $\mathfrak{C}_0$
denotes the space of all $f\in\mathfrak{C}$
such that $\lim_{|x|\to\infty} f(x)=0$.
Also, for any given set $A$, we use $\sharp A$
to denote its \emph{cardinality}.
Finally, when we prove a theorem (or the like),
in its proof we always use the same symbols as in the
statement itself of that theorem (or the like).

\section{Main results }\label{2222}

By definition,   a function
$f\in \mathfrak{C}\cap L^\infty$
belongs to the space $\Lambda_s$ for some $s\in(0,\infty)$
if and only if  there exist a  constant
$\alpha\in(0,\infty)$ and an integer $r>s$ such that
\begin{equation}\label{2-1b}
\Delta_r
f(x,y):=\sup_{|h|\leq y} |\Delta_h^r f(x)|
\leq \alpha y^s\    \ \text{for any}\  \
x\in\mathbb{R}^n\  \ \text{and}\  \
y\in(0,1],
\end{equation}
where the \emph{smallest constant $\alpha$} as above
is denoted by $\alpha_{r,s}(f)$.
For each $f\in \Lambda_s$ and $\varepsilon \in(0,
\alpha_{r,s}(f))$,  consider the set
$S_{r}(s,f,\varepsilon)$ of ``bad'' points
$(x, y)\in \mathbb{R}^n\times (0, 1]$ at
which \eqref{2-1b} with $\varepsilon$ in place of
$\alpha$  fails; that is,
$$ S_{r}(s,f,\varepsilon):=\left\{ (x,y)\in
\mathbb{R}^n\times (0,1]:
\frac{\Delta_r f(x,y)}
 { y^s}>\varepsilon\right\}.$$
To measure the ``size'' of such a set,
we consider a suitably defined nonnegative
extended real-valued function
$\gamma:\mathcal{B}(\mathbb{R}^n\times (0, 1])
\to [0,\infty]$ on the Borel $\sigma$-algebra
$\mathcal{B}(\mathbb{R}^n\times (0, 1])$ of
subsets of $\mathbb{R}^n\times (0, 1]$.
We call  $\gamma$ an \emph{admissible function}
if it satisfies that $\gamma(\emptyset)=0$ and
$\gamma(E_1)\leq \gamma(E_2)$ whenever $E_1,
E_2\in \mathcal{B}(\mathbb{R}^n\times
(0, 1])$ and $E_1\subset E_2$.

For a given admissible function $\gamma$, we
then define
\[ \varepsilon_{r,s,\gamma}(f):=\inf\{\varepsilon\in(0,\infty):
\gamma(S_{r}(s,f,\varepsilon))<\infty\}.\]
In other words, $\varepsilon_{r,s,\gamma}(f)$ is the
\emph{critical index} for which $\gamma(S_{r}(s, f,\varepsilon))
<\infty$ for any $\varepsilon\in(\varepsilon_{r,s,\gamma}(f),\infty)$.
For convenience, we refer to  $\varepsilon_{r,s,\gamma}(f)$
as the \emph{Lipschitz deviation constant}
of $f$ with constants $r,s$ and admissible
function $\gamma$. Clearly, $0\leq \varepsilon_{r,s,\gamma}(f)
\leq \alpha_{r,s}(f)$.  In general, however,
the constant $\varepsilon_{r,s,\gamma}(f)$ can be
significantly smaller than  the  constant
$\alpha_{r,s}(f)$.

Let $s\in(0,\infty)$ and let $r$ be an given integer
with $r>s$.
We are interested in the following question:
\begin{center}
\begin{minipage}{13.8cm}
\emph{What conditions should be imposed on   a
subspace $V$ of $ \Lambda_s$  to guarantee the
existence of  an admissible function $\gamma$ such
that, for each $f\in \Lambda_s$,  the distance
$\mathrm{dist}\,(f, V)_{\Lambda_s}$ can be characterized
by  the Lipschitz deviation constant
$\varepsilon_{r,s,\gamma}(f)$  of $f$?}
\end{minipage}
\end{center}
We will prove a more general result in a unified
framework, showing that under certain conditions
on a given subspace $V\subset \Lambda_s$, there always
exists  an admissible function $\gamma$ (depending on $V$)
such that
$\varepsilon_{r,s,\gamma}(f)\sim \mathrm{dist}\,(f,  \Lambda_s)_{\Lambda_s}$.

Our general result applies to  the case when $V$ is
the intersection of $\Lambda_s$ with any of the following
classical function spaces:  the space $\mathrm{J}^s(\mathop\mathrm{\,bmo\,})$
for  all $s\in(0,\infty)$,
the Besov spaces $B_{p,q}^s$  and the Triebel--Lizorkin
spaces $F_{p,q}^s$  for all $s\in(0,\infty)$ and $p, q\in (0, \infty]$,
as well as the Besov-type
and  the Triebel--Lizorkin-type spaces.  In particular, our
result extends Theorem A to  higher orders of
smoothness in the case when  $V=J_s(\mathop\mathrm{\,bmo\,})$, where
the admissible function $\gamma$ takes the form
\[\gamma(E):=\sup_{I\in\mathcal D} \frac 1 {|I|} \int_0^{\ell(I)}
\left|E_y\cap I\right|\frac {dy} y<\infty, \  E\subset
\mathbb{R}^n\times [0, 1]
\]
with $E_y:=\{x\in \mathbb{R}^n:(x,y)\in E\}$
for any $y\in [0, 1]$.

To highlight the spirit of our main results,
let us first list
the corresponding results  for the
aforementioned  specific well-known
function spaces. Assume that $s\in(0,\infty)$ and $r>s$ is a
given integer. Let $p\in(1,\infty)$ and $q \in(0,\infty]$.
Then the main result of this article implies the
following ones on the aforementioned classical function spaces
(see Section \ref{sea8} for   precise formulations
and  proofs): Here, all the positive equivalence constants
are independent of $f$.
\begin{itemize}
\item   For the space
$\mathrm{J}_s(\mathop\mathrm{\,bmo\,})$ and $f\in\Lambda_s$,
\begin{equation} \label{1-8b}	
\mathop\mathrm{\,dist\,}(f,
\mathrm{J}_s(\mathop\mathrm{\,bmo\,}))_{\Lambda_s} \sim
\varepsilon_{r,s}(f):=\inf\left\{ \varepsilon\in(0,\infty):	
J_{r,s,\varepsilon}(\mathop\mathrm{\,bmo\,})(f) <\infty\right\},
\end{equation}
where
\begin{equation*}
J_{r,s,\varepsilon}(\mathop\mathrm{\,bmo\,})(f):=
\sup_{I\in\mathcal{D}} \frac 1 {|I|} \int_0^{\ell(I)}
\left|\left\{ x\in I:  \frac{\Delta_r f(x, y)}{y^{s} }
>\varepsilon\right\}\right|\,\frac {dy} y.
\end{equation*}

\item For the Sobolev space $W^{1,p}$
and
$f\in \mathfrak{C}_0\cap
\Lambda_1$,
\begin{align*}
\mathop\mathrm{\,dist\,}(f, W^{1,p})_{\Lambda_s}
\sim
\varepsilon_p(f):=\inf \left\{ \varepsilon\in(0,\infty):\left\| \int_{\left\{y\in (0,1]:\frac
{\Delta_2 f(\cdot, y)}{y}>\varepsilon \right\}}
\frac {dy} y\right\|_{L^{\frac p2}}
<\infty \right\}.
\end{align*}
In particular, this implies
$W^{1,p}\cap\Lambda_s\subsetneqq
\overline{W^{1,p}\cap\Lambda_1}^{\Lambda_1}
\subsetneqq\Lambda_1$.

\item   For  the Besov space $B_{p,q}^s$
 and
$f\in \mathfrak{C}_0\cap
\Lambda_s$,
\begin{align*}
&	
\mathop\mathrm{\,dist\,}
(f, B_{p, q}^s)_{\Lambda_s}
\sim \varepsilon_{r,s,p,q}(f):=
\inf \left\{ \varepsilon\in(0,\infty):B_{p,q,\varepsilon}^s(f) <\infty\right\},
\end{align*}
where
\begin{equation*}
B_{p,q,\varepsilon}^s(f):= \left\|
\left\{  \int_{2^{-j-1}}^{2^{-j}} \left|
\left\{ x\in{\mathbb R}^n:\frac{\Delta_r f(x, y)}{y^{s} }>\varepsilon
\right\} \right|\,\frac {dy} y \right\}_{j=0}^\infty
\right\|_{\ell^{\frac qp}({\mathbb Z}_+)}.
\end{equation*}
\item  For  the Triebel--Lizorkin space
$F_{p,q}^s$ and
$f\in \mathfrak{C}_0\cap
\Lambda_s$,
\begin{align*}
\mathop\mathrm{\,dist\,}(f, F_{p, q}^s)_{\Lambda_s}
\sim \varepsilon_{r,s,p,q}(f):=
\inf \left\{ \varepsilon\in(0,\infty):F_{p, q,\varepsilon}^s(f) <\infty \right\},
\end{align*}
where
\[  F_{p, q,\varepsilon}^s(f):=
\left\| \int_{\left\{y\in [0,1]:\frac{\Delta_r f(x, y)}{y^{s}}>\varepsilon \right\}}
\frac {dy} y\right\|_{L^{\frac pq}}.\]
\end{itemize}

All of the above results can, in fact,
be proved within a unified framework,
where the function space $V$ of smooth
order $s\in(0,\infty)$, called  the
\emph{Daubechies $s$-Lipschitz $X$-based space},
is  defined via the Daubechies wavelet
expansions and  a given  quasi-normed
lattice $X$ of function sequences.

Before proceeding, let us briefly review
the Daubechies wavelet system on
$\mathbb{R}^n$ and related facts.
We denote by $\mathfrak{C}_{\mathrm{c}}^r$
the set of all $r$ times  continuously
differentiable  real-valued  functions on
$\mathbb{R}^n$ with compact support.  For any
$\gamma:=(\gamma_1,\ldots,\gamma_n)\in\mathbb Z_+^n$ and
$x:=(x_1,\ldots,x_n)\in\mathbb{R}^n$, let
$|\gamma|:=\gamma_1+\cdots+\gamma_n$,
$x^\gamma:=x_1^{\gamma_1}\cdots x_n^{\gamma_n}$, and
$\partial^\gamma:=(\frac{\partial}{\partial
x_1})^{\gamma_1}\cdots(\frac{\partial}{\partial x_n})^{\gamma_n}$.
According to \cite[Sections~3.8 and 3.9]{Me92},
for any integer  $L>1$
there exist functions
$\varphi,\psi_\ell\in \mathfrak{C}_{\mathrm{c}}^L$ with
$\ell\in\{1,\ldots, 2^{n}-1\}=:\mathcal{N}$
such that,
for any $\gamma\in\mathbb{Z}_+^n$ with
$|\gamma|\le L$,
\begin{align*}
\int_{\mathbb{R}^n}\psi_{\ell}(x)x^\gamma\,dx=0,\
\forall\, \ell\in \mathcal{N}
\end{align*}
and the union of both
\begin{align}\label{fanofn}
\Phi:=\left\{\varphi_k(x):=\varphi(x-k):k\in \mathbb Z^n\right\}
\end{align}
and $$
\Psi:=\left\{ \psi_{\ell,j,k}(x):=2^{\frac{jn}2}
\psi_\ell (2^jx-k):\ell\in \mathcal{N},\
j\in \mathbb Z_+,\  \ k\in\mathbb Z^n\right\}
$$
forms  an orthonormal basis of $L^2$.
The system  $\Phi\cup \Psi$
is called the \emph{Daubechies wavelet system
of regularity $L$}
on $\mathbb{R}^n$.
We index this system as usual   via dyadic
cubes as follows.
For any $j\in \mathbb{Z}_+$, let $\mathcal{D}_j$
denote the class of all
dyadic cubes in $\mathcal{D}$ with edge lengths
$2^{-j}$.
Let $\Omega:=\Omega_0\cup\Omega_1$, where
$\Omega_0:=\{0\}\times \mathcal{D}_0$ and
$\Omega_1:= \mathcal{N}\times \mathcal D$.
For any $\omega=(0, I)\in\Omega_0$ with
$I=k+[0,1)^n\in \mathcal{D}_0$, let
$\psi_\omega(\cdot):=\varphi(\cdot-k),$
while, for  any  $\omega=(\ell_\omega,
I_\omega)\in\Omega_1$
with $\ell_\omega\in\mathcal{N}$
and $I_\omega=2^{-j} ( k+[0,1)^n) \in\mathcal
D_j$, let
$\psi_{\omega}(\cdot): =2^{\frac{jn}2}
\psi_{\ell_\omega}(2^j \cdot-k).$
For any $\omega\in\Omega$ and
$f\in L^1_{\mathrm{loc}}$, let
$\langle f,\psi_{\omega}\rangle:=
\int_{\mathbb{R}^n} f(x) \psi_\omega(x)\, dx.$
We fix a Daubechies wavelet system
$\Phi\cup \Psi$  with regularity $L$ in the
remainder of this article.

The definition of the function space $V$ in
our framework also requires the concept of
quasi-normed lattice of function sequences,
which we now  introduce.
For any given $i,j\in\mathbb{Z}_+$,
let $\delta_{i,j}:=1$ or $0$ depending on
whether $i=j$ or $i\neq j$.
We denote by $\mathscr M$ the
set of all measurable functions  $f:
\mathbb{R}^n\to \mathbb{R}\cup\{\pm\infty\}$
that are finite almost everywhere  on
$\mathbb{R}^n$ and by  $\mathscr{M}_{\mathbb{Z}_+}$
the set of all sequences $	\{f_j\}_{j\in\mathbb{Z}_+}$
of functions in $\mathscr M$.  We use
boldface letters $\mathbf G, \mathbf F,\ldots$
to denote elements in $\mathscr{M}_{\mathbb{Z}_+}$.
For any
 $\alpha\in \mathbb{C}$
and $\mathbf F:=\{f_j\}_{j\in\mathbb{Z}_+},\mathbf G:=\{g_j\}_{j\in\mathbb{Z}_+}
\in \mathscr{M}_{\mathbb{Z}_+}$, define
$|\mathbf{F}|:=\{| f_j|\}_{j\in\mathbb{Z}_+},\
\alpha \mathbf F:=\{\alpha f_j\}_{j\in\mathbb{Z}_+}$,
and
$\mathbf F\pm \mathbf G:=
\{f_j\pm g_j
\}_{j\in\mathbb{Z}_+}.$
Also, we write $\mathbf{F}\leq \mathbf {G}$
if, for  any $j\in\mathbb{Z}_+$,
$f_j\leq g_j$  almost everywhere on $\mathbb{R}^n$.
We define the \emph{left shift $S_L$} and
the \emph{right shift} $S_R$ on the space
$\mathscr{M}_{\mathbb{Z}_+}$
as follows: for any $\mathbf{F}:
=\{f_j\}_{j\in\mathbb{Z}_+}
\in\mathscr{M}_{\mathbb{Z}_+}$,
$
S_L\left(\mathbf{F}\right)
:=\{f_{j+1}\}_{j\in\mathbb{Z}_+}
\ \mathrm{and}\
S_R\left(\mathbf{F}\right)
:=\{f_{j-1}\}_{j\in\mathbb{Z}_+}, \
\text{where}\  f_{-1}:=0.
$

\begin{definition}\label{Debqfs}
Let $\|\cdot\|_X:
\mathscr{M}_{\mathbb{Z}_+}\to [0,\infty]$ be
an extended-valued  quasi-norm
defined on the entire space
$\mathscr{M}_{\mathbb{Z}_+}$ satisfying
that  $\|\mathbf F\|_X\leq \|\mathbf{G}\|_X$
whenever $\mathbf{F},\mathbf{G}\in
\mathscr{M}_{\mathbb{Z}_+}$ and
$|\mathbf{F}|\leq |\mathbf{G}|$.
Let $X$ denote the  space of all function
sequences $ \mathbf{F}\in
\mathscr{M}_{\mathbb{Z}_+}$ such that
$\|\mathbf{F}\|_X<\infty$.
We call
$X=(X,\|\cdot\|_X)$  a \emph{quasi-normed
lattice
of function
sequences} if it satisfies   the following
two conditions:
\begin{itemize}
\item[\rm (i)]
There exists a positive constant $C$ such
that,
for any $\mathbf{F}\in X$,
$\|	S_L(\mathbf{F})\|_X+\|S_R(\mathbf{F})
\|_X\leq C\|\mathbf{F}\|_X. $
\item[\rm (ii)]
For any bounded function $f:\mathbb{R}^n\to
\mathbb{R}$ with compact support,
$\{f, 0,\ldots\}\in X$.
\end{itemize}
\end{definition}

The function space with smooth order $s\in(0,\infty)$ in
our general framework is called the
Daubechies $s$-Lipschitz $X$-based space.
It is defined below through the Daubechies
wavelet expansions and a given quasi-normed
lattice $X$ of function sequences.

\begin{definition}\label{asdfs}
Let $X$ be a quasi-normed lattice
of function sequences. Let $s\in(0,\infty)$. Assume that
the regularity $L\in\mathbb{N}$ of the \emph{Daubechies wavelet
 system}  is strictly  bigger than $s$. Then the
\emph{Daubechies $s$-Lipschitz $X$-based space}
$\Lambda_X^{s}$ is defined
to be the set of
all functions $f\in \Lambda_{s}$ such that
$$
\|f\|_{\Lambda_X^{s}}:=\left\|\left\{
\sum_{\{\omega\in \Omega:I_\omega\in\mathcal{D}_j\}}
|I_\omega|^{-\frac sn-\frac12}\left|\langle f,\psi_{\omega}\rangle\right|\mathbf{1}_{I_\omega}
\right\}_{j\in\mathbb{Z}_+}
\right\|_{X}<\infty.
$$
\end{definition}

It is easily seen that
many classical function spaces, including those
aforementioned spaces, can be considered  as the
\emph{Daubechies s-Lipschitz $X$-based spaces}
associated with  some  quasi-normed lattice
$X$ of function sequences, according to
the usual  wavelet characterizations of these spaces.

To formulate our first result, we
assume that the  given quasi-normed
lattice $X$ of function sequences
satisfies the following condition
for some positive constant $u$.

\begin{assumption}\label{a1}
Let $X$ be a quasi-normed lattice
of function sequences and $u\in(0,\infty)$.
Assume that,
for any $\beta\in(1,\infty)$,
there exists a positive constant $C$,
which may depend on $u$ and $\beta$,
such that,
for any sequence $\{B_{k,j}\}_{k\in
\mathbb N,j\in\mathbb Z_+}$ in $\mathbb{B}$
of Euclidean balls in $\mathbb{R}^n$, one has
\begin{align*}
\left\|\left\{ \left|\sum_{k\in\mathbb N}
\mathbf{1}_{\beta B_{k,j}}\right|^u
\right\}_{j\in\mathbb Z_+}\right
\|_{X}^{\frac 1u}
\le C\left\|\left\{ \left|
\sum_{k\in\mathbb N}\mathbf{1}_{B_{k,j}}\right|^u
\right\}_{j\in\mathbb Z_+}\right\|_{X}^{\frac 1u}.
\end{align*}
\end{assumption}

With Assumption I, we can define the
Lipschitz deviation constant
associated with $X$  as follows.

\begin{definition}\label{asdfs34}
Let $X$ be a quasi-normed lattice
of function
sequences satisfying Assumption \ref{a1}
for some $u\in(0,\infty)$.
Assume that  $s\in(0,\infty)$ and
$r\in\mathbb N\cap(s,\infty)$.
Given $f\in\Lambda_s$, define the
\emph{Lipschitz deviation constant} $\varepsilon_Xf$ of $f$
associated with $X$ by
\begin{equation}\label{ef}
\varepsilon_Xf:=
\inf\left\{\varepsilon\in(0,\infty):\left\|\left\{ \left|\int_{0}^1
\mathbf{1}_{S_{r,j}(s,f, \varepsilon)}
(\cdot,y)\,\frac{dy}{y}
\right|^u\right\}_{j\in\mathbb Z_+}
\right\|_{X}<\infty\right\},
\end{equation}
where, for any $j\in\mathbb{Z}_+$,
\begin{align}\label{dfmawlmfp}
S_{r,j}(s,f, \varepsilon):&=\left\{
(x, y)\in\mathbb{R}_+^{n+1}:\frac{\Delta_r f(x,y)}
 { y^s}>\varepsilon,\
y\in(2^{-j-1},2^{-j}]\right\},
\end{align}
\end{definition}

With the above definitions, we can now
state our first result as follows.

\begin{theorem}\label{pppz23}
Let $s\in(0,\infty)$ and  $r\in
\mathbb N\cap(s,\infty)$. Assume
the regularity $L\in\mathbb{N}$ of the Daubechies wavelet
system $\{\psi_\omega\}_{\omega\in\Omega}$ satisfies
$L>s$ and $L\geq r-1$.
Let $X$ be a quasi-normed lattice
of function
sequences satisfying Assumption \ref{a1}
for some $u\in(0,\infty)$.
Then,
for any $f\in\Lambda_s$,
\begin{align}\label{pofwj}
\mathop\mathrm{\,dist\,}\left(f,
\Lambda_X^{s}\right)_{\Lambda_s}
\sim \varepsilon_Xf+
\inf\left\{\varepsilon\in(0,\infty):\left\|\left\{
\sum_{I\in V_0(s, f, \varepsilon)}
\mathbf{1}_{I}\delta_{0,j}
\right\}_{j\in\mathbb{Z}_+}
\right\|_{X}<\infty\right\},
\end{align}
where \begin{align*}
V_0(s, f, \varepsilon):=
\left\{ I\in\mathcal D_0:\left|\int_{\mathbb{R}^n} f(x)
\psi_{(0,I)}(x)\, dx\right|
>\varepsilon \right\}
\end{align*}
and the positive equivalence constants
are independent of $f$.
\end{theorem}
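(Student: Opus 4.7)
The plan is to pass both sides of \eqref{pofwj} through the Daubechies wavelet transform, leveraging two complementary facts. First, by the vanishing moment property of the mother wavelets $\psi_\ell$, each coefficient $\langle f,\psi_\omega\rangle$ with $\omega\in\Omega_1$ and $I_\omega\in\mathcal D_j$ is dominated by $|I_\omega|^{1/2}$ times a symmetric difference $\Delta_h^r f(\xi)$ with $|h|\sim 2^{-j}$ and $\xi$ in a fixed dilate of $I_\omega$. Second, $\Delta_h^r f(x)$ with $|h|=y\in(2^{-j-1},2^{-j}]$ is controlled pointwise by a bounded-overlap local sum of the normalized wavelet coefficients $|I_\omega|^{-s/n-1/2}|\langle f,\psi_\omega\rangle|$ over cubes $I_\omega\in\mathcal D_j$ meeting a fixed dilate of $B(x,y)$. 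Combined with the standard wavelet characterization $\|f\|_{\Lambda_s}\sim\sup_{\omega\in\Omega}|I_\omega|^{-s/n-1/2}|\langle f,\psi_\omega\rangle|$, these transfers let me recast the difference-level data defining $\varepsilon_X f$ in terms of wavelet-coefficient data indexed by dyadic cubes, and conversely.

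For the upper bound, I fix $\varepsilon$ strictly larger than each of the two quantities on the right of \eqref{pofwj} and, for each $j\in\mathbb Z_+$, declare a cube $I_\omega\in\mathcal D_j$ \emph{bad} if a fixed dilate of it witnesses the set $S_{r,j}(s,f,\varepsilon)$ in a nontrivial portion of its slab; at $j=0$ I also include the cubes in $V_0(s,f,\varepsilon)$. I then define
\begin{align*}
g_\varepsilon:=\sum_{\omega:\,I_\omega\ \mathrm{good}}\langle f,\psi_\omega\rangle\,\psi_\omega.
\end{align*}
By the first complementary fact above together with the wavelet characterization of $\Lambda_s$, each wavelet coefficient of $f-g_\varepsilon$ is $\lesssim\varepsilon\,|I_\omega|^{s/n+1/2}$, so $\|f-g_\varepsilon\|_{\Lambda_s}\lesssim\varepsilon$. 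To check $g_\varepsilon\in\Lambda_X^s$, I dominate the coefficient sequence of $g_\varepsilon$ pointwise by a sum of indicators of bad cubes, which in turn is controlled by the sequence inside the $X$-norm appearing in Definition \ref{asdfs34} plus the $V_0$ contribution at $j=0$; Assumption \ref{a1} then absorbs the finitely many enlargements coming from the support of the wavelet.

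For the lower bound, I pick any $g\in\Lambda_X^s$ and set $M:=\|f-g\|_{\Lambda_s}$. Since $\Delta_r(f-g)(x,y)\le M y^s$, for any $\varepsilon>2M$ one has $S_{r,j}(s,f,\varepsilon)\subset S_{r,j}(s,g,\varepsilon/2)$; by the second complementary fact above, every point of the latter lies in a bounded-overlap dilate of a cube $I_\omega\in\mathcal D_j$ carrying a coefficient with $|\langle g,\psi_\omega\rangle|\gtrsim\varepsilon|I_\omega|^{s/n+1/2}$. Comparing indicators and invoking Assumption \ref{a1} to undo the dilation, the $X$-norm in \eqref{ef} is bounded by $\varepsilon^{-u}\|g\|_{\Lambda_X^s}^u$, which is finite, yielding $\varepsilon_X f\le 2M$; an analogous argument at the coarsest scale (using that $|\langle f-g,\psi_{(0,I)}\rangle|\lesssim M$ uniformly in $I\in\mathcal D_0$) bounds the second term on the right of \eqref{pofwj} by a multiple of $M$. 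The main obstacle is quantifying the two wavelet–difference transfers uniformly in the wavelet indices with constants independent of $X$, while tracking the finite neighbour-cube overlaps created by the support of $\psi_\omega$ so that Assumption \ref{a1} can be applied cleanly; the father wavelet $\varphi$ has no vanishing moments, which is precisely the reason the $V_0(s,f,\varepsilon)$ term must appear separately and cannot be absorbed into $\varepsilon_X f$.
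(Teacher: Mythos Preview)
Your construction has the roles of ``good'' and ``bad'' reversed: with $g_\varepsilon=\sum_{\text{good}}\langle f,\psi_\omega\rangle\,\psi_\omega$, the remainder $f-g_\varepsilon$ carries the coefficients at \emph{bad} cubes, and there is no reason those are $\lesssim\varepsilon\,|I_\omega|^{s/n+1/2}$---your first transfer fact only bounds coefficients at \emph{good} cubes (whose dilate avoids $S_{r,j}(s,f,\varepsilon)$). You should set $g_\varepsilon:=\sum_{\text{bad}}$ instead; then $f-g_\varepsilon$ is supported on good cubes and the $\Lambda_s$ bound follows, while ``dominate the coefficient sequence of $g_\varepsilon$ by indicators of bad cubes'' becomes coherent.

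With that slip fixed, the substantive gap is in showing $g_\varepsilon\in\Lambda_X^s$, i.e.\ that $\bigl\|\{\sum_{I\ \mathrm{bad}}\mathbf 1_I\}_j\bigr\|_X<\infty$. A cube $I\in\mathcal D_j$ must be declared bad as soon as its dilate meets $S_{r,j}(s,f,\varepsilon)$ at a \emph{single} point (otherwise the good-cube coefficient bound fails), so the bad set is essentially the hyperbolic $R$-thickening $[S_{r,j}(s,f,\varepsilon)]_R$. Assumption~\ref{a1} lets you undo dilations of \emph{balls}, but $S_{r,j}(s,f,\varepsilon)$ is an arbitrary open set, not a union of balls of scale $\sim 2^{-j}$, and Assumption~\ref{a1} alone gives no comparison between $\int_0^1\mathbf 1_{[S_{r,j}(\varepsilon)]_R}(\cdot,y)\,\frac{dy}{y}$ and $\int_0^1\mathbf 1_{S_{r,j}(\varepsilon')}(\cdot,y)\,\frac{dy}{y}$. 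The missing ingredient is the \emph{hyperbolic continuity} of $(x,y)\mapsto y^{-s}\Delta_r f(x,y)$: the paper proves (Theorem~\ref{thm-3adf}, resting on the modulus-of-continuity estimate Lemma~\ref{lem-4-5-0}) that for every $\varepsilon_1<\varepsilon$ there exists $\delta=\delta(f,\varepsilon,\varepsilon_1)>0$ with
\[
[S_{r,j}(s,f,\varepsilon)]_\delta\subset \bigcup_{|j'-j|\le 1}S_{r,j'}(s,f,\varepsilon_1).
\]
Only with this in hand, combined with a maximal hyperbolic packing (Lemma~\ref{a2.15s}), can Assumption~\ref{a1} be invoked to pass from $[S_{r,j}(\varepsilon)]_R$ back to $S_{r,j}(\varepsilon_1)$ for some $\varepsilon_1\in(\varepsilon_Xf,\varepsilon)$. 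This continuity step is not a formality---Lemma~\ref{lem-4-5-0} requires a Littlewood--Paley decomposition of $f$ and separate treatments of $s<1$, $s=1$, $s>1$---and it, rather than the two wavelet--difference transfers you flag, is the genuine ``main obstacle''. Your lower bound, by contrast, is essentially sound: there $S_{r,j}(s,g,\varepsilon/2)$ is covered by dilates of large-$g$-coefficient \emph{cubes}, which are already dyadic boxes, so Assumption~\ref{a1} applies directly (though the dilate radius and the number of neighbouring scales depend on $\|g\|_{\Lambda_s}$ and $\varepsilon$, so your stated bound $\varepsilon^{-u}\|g\|_{\Lambda_X^s}^u$ is not literally correct---but finiteness is all you need).
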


The proof of Theorem \ref{pppz23}
is given in Subsection \ref{sec:3-3}.

It should be noted  that Assumption
\ref{a1} may not hold in  certain
endpoint cases, such as  when $\Lambda_X^s$
is  the Triebel--Lizorkin space
$F^s_{\infty,q}$ or the Besov space
$B^s_{\infty,q}$ (see Definitions
\ref{df-Triebel} and \ref{sanfnlk}).
Theorem \ref{pppz23} can not be applied
to these endpoint cases.  To address
these endpoint cases, we need a
different theorem, namely Theorem
\ref{thm-7-11} below.

The formulation of Theorem \ref{thm-7-11}
requires an additional assumption and some
additional notation, which we  now introduce.
For each  $I\in \mathcal D$, let
$T(I): =I \times (2^{-1} \ell(I), \ell(I)].$
We denote by  $\mathscr P({\mathbb R}^{n+1}_+)$
the
collection  of all measurable subsets of
${\mathbb R}^{n+1}_+$.
Let
\begin{align*}
\mathscr{P}_{\mathbb{Z}_+}(\mathbb{R}^{n+1}_+)
:=
\left\{
\{A_j\}_{j\in\mathbb{Z}_+}:\forall\, j\in\mathbb{Z}_+,\
A_j\in\mathscr P({\mathbb R}^{n+1}_+)\right\}.
\end{align*}For any $\{A_j\}_{j\in\mathbb{Z}_+},
\{B_j\}_{j\in\mathbb{Z}_+}\in
\mathscr{P}_{\mathbb{Z}_+}(\mathbb{R}^{n+1}_+)$,
if $A_j\subset B_j$
for any $j\in\mathbb{Z}_+$,
we  write
$\{A_j\}_{j\in\mathbb{Z}_+}\subset
\{B_j\}_{j\in\mathbb{Z}_+}.$

\begin{definition}
The \emph{left shift $S_L$} and
the \emph{right shift} $S_R$ on
$\mathscr{P}_{\mathbb{Z}_+}
(\mathbb{R}^{n+1}_+)$
are defined, respectively, by setting, for any
$\mathbf{F}:=\{A_j\}_{j\in\mathbb{Z}_+}
\in\mathscr{P}_{\mathbb{Z}_+}
(\mathbb{R}^{n+1}_+)$,
$S_L(\mathbf{F})
:=\{A_{j+1}\}_{j\in\mathbb{Z}_+}$ and $
S_R(\mathbf{F})
:=\{A_{j-1}\}_{j\in\mathbb{Z}_+},$
where $A_{-1}:=\emptyset$.
\end{definition}

We also  use the \emph{Poincar\'e
hyperbolic metric}  $\rho:\mathbb{R}_+^{n+1}\times \mathbb{R}_+^{n+1}
\to \mathbb{R}$ in the upper-half space
$\mathbb{R}_+^{n+1}$, which is defined
by setting, for any $\mathbf{x}:=( x, x_{n+1})$,
$\mathbf{y}:=( y, y_{n+1}) \in \mathbb{R}_+^{n+1}$,
\begin{align}\label{metric}
\rho(\mathbf{x}, \mathbf{y}) &:
=\mathop\mathrm{\,arccosh\,}
\left( 1+\frac{|\mathbf{x}-\mathbf{y}|^2}{2x_{n+1} y_{n+1}}\right),
\end{align}
For  convenience, we collect
some useful properties of this metric
in Appendix.
Given $R\in(0,\infty)$, we define
the \emph{hyperbolic $R$-neighborhood} $A_R$
of a subset $A\subset \mathbb{R}_+^{n+1}$
by setting
$$A_R:=\{\mathbf x\in\mathbb{R}^{n+1}_+:
\rho(\mathbf x, A)< R\},$$
where
$\rho\left(\textbf{x}, A\right):=
\inf_{\textbf{y}\in A} \rho\left(\textbf{x}, \textbf{y}\right)$.

\begin{definition}\label{Debqf2s}
A map
$\nu:\mathscr{P}_{\mathbb{Z}_+}
(\mathbb{R}^{n+1}_+)\rightarrow [0,\infty]$
is called a \emph{Carleson-type measure}
if it satisfies the following conditions:
\begin{itemize}
\item[\rm(i)]
For any $\{A_j\}_{j\in\mathbb{Z}_+},
\{B_j\}_{j\in\mathbb{Z}_+}\in
\mathscr{P}_{\mathbb{Z}_+}
(\mathbb{R}^{n+1}_+)$,
$\{A_j\}_{j\in\mathbb{Z}_+}\subset
\{B_j\}_{j\in\mathbb{Z}_+}$ implies
that $\nu(\{A_j\}_{j\in\mathbb{Z}_+})
\le\nu(\{B_j\}_{j\in\mathbb{Z}_+})$.
\item[\rm(ii)]
There exists a positive constant $C$
such that,
for any $\{A_j\}_{j\in\mathbb{Z}_+},
\{B_j\}_{j\in\mathbb{Z}_+}\in
\mathscr{P}_{\mathbb{Z}_+}(\mathbb{R}^{n+1}_+)$,
$$\nu(\{A_j\cup B_j\}_{j\in\mathbb{Z}_+})
\le C\left[\nu(\{A_j\}_{j\in\mathbb{Z}_+})
+\nu(\{B_j\}_{j\in\mathbb{Z}_+})\right].$$
\item[\rm(iii)]
For any $j\in\mathbb Z_+$, let $A_j\subset
\mathbb{R}^{n+1}_+$ be a measurable set
such that there  exist constants $\delta,
\delta'\in (0, \frac{1}{10})$, independent of $j$,
such that,
for any $\mathbf{z}\in A_j$,
\begin{equation}\label{5-1-00}
|B_\rho(\mathbf{z},\delta) \cap A_j |\ge
\delta' |B_\rho(\mathbf{z},\delta)|.
\end{equation}
Then,
for any sequence $\{A_j\}_{j\in\mathbb
Z_+}$ as above,
$\nu(\{(A_j)_R\}_{j\in\mathbb Z_+})$ and
$\nu(\{A_j\}_{j\in\mathbb Z_+})$
are finite or infinite simultaneously.
\item[\rm(iv)]
There exists a positive constant $C$ such
that,
for any $\{A_j\}_{j\in\mathbb{Z}_+}\in
\mathscr{P}_{\mathbb{Z}_+}(\mathbb{R}^{n+1}_+)$,
$\nu(S_L\{A_j\}_{j\in\mathbb{Z}_+})
\le\nu(\{A_j\}_{j\in\mathbb{Z}_+})$
and $\nu(S_R\{A_j\}_{j\in\mathbb{Z}_+})
\le\nu(\{A_j\}_{j\in\mathbb{Z}_+})$.
\end{itemize}
\end{definition}
Given a function
$\nu:\mathscr{P}_{\mathbb{Z}_+}
(\mathbb{R}^{n+1}_+)\rightarrow
[0,\infty]$, we also define,
for any $A\in \mathscr
P({\mathbb R}^{n+1}_+)$, $$\nu(A)
:=\nu\left(\left\{A
\right\}_{j=0}\cup\left\{
\emptyset
\right\}_{j\in\mathbb N}\right),$$
here and thereafter, $\{A
\}_{j=0}\cup\{
\emptyset
\}_{j\in\mathbb N}$ means the
sequence of sets in $\mathscr
P({\mathbb R}^{n+1}_+)$ whose first entry
is $A$ and whose other entries are all the empty set.

We make the following additional
assumption on the quasi-normed lattice $X$.

\begin{assumption}\label{pplp}
Let $X$ be a quasi-normed lattice
of function
sequences. Assume that
there exists a Carleson-type measure
$\nu$ such that,
for any sequence $A$ in  $\mathcal D$,
\begin{align*}
\left\|\left\{
\sum_{I\in A\cap \mathcal D_j}
\mathbf{1}_{I}
\right\}_{j\in\mathbb{Z}_+}
\right\|_{X}\ \
\text{and}\ \ \nu\left(\left\{
\bigcup_{I\in A\cap \mathcal D_j}
T(I)\right\}_{j\in\mathbb{Z}_+}\right)
\end{align*}
are finite or infinite simultaneously.
\end{assumption}

It can be easily verified that
Assumption \ref{pplp} holds
in the endpoint case where
$\Lambda_X^{s}:=F^s_{\infty,q}$
with $s\in(0,\infty)$ and $q\in(0,\infty]$
or where $\Lambda_X^{s}:=B^s_{\infty,q}$
with $s\in(0,\infty)$
and $q\in(0,\infty]$.
\begin{example}
\begin{itemize}
\item[\rm (i)]
Let $\Lambda_X^{s}:=F^s_{\infty,q}$
with $s\in(0,\infty)$ and $q\in(0,\infty]$.
For any $\{A_j\}_{j\in\mathbb{Z}_+}
\in\mathscr{P}_{\mathbb{Z}_+}
(\mathbb{R}^{n+1}_+)$, let
$\nu(\{A_j\}_{j\in\mathbb Z_+})
:=M(\bigcup_{j\in\mathbb Z_+}A_j)$,
where
$F^s_{\infty,q}$ is  the Triebel--Lizorkin
space defined
in Definition \ref{sanfnlk}(ii)
and $M$ is the same as in \eqref{asdf}.
Then, by Lemma \ref{as12312},
we find that in this case Assumption
\ref{pplp} holds.
\item[\rm (ii)] Let $\Lambda_X^{s}
:=B^s_{\infty,q}$ with $s\in(0,\infty)$
and $q\in(0,\infty]$. For any
$\{A_j\}_{j\in\mathbb{Z}_+}
\in\mathscr{P}_{\mathbb{Z}_+}
(\mathbb{R}^{n+1}_+)$, let
$\nu(\{A_j\}_{j\in\mathbb Z_+})
:=\sharp\{j\in\mathbb Z_+:A_j\neq\emptyset\}$,
where
$B^s_{\infty,q}$ is  the Besov space
defined in Definition \ref{df-Triebel}.
Then, by Lemma \ref{as12k312},
we find that in this case Assumption
\ref{pplp} also holds.
\end{itemize}
\end{example}

For any $A\subset \mathscr
P({\mathbb R}^{n+1}_+)$,
let $\nu(A):=\nu(\left\{A
\right\}_{j=0}\cup\left\{
\emptyset
\right\}_{j\in\mathbb N})$.

\begin{definition}\label{asdfs345}
Let $X$ be a quasi-normed lattice
of function
sequences satisfying Assumption
\ref{pplp} for some  Carleson-type
measure $\nu:
\mathscr{P}_{\mathbb{Z}_+}
(\mathbb{R}^{n+1}_+)\rightarrow
[0,\infty]$.
Given  $s\in(0,\infty)$ and an integer $r>s$,
we define the  \emph{Lipschitz
deviation constant} $\varepsilon_{X,\nu}f$ of $f\in
\Lambda_s$  associated with $X$ and $\nu$ by setting
\begin{equation*}
\varepsilon_{X,\nu}f
:=\inf\left\{\varepsilon\in
(0,\infty):\nu\left(\left\{S_{r,j}(s,f,
\varepsilon)\right
\}_{j\in\mathbb Z_+}\right)<\infty\right\},
\end{equation*}
where $S_{r,j}(s,f, \varepsilon)$
is as in \eqref{dfmawlmfp}.
\end{definition}

\begin{theorem}\label{thm-7-11}
Let $X$ be a quasi-normed lattice
of function
sequences satisfying Assumption
\ref{pplp} for some  Carleson-type
measure $\nu:
\mathscr{P}_{\mathbb{Z}_+}
(\mathbb{R}^{n+1}_+)\rightarrow [0,\infty]$.
Let $s\in(0,\infty)$ and let $r>s$ be an integer.
Assume that
the regularity $L\in\mathbb N$ of the Daubechies wavelet
system $\{\psi_\omega\}_{\omega\in\Omega}$
satisfies that
$L>s$ and $L\geq r-1$. Let $\Lambda_X^{s}$ be
the Daubechies $s$-Lipschitz $X$-based space.
Then, for any $f\in\Lambda_s$,
 \begin{align}\label{mop}
\mathop\mathrm{\,dist\,}
\left(f, \Lambda_X^{s}
\right)_{\Lambda_s}&\sim
\varepsilon_{X,\nu}f+
\inf\left\{\varepsilon\in(0,\infty):\nu\left(\bigcup_{I\in V_0(s, f,
\varepsilon)}
T(I)\right)<\infty\right\}
\end{align}
with the positive equivalence constants
independent of $f$, where
$S_{r,j}(s,f, \varepsilon)$ is as in \eqref{dfmawlmfp}.
\end{theorem}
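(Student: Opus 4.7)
The plan is to prove both directions of \eqref{mop} separately, following the same broad strategy as in the proof of Theorem \ref{pppz23} but replacing the quantitative $X$-norm inequalities by the qualitative finite/infinite dichotomy afforded by the Carleson-type measure $\nu$ in Assumption \ref{pplp}. Write $\varepsilon^\ast(f)$ for the second infimum on the right-hand side of \eqref{mop}, so the target is $\mathop\mathrm{\,dist\,}(f,\Lambda_X^{s})_{\Lambda_s}\sim \varepsilon_{X,\nu}f + \varepsilon^\ast(f)$.

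For the upper bound I would fix $\varepsilon$ slightly larger than $\varepsilon_{X,\nu}f+\varepsilon^\ast(f)$ and construct an explicit approximant $g_\varepsilon\in\Lambda_X^{s}$ with $\|f-g_\varepsilon\|_{\Lambda_s}\lesssim \varepsilon$. Split the Daubechies expansion of $f$ into \emph{good} and \emph{bad} parts: a term $\langle f,\psi_\omega\rangle\psi_\omega$ with $\omega\in\Omega_1$ and $I_\omega\in\mathcal D_j$ is declared bad if the hyperbolically enlarged tent $T(I_\omega)$ meets $S_{r,j}(s,f,\varepsilon)$, while an $\omega\in\Omega_0$ is bad if $I_\omega\in V_0(s,f,\varepsilon)$. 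Define $g_\varepsilon$ to be $f$ minus the bad part. The standard pointwise estimate for $\Delta_r$ applied to localized wavelet sums, together with the Lipschitz majorant $\Delta_r f(x,y)\leq \varepsilon y^s$ valid off $S_{r,j}(s,f,\varepsilon)$, yields $\|f-g_\varepsilon\|_{\Lambda_s}\lesssim \varepsilon$. To see $g_\varepsilon\in\Lambda_X^{s}$, note that the retained wavelet supports lie in tents disjoint from $S_{r,j}(s,f,\varepsilon)$; Assumption \ref{pplp} then converts finiteness of $\|g_\varepsilon\|_{\Lambda_X^{s}}$ (written as the size, in the sense of the lattice, of the sum of indicators of these cubes) into the finiteness of $\nu$ on the associated tent-sequence, which holds by choice of $\varepsilon$.

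For the lower bound, let $g\in\Lambda_X^{s}$ with $\delta:=\|f-g\|_{\Lambda_s}$ close to $\mathop\mathrm{\,dist\,}(f,\Lambda_X^{s})_{\Lambda_s}$. The triangle inequality $\Delta_r f(x,y)\leq \Delta_r g(x,y)+C\delta y^s$ gives $S_{r,j}(s,f,\varepsilon)\subset S_{r,j}(s,g,\varepsilon-C\delta)$ whenever $\varepsilon>C\delta$. A standard pointwise bound (using $r\leq L+1$ and the vanishing moments of each $\psi_\ell$) then controls $\Delta_r g(x,y)/y^s$ by the wavelet coefficients of $g$ in a hyperbolic neighborhood of $(x,y)$, showing that $S_{r,j}(s,g,\eta)$ lies in the hyperbolic $R$-neighborhood of $\bigcup_\omega T(I_\omega)$, the union taken over $\omega\in\Omega_1$ with $I_\omega\in\mathcal D_j$ and $|\langle g,\psi_\omega\rangle||I_\omega|^{-s/n-1/2}\gtrsim \eta$. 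Conditions (i)--(iv) of Definition \ref{Debqf2s}, combined with Assumption \ref{pplp}, transfer the finiteness of $\|g\|_{\Lambda_X^{s}}$ to $\nu(\{S_{r,j}(s,f,\varepsilon)\}_{j\in\mathbb Z_+})<\infty$, giving $\varepsilon_{X,\nu}f\lesssim \delta$. The contribution of $V_0$ is handled analogously from the $\Omega_0$ piece of the expansion.

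The hardest part will be absorbing the geometric inflations forced by this route---hyperbolic $R$-neighborhoods arising when passing from wavelet coefficients to pointwise differences, one-scale shifts between the $\mathcal D_j$-indexing of wavelets and the interval $(2^{-j-1},2^{-j}]$ defining $S_{r,j}$, and cube dilations in the construction of the good part of $f$---without a quasi-norm to soak them up. Property (iii) of Definition \ref{Debqf2s} is tailored to the first inflation, but verifying its porosity hypothesis \eqref{5-1-00} for the sets $S_{r,j}(s,g,\eta)$ and for the tent unions indexing the approximant requires a direct dyadic porosity argument. Property (iv) absorbs the scale shifts, and property (ii) ensures that finitely many applications of (iii) and (iv) only cost a bounded constant in $\nu$. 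Combining these yields the quantitative equivalence \eqref{mop}; the $V_0$ term is comparatively routine but must be separated cleanly because Assumption \ref{pplp} links wavelet-support unions to $\nu$ scale-by-scale rather than in a single stroke.
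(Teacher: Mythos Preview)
Your upper-bound construction is inverted. You take $g_\varepsilon$ to be the good part of the wavelet expansion (tents disjoint from $S_{r,j}(s,f,\varepsilon)$), so $f-g_\varepsilon$ is the bad part. But by Lemma~\ref{thm-6-1}, every $\omega\in\Omega_1$ with $|I_\omega|^{-\frac12-\frac sn}|\langle f,\psi_\omega\rangle|>C\varepsilon$ is bad (its enlarged tent meets some $S_{r,i}$), so $\|f-g_\varepsilon\|_{\Lambda_s}\sim\sup_{\omega\text{ bad}}|I_\omega|^{-\frac12-\frac sn}|\langle f,\psi_\omega\rangle|$ can be as large as $\|f\|_{\Lambda_s}$, not $\lesssim\varepsilon$. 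Conversely, your $g_\varepsilon$ has uniformly small coefficients but its tent-sequence need not have finite $\nu$: when $\varepsilon>\|f\|_{\Lambda_s}$ every cube is good and the tent-union fills $\mathbb R^n\times(0,1]$. The approximant must be the \emph{bad} part, so that $f-g_\varepsilon$ carries only coefficients $\lesssim\varepsilon$ (giving $\|f-g_\varepsilon\|_{\Lambda_s}\lesssim\varepsilon$ via Lemma~\ref{asqw}) while the bad tent-union sits inside $\bigcup_i(S_{r,i}(s,f,\varepsilon))_R$.

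Even after this correction, the passage from $\nu(\{S_{r,j}\}_j)<\infty$ to $\nu(\{(S_{r,j})_R\}_j)<\infty$ is the crux, and your plan to verify \eqref{5-1-00} for $S_{r,j}(s,f,\varepsilon)$ by a ``direct dyadic porosity argument'' is not viable: $S_{r,j}$ is a superlevel set of a merely continuous function and has no a~priori thickness at the threshold $\varepsilon$. The paper sidesteps this via Theorem~\ref{thm-3adf}: one replaces $S_{r,j}$ by its $\delta$-thickening $(S_{r,j})_\delta$, which satisfies \eqref{5-1-00} automatically (Lemma~\ref{dda2f}(iii)), applies property~(iii) of Definition~\ref{Debqf2s} to that thickening, and then uses the continuity estimate of Theorem~\ref{thm-3adf} to embed $(S_{r,j}(s,f,\varepsilon))_\delta\subset\bigcup_{|i-j|\le1}S_{r,i}(s,f,\varepsilon')$ for any $\varepsilon'<\varepsilon$. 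Structurally the paper never builds $g_\varepsilon$ from the $S_{r,j}$ at all: it first proves the wavelet distance formula Theorem~\ref{thm-2-6} and then shows that the wavelet-side and difference-side infima coincide through Theorems~\ref{cor-6-2-0}, \ref{cor-6-7-0} together with the thickening device above. Your lower bound is essentially along these lines and is sound once you apply property~(iii) to the tent-unions $T_j^0(s,f,c\varepsilon)$ (which do satisfy \eqref{5-1-00} by Lemma~\ref{dda2f}(i),(ii)) rather than to $S_{r,j}(s,g,\eta)$.
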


The proof of Theorem \ref{thm-7-11}
is given in
Subsection \ref{sec:3-4}.

The remainder of this article is
organized as follows.

In Section \ref{oijjl}, we establish
the wavelet characterization
of the distance
$\mathop\mathrm{\,dist\,}
(f, \Lambda_X^{s})_{\Lambda_s}$
from any function $f \in \Lambda_s$
to the subspace $\Lambda_X^{s}$.
In Section \ref{s3}, we use differences
to characterize
the distance from any function in
$\Lambda_s$ to $\Lambda_X^{s}$. As
applications,
we also characterize the closure $\overline{\Lambda_X^s}^{\Lambda_s}$ of
the Daubechies $s$-Lipschitz $X$-based space
$\Lambda_X^{s}$ in $\Lambda_s$
by using wavelet coefficients and
differences, respectively.
Additionally, we explore the connection
between the hyperbolic metric
and the Carleson measure via the
difference and discuss
the relations between wavelets and
differences
(see Theorems \ref{cor-6-2-0} and
\ref{cor-6-7-0}).
In Section \ref{sea8}, we apply the
results from Section \ref{2222}
to Besov spaces, Triebel--Lizorkin
spaces, Besov-type spaces,
and Triebel--Lizorkin-type spaces,
yielding entirely new results
for these function spaces. Finally,
in Appendix, we include some
basic properties of hyperbolic
metrics related to this article,
which may also be useful for other
applications in function spaces.

\section{Wavelet characterizations}\label{oijjl}

The main purpose of this section
is to characterize the distance
from  a function of the Lipschitz space
$\Lambda_s$ to its
subspace $\Lambda_X^s$
via wavelet coefficients and give its application to characterizing
the closure  $\overline{\Lambda_X^s}^{\Lambda_s}$ of
the Daubechies $s$-Lipschitz $X$-based space
$\Lambda_X^{s}$ in $\Lambda_s$
by wavelets.
For the formulation of our main
result in this section,
Theorem \ref{thm-2-6}, we need
to introduce some additional notation.

Let $Y$ be a quasi-normed space
equipped with a quasi-norm $\|\cdot\|_Y$
and let $Z\subset Y$.
Then, for any $f\in Y$, let
\begin{align*}
\mathop\mathrm{\,dist\,}\left(f, Z\right)_{Y}:=&
\inf_{g\in Z} \|f-g\|_{Y}.
\end{align*}

The following conclusion is the main result
of this section.

\begin{theorem}\label{thm-2-6}
Let $X$ be a quasi-normed lattice
of function
sequences and  $s\in(0,\infty)$.
Assume that
the regularity parameter $L\in\mathbb N$ of
the Daubechies wavelet system
$\{\psi_\omega\}_{\omega\in\Omega}$
satisfies that
$L>s$. Let $\Lambda_X^{s}$ be
the Daubechies $s$-Lipschitz $X$-based space.
Then, for any $f\in\Lambda_s$,
\begin{align}\label{afpmfp}
\mathop\mathrm{\,dist\,}\left(f,
\Lambda_X^{s}\right)_{\Lambda_s}\sim
\inf\left\{\varepsilon\in(0,\infty):\left\|\left\{
\sum_{I\in W_j^0(s, f, \varepsilon)}
\mathbf{1}_{I}
\right\}_{j\in\mathbb{Z}_+}
\right\|_{X}<\infty\right\}
\end{align}
with positive equivalence constants
independent of $f$, where, for any
$j\in\mathbb{Z}_+$,
\begin{align*}
W^0_j(s, f, \varepsilon):=
\left\{ I\in\mathcal D_j:\max_{\{\omega\in\Omega:I_{\omega} =I\}}
\left|\int_{\mathbb{R}^n} f(x) \psi_{\omega}(x)\,
dx\right|
>\varepsilon |I|^{\frac sn+\frac 12}\right\}.
\end{align*}
\end{theorem}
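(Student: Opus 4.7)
The plan is to prove \eqref{afpmfp} as a two-sided bound, relying crucially on the standard Daubechies wavelet characterization of the Lipschitz space:
\[
\|h\|_{\Lambda_s}\sim \sup_{\omega\in\Omega} |I_\omega|^{-\frac{s}{n}-\frac{1}{2}}\,|\langle h,\psi_\omega\rangle|,
\]
which is available under the hypothesis $L>s$ on the wavelet regularity. Denote the right-hand side of \eqref{afpmfp} by $\varepsilon^\ast(f)$.

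For the upper bound $\mathop\mathrm{dist}(f,\Lambda_X^{s})_{\Lambda_s}\lesssim \varepsilon^\ast(f)$, I would fix $\varepsilon>\varepsilon^\ast(f)$ and \emph{threshold} the wavelet expansion of $f$ by setting
\[
g:=\sum_{\substack{\omega\in\Omega\\ |\langle f,\psi_\omega\rangle|>\varepsilon|I_\omega|^{\frac{s}{n}+\frac{1}{2}}}} \langle f,\psi_\omega\rangle\,\psi_\omega.
\]
Every wavelet coefficient of $f-g$ is then bounded by $\varepsilon|I_\omega|^{s/n+1/2}$, so the wavelet characterization yields $\|f-g\|_{\Lambda_s}\lesssim \varepsilon$. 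To see $g\in\Lambda_X^{s}$, observe that the cubes $I_\omega$ actually appearing in $g$ at scale $2^{-j}$ all lie in $W_j^0(s,f,\varepsilon)$, while for each such $I$ there are at most $2^n-1$ wavelets $\omega$ with $I_\omega=I$, and each normalized coefficient $|I_\omega|^{-s/n-1/2}|\langle f,\psi_\omega\rangle|$ is dominated by $C\|f\|_{\Lambda_s}$. By monotonicity of $\|\cdot\|_X$ this gives
\[
\|g\|_{\Lambda_X^{s}}\lesssim \|f\|_{\Lambda_s}\left\|\left\{\sum_{I\in W_j^0(s,f,\varepsilon)}\mathbf{1}_I\right\}_{j\in\mathbb Z_+}\right\|_X<\infty.
\]

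For the lower bound $\varepsilon^\ast(f)\lesssim \mathop\mathrm{dist}(f,\Lambda_X^{s})_{\Lambda_s}$, I would take any $g\in\Lambda_X^{s}$ with $\|f-g\|_{\Lambda_s}<\varepsilon$; the wavelet characterization yields $|\langle f-g,\psi_\omega\rangle|\le C_0\varepsilon|I_\omega|^{s/n+1/2}$ for every $\omega$. Hence, whenever $I\in W_j^0(s,f,2C_0\varepsilon)$, the triangle inequality produces some $\omega$ with $I_\omega=I$ and $|\langle g,\psi_\omega\rangle|>C_0\varepsilon|I|^{s/n+1/2}$. This yields the pointwise comparison
\[
\sum_{I\in W_j^0(s,f,2C_0\varepsilon)}\mathbf{1}_I \le \frac{1}{C_0\varepsilon}\sum_{\{\omega\in\Omega:I_\omega\in\mathcal D_j\}}|I_\omega|^{-\frac{s}{n}-\frac{1}{2}}|\langle g,\psi_\omega\rangle|\,\mathbf{1}_{I_\omega},
\]
and applying $\|\cdot\|_X$ in the $j$-variable together with the lattice monotonicity gives finiteness, so $\varepsilon^\ast(f)\le 2C_0\varepsilon$.

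The step I expect to require the most care is the Daubechies wavelet characterization of $\Lambda_s$ itself, in particular the rigorous pairing $\langle f,\psi_\omega\rangle$ for $f\in\Lambda_s$ (only bounded and continuous, not necessarily integrable) against compactly supported wavelets, and the convergence of the thresholded series defining $g$ in a sense sufficient to identify its wavelet coefficients. Once these analytic preliminaries are in place, both inequalities reduce to the pointwise comparisons above combined with the two defining properties of a quasi-normed lattice $X$ from Definition~\ref{Debqfs}.
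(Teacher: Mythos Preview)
Your proposal is correct and follows essentially the same route as the paper: both directions rely on the wavelet characterization of $\Lambda_s$ (the paper's Lemma~\ref{asqw}), the upper bound is obtained by exactly the same thresholding $g=G_\varepsilon(f)$, and the lower bound is the same Chebyshev-type comparison $W_j^0(s,f,c\varepsilon)\subset W_j^0(s,g,\delta)$ followed by the lattice monotonicity of $\|\cdot\|_X$. The only cosmetic difference is that the paper phrases the lower bound as a contradiction argument, while you argue directly; your acknowledged ``care point'' about convergence of the thresholded series and identification of its coefficients is precisely what the paper handles by invoking Lemma~\ref{asqw}.
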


\begin{proof}
For any $\varepsilon\in(0,\infty)$, let
$
G_\varepsilon(f):=
\sum_{\omega\in \Omega(\varepsilon)}\langle f,\psi_{\omega}\rangle\psi_{\omega},
$
where $\Omega(\varepsilon)
:=\{\omega\in\Omega:
I_{\omega}\in W^0(s,f,\varepsilon)\}$.
Then, by Lemma \ref{asqw}, we have
$G_\varepsilon(f)\in \Lambda_s$
and
\begin{align}\label{aaa25}
\|f-G_\varepsilon(f)\|_{\Lambda_s}
\lesssim\sup_{\omega\in\Omega\setminus
\Omega(\varepsilon)}|\langle f,\psi_{\omega}
\rangle||I_{\omega}|^{-\frac sn-\frac 12}
\le\varepsilon.
\end{align}
We use $d_0$ to denote the
right-hand side of $\eqref{afpmfp}$.
We first show that
\begin{align}\label{zxczz2}
\inf_{g\in \Lambda_X^{s}} \|f-g\|_{\Lambda_s}
\lesssim
d_0.
\end{align}
Assume that $\varepsilon\in(0,\infty)$
satisfies that
$\|\{
\sum_{I\in W_j^0(s, f, \varepsilon)}
\mathbf{1}_{I}
\}_{j\in\mathbb{Z}_+}
\|_{X}<\infty.$
From this and Lemma \ref{asqw}, it follows
that
\begin{align*}
\|G_\varepsilon(f)\|_{\Lambda_X^{s}}&\sim
\left\|\left\{
\sum_{\{\omega\in\Omega:I_\omega\in W_j^0(s, f, \varepsilon)\}}|I_\omega|^{-\frac{s}{n}-
\frac 12}\left|\langle G_\varepsilon(f),
\psi_{\omega}\rangle\right|
\mathbf{1}_{I_\omega}
\right\}_{j\in\mathbb{Z}_+}
\right\|_{X}\\
&\lesssim\sup_{\omega\in\Omega}\left|
|I_\omega|^{-\frac{s}{n}-\frac 12}
\langle G_\varepsilon(f),\psi_{\omega}
\rangle\right|\left\|\left\{
\sum_{I\in W_j^0(s, f, \varepsilon)}
\mathbf{1}_{I}
\right\}_{j\in\mathbb{Z}_+}
\right\|_{X}<\infty,
\end{align*}
which further implies that $G_\varepsilon(f)
\in \Lambda_X^{s}$.
By this and \eqref{aaa25}, we conclude that,
for any
$\varepsilon\in(0,\infty)$ with
$\|\{
\sum_{I\in W_j^0(s, f, \varepsilon)}
\mathbf{1}_{I}\}_{j\in\mathbb{Z}_+}
\|_{X}<\infty,$
$$\inf_{g\in \Lambda_X^{s}}
\|f-g\|_{\Lambda_s}\le
\|f-G_\varepsilon(f)\|_{\Lambda_s}
\lesssim\varepsilon,
$$
which further implies that \eqref{zxczz2}
holds.
Next, we prove that
$d_0\lesssim\inf_{g\in \Lambda_X^{s}}
\|f-g\|_{\Lambda_s}.$
To this end, by Lemma \ref{asqw},
we conclude that
it suffices to show, for any
$f\in\Lambda_s$,
\begin{align}\label{zxscgh}
d_0
\le
\inf_{g\in \Lambda_X^{s}}
\sup_{\omega\in\Omega}|\langle
f-g,\psi_{\omega}\rangle|
|I_{\omega}|^{-\frac sn-\frac12}.
\end{align}
Assume otherwise that
\begin{align}\label{zxscgh2}
\inf_{g\in \Lambda_X^{s}}
\sup_{\omega\in\Omega}|\langle f-g,
\psi_{\omega}\rangle||I_{\omega}
|^{-\frac sn-\frac12}< d_0.
\end{align}
Let $\varepsilon\in(0,\infty)$ and
$\delta\in(0,\infty)$  be such that
$$\inf_{g\in \Lambda_X^{s}}
\sup_{\omega\in\Omega}|\langle f-g,
\psi_{\omega}\rangle||I_{\omega}|^{-\frac sn-\frac12}
<\varepsilon< \varepsilon+\delta<d_0.$$
Then there exists a function $g\in \Lambda_X^{s}$
such that
\begin{align}\label{zxc3ass}
\sup_{\omega\in\Omega}|\langle f-g,
\psi_{\omega}\rangle||I_{\omega}
|^{-\frac sn-\frac12}<\varepsilon.
\end{align}
From this and Lemma \ref{asqw},
it follows that
\begin{align}\label{zxc4ass}
\delta \left\|\left\{
\sum_{I\in W_j^0(s, g, \delta)}
\mathbf{1}_{I}
\right\}_{j\in\mathbb{Z}_+}
\right\|_{X}\nonumber
&\lesssim
\left\|\left\{
\sum_{\{\omega\in\Omega:I_\omega\in
W_j^0(s, g, \delta)\}}|I_\omega
|^{-\frac{s}{n}-\frac 12}\left|
\langle g,\psi_{\omega}\rangle\right|
\mathbf{1}_{I_\omega}
\right\}_{j\in\mathbb{Z}_+}
\right\|_{X}\\
&\lesssim\; \|g\|_{\Lambda_X^{s}}
<\infty.
\end{align}
On one hand, since $\varepsilon+\delta<d_0$,
we deduce that
$\|\{
\sum_{I\in W_j^0(s, f, \varepsilon+\delta)}
\mathbf{1}_{I}
\}_{j\in\mathbb{Z}_+}
\|_{X}=\infty.$
On the other hand, however, by \eqref{zxc3ass},
we conclude that,
for any $\omega\in\Omega$ with $I_\omega
\in W_0(s, f, \varepsilon+\delta)$,
\begin{align*}
& |\langle g,\psi_{\omega}\rangle|\ge
|\langle f,\psi_{\omega}\rangle|-
\varepsilon |I_\omega|^{\frac sn+\frac 12}>
\delta  |I_\omega|^{\frac sn+\frac 12},
\end{align*}
which further implies that, for any
$j\in\mathbb Z_+$,
$W_j^0(s, f, \varepsilon+\delta)\subset
W_j^0(s, g, \delta).$
From this and \eqref{zxc4ass}, we infer
that
\begin{align*}
\left\|\left\{
\sum_{I\in W_j^0(s, f, \varepsilon+\delta)}
\mathbf{1}_{I}
\right\}_{j\in\mathbb{Z}_+}
\right\|_{X}\leq\left\|\left\{
\sum_{I\in W_j^0(s, g, \delta)}
\mathbf{1}_{I}
\right\}_{j\in\mathbb{Z}_+}
\right\|_{X}
<\infty,
\end{align*}
which further implies $d_0\le\varepsilon+\delta$.
This contradicts $\varepsilon+\delta<d_0$.
Thus, assumption \eqref{zxscgh2} is not
true and hence \eqref{zxscgh} holds.
This, combined with \eqref{zxczz2}, then
finishes the proof of  Theorem \ref{thm-2-6}.
\end{proof}

To apply Theorem \ref{thm-2-6} to Besov-type spaces in
Subsection \ref{fwefm} and to Triebel--Lizorkin-type
spaces in Subsection \ref{fwefm2}, we
also need the following proposition to
separate the Daubechies wavelet
system into the vanishing part and the
non-vanishing part.

\begin{proposition}\label{pp}
Let $X$ be a quasi-normed lattice
of function
sequences and $s\in(0,\infty)$.
Assume that
the regularity parameter $L\in\mathbb N$
of the Daubechies wavelet system
$\{\psi_\omega\}_{\omega\in\Omega}$
satisfies that
$L>s$. Let $\Lambda_X^{s}$ be
the Daubechies $s$-Lipschitz $X$-based
space and $\varphi$
 the same as in \eqref{fanofn}.
Let
$$\mathcal{A}:=\left\{\{a_k
\}_{k\in\mathbb N}\ in\ \mathbb Z^n: \lim_{k\to\infty}|a_k|=\infty,\
 \left\|\left\{
\sum_{k\in\mathbb N}
\mathbf{1}_{I_{0,a_k}}
\right\}_{j\in\mathbb{Z}_+}\right
\|_{X}=\infty
\right\}.$$
Then, for any $f\in\Lambda_s$,
\begin{align*}
\varepsilon_0&=\sup_{\{a_k\}_{k\in\mathbb N}
\in\mathcal{A}}
\liminf_{k\rightarrow\infty}
\left|\int_{\mathbb{R}^n}\varphi(x-a_k)f(x)
\,dx \right|,
\end{align*}
where
$$\varepsilon_0:=\inf\left\{\varepsilon
\in(0,\infty):\left\|\left\{
\sum_{I\in V_0(s, f, \varepsilon)}
\mathbf{1}_{I}\delta_{0,j}
\right\}_{j\in\mathbb{Z}_+}
\right\|_{X}<\infty\right\}.$$
\end{proposition}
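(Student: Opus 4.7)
The plan is to view both $\varepsilon_0$ and the supremum $\eta$ on the right-hand side as the same critical threshold at which the set $V_0(s, f, \varepsilon)$ ceases to be ``small enough'' for its indicator sum to lie in $X$. Writing $\beta_f(k) := |\int_{\mathbb{R}^n} \varphi(x-k) f(x)\, dx|$ for $k \in \mathbb{Z}^n$ identifies $V_0(s,f,\varepsilon)$ with $S_\varepsilon := \{k \in \mathbb{Z}^n : \beta_f(k) > \varepsilon\}$. The key tool in both directions is the combination of the quasi-triangle inequality, monotonicity of $\|\cdot\|_X$, and Definition \ref{Debqfs}(ii), which lets me harmlessly add or remove any bounded, compactly supported contribution placed in the $j=0$ slot without changing whether the $X$-norm is finite.

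For $\eta \leq \varepsilon_0$, I would fix any $\{a_k\} \in \mathcal{A}$ and argue by contradiction. If $\liminf_k \beta_f(a_k) > \varepsilon_0$, pick $\varepsilon \in (\varepsilon_0, \liminf_k \beta_f(a_k))$ and $N \in \mathbb{N}$ so that $\beta_f(a_k) > \varepsilon$ for all $k \geq N$. Then $\{a_k\}_{k \geq N} \subset S_\varepsilon$, so monotonicity of $\|\cdot\|_X$ gives
\[
\left\|\left\{\sum_{k \geq N}\mathbf{1}_{I_{0,a_k}}\delta_{0,j}\right\}_{j\in\mathbb{Z}_+}\right\|_X \leq \left\|\left\{\sum_{I \in V_0(s,f,\varepsilon)}\mathbf{1}_I\delta_{0,j}\right\}_{j\in\mathbb{Z}_+}\right\|_X < \infty,
\]
the final finiteness coming from $\varepsilon > \varepsilon_0$. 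The discarded head $\sum_{k<N}\mathbf{1}_{I_{0,a_k}}$ is bounded with compact support, hence lies in $X$ via Definition \ref{Debqfs}(ii); the quasi-triangle inequality then reassembles the entire sum into $X$, contradicting $\{a_k\} \in \mathcal{A}$.

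For $\varepsilon_0 \leq \eta$, I would fix any $\varepsilon \in (0, \varepsilon_0)$ and construct a witness $\{a_k\} \in \mathcal{A}$ with $\liminf_k \beta_f(a_k) \geq \varepsilon$. Since $\varepsilon < \varepsilon_0$, one has $\|\{\sum_{I \in V_0(s,f,\varepsilon)}\mathbf{1}_I\delta_{0,j}\}_{j\in\mathbb{Z}_+}\|_X = \infty$; Definition \ref{Debqfs}(ii) then forces $S_\varepsilon$ to be infinite, since otherwise the sum would be bounded and compactly supported, hence in $X$. Enumerating $S_\varepsilon$ as $\{a_k\}_{k \in \mathbb{N}}$ in non-decreasing order of $|a_k|$, and exploiting the fact that every Euclidean ball meets $\mathbb{Z}^n$ in only finitely many points, automatically gives $|a_k| \to \infty$. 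Then $\sum_k \mathbf{1}_{I_{0,a_k}} = \sum_{I \in V_0(s,f,\varepsilon)}\mathbf{1}_I$, so $\{a_k\} \in \mathcal{A}$, while $\beta_f(a_k) > \varepsilon$ for every $k$ gives $\liminf_k \beta_f(a_k) \geq \varepsilon$. Letting $\varepsilon \uparrow \varepsilon_0$ closes the inequality.

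The main obstacle I foresee is a small interpretive issue rather than a deep one: the membership condition for $\mathcal{A}$ as written lacks the $\delta_{0,j}$ factor that is present in the definition of $\varepsilon_0$, so one must first confirm that the intended reading is the sequence supported only in the $j = 0$ slot (consistent with $I_{0,a_k} \in \mathcal{D}_0$). Once that is fixed, the argument reduces entirely to the controlled head--tail splittings above, powered by Definition \ref{Debqfs}(ii) and the quasi-triangle inequality, and no additional ideas should be needed.
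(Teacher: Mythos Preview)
Your proposal is correct and follows essentially the same two-sided argument as the paper: for one inequality you use that a tail of any $\{a_k\}\in\mathcal A$ with large $\liminf$ lands in $V_0(s,f,\varepsilon)$, and for the other you enumerate $V_0(s,f,\varepsilon)$ itself as the witnessing sequence. You are in fact more careful than the paper in two places the paper glosses over---invoking Definition~\ref{Debqfs}(ii) to justify discarding or reattaching the finite head, and explicitly arguing that $S_\varepsilon$ must be infinite and that $|a_k|\to\infty$---and your remark about the missing $\delta_{0,j}$ in the definition of $\mathcal A$ is a genuine notational slip in the paper.
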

\begin{proof}
We first show
\begin{align}\label{fnqo}
\varepsilon_0
\le&\sup_{\{a_k\}_{k\in\mathbb N}
\in\mathcal{A}}
\liminf_{k\rightarrow\infty} \left|
\int_{\mathbb{R}^n}\varphi(x-a_k)f(x)
\,dx \right|=:\varepsilon_1.
\end{align}
Let $\varepsilon\in(0,\varepsilon_1)$.
Then there exists $\{a_k
\}_{k\in\mathbb N}\in\mathcal{A}$
such that
$$
\liminf_{k\rightarrow\infty}
\left|\int_{\mathbb{R}^n}\varphi(x-a_k
)f(x)\,dx \right|>\varepsilon,
$$
which further implies that there exists
$N\in\mathbb N$
such that, for any $k\in[N,\infty)$, $$ \left|\int_{\mathbb{R}^n}\varphi(x-a_k)
f(x)\,dx \right|>\varepsilon.$$
From this, we deduce that
\begin{align*}
\left\|\left\{
\sum_{I\in V_0(s, f, \varepsilon)}
\mathbf{1}_{I}\delta_{0,j}
\right\}_{j\in\mathbb{Z}_+}
\right\|_{X}&\geq\left\|\left\{
\sum_{\{k\in\mathbb N:k\geq N\}}
\mathbf{1}_{I_{0,a_k}}\delta_{0,j}
\right\}_{j\in\mathbb{Z}_+}
\right\|_{X}=\infty,
\end{align*}
which further implies $\varepsilon
\le\varepsilon_0$ and hence
$\varepsilon_1\le\varepsilon_0$.
Thus, \eqref{fnqo} holds.
Now, we prove
\begin{align}\label{fnqo2}
\varepsilon_0
\geq\varepsilon_1.
\end{align}
Let $\varepsilon\in(0,\varepsilon_0)$.
Then $$
\left\|\left\{
\sum_{I\in V_0(s, f, \varepsilon)}
\mathbf{1}_{I}\delta_{0,j}
\right\}_{j\in\mathbb{Z}_+}
\right\|_{X}=\infty.
$$
Let $V_0(s, f, \varepsilon)=:
\{I_{0,a_k}\}_{k\in\mathbb N}$.
Then we have $\{a_k\}_{k\in
\mathbb N}\in\mathcal{A}$,
which further implies that
$$\varepsilon_1\geq
\liminf_{k\rightarrow\infty}
\left|\int_{\mathbb{R}^n}
\varphi(x-a_k)f(x)\,dx \right|\geq\varepsilon$$
and
hence $\varepsilon_1\geq\varepsilon_0$.
Thus, \eqref{fnqo2} holds.
This finishes the proof of Proposition \ref{pp}.
\end{proof}

Applying Theorem \ref{thm-2-6}, we now give the
wavelet characterization of the closure $\overline{\Lambda_X^{s}}^{\Lambda_s}$
of the Daubechies $s$-Lipschitz $X$-based space
$\Lambda_X^{s}$ in $\Lambda_s$.

\begin{theorem}\label{thm-2-622a}
Let $X$ be a quasi-normed lattice
of function
sequences and  $s\in(0,\infty)$.
Assume that
the regularity parameter $L\in\mathbb N$
of the Daubechies wavelet system
$\{\psi_\omega\}_{\omega\in\Omega}$ satisfies that
$L>s$. Let $\Lambda_X^{s}$ be
the Daubechies $s$-Lipschitz $X$-based space.
Then
$f\in\overline{\Lambda_X^{s}}^{\Lambda_s}$
if and only if,  for any $\varepsilon\in(0,\infty)$,
$$
\left\|\left\{
\sum_{I\in W_j^0(s, f, \varepsilon)}
\mathbf{1}_{I}
\right\}_{j\in\mathbb{Z}_+}
\right\|_{X}<\infty.$$
\end{theorem}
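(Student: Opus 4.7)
The plan is to obtain Theorem \ref{thm-2-622a} as an essentially immediate corollary of Theorem \ref{thm-2-6}, with the only real content being a monotonicity observation about how the condition behaves in $\varepsilon$. I would not attempt any new wavelet analysis here, since all the heavy lifting has already been done.

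First I would unwind the definitions: by the convention adopted in the introduction, $\overline{\Lambda_X^{s}}^{\Lambda_s}=\{f\in\Lambda_s:\mathop\mathrm{\,dist\,}(f,\Lambda_X^{s})_{\Lambda_s}=0\}$. Applying Theorem \ref{thm-2-6}, this condition is equivalent to
\begin{align*}
\inf\left\{\varepsilon\in(0,\infty):\left\|\left\{\sum_{I\in W_j^0(s,f,\varepsilon)}\mathbf{1}_I\right\}_{j\in\mathbb{Z}_+}\right\|_X<\infty\right\}=0,
\end{align*}
with positive equivalence constants depending only on the parameters (and crucially independent of $f$), so the infimum vanishing is really an ``if and only if'' statement.

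The second step is then to show that this infimum being zero is equivalent to the set in braces being all of $(0,\infty)$, i.e. to the condition in the statement of Theorem \ref{thm-2-622a}. For this I would verify monotonicity of the set $W_j^0(s,f,\varepsilon)$ in $\varepsilon$: if $0<\varepsilon_1<\varepsilon_2$ then $W_j^0(s,f,\varepsilon_2)\subset W_j^0(s,f,\varepsilon_1)$, so pointwise
\begin{align*}
\sum_{I\in W_j^0(s,f,\varepsilon_2)}\mathbf{1}_I\le \sum_{I\in W_j^0(s,f,\varepsilon_1)}\mathbf{1}_I.
\end{align*}
Because $\|\cdot\|_X$ is a lattice quasi-norm (the monotonicity $|\mathbf F|\le|\mathbf G|\Rightarrow\|\mathbf F\|_X\le\|\mathbf G\|_X$ built into Definition \ref{Debqfs}), finiteness of the $X$-norm for $\varepsilon_1$ forces finiteness for every $\varepsilon_2>\varepsilon_1$. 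Thus the set $E(f):=\{\varepsilon>0:\|\cdots\|_X<\infty\}$ is upward closed. Consequently $\inf E(f)=0$ if and only if $E(f)=(0,\infty)$, which is exactly the stated condition.

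Putting these two equivalences together yields the theorem. The main obstacle is essentially nil once Theorem \ref{thm-2-6} is in hand; the only subtlety to watch is that the lattice property of $X$ is genuinely needed to propagate finiteness from small $\varepsilon$ to larger $\varepsilon$, and I would state this monotonicity step explicitly so the logic is transparent.
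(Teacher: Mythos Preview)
Your proposal is correct and follows essentially the same approach as the paper's proof, which simply observes that $f\in\overline{\Lambda_X^{s}}^{\Lambda_s}$ is equivalent to $\mathop\mathrm{\,dist\,}(f,\Lambda_X^{s})_{\Lambda_s}=0$ and then invokes Theorem~\ref{thm-2-6}. The only difference is that you spell out the monotonicity step (upward-closedness of $E(f)$ via the lattice property of $X$) explicitly, whereas the paper leaves this implicit; your added detail is correct and harmless.
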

\begin{proof}
Observe that $f\in
\overline{\Lambda_X^{s}}^{\Lambda_s}$
if and only if
$f\in \Lambda_s$
and $\mathop\mathrm{\,dist\,}(f,
\Lambda_X^{s})_{\Lambda_s}=0$.
From this and  Theorem \ref{thm-2-6},
it follows that, when $f\in \Lambda_s$,
$\mathop\mathrm{\,dist\,}(f,
\Lambda_X^{s})_{\Lambda_s}=0$
if and only if, for any
$\varepsilon\in(0,\infty)$,
$\|\{\sum_{I\in W_j^0(s, f,
\varepsilon)}
\mathbf{1}_{I}\}_{j\in
\mathbb{Z}_+}\|_{X}<\infty,$
which completes the proof of
Theorem \ref{thm-2-622a}.
\end{proof}

\section{Difference characterizations}\label{s3}

In this section, for any $s\in(0,\infty)$,
we establish the characterization
of the distance from any function of
the Lipschitz space
$\Lambda_s$ to  the
Daubechies $s$-Lipschitz $X$-based space
$\Lambda_X^{s}$  in terms of differences, that is,
we prove Theorems \ref{pppz23} and \ref{thm-7-11}.

Recall that, for any $h\in \mathbb{R}^n$,
the \emph{symmetric difference operator}
$\Delta_h$
is defined
by setting, for any function $f:
\mathbb{R}^n \to \mathbb{C}$ and
$x\in\mathbb{R}^n$,
$
\Delta_h(f)(x) := f(x+\frac h2)-
f(x-\frac h2)
$ and, for any $k\in\mathbb N$,
the \emph{$(k+1)$-order symmetric
difference operator
$\Delta^{k+1}_h$}
is defined by setting, for any
function $f$,
$\Delta^{k+1}_h (f): =
\Delta^k_h(\Delta_h(f)).$
For any $y\in(0,\infty)$ and
$x\in\mathbb{R}^n$, let
\begin{align}\label{nnfpaw}
\Delta_{k} f(x,y): =
\sup_{\{h\in\mathbb{R}^n:|h|=y\}}
\left|\Delta^k_h(f)(x)\right|.
\end{align}
Define the measure
$d\mu (x,x_{n+1}):=
\frac {dx\,dx_{n+1}} {x_{n+1}}$
on $\mathbb{R}_+^{n+1}$.
Assume that $r\in\mathbb N$ and $r>s$.
Then, for any $\varepsilon\in(0,\infty)$,
let
\begin{align*}
S_r(s,f, \varepsilon):&=\left\{
(x, y)\in\mathbb{R}_+^{n+1}:\frac{\Delta_r f(x,y)}
 { y^s}
 >\varepsilon\right\}
\end{align*}
and, for any $j\in\mathbb Z_+$,
\begin{align*}
S_{r,j}(s,f, \varepsilon):&=\left\{
(x, y)\in\mathbb{R}_+^{n+1}:
\frac{\Delta_r f(x,y)}
 { y^s}>\varepsilon,\
y\in\left(2^{-j-1},2^{-j}\right]\right\}.
\end{align*}

When Theorem \ref{pppz23} is applied
to a specific function space,
the function space under consideration
often has some additional nice properties, which
lead to a more concise version of Theorem \ref{pppz23}; indeed,
applying Theorem \ref{pppz23}, we can
easily obtain the following concise
corollary; we omit the details.

\begin{corollary}\label{pp2}
Let $X$ be a quasi-normed lattice
of function
sequences satisfying Assumption \ref{a1}
for some $u\in(0,\infty)$.
Let $s\in(0,\infty)$ and $r\in\mathbb N$
with $r>s$.
Assume that
the regularity parameter $L\in\mathbb N$ of the
Daubechies wavelet system
$\{\psi_\omega\}_{\omega\in\Omega}$ satisfies that
$L>s$ and $L\geq r-1$. Let $\Lambda_X^{s}$ be
the Daubechies $s$-Lipschitz $X$-based space and $\varphi$
be the same as in \eqref{fanofn}.
Assume that $E\subset \mathbb{R}^n$ with $|E|=\infty$
implies that $\|\{
\mathbf{1}_{E}\delta_{0,j}
\}_{j\in\mathbb{Z}_+}\|_X=\infty$.
Then, for any $f\in\Lambda_s$,
\begin{align*}
\mathop\mathrm{\,dist\,}\left(f, \Lambda_X^{s}\right)_{\Lambda_s}&\sim\varepsilon_f+
\limsup_{k\in\mathbb{Z}^n,\;|k|\rightarrow\infty}
\left|\int_{\mathbb{R}^n}\varphi(x-k)f(x)\,dx\right|
\end{align*}
with positive equivalence constants independent of $f$,
where
$\varepsilon_Xf$ is the same as in \eqref{ef}.
\end{corollary}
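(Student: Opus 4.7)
The plan is to derive Corollary \ref{pp2} as an immediate consequence of Theorem \ref{pppz23} combined with Proposition \ref{pp} and the extra hypothesis on $X$, by identifying the second term on the right-hand side of \eqref{pofwj} with the $\limsup$ appearing in the statement. Write
$$\varepsilon_0:=\inf\left\{\varepsilon\in(0,\infty):\left\|\left\{\sum_{I\in V_0(s,f,\varepsilon)}\mathbf{1}_{I}\delta_{0,j}\right\}_{j\in\mathbb Z_+}\right\|_X<\infty\right\}$$
and $L_f:=\limsup_{k\in\mathbb Z^n,\,|k|\to\infty}|\int_{\mathbb R^n}\varphi(x-k)f(x)\,dx|$. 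By Theorem \ref{pppz23}, the distance is equivalent to $\varepsilon_Xf+\varepsilon_0$, so the whole task reduces to proving $\varepsilon_0=L_f$.

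First I would invoke Proposition \ref{pp} to rewrite $\varepsilon_0=\sup_{\{a_k\}\in\mathcal A}\liminf_{k\to\infty}|\langle f,\varphi(\cdot-a_k)\rangle|$, and then identify $\mathcal A$ explicitly using the extra hypothesis of the corollary. The key point is that if $\{a_k\}\subset\mathbb Z^n$ satisfies $|a_k|\to\infty$, then the sequence must take infinitely many distinct values; since the unit cubes $I_{0,a_k}=a_k+[0,1)^n$ associated to distinct $a_k$ are pairwise disjoint, the set $E:=\bigcup_k I_{0,a_k}$ has infinite Lebesgue measure. By the lattice monotonicity of $\|\cdot\|_X$ together with the hypothesis that $|E|=\infty$ forces $\|\{\mathbf{1}_E\delta_{0,j}\}_{j\in\mathbb Z_+}\|_X=\infty$, one obtains $\|\{\sum_k\mathbf{1}_{I_{0,a_k}}\}_{j\in\mathbb Z_+}\|_X=\infty$, so $\{a_k\}\in\mathcal A$. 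Hence $\mathcal A$ is exactly the collection of all sequences in $\mathbb Z^n$ tending to infinity in modulus.

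Writing $F(k):=|\int_{\mathbb R^n}\varphi(x-k)f(x)\,dx|$, what remains is the purely elementary identity
$$\sup\left\{\liminf_{k\to\infty} F(a_k):\{a_k\}\subset\mathbb Z^n,\ |a_k|\to\infty\right\}=\limsup_{|k|\to\infty} F(k).$$
The inequality $\leq$ is immediate: for any such sequence $\{a_k\}$ and any $N\in\mathbb N$ one has $F(a_k)\leq\sup_{|m|\geq N}F(m)$ for all sufficiently large $k$, so $\liminf_k F(a_k)\leq\limsup_{|m|\to\infty}F(m)$. For the reverse $\geq$, construct an extremizing sequence by selecting, for each $N\in\mathbb N$, some $a_N\in\mathbb Z^n$ with $|a_N|\geq N$ and $F(a_N)\geq\sup_{|m|\geq N}F(m)-1/N$ (or $F(a_N)\geq N$ if this supremum is $+\infty$); then $\lim_N F(a_N)=L_f$, and this sequence lies in $\mathcal A$ by Step~3.

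The main obstacle is Step~3, the verification that $\mathcal A$ contains every sequence with $|a_k|\to\infty$; everything else is bookkeeping. This step hinges on matching the definition of $\mathcal A$ (where $\sum_k\mathbf{1}_{I_{0,a_k}}$ appears without the factor $\delta_{0,j}$) with the hypothesis stated using $\delta_{0,j}$; the lattice monotonicity $\|\mathbf F\|_X\leq\|\mathbf G\|_X$ whenever $|\mathbf F|\leq|\mathbf G|$ bridges these two formulations, after first using $\sum_k\mathbf{1}_{I_{0,a_k}}\geq\mathbf{1}_E$ to reduce to a single indicator.
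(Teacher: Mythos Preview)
Your proposal is correct and follows the route the paper implicitly has in mind (the paper omits the details, merely pointing to Theorem~\ref{pppz23}). Your use of Proposition~\ref{pp} to rewrite $\varepsilon_0$, the identification of $\mathcal A$ with all $\mathbb Z^n$-sequences escaping to infinity via the extra hypothesis, and the elementary $\sup$-of-$\liminf$ computation are all sound. One remark: the argument can be made marginally shorter by bypassing Proposition~\ref{pp} entirely---since the cubes $I_{0,k}$ are pairwise disjoint, $\sum_{I\in V_0(s,f,\varepsilon)}\mathbf 1_I=\mathbf 1_{E_\varepsilon}$ with $E_\varepsilon=\bigcup\{I_{0,k}:F(k)>\varepsilon\}$, so the extra hypothesis together with Definition~\ref{Debqfs}(ii) gives directly that the $X$-norm is finite iff $\{k:F(k)>\varepsilon\}$ is finite, whence $\varepsilon_0=\inf\{\varepsilon:\#\{k:F(k)>\varepsilon\}<\infty\}=L_f$. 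But your path through Proposition~\ref{pp} is equally valid.
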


Before we give the proofs of Theorems \ref{pppz23} an \ref{thm-7-11},
we first present their applications to characterizing
the closure $\overline{\Lambda_X^{s}}^{\Lambda_s}$
of the Daubechies $s$-Lipschitz $X$-based space
$\Lambda_X^{s}$ in $\Lambda_s$ by differences, respectively,
in Theorems \ref{pppz2a} and \ref{thm-7-117}.

\begin{theorem}\label{pppz2a}
Let $X$ be a quasi-normed lattice
of function
sequences satisfying Assumption \ref{a1}
for some $u\in(0,\infty)$.   Assume that
$s\in(0,\infty)$,  $r\in\mathbb N\cap(s,\infty)$, and
the regularity parameter $L\in\mathbb N$ of the
Daubechies wavelet system $\{\psi_\omega\}_{\omega\in\Omega}$
satisfies $L>s$ and $L\geq r-1$. Let
$f\in\Lambda_s$.  Then the following
two statements are
mutually equivalent:
\begin{itemize}
\item[\rm (i)] $f\in\overline{
\Lambda_X^{s}}^{\Lambda_s}$.

\item[\rm (ii)]
For any $\varepsilon\in(0,\infty)$,
$$
\left\|\left\{ \left|\int_{0}^\infty
\mathbf{1}_{S_{r,j}(s,f, \varepsilon)}
(\cdot,y)\,\frac{dy}{y} \right|^u
\right\}_{j\in\mathbb Z_+}
\right\|_{X}^{\frac 1u} +\left\|\left\{
\sum_{I\in V_0(s, f, \varepsilon)}
\mathbf{1}_{I}\delta_{0,j}
\right\}_{j\in\mathbb{Z}_+}
\right\|_{X}
<\infty.
$$
\end{itemize}
\end{theorem}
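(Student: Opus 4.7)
The proof should be a direct consequence of Theorem \ref{pppz23}, combined with the monotonicity in $\varepsilon$ of the two quantities appearing on the right-hand side of \eqref{pofwj}. My plan is to reduce the closure characterization to the statement $\mathop\mathrm{dist}(f,\Lambda_X^s)_{\Lambda_s}=0$, and then unfold the two infima defining $\varepsilon_X f$ and the tail quantity in terms of $V_0(s,f,\varepsilon)$.

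First I would observe that, by the very definition of the closure and the agreement $\overline{\Lambda_X^s}^{\Lambda_s}=\overline{\Lambda_X^s\cap\Lambda_s}^{\Lambda_s}$ set up in the introduction, the condition $f\in\overline{\Lambda_X^s}^{\Lambda_s}$ is equivalent to the pair of conditions $f\in\Lambda_s$ and $\mathop\mathrm{dist}(f,\Lambda_X^s)_{\Lambda_s}=0$. Applying Theorem \ref{pppz23} to $f\in\Lambda_s$, the vanishing of the distance is in turn equivalent to both
\begin{align*}
\varepsilon_X f=0
\quad\text{and}\quad
\inf\left\{\varepsilon\in(0,\infty):\left\|\left\{\sum_{I\in V_0(s,f,\varepsilon)}\mathbf{1}_{I}\delta_{0,j}\right\}_{j\in\mathbb{Z}_+}\right\|_{X}<\infty\right\}=0.
\end{align*}

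Next I would exploit the monotonicity of the sets $S_{r,j}(s,f,\varepsilon)$ and $V_0(s,f,\varepsilon)$ in $\varepsilon$: as $\varepsilon$ increases, both sets shrink, so by the lattice property $\|\mathbf F\|_X\le\|\mathbf G\|_X$ whenever $|\mathbf F|\le|\mathbf G|$, the two quantities
\begin{align*}
\left\|\left\{\left|\int_{0}^{\infty}\mathbf{1}_{S_{r,j}(s,f,\varepsilon)}(\cdot,y)\,\frac{dy}{y}\right|^u\right\}_{j\in\mathbb Z_+}\right\|_X^{1/u},\quad
\left\|\left\{\sum_{I\in V_0(s,f,\varepsilon)}\mathbf{1}_{I}\delta_{0,j}\right\}_{j\in\mathbb{Z}_+}\right\|_X
\end{align*}
are nonincreasing in $\varepsilon$. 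Consequently, the infimum of admissible $\varepsilon$'s is $0$ if and only if the corresponding expression is finite for every $\varepsilon\in(0,\infty)$. Note also that in the first quantity above, the integrand is supported in $y\in(2^{-j-1},2^{-j}]\subset(0,1]$ by \eqref{dfmawlmfp}, so replacing $\int_0^1$ in \eqref{ef} by $\int_0^\infty$ is harmless.

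Combining these observations, $\mathop\mathrm{dist}(f,\Lambda_X^s)_{\Lambda_s}=0$ is equivalent to the finiteness of the sum displayed in statement (ii) for every $\varepsilon\in(0,\infty)$, and conversely this finiteness for every $\varepsilon$ forces $f\in\Lambda_s$ to satisfy (i). Since the steps are structurally straightforward, I do not anticipate a serious obstacle; the only subtlety is the order-of-quantifiers step connecting ``infimum is zero'' to ``the condition holds for every positive $\varepsilon$,'' which is handled by the monotonicity argument above together with the quasi-triangle inequality implicit in $\|\cdot\|_X$.
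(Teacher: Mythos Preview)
Your proposal is correct and follows essentially the same approach as the paper: reduce to $\mathop\mathrm{dist}(f,\Lambda_X^s)_{\Lambda_s}=0$ and invoke Theorem \ref{pppz23}. You actually supply more detail than the paper does, explicitly justifying via monotonicity in $\varepsilon$ why the vanishing of the two infima in \eqref{pofwj} is equivalent to the finiteness condition holding for every $\varepsilon>0$, a step the paper leaves implicit.
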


\begin{proof}
Observe that $f\in\overline{\Lambda_X^{s}
}^{\Lambda_s}$ if and only if
$f\in \Lambda_s$
and $\mathop\mathrm{\,dist\,}(f,
\Lambda_X^{s})_{\Lambda_s}=0.$
From this and  Theorem \ref{pppz23},
it follows that, when $f\in \Lambda_s$,
$\mathop\mathrm{\,dist\,}(f,
\Lambda_X^{s})_{\Lambda_s}=0$
if and only if, for any $\varepsilon\in(0,
\infty)$,
$\|\{\sum_{I\in W_j^0(s, f, \varepsilon)}
\mathbf{1}_{I}\}_{j\in\mathbb{Z}_+}\|_{X}<\infty.$
Thus, (i) $\Longleftrightarrow$ (ii),
which completes the proof of Theorem \ref{pppz2a}.
\end{proof}

\begin{theorem}\label{thm-7-117}
Let $X$ be a quasi-normed lattice
of function
sequences satisfying Assumption \ref{pplp}.
Let $s\in(0,\infty)$ and $r\in\mathbb N$ with
$r>s$.
Assume that
the regularity parameter $L\in\mathbb N$ of
the Daubechies wavelet
system $\{\psi_\omega\}_{\omega\in\Omega}$
satisfies that
$L>s$ and $L\geq r-1$. Let $\Lambda_X^{s}$ be
the Daubechies $s$-Lipschitz $X$-based space.
Then
$f\in\overline{\Lambda_X^{s}}^{\Lambda_s}$
if and only if,  for any $\varepsilon\in(0,
\infty)$,
$$\nu\left(\left\{S_{r,j}(s,f, \varepsilon)
\right\}_{j\in\mathbb Z_+}\right)+
\nu\left(\bigcup_{I\in V_0(s, f, \varepsilon)}
T(I)\right)<\infty.$$
\end{theorem}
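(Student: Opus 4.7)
My plan is to deduce Theorem~\ref{thm-7-117} directly from Theorem~\ref{thm-7-11} by the same mechanism used in the proof of Theorem~\ref{pppz2a}. I would start from the standard identity
\begin{equation*}
f \in \overline{\Lambda_X^{s}}^{\Lambda_s} \iff f \in \Lambda_s \text{ and } \mathop\mathrm{\,dist\,}(f, \Lambda_X^{s})_{\Lambda_s} = 0,
\end{equation*}
and then substitute the equivalent expression \eqref{mop} provided by Theorem~\ref{thm-7-11}. Since the right-hand side of \eqref{mop} is a sum of two non-negative quantities, the vanishing of $\mathop\mathrm{\,dist\,}(f, \Lambda_X^{s})_{\Lambda_s}$ is equivalent to the simultaneous vanishing of the two summands $\varepsilon_{X,\nu}f$ and $\inf\{\varepsilon\in(0,\infty):\nu(\bigcup_{I\in V_0(s, f, \varepsilon)} T(I))<\infty\}$.

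The next step is to convert ``each of these two infima equals $0$'' into ``the corresponding $\nu$-quantity is finite for every $\varepsilon\in(0,\infty)$''. For this I would invoke a monotonicity argument in the parameter $\varepsilon$. If $0<\varepsilon_1<\varepsilon_2$, then $S_{r,j}(s,f,\varepsilon_2)\subset S_{r,j}(s,f,\varepsilon_1)$ for every $j\in\mathbb{Z}_+$, and $V_0(s,f,\varepsilon_2)\subset V_0(s,f,\varepsilon_1)$, so that also $\bigcup_{I\in V_0(s,f,\varepsilon_2)}T(I)\subset \bigcup_{I\in V_0(s,f,\varepsilon_1)}T(I)$. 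Combined with property~(i) of Definition~\ref{Debqf2s} (monotonicity of Carleson-type measures under componentwise inclusion, together with the convention $\nu(A):=\nu(\{A\}_{j=0}\cup\{\emptyset\}_{j\in\mathbb N})$), this forces the maps $\varepsilon\mapsto \nu(\{S_{r,j}(s,f,\varepsilon)\}_{j\in\mathbb Z_+})$ and $\varepsilon\mapsto \nu(\bigcup_{I\in V_0(s, f, \varepsilon)} T(I))$ to be non-increasing. Hence an infimum of a set of the form $\{\varepsilon\in(0,\infty):\nu(\cdots)<\infty\}$ equals $0$ exactly when $\nu(\cdots)$ is finite for every $\varepsilon\in(0,\infty)$.

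Combining the two reductions and absorbing the positive equivalence constants from \eqref{mop} yields the stated biconditional. The structural core of the argument is already carried out in Theorem~\ref{thm-7-11}; the present statement is essentially a distance-to-closure reformulation, so I do not expect a serious obstacle here. The only thing to verify carefully is the monotonicity of $\nu$ along the decreasing chains of exceptional sets indexed by $\varepsilon$, which is immediate from Definition~\ref{Debqf2s}(i); no additional hypothesis on $X$ beyond Assumption~\ref{pplp} is needed.
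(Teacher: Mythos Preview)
Your proposal is correct and follows essentially the same approach as the paper: the paper's proof simply says to repeat the proof of Theorem~\ref{pppz2a} with Theorem~\ref{pppz23} replaced by Theorem~\ref{thm-7-11}, which is precisely what you do. Your explicit treatment of the monotonicity in $\varepsilon$ (via Definition~\ref{Debqf2s}(i)) spells out a step the paper leaves implicit, but the underlying argument is identical.
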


\begin{proof}
Repeating the proof of Theorem \ref{pppz2a}
with Theorem \ref{pppz23} replaced
by  Theorem \ref{thm-7-11},
we then obtain the desired conclusion,
which completes the proof of Theorem \ref{thm-7-117}.
\end{proof}

The proofs of Theorems \ref{pppz23} and \ref{thm-7-11}
are given,
respectively,
in
Subsections \ref{sec:3-3} and \ref{sec:3-4}.
To this end, we first establish connections
between the hyperbolic metric and
the difference in
Subsection \ref{sec:3-1} and
then connections between the wavelet and the
difference in Subsection  \ref{sec:3-2},
which play a
key role in the proofs of Theorems \ref{pppz23}
and \ref{thm-7-11}.

\subsection{Connections between the hyperbolic metric and
the difference}\label{sec:3-1}

In this subsection, we establish a connection
between
the hyperbolic metric and the Carleson measure
in terms of differences.
It should be pointed out that Theorem
\ref{thm-3adf} plays
a key role in the proofs of Theorems
\ref{pppz23} and \ref{thm-7-11}.

\begin{theorem}\label{thm-3adf}
Let  $s\in(0,\infty)$ and $r\in\mathbb N$ with
$r>s$. Assume that $f\in\Lambda_s$ and $\varepsilon,\varepsilon_1\in(0,\infty)$ with
$\varepsilon>\varepsilon_1$.
Then there exists a positive constant $\delta$,
depending only on $f,\varepsilon$, and $\varepsilon_1$,
such
that, for any $j\in\mathbb N$,
$$
[S_{r,j}(s, f, \varepsilon)]_\delta\subset
S_{r,j-1}(s, f, \varepsilon_1)\cup
S_{r,j}(s, f, \varepsilon_1)\cup S_{r,j+1}
(s, f, \varepsilon_1).
$$
\end{theorem}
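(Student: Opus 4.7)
The plan is to show that whenever $(x_0,y_0)\in S_{r,j}(s,f,\varepsilon)$ and $\rho((x,y),(x_0,y_0))<\delta$ with $\delta$ sufficiently small (depending only on $f$, $\varepsilon$, $\varepsilon_1$), the point $(x,y)$ belongs to $S_{r,j'}(s,f,\varepsilon_1)$ for some $j'\in\{j-1,j,j+1\}$. First, I would translate hyperbolic proximity into Euclidean bounds using the properties of $\rho$ collected in the Appendix: $\rho((x,y),(x_0,y_0))<\delta$ forces $y/y_0\in[e^{-\delta},e^{\delta}]$ and $|x-x_0|\le C_0\delta\max\{y,y_0\}$ for an absolute constant $C_0$. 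Requiring $\delta<\ln 2$ then ensures $y\in(2^{-j-2},2^{-j+1}]$, so $(x,y)$ lies in the correct $y$-slab of $S_{r,j-1}\cup S_{r,j}\cup S_{r,j+1}$.

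Next, by compactness of the sphere $\{h:|h|=y_0\}$ and continuity of $h\mapsto\Delta_h^r f(x_0)$, I would pick $h_0$ with $|h_0|=y_0$ and $|\Delta_{h_0}^r f(x_0)|=\Delta_r f(x_0,y_0)>\varepsilon y_0^s$, and then set $h:=(y/y_0)h_0$, so that $|h|=y$ and $\Delta_r f(x,y)\ge|\Delta_h^r f(x)|$. The heart of the proof is the pointwise estimate
\[
\big|\Delta_h^r f(x)-\Delta_{h_0}^r f(x_0)\big|\le\eta(\delta)\,y_0^s,
\]
with $\eta(\delta)\to 0^+$ as $\delta\to 0^+$ and implicit constants depending only on $\|f\|_{\Lambda_s}$, $s$, $r$, $n$. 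Expanding both $r$-th differences as signed sums of $r+1$ values of $f$, the $k$-th matched pair of evaluation points is separated by at most $|x-x_0|+(r/2)\,y_0\,|y/y_0-1|\le C_1\delta y_0$.

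The main obstacle is controlling the separation $|f(z)-f(z')|$ uniformly in $y_0\in(0,1]$: when $s>1$, the first-order modulus of $f$ only gives $|f(z)-f(z')|\lesssim\delta y_0$, which is not small relative to $y_0^s$ as $y_0\to 0$. I would resolve this by a Whitney-type polynomial approximation. Choosing $B:=B(x_0,C_2 y_0)$ so that $B$ contains all $2(r+1)$ evaluation points, there exists a polynomial $P$ of degree $\le r-1$ satisfying a local H\"older estimate
\[
|(f-P)(z)-(f-P)(z')|\le C\,\|f\|_{\Lambda_s}\,y_0^{s-\sigma}\,|z-z'|^\sigma,\qquad z,z'\in B,
\]
for some $\sigma\in(0,1]$ with $\sigma\le s$ (take $\sigma=\min\{s,1\}$ when $s\ne 1$, and $\sigma$ slightly below $1$ in the Zygmund case $s=1$). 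Since $\deg P<r$, one has $\Delta_h^r P=\Delta_{h_0}^r P=0$, so $f$ may be replaced by $f-P$ in both $r$-th differences; applying the H\"older estimate termwise produces
\[
\big|\Delta_h^r f(x)-\Delta_{h_0}^r f(x_0)\big|\le C'\,\|f\|_{\Lambda_s}\,\delta^\sigma\,y_0^s.
\]

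Combining, $\Delta_r f(x,y)\ge\varepsilon y_0^s-C'\|f\|_{\Lambda_s}\delta^\sigma y_0^s$, and the bound $y\le e^{\delta}y_0$ gives $\Delta_r f(x,y)/y^s\ge(\varepsilon-C'\|f\|_{\Lambda_s}\delta^\sigma)e^{-s\delta}$. Choosing $\delta$ small enough that $(\varepsilon-C'\|f\|_{\Lambda_s}\delta^\sigma)e^{-s\delta}>\varepsilon_1$ closes the inclusion. The central technical difficulty is securing the Whitney-type H\"older estimate for $f-P$ uniformly in $B$; I expect to either invoke it from the approximation-theoretic toolbox cited in the paper (e.g., \cite{T83,T20}) or to derive it directly via the wavelet characterization of $\Lambda_s$, splitting the wavelet sum at scale $\sim y_0$ into a smooth low-frequency part that produces $P$ and a high-frequency tail of size $\lesssim\|f\|_{\Lambda_s}y_0^s$ in $L^\infty(B)$ with appropriate Besov smoothness giving the H\"older bound.
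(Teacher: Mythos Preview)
Your proposal is correct and reaches the same conclusion, but the route differs from the paper's. The paper does not use Whitney polynomial approximation; instead it proves a standalone continuity lemma (Lemma~\ref{lem-4-5-0}) for the supremum quantity $\Delta_r f(x,y)$ itself, via a Littlewood--Paley decomposition $f=\sum_j \phi_j*f$. For each $j$ it bounds $|\Delta_h^r(\phi_j*f)(x)-\Delta_h^r(\phi_j*f)(x')|$ using the derivative estimate $\|\partial^\alpha(\phi_j*f)\|_\infty\lesssim\|f\|_{\Lambda_s}2^{j(|\alpha|-s)}$, and then sums over the three ranges $2^j\le 1/y$, $1/y<2^j\le 1/|x-x'|$, $2^j>1/|x-x'|$, with an explicit trichotomy $s<1$, $s=1$, $s>1$ giving respectively H\"older, log-Lipschitz, and scaled Lipschitz bounds. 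Theorem~\ref{thm-3adf} then follows in a few lines by plugging the hyperbolic-to-Euclidean bounds \eqref{dafg}--\eqref{dafg2} into that lemma.

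Your approach replaces this by fixing a maximizing direction $h_0$, scaling it, and subtracting a local polynomial $P$ of degree $\le r-1$ so that $\Delta_h^rP=0$, reducing everything to a single local H\"older estimate on $f-P$. This is conceptually clean and avoids the visible case split, but the work is only relocated: proving the Whitney--H\"older inequality $|(f-P)(z)-(f-P)(z')|\lesssim y_0^{s-\sigma}|z-z'|^\sigma$ uniformly on $B(x_0,Cy_0)$ still requires a scale decomposition essentially equivalent to the paper's (indeed your suggested proof via ``splitting the wavelet sum at scale $\sim y_0$'' reproduces exactly the low/high split the paper carries out on $\phi_j*f$). The paper's Lemma~\ref{lem-4-5-0} has the minor advantage of bounding $|\Delta_rf(x,y)-\Delta_rf(x',y')|$ directly (a statement about the suprema, not a single direction), which is slightly stronger and reusable; your detour through a maximizer is harmless here but yields a one-sided inequality only.
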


In what follows, we denote by $\mathcal S$ the \emph{space of all Schwartz functions},
equipped with the well-known topology determined by a countable family of
norms.
Let $\eta\in\mathcal S$ satisfy $0\leq\eta\leq1$,
\begin{equation*}
\eta\equiv1\ \text{on}\
\left\{x\in\mathbb{R}^n:|x|\leq1\right\},\
\text{and}\
\eta\equiv0\  \text{on}\ \left\{x\in
\mathbb{R}^n:|x|\geq\frac 32\right\}.
\end{equation*}
For any $x\in\mathbb{R}^n$, let
$
\theta(x):=\eta\left(x\right)-\eta(2x).
$
Then $\mathop\mathrm{\,supp\,}\theta\subset
\left\{x\in\mathbb{R}^n:1/2 \leq|x|\leq
3/2\right\}$ and, for any $x\in\mathbb{R}^n$,
\begin{align*}
1=\lim_{k\to \infty} \eta\left(\frac x {2^k}\right)
=\eta(x) +\sum_{j=1}^\infty \left[\eta
\left(\frac x {2^j}\right) -\eta
\left(\frac x {2^{j-1}}\right)\right]
=\eta(x) +\sum_{j=1}^\infty \theta
\left(\frac x {2^j}\right).
\end{align*}

In what follows, for any $\phi\in\mathcal{S} $,
we use $\mathcal{F} \phi$ (or $\widehat{\phi}$)
and $\mathcal{F}^{-1}\phi$ to denote its
\emph{Fourier transform} and its \emph{inverse Fourier transform},
respectively.
Recall that, for any $\phi\in\mathcal{S} $
and $\xi\in\mathbb{R}^n$,
\begin{align*}
\widehat{\phi}(\xi):=\frac 1{(2\pi)^n}\int_{\mathbb{R}^n}
f(x)e^{-ix\cdot \xi}\,dx
\end{align*}
and
$\mathcal{F}^{-1}\phi(\xi):=\mathcal{F}\phi(-\xi)$.
Define $\{\phi_k\}_{k\in\mathbb N}$ by
setting, for any $x\in\mathbb{R}^n$,
\begin{equation}\label{eq-phi1}
\phi_0(x):=\mathcal F^{-1}\eta(x)
\end{equation}
and, for any $k\in\mathbb N$,
\begin{equation}\label{eq-phik}
\phi_k(x):=2^{kn}\mathcal F^{-1}\theta(2^kx).
\end{equation}
Clearly, $\sum_{k\in\mathbb Z_+}
\widehat\phi_k=1$ on $\mathbb{R}^n$.
To prove Theorem \ref{thm-3adf}, we
first need the following conclusion
about difference operators.

\begin{lemma}\label{lem-4-5-0}
Let $s\in(0,\infty)$, $r\in\mathbb N$ with $r>s$,
$y, y'\in (0, 1]$, and
$x, x'\in\mathbb{R}^n$. Assume in
addition that $|x-x'|\leq \max\{y,y'\} $
if $s\in[1,\infty)$.
Then there exists a positive constant
$C_{(r)}$, depending on $r$, such
that, for any $f\in\Lambda_s$,
\begin{align*}
&\left|\Delta_rf(x, y) -\Delta_r f(x', y')
\right|
\\\nonumber
&\quad\leq C_{(r)} \|f\|_{\Lambda_s}\left\{
\begin{aligned}
& |x-x'|^s+|y-y'|^s\hspace{5.6cm}\text{if
\ $s\in(0,1)$,}\\
&|x-x'| \log \left( e+\frac {y+y'} {|x-x'|}
\right)+ |y-y'|\log \left( e+\frac {y+y'} {|y-y'|}\right)\hspace{.5cm}\text{if\ $s=1$,}\\
&(y+y')^{s-1}( |y-y'|+|x-x'|)\hspace{4cm}
\text{if\ $s\in(1,\infty)$.}
\end{aligned}
\right.
\end{align*}
\end{lemma}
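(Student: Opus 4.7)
The plan is to first reduce the problem to controlling a single difference of the form $|\Delta_h^r f(x)-\Delta_{h'}^r f(x')|$ with $|h|=y$ and $|h'|=y'$. Since $\Delta_r f(x,y)$ is a supremum, for any $\varepsilon>0$ I pick $h$ with $|h|=y$ realizing $\Delta_r f(x,y)$ up to $\varepsilon$ and take $h' := (y'/y)h$, so that $h\parallel h'$ and $|h-h'|=|y-y'|$; a symmetric choice in the reversed direction reduces the lemma to proving the claimed estimate for such $(h,h')$. I further split via the triangle inequality into a horizontal part $|\Delta_h^r f(x)-\Delta_h^r f(x')|$ (same step, shifted base) and a vertical part $|\Delta_h^r f(x')-\Delta_{h'}^r f(x')|$ (same base, perturbed step), which I expect to match the $|x-x'|$- and $|y-y'|$-terms of the final bound respectively.

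For $s\in(0,1)$ I would proceed directly: the norm $\|f\|_{\Lambda_s}$ controls first differences, yielding the pointwise Hölder estimate $|f(u)-f(v)|\le C\|f\|_{\Lambda_s}|u-v|^s$. Expanding each $\Delta_h^r$ as a signed sum of $r+1$ translates of $f$ and applying this termwise gives
\[
|\Delta_h^r f(x)-\Delta_{h'}^r f(x')|\le C_r\|f\|_{\Lambda_s}(|x-x'|+|h-h'|)^s\le C_r\|f\|_{\Lambda_s}(|x-x'|^s+|y-y'|^s),
\]
so no Littlewood--Paley decomposition is needed in this case.

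For $s\geq 1$, pointwise Hölder control of $f$ fails and I would use the Littlewood--Paley decomposition $f=\sum_{k\ge 0}f_k$ with $f_k:=\phi_k*f$ and $\phi_k$ as in \eqref{eq-phi1}--\eqref{eq-phik}. Since $r>s$, the standard identification $\Lambda_s=B^s_{\infty,\infty}$ gives $\|f_k\|_\infty\lesssim 2^{-ks}\|f\|_{\Lambda_s}$, and Bernstein's inequality then yields $\|\partial^\alpha f_k\|_\infty\lesssim 2^{k(|\alpha|-s)}\|f\|_{\Lambda_s}$ for every multi-index $\alpha$. Using the Fourier identity $\widehat{\Delta_h^r f_k}(\xi)=(2i\sin(h\cdot\xi/2))^r\widehat{f_k}(\xi)$ on the frequency annulus $|\xi|\sim 2^k$, I obtain the pointwise estimates
\[
\|\Delta_h^r f_k\|_\infty\lesssim\min\{1,(2^k|h|)^r\}2^{-ks}\|f\|_{\Lambda_s},\qquad \|\nabla_x\Delta_h^r f_k\|_\infty\lesssim\min\{1,(2^k|h|)^r\}2^{k(1-s)}\|f\|_{\Lambda_s},
\]
together with the analogue $\|\partial_h\Delta_h^r f_k\|_\infty\lesssim\min\{1,(2^k|h|)^{r-1}\}2^{k(1-s)}\|f\|_{\Lambda_s}$. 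Combining the trivial bound with smoothness in $x$ (resp.\ $h$) gives a double bound of the form $\min\{1,(2^k y_*)^r\}\min\{2^{-ks},|x-x'|2^{k(1-s)}\}\|f\|_{\Lambda_s}$ for each LP piece of the horizontal part, where $y_*:=\max\{y,y'\}$, and similarly for the vertical part with $|y-y'|$ in place of $|x-x'|$.

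The last step is to sum over $k$ by splitting at the scales $k_1:=\log_2(1/y_*)$ and $k_2:=\log_2(1/|x-x'|)$ (respectively $\log_2(1/|y-y'|)$ for the vertical part); the hypothesis $|x-x'|\le y_*$ for $s\geq 1$ guarantees $k_1\le k_2$. In the low-frequency region $k\le k_1$, the factor $(2^k y_*)^r$ combines with $|x-x'|2^{k(1-s)}$ into a geometric series summing to $|x-x'|y_*^{s-1}$ (using $r>s$). In the intermediate region $k_1<k\le k_2$, the bound reduces to $|x-x'|2^{k(1-s)}$ and sums to $|x-x'|y_*^{s-1}$ for $s>1$, to $|x-x'|\log(y_*/|x-x'|)$ for $s=1$, and to $|x-x'|^s$ for $s<1$. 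In the high-frequency region $k>k_2$ the bound $2^{-ks}$ sums to $|x-x'|^s$, which is controlled by $|x-x'|y_*^{s-1}$ under $|x-x'|\le y_*$ when $s\geq 1$. Adding the horizontal and vertical contributions and replacing $y_*$ by $y+y'$ yields the three cases in the statement. The main obstacle is the critical regime $s=1$: here the intermediate sum produces the logarithmic factor, so the parameters $|x-x'|$ and $|y-y'|$ must be summed separately rather than folded into one combined parameter, and the assumption $|x-x'|\le\max\{y,y'\}$ is essential in converting the would-be $|x-x'|^s$ contribution in the $s>1$ case into the sharper $|x-x'|(y+y')^{s-1}$ claimed in the lemma.
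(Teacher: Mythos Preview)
Your proposal is correct and follows essentially the same route as the paper: the same reduction to a horizontal piece $|\Delta_{y\xi}^r f(x)-\Delta_{y\xi}^r f(x')|$ plus a vertical piece $|\Delta_{y\xi}^r f(x')-\Delta_{y'\xi}^r f(x')|$ with $\xi\in\mathbb{S}^{n-1}$, the same direct H\"older argument for $s\in(0,1)$, and for $s\ge 1$ the same Littlewood--Paley expansion $f=\sum_k\phi_k*f$ with the Bernstein-type bound $\|\partial^\alpha(\phi_k*f)\|_\infty\lesssim 2^{k(|\alpha|-s)}\|f\|_{\Lambda_s}$ followed by the identical three-range summation at the scales $1/y_*$ and $1/|x-x'|$ (resp.\ $1/|y-y'|$). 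The only cosmetic difference is that you phrase the LP piece estimates via the Fourier multiplier $(2i\sin(h\cdot\xi/2))^r$, whereas the paper obtains the same bounds by the mean value theorem together with an auxiliary convolution $\phi_k=\phi_k*\Phi_k$; in particular, for the $h$-derivative of $\Delta_h^r(\phi_j*f)$ the paper writes $\nabla_z\Delta_z^r(\phi_j*f)(x')=\Delta_1^r g_{z,x'}(0)$ with $g_{z,x'}(t)=t\,\nabla(\phi_j*f)(x'+tz)$ to get the factor $\min\{1,(2^j|h|)^{r-1}\}2^{j(1-s)}$ you assert---this is the only step where your sketch is noticeably lighter than the paper's argument, but the claimed bound is correct.
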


\begin{proof}
Observe that, for any $y, y'\in (0, 1]$
and $x, x'\in\mathbb{R}^n$,
\begin{align*}
&\left|\Delta_rf(x,y)-\Delta_r f(x', y')
\right|\\
&\quad=\left|\sup_{\xi\in\mathbb{S}^{n-1}}
\left|\Delta_{y\xi}^r f(x)\right|-\sup_{\xi\in\mathbb{S}^{n-1}} \left|\Delta_{y'\xi}^r f(x')\right|\right|
\leq \sup_{\xi\in\mathbb{S}^{n-1}}
\left|\Delta_{y\xi}^r f(x)-\Delta_{y'\xi}^r f(x')\right|\\
&\quad\leq \sup_{\xi\in\mathbb{S}^{n-1}}
\left|\Delta_{y\xi}^r f(x)-\Delta_{y\xi}^r f(x')\right|+
\sup_{\xi\in\mathbb{S}^{n-1}}
\left|\Delta_{y\xi}^r f(x')-\Delta_{y'\xi}^r
f(x')\right|,
\end{align*}
here and thereafter, $\mathbb{S}^{n-1}$
denotes the \emph{unit sphere}
of $\mathbb{R}^n$.
Let
$$B_1:= \left|\Delta_{y\xi}^r f(x)
-\Delta_{y\xi}^rf(x')\right|\ \text{and}\
B_2:=\left| \Delta_{y\xi}^r f(x')
-\Delta_{y'\xi} ^r f(x')\right|.$$
Without loss of generality, we may
assume that $\|f\|_{\Lambda_s}=1$.
To show the present lemma,
we consider the following two cases
for $s$.

If $s\in(0,1)$,
then
$$
B_1\lesssim\sup_{\{h\in\mathbb{R}^n:|h|=y\}}\max_{\{j\in\mathbb{Z}_+:0\leq
j\leq r\}} \left|f\left(x+\left[
\frac r2-j\right]h\right)-
f\left(x'+\left[\frac r2-j\right]h\right)
\right| \lesssim |x-x'|^s$$
and
$$
B_2\lesssim\sup_{ \xi\in\mathbb{S}^{n-1} }
\max_{\{j\in\mathbb{Z}_+:0\leq j\leq r\}}
\left|f\left(x'+\left[\frac r2-j\right]y\xi\right)
-f\left(x'+\left[\frac r2-j\right]y'\xi
\right)\right|\lesssim|y-y'|^s,
$$
which further imply the desired estimate
in this case.

If $s\in[1,\infty)$ and $|x-x'|\leq y$
(which we may assume by symmetry,
without loss of generality),
let $\xi\in\mathbb{S}^{n-1}$, $h:=y\xi$,
and $\phi_j$ be the same as in \eqref{eq-phi1}
and \eqref{eq-phik}.
For $B_1$, using \cite[Theorem 1.4.9]{g14},
we obtain
\begin{align*}
B_1&=\left|\Delta_h^r f(x)-\Delta_h^rf(x')
\right|\leq   \sum_{j=0}^\infty \left|\Delta_h^r
(\phi_j\ast f)(x)-\Delta_h^r(\phi_j\ast f )(x')
\right|=:\sum_{j=0}^\infty d_j.
\end{align*}
We claim that, for any $j\in\mathbb Z_+$ and
$\alpha\in\mathbb Z_+^n$,
\begin{align}\label{ljoi}
\left\|\partial^{\alpha}[\phi_j\ast f]
\right\|_{L^\infty}\lesssim
\|f\|_{\Lambda_s}2^{j(|\alpha|-s)}.
\end{align}
Let $\Phi\in\mathcal{S}$ be such that
$\mathbf{1}_{B(\mathbf{0},1) }\le
\mathcal F\Phi\le\mathbf{1
}_{B(\mathbf{0},2) }.$
Then we conclude that
$\mathcal F{\phi_0}\mathcal F{\Phi}
=\mathcal F{\phi_0}$,
which further implies that,
for any $j\in\mathbb Z_+$,
$\phi_j\ast\Phi_j=\phi_j$, where
$\Phi_j(\cdot):=2^{jn}\Phi(2^j\cdot)$.
From this and \cite[Theorem 1.4.9]{g14},
we deduce that, for any $x\in\mathbb{R}^n$
\begin{align}\label{lnjdf2}
\left|\partial^{\alpha}[\phi_j\ast f](x)
\right|&=
\left|\partial^{\alpha}[\phi_j\ast\Phi_j
\ast f](x)\right|
=\int_{\mathbb{R}^n}|\phi_j\ast f(x)|
|\partial^{\alpha}\Phi_j(x-y)|\,dy
\nonumber\\
&\lesssim\|f\|_{\Lambda_s}2^{j(|\alpha|-s)}
\int_{\mathbb{R}^n}|2^{jn}\partial^{\alpha}
\Phi(2^jy)|\,dy
\lesssim \|f\|_{\Lambda_s}2^{j(|\alpha|-s)},
\end{align}
which further implies \eqref{ljoi}.
By the mean value theorem and the
just aforementioned claim,
we have, for any $j\in\mathbb Z_+$,
\begin{align}\label{lnjdf}
d_j&\leq \left\|\nabla [\Delta_h^r
(\phi_j\ast f)]\right\|_{L^\infty}
|x-x'| =\left\| \Delta_h^r [\nabla
(\phi_j\ast f)]\right\|_{L^\infty}
|x-x'|\nonumber\\
&\lesssim |h|^r \left\|\nabla^{r+1}
(\phi_j\ast f)\right\|_{L^\infty}
|x-x'|\lesssim
(y2^j)^r 2^{-j(s-1)} |x-x'|.
\end{align}
Moreover, using the mean value
theorem and \cite[Theorem 1.4.9]{g14},
we find that, for any $j\in\mathbb Z_+$,
\begin{align*}
d_j& \lesssim \max_{\{i\in\mathbb{Z}_+:0\leq i\leq r\}}\left|\phi_j\ast
f\left(x+\left[\frac r2-i\right]h\right)-\phi_j\ast
f\left(x'+\left[\frac r2-i\right]h\right)\right|\\
&\lesssim |x-x'|  \|\nabla
(\phi_j\ast f)\|_{L^\infty}\lesssim
2^{-j(s-1)}|x-x'|
\end{align*}
and $d_j\lesssim\|\phi_j\ast f
\|_{L^\infty}\lesssim2^{-js}.$
From this, \eqref{lnjdf}, and the
assumption $|x-x'|\leq y$, it follows
that
\begin{align*}
B_1&\lesssim
|x-x'|\left[\sum_{\{j\in\mathbb Z_+:2^j \leq 1/y\}}
2^{-j(s-1)} (2^j y)^r +
\sum_{\{j\in\mathbb Z_+:y^{-1} <2^j
\leq |x-x'|^{-1}\}} 2^{-j(s-1)}\right]
+\sum_{\{j\in\mathbb Z_+:2^j >
|x-x'|^{-1}\}} 2^{-js}\\
&\lesssim |x-x'|y^{s-1}+  |x-x'|
\sum_{\{j\in\mathbb Z_+:y^{-1} <2^j
\leq |x-x'|^{-1}\}} 2^{-j(s-1)}.
\end{align*}
Next, we consider two subcases for $s$
in this case. If $s=1$,
we have
$$B_1\lesssim|x-x'|\log \left( e+\frac y
{|x-x'|}\right).$$
If $s\in(1,\infty)$, we have
$B_1\lesssim y^{s-1} |x-x'|$,
which is the desired estimate of $B_1$.

As for $B_2$, using
\cite[Theorem 1.4.9]{g14}, we obtain
$$ B_2= \left| \Delta_{y\xi}^r f(x')
-\Delta_{y'\xi} ^r f(x')\right|\leq
\sum_{j=0}^\infty r_j,
$$
where, for any $j\in\mathbb Z_+$,
$r_j:= |\Delta_{y\xi}^r
(\phi_j\ast f)(x') -\Delta_{y'\xi} ^r
(\phi_j\ast f)(x')|.
$
Fix $j\in\mathbb Z_+$ and $x'\in\mathbb{R}^n$,
and let $\Phi_{x'}(z):=\Delta_z^r
(\phi_j\ast f)(x')$ for any $z\in \mathbb{R}^n$.
Then
$$
\nabla \Phi_{x'} (z)=\sum_{i=0}^r
\binom{r}{i} (-1)^{i}
\left(\frac r2-i\right) \nabla
(\phi_j\ast f)\left(x'+\left[
\frac r2-i\right]z\right)=\Delta_1^{r}
g_{z,x'}(0),
$$
where $g_{z,x'}(t):=t \nabla (\phi_j
\ast f)(x'+tz)$ for any $t\in\mathbb{R}$.
This, combined with \eqref{lnjdf2},
further implies that,
for any $z\in\mathbb{R}^n$,
\begin{align*}
| \nabla \Phi_{x'} (z)|&
\lesssim  \max_{|t|\leq r/2}
\left|g^{(r)}_{z,x'} (t)\right|
\lesssim
|z|^{r-1}\|\nabla^{r} (\phi_j\ast f)
\|_{L^\infty} +  |z|^{r}\|\nabla^{r+1}
 (\phi_j\ast f)\|_{L^\infty}\\
&\lesssim \left[ (2^j |z|)^{r-1}+(2^j
|z|)^{r}\right]2^{-j(s-1)}.
\end{align*}
By this and the mean value theorem,
we find that
\begin{align}\label{lnjdf4}
r_j=|\Phi_{x'} (y\xi)-\Phi_{x'} (y'\xi)|
\leq \max_{t\in [y',y]}  |\nabla \Phi_{x'}
(t\xi)| |y-y'|
\lesssim2^{-j(s-1)} \left[ (2^{j} y)^{r-1}
+(2^j y)^{r}\right]  |y-y'|.
\end{align}
Moreover, from the mean value theorem and
\cite[Theorem 1.4.9]{g14},
we infer that
\begin{align*}
r_j& \lesssim \max_{\{i\in\mathbb{Z}_+:0\leq i\leq r\}}\left|
\phi_j\ast f\left(x'+\left[\frac r2-i\right]
y\xi\right)-\phi_j\ast
f\left(x'+\left[\frac r2-i\right]y'\xi\right)
\right|\\&\lesssim |y-y'|  \|\nabla (\phi_j
\ast f)\|_{L^\infty}\lesssim
\|f\|_{\Lambda_s}2^{-j(s-1)}|y-y'|
\end{align*}
and $r_j\lesssim\|\phi_j\ast f\|_{L^\infty}
\lesssim\|f\|_{\Lambda_s}2^{-js}.$
Using this and \eqref{lnjdf4}, we conclude that
\begin{align*}
B_2
&\lesssim
|y-y'
|\left [\sum_{\{j\in\mathbb Z_+:2^j
\leq 1/y\}}
2^{-j(s-1)} (2^j y)^{r-1} +
\sum_{\{j\in\mathbb Z_+:y^{-1} <2^j
\leq |y-y'|^{-1}\}} 2^{-j(s-1)}\right]
+\sum_{\{j\in\mathbb Z_+:2^j > |y-y'
|^{-1}\}} 2^{-js}\\
&\lesssim |y-y'|y^{s-1}+  |y-y'|
\sum_{\{j\in\mathbb Z_+:y^{-1} <2^j
\leq |y-y'|^{-1}\}} 2^{-j(s-1)}.
\end{align*}
Again, we consider two subcases for $s$
in this case. If $s=1$,
we have
$$B_2\lesssim|y-y'|\log \left( e+
\frac y {|y-y'|}\right).$$
If $s\in(1,\infty)$, we have
$B_2\lesssim y^{s-1} |y-y'|$,
which is the desired estimate of $B_2$.
This finishes the proof of Lemma \ref{lem-4-5-0}.
\end{proof}

\begin{lemma}\label{lem-4-545}
Let $s\in(0,\infty)$, $r\in\mathbb N$ with $r>s$,
$\max\{y, y'\}\in
(1, \infty)$, and $x, x'\in\mathbb{R}^n$. Assume
in addition that $|x-x'|\leq \max\{y,y'\}
$ if $s\in[1,\infty)$.
Then there exists a positive constant
$C_{(r)}$, depending on $r$, such
that, for any $f\in\Lambda_s$,
\begin{align*}
&\left|\Delta_rf(x, y) -\Delta_r f(x', y')
\right|\\\nonumber
&\quad\leq C_{(r)} \|f\|_{\Lambda_s}
\begin{cases}
|x-x'|^s+|y-y'|^s &\text{if $s\in(0,1)$,}\\
\displaystyle|x-x'| \log \left( e+\frac {1} {|x-x'|}
\right)+ |y-y'|\log \left( e+\frac {1}
{|y-y'|}\right)&\text{if $s=1$,}\\
(y+y')^{s-1}( |y-y'|+|x-x'|) &\text{if
$s\in(1,\infty)$.}
\end{cases}
\end{align*}
\end{lemma}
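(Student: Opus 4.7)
The plan is to follow the pattern of the proof of Lemma \ref{lem-4-5-0} with modifications that reflect the hypothesis $\max\{y,y'\} > 1$. By symmetry, I may assume $y \geq y'$ (so that $y > 1$) and normalize $\|f\|_{\Lambda_s}=1$. I would first split
$$\left|\Delta_rf(x, y)-\Delta_r f(x', y')\right|\leq \sup_{\xi\in\mathbb{S}^{n-1}}\left[B_1(\xi)+B_2(\xi)\right],$$
where $B_1(\xi):=|\Delta^r_{y\xi}f(x)-\Delta^r_{y\xi}f(x')|$ and $B_2(\xi):=|\Delta^r_{y\xi}f(x')-\Delta^r_{y'\xi}f(x')|$, and then estimate each term separately.

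For $s\in(0,1)$, I would expand $\Delta^r_h f$ as a signed sum of point values of $f$ and apply $|f(a)-f(b)|\lesssim|a-b|^s$, which is valid from the Lipschitz property when $|a-b|\leq 1$ and holds trivially by boundedness $\|f\|_{L^\infty}\leq\|f\|_{\Lambda_s}=1\leq |a-b|^s$ when $|a-b|>1$. This would yield $B_1\lesssim|x-x'|^s$ and $B_2\lesssim|y-y'|^s$. For $s\in[1,\infty)$, I would invoke the Littlewood--Paley decomposition $f=\sum_{j\in\mathbb Z_+}\phi_j\ast f$ from \eqref{eq-phi1}--\eqref{eq-phik} together with the bound $\|\partial^\alpha(\phi_j\ast f)\|_{L^\infty}\lesssim 2^{j(|\alpha|-s)}$ established via \eqref{ljoi}. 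The pivotal observation is that, of the three candidate bounds on $d_j:=|\Delta^r_h(\phi_j\ast f)(x)-\Delta^r_h(\phi_j\ast f)(x')|$ employed in Lemma \ref{lem-4-5-0}, the bound $(y2^j)^r 2^{-j(s-1)}|x-x'|$, which grows with $y$, becomes useless here since $y2^j\geq y>1$ for every $j\in\mathbb Z_+$. I would therefore rely only on the two $y$-independent bounds
$$d_j\lesssim\min\left\{2^{-j(s-1)}|x-x'|,\ 2^{-js}\right\},$$
and sum over $j\geq 0$, with the transition occurring at $2^j\sim 1/|x-x'|$ (if $|x-x'|\geq 1$, the first regime is empty and only the $2^{-js}$ bound is used). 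A direct geometric summation would yield $B_1\lesssim |x-x'|\log(e+1/|x-x'|)$ when $s=1$ and $B_1\lesssim|x-x'|\leq(y+y')^{s-1}|x-x'|$ when $s\in(1,\infty)$ (using $y>1$). The symmetric argument for $B_2$, via the auxiliary function $\Phi_{x'}(z):=\Delta^r_z(\phi_j\ast f)(x')$ as in the proof of Lemma \ref{lem-4-5-0}, would provide the analogous bounds with $|y-y'|$ in place of $|x-x'|$.

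The principal subtlety would be the emergence, for $s=1$, of the logarithmic factor $\log(e+1/|x-x'|)$ rather than $\log(e+(y+y')/|x-x'|)$. This is precisely the improvement afforded by the fact that the lower index in the $j$-sum is $0$ (corresponding to dyadic scale $1$), not $\log_2 y^{-1}$, under the present hypothesis $y>1$; hence the number of indices $j$ satisfying $2^j\leq|x-x'|^{-1}$ is $\sim\log(1/|x-x'|)$, instead of $\log(y/|x-x'|)$. All remaining details, including the invocation of the hypothesis $|x-x'|\leq \max\{y,y'\}$ to justify the dyadic bookkeeping when $s\geq 1$, and the trivial handling of the regimes $|x-x'|>1$ or $|y-y'|>1$ (where the right-hand side of the claimed estimate dominates a universal constant that bounds $B_1+B_2$), should transfer essentially verbatim from the proof of Lemma \ref{lem-4-5-0}.
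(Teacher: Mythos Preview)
Your proposal is correct and follows essentially the same approach as the paper's proof: the same $B_1/B_2$ splitting, the same observation that the $(y2^j)^r$-type bound from Lemma~\ref{lem-4-5-0} is discarded because $y>1$, the same two surviving estimates $d_j\lesssim\min\{2^{-j(s-1)}|x-x'|,\,2^{-js}\}$ summed with crossover at $2^j\sim|x-x'|^{-1}$, and the same separate treatment of the regime $|x-x'|>1$. Your explanation of why the logarithm becomes $\log(e+1/|x-x'|)$ rather than $\log(e+(y+y')/|x-x'|)$ is exactly the point; the paper executes this identically but with less commentary.
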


\begin{proof}
By the proof of Lemma \ref{lem-4-5-0},
we find that,
for any $y, y'\in (0, \infty)$ and $x,
x'\in\mathbb{R}^n$,
\begin{align*}
|\Delta_rf(x,y)-\Delta_r f(x', y') |
&\leq \sup_{\xi\in\mathbb{S}^{n-1}}
\left|\Delta_{y\xi}^r f(x)-\Delta_{y\xi}^r
f(x')\right|+
\sup_{\xi\in\mathbb{S}^{n-1}} \left|
\Delta_{y\xi}^r f(x')-\Delta_{y'\xi}^r
f(x')\right|.
\end{align*}
Let
$B_1:=|\Delta_{y\xi}^r f(x)-
\Delta_{y\xi}^rf(x')|$ and
$B_2:=|\Delta_{y\xi}^r f(x') -
\Delta_{y'\xi} ^r f(x')|.$
Without loss of generality, we may
assume that $\|f\|_{\Lambda_s}=1$.
To prove the present lemma,
we consider the following two cases for $s$.

If $s\in(0,1)$,
then the proof of Lemma \ref{lem-4-5-0}
implies the desired estimate.

If $s\in[1,\infty)$, $|x-x'|\leq y$,
and $y\geq y'$, let $\xi\in
\mathbb{S}^{n-1}$, $h:=y\xi$,
and $\phi_j$ for any $j\in\mathbb Z_+$
be the same as in \eqref{eq-phi1} and
\eqref{eq-phik}.
For $B_1$, using \cite[Theorem 1.4.9]{g14},
we obtain
\begin{align*}
B_1&=\left|\Delta_h^r f(x)-\Delta_h^rf(x')
\right|\leq   \sum_{j=0}^\infty \left|\Delta_h^r
(\phi_j\ast f)(x)-\Delta_h^r(\phi_j\ast f
)(x')\right|=:\sum_{j=0}^\infty d_j.
\end{align*}
From  the mean value theorem and
\cite[Theorem 1.4.9]{g14},
we deduce that, for any $j\in\mathbb Z_+$,
\begin{align}\label{jlkjln2}
d_j& \lesssim \max_{\{i\in\mathbb{Z}_+:0\leq i\leq r\}}
\left|\phi_j\ast f\left(x+\left[\frac r2-i
\right]h\right)-\phi_j\ast
f\left(x'+\left[\frac r2-i\right]h\right)
\right|\nonumber\\
&\lesssim |x-x'|  \|\nabla
(\phi_j\ast f)\|_{L^\infty}\lesssim
2^{-j(s-1)}|x-x'|
\end{align}
and
\begin{align}\label{jlkjln}
d_j& \lesssim\|\phi_j\ast f\|_{L^\infty}
\lesssim2^{-js}.
\end{align}
Now, we consider two subcases for $|x-x'|$.
If $|x-x'|>1$, by \eqref{jlkjln},
we conclude that
$B_1\lesssim1\lesssim|x-x'|,$
which is the desired estimate of $B_1$.
If $|x-x'|\le1$, by \eqref{jlkjln2},
\eqref{jlkjln}, and the
assumption that $|x-x'|\leq 1$, we find that
\begin{align*}
B_1&\lesssim
|x-x'|\sum_{\{j\in\mathbb Z_+:0 <2^j \leq
|x-x'|^{-1}\}} 2^{-j(s-1)}+
\sum_{\{j\in\mathbb Z_+:2^j >
|x-x'|^{-1}\}} 2^{-js}\\
&\lesssim |x-x'|^{s}+  |x-x'|
\sum_{\{j\in\mathbb Z_+:0 <2^j
\leq |x-x'|^{-1}\}} 2^{-j(s-1)}.
\end{align*}
Next, we consider two subcases for $s$
in this case. If $s=1$,
we have
$$B_1\lesssim|x-x'|\log \left( e+
\frac 1 {|x-x'|}\right).$$
If $s\in(1,\infty)$, we have
$B_1\lesssim y^{s-1}|x-x'|$,
which is the desired estimate of $B_1$.

As for $B_2$, using
\cite[Theorem 1.4.9]{g14}, we conclude that
$$ B_2= \left| \Delta_{y\xi}^r f(x') -
\Delta_{y'\xi} ^r f(x')\right|
\leq \sum_{j=0}^\infty r_j,$$
where, for any $j\in\mathbb Z_+$,
$$r_j:= \left|\Delta_{y\xi}^r (\phi_j
\ast f)(x') -\Delta_{y'\xi} ^r (\phi_j
\ast f)(x')\right|.$$
From the mean value theorem and
\cite[Theorem 1.4.9]{g14},
we infer that, for any $j\in\mathbb Z_+$,
\begin{align*}
r_j& \lesssim \max_{\{i\in\mathbb{Z}_+:0\leq i\leq r\}}\left|
\phi_j\ast f\left(x'+\left[\frac r2-i
\right]y\xi\right)
-\phi_j\ast  f\left(x'+\left[\frac r2-i
\right]y'\xi\right)\right|\\&\lesssim
|y-y'|
\|\nabla (\phi_j\ast f)\|_{L^\infty}
\lesssim 2^{-j(s-1)}|y-y'|
\end{align*}
and $r_j\lesssim\|\phi_j\ast f\|_{L^\infty}
\lesssim2^{-js}.$
By this and \eqref{lnjdf4}, we find that
\begin{align*}
B_2&\lesssim|y-y'|
\sum_{\{j\in\mathbb Z_+:0 <2^j \leq
|y-y'|^{-1}\}} 2^{-j(s-1)}
+\sum_{\{j\in\mathbb Z_+:2^j >
|y-y'|^{-1}\}} 2^{-js}\\
&\lesssim |y-y'|^{s}+  |y-y'|
\sum_{\{j\in\mathbb Z_+:0 <2^j
\leq |y-y'|^{-1}\}} 2^{-j(s-1)}.
\end{align*}
Now, we consider two subcases for $s$
in this case. If $s=1$,
we have
$$B_2\lesssim|y-y'|\log \left( e+
\frac 1 {|y-y'|}\right).$$
If $s\in(1,\infty)$, we have
$B_2\lesssim y^{s-1}|y-y'|$,
which is the desired estimate of
$B_2$.
This finishes the proof of
Lemma \ref{lem-4-5-0}.
\end{proof}

For any $t\in(0,\infty)$ and $\mathbf{z}
:=(z,z_{n+1})\in\mathbb{R}_+^{n+1}$, let
$B_\rho(\mathbf{z}, t):=\{
\mathbf{y}\in\mathbb{R}_+^{n+1}:\rho(\mathbf{z}, \mathbf{y})< t\},$
where $\rho$ is as in \eqref{metric}.

\begin{proof}[Proof of Theorem \ref{thm-3adf}]
We claim that
there exists $\delta\in (0, \frac1{10})$ such that
\begin{equation}\label{5-6}
(S_r(s, f,\varepsilon))_\delta:=
\bigcup_{\mathbf{z}\in S_r(s, f,\varepsilon)}
B_\rho(\mathbf{z}, \delta) \subset
S_r(s, f,\varepsilon_1).
\end{equation}
Assuming that this claim holds for
the moment, then, for
any $(z, z_{n+1})\in(S_{r,j}(s, f,
\varepsilon))_\delta$,
$(z, z_{n+1})\in S_{r}(s, f,\varepsilon_1)$
and there exists $(z', z_{n+1}')\in S_{r,j}(s,
f,\varepsilon)$ such
that $\rho((z, z_{n+1}),(z', z_{n+1}'))<\delta$.
From this and \eqref{dafg1}, we deduce that
\begin{equation*}
\frac{1}{(1+2\delta) 2^{j+1}}<\frac{z_{n+1}'}{1+2\delta}
\leq z_{n+1}\leq
(1+2\delta)  z_{n+1}'\leq\frac{1+2\delta}{2^{j}}.
\end{equation*}
Since $\delta<\frac 14$, it follows that
$2^{-j-2}<z_{n+1}\le2^{-j+1}$,
which further implies that
$$
(z, z_{n+1})\in S_{r,j-1}(s, f, \varepsilon_1)\cup
 S_{r,j}(s, f, \varepsilon_1)\cup S_{r,j+1}(s, f,
 \varepsilon_1).
$$
Thus, the present theorem holds.

Therefore, to complete the proof of the
present theorem, it remains to show the above
claim \eqref{5-6}.
Let  $\mathbf{z}:=(z, z_{n+1})\in S_r(s, f,
\varepsilon)$ and $\mathbf{z}':=(z', z_{n+1}')
\in B_\rho(\mathbf{z}, \delta)$.
From \eqref{dafg} and \eqref{dafg2}, we infer that
\begin{equation}\label{5-7}
|z-z'|\leq 2 z_{n+1} \delta
\end{equation}
and
\begin{equation}\label{5-8}
|z_{n+1}-z_{n+1}'|\leq 2z_{n+1}\delta.
\end{equation}

We next consider three cases based on the size of $s$.

If $s\in(0,1)$, then, from Lemmas
\ref{lem-4-5-0} and \ref{lem-4-545}, \eqref{5-7},
and \eqref{5-8}, it follows that
\begin{align*}
	|\Delta_rf(z,z_{n+1})-\Delta_r(z',
z_{n+1}')| &\lesssim
 |z-z'|^s +|z_{n+1}-z_{n+1}'|^s\lesssim
 z_{n+1}^s \delta^s,
\end{align*}
which, together with the assumption that
$\mathbf{z}:=(z, z_{n+1})\in S_r(s, f,\varepsilon)$
and
\eqref{dafg1},
further implies that there exists a
positive constant $c$ such that
$$ \Delta_rf(z', z_{n+1}') \ge \Delta_rf(z,
z_{n+1})-c z_{n+1}^s\delta^s \ge (\varepsilon-
c\delta^s)z_{n+1}^s \ge \frac{\varepsilon-c\delta^s}{(2\delta+1)^{s}} z_{n+1}'^{s}.$$
Since $\varepsilon_1<\varepsilon$, we deduce
that there exists  $\delta\in (0, 1)$ such
that $\frac{\varepsilon-c\delta^s}{(2\delta+1)^{s}}
>\varepsilon_1$, which is the desired estimate
in this case.

If $s=1$, then, using Lemmas \ref{lem-4-5-0}
and \ref{lem-4-545}, \eqref{5-7},
and \eqref{5-8}, we obtain
\begin{align*}
&|\Delta_rf(z,z_{n+1})-\Delta_r(z', z_{n+1}')| \\
&\quad\lesssim z_{n+1}
\left[ \frac {|z-z'|}{3z_{n+1}}\log \left( e+
\frac {3z_{n+1}} {|z-z'|}\right) +\frac {|z_{n+1}-
z_{n+1}'|}{3z_{n+1}} \log
\left( e+\frac {3z_{n+1}} {|z_{n+1}-z_{n+1}'|}\right)
\right]\\
&\quad\lesssim z_{n+1}
\sup_{t \in(0,4\delta)} t \log \left(e+\frac 1t\right)
\lesssim \sqrt{\delta} z_{n+1},
\end{align*}
which, combined with the assumption that
$\mathbf{z}\in S_r(s, f,\varepsilon)$ and
\eqref{dafg1},
further implies that there exists a positive
constant $c$ such that
\begin{align*}
\Delta_rf(z', z_{n+1}')\ge\Delta_rf(z,z_{n+1})-
c z_{n+1}\sqrt{\delta}
\ge\left(\varepsilon-c\sqrt{\delta} \right) z_{n+1}\ge
\frac{\varepsilon-c\sqrt{\delta} }{2\delta+1} z_{n+1}'.
\end{align*}
Choosing $\delta\in(0,\infty)$ so that
$\frac{\varepsilon-c\sqrt{\delta} }{2\delta+1}>\varepsilon_1$,
we obtain the desired estimate in this case.

Finally, if $s\in(1,\infty)$,
using   Lemma \ref{lem-4-5-0}, \eqref{5-7},
and \eqref{5-8}, we conclude that
\begin{align*}
\left|\Delta_rf(z,z_{n+1})-\Delta_r(z', z_{n+1}')\right|
&\lesssim z_{n+1}^{s-1}  \left(|x-x'| +|y-y'|\right)
\lesssim z_{n+1}^s \delta,
\end{align*}
which, together with the assumption that
$\mathbf{z}\in S_r(s, f,\varepsilon)$ and
\eqref{dafg1}, further
implies that there exists a positive constant
$c$ such that
\begin{align*}
\Delta_rf(z', z_{n+1}')\ge \Delta_rf(z,z_{n+1})-c
z_{n+1}^s \delta\ge (\varepsilon-c\delta )  z_{n+1}^s
\ge \frac{ \varepsilon-c\sqrt{\delta} }{(2\delta+1)^{s}} z_{n+1}'^{s}.
\end{align*}
Choosing $\delta\in(0,\infty)$ so that
$ \frac{ \varepsilon-c\sqrt{\delta} }{(2\delta+1)^{s}}
>\varepsilon_1$,
we obtain  the desired estimate in this case.

Thus, for any $\mathbf{z}' \in B_\rho(\mathbf{z}, \delta)$,
$B_\rho(\mathbf{z}, \delta) \subset S_r(s, f,\varepsilon_1)$,
which further implies that the above claim holds.
This finishes the proof of Theorem \ref{thm-3adf}.
\end{proof}

\subsection{Connections between wavelets
 and differences}\label{sec:3-2}

In this subsection, we discuss the relation
between wavelets and
differences (see Theorems \ref{cor-6-2-0}
and \ref{cor-6-7-0}).  We should
point out that Theorems \ref{cor-6-2-0} and
\ref{cor-6-7-0} play an important
role in the proofs of Theorems \ref{pppz23}
and \ref{thm-7-11}.
For any cube  $I\subset\mathbb{R}^n$,
we always use $\ell(I)$ to
denote its edge length and let $\widehat I
:=I\times [0, \ell(I))$.

For any $\varepsilon\in(0,
\infty)$, $j\in\mathbb Z_+$, and
$f\in \Lambda_s$, let
$$
W^0(s, f, \varepsilon):= \left\{ I\in\mathcal D:\max_{\{\omega\in\Omega:I_{\omega}
=I\}}|\langle f,\psi_{(\ell,I)}\rangle|
>\varepsilon |I|^{\frac sn+\frac 12}\right\},
$$
$$
W(s, f, \varepsilon):=W^0(s, f, \varepsilon)
\cap \left\{ I_\omega\in\mathcal D:\omega\in\Omega_1\right\},
$$
$$
W_j(s, f, \varepsilon):=W(s, f, \varepsilon)
\cap\mathcal D_j,\
W_j^0(s, f, \varepsilon):=W^0(s, f, \varepsilon)
\cap\mathcal D_j,
$$
$$
V_0(s, f, \varepsilon):= W_0^0(s, f, \varepsilon)
\cap \left\{ I_\omega\in\mathcal D:\omega\in\Omega_0\right\},
$$
$$ T_j(s, f, \varepsilon): =
\bigcup_{I \in W_j(s, f, \varepsilon)} T(I),\
T_j^0(s, f, \varepsilon): =
\bigcup_{I \in W_j^0(s, f, \varepsilon)} T(I),$$
$$ T(s, f, \varepsilon): =
\bigcup_{I \in W(s, f, \varepsilon)} T(I),\
\text{and}\
T^0(s, f, \varepsilon): =
\bigcup_{I \in W^0(s, f, \varepsilon)} T(I).$$

\begin{theorem}\label{cor-6-2-0}
Let  $s\in(0,\infty)$, $r\in\mathbb N$ with
$r>s$, and  $f\in\Lambda_s$.
Assume that
the regularity parameter $L\in\mathbb N$ of
the Daubechies wavelet system
$\{\psi_\omega\}_{\omega\in\Omega}$ satisfies that
$L\geq r-1$.
Then there exist positive constants
$c\in (0, 1)$, $m\in\mathbb N$,
and  $R\in(1,\infty)$ such that, for
any $\varepsilon\in(0,\infty)$ and $j\in\mathbb Z_+$,
$$T_j(s,f,\varepsilon)
\subset\bigcup_{i=j+1}^{j+m}\left[S_{r,i}(s,f, c\varepsilon)
\right]_R,$$
where $c$, $m$,
and  $R$ are  independent of $f$ and $j$.
\end{theorem}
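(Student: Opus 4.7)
The plan is to prove the inclusion pointwise by contradiction. Fix $I\in W_j(s,f,\varepsilon)$ and an index $\omega$ (with $I_\omega=I$) for which $|\langle f,\psi_\omega\rangle|>\varepsilon|I|^{s/n+1/2}$, pick any point $P=(x_P,y_P)\in T(I)$, and suppose toward a contradiction that $P\notin[S_{r,i}(s,f,c\varepsilon)]_R$ for every $i\in\{j+1,\ldots,j+m\}$. Using the formula \eqref{metric}, one checks that if $R$ is chosen so that $\cosh R\sim 2^m$, then the hyperbolic $R$-ball $B_\rho(P,R)$ contains the Euclidean slab
\begin{equation*}
\bigl\{(x,y):|x-x_P|<c_0\,2^{-j},\ y\in(2^{-j-m-1},2^{-j-1}]\bigr\}
\end{equation*}
for some $c_0=c_0(R,m)>0$, the $x$-window staying uniform across the height range because $\sqrt{2(\cosh R-1)\,y\,y_P}\gtrsim 2^{-j}$ throughout. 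The contradiction hypothesis then reads as the pointwise bound
\begin{equation*}
|\Delta_h^r f(x)|\leq c\varepsilon|h|^s\qquad\text{for all }|x-x_P|<c_0\,2^{-j}\text{ and all }|h|\in(2^{-j-m-1},2^{-j-1}].
\end{equation*}

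The core analytic step is to express the wavelet $\psi_\omega$ as a finite sum of $r$-th differences at scales strictly inside the controlled range. Since $L\geq r-1$, the compactly supported $\psi_\ell$ has vanishing moments of order at least $r-1$, so $\widehat{\psi_\ell}(\xi)=O(|\xi|^r)$ near the origin. A standard Fourier construction---choosing a smooth partition of unity of $\mathbb R^n$ subordinate to finitely many step vectors $h_1,\ldots,h_N$ with $|h_\alpha|$ in a fixed interval $[c_1,c_2]\subset(0,1/2)$, so that on the support of each partition member the symbol $[2i\sin(h_\alpha\cdot\xi/2)]^r$ is bounded below---yields
\begin{equation*}
\psi_\ell=\sum_{\alpha=1}^N\Delta_{h_\alpha}^r G_\alpha,
\end{equation*}
with $G_\alpha\in\mathcal S$. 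Rescaling via $\psi_\omega(x)=2^{jn/2}\psi_\ell(2^j x-k)$ together with the identity $\Delta_h^r[g(a\cdot)](x)=[\Delta_{ah}^r g](ax)$ produces $\psi_\omega=\sum_\alpha\Delta_{2^{-j}h_\alpha}^r G_{\alpha,\omega}$ with $G_{\alpha,\omega}(x):=2^{jn/2}G_\alpha(2^j x-k)$ concentrated near $I$ at scale $2^{-j}$ and $\|G_{\alpha,\omega}\|_{L^1}\lesssim 2^{-jn/2}$. Choosing $m$ so that $[c_1,c_2]\subset(2^{-m-1},1/2)$, pairing with $f$ via the anti-self-adjointness $\int f\,\Delta_h^r g=(-1)^r\int[\Delta_h^r f]\,g$, and applying the pointwise bound above (handling the Schwartz tails of $G_{\alpha,\omega}$ by truncation and their rapid decay, the remainder absorbed through the global bound $|\Delta_h^r f|\leq\|f\|_{\Lambda_s}|h|^s$) yield
\begin{equation*}
|\langle f,\psi_\omega\rangle|\lesssim c\varepsilon\cdot 2^{-js}\cdot 2^{-jn/2}=c\varepsilon|I|^{s/n+1/2},
\end{equation*}
which contradicts $|\langle f,\psi_\omega\rangle|>\varepsilon|I|^{s/n+1/2}$ once $c$ is sufficiently small.

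The principal technical obstacle is the difference decomposition of $\psi_\ell$ with step vectors bounded away from both $0$ and $\infty$: the Fourier construction requires choosing the partition of unity so that the symbols $[2i\sin(h_\alpha\cdot\xi/2)]^r$ never vanish simultaneously on $\operatorname{supp}\widehat{\psi_\ell}$, and the $r$ vanishing moments of $\psi_\ell$ are needed to make each quotient $\rho_\alpha\widehat{\psi_\ell}/[2i\sin(h_\alpha\cdot\xi/2)]^r$ smooth through the origin so that $G_\alpha\in\mathcal S$. Tracking the supports through the rescaling then determines the admissible interval $[c_1,c_2]$ for $|h_\alpha|$ and thereby fixes $m$ with $2^{-m-1}\leq c_1$; the balance $\cosh R\sim 2^m$ then delivers $R$. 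A complementary geometric verification is that this choice of $R$ keeps the $x$-window of $B_\rho(P,R)$ of order $2^{-j}$ down to the smallest relevant height $2^{-j-m-1}$, so that the slab in the first paragraph truly covers the supports of all $G_{\alpha,\omega}$, completing the extraction of universal constants $(c,m,R)$.
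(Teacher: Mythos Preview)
Your contradiction scheme breaks at the tail estimate, and the defect is structural rather than cosmetic. Suppose the decomposition $\psi_\ell=\sum_\alpha\Delta_{h_\alpha}^r G_\alpha$ were available with $G_\alpha\in\mathcal S$ (it is not---$\psi_\ell$ is only $C^L$, so $\widehat{\psi_\ell}$ decays no faster than $|\xi|^{-L}$ and $\widehat{G_\alpha}$ inherits this; but grant it). After moving the difference onto $f$ you obtain
\[
|\langle f,\psi_\omega\rangle|\;\le\;|I|^{\frac sn+\frac12}\Bigl(A\,c\varepsilon+B\,\tau\,\|f\|_{\Lambda_s}\Bigr),
\]
where $A,B$ are universal, and $\tau=\|G_\alpha\mathbf 1_{\{|x|>c_0\}}\|_{L^1}$ is the tail mass left outside the slab of width $c_0\,2^{-j}$. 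The first term is harmless, but the second is proportional to $\|f\|_{\Lambda_s}$, \emph{not} to $\varepsilon$. For the inequality $|\langle f,\psi_\omega\rangle|>\varepsilon|I|^{s/n+1/2}$ to be contradicted you would need $B\tau\|f\|_{\Lambda_s}<\varepsilon(1-Ac)$, which forces $\tau$---hence the slab width $c_0$, hence $R$---to depend on $\|f\|_{\Lambda_s}/\varepsilon$. That ratio is unbounded (take $\|f\|_{\Lambda_s}$ huge and $\varepsilon$ small while $W_j(s,f,\varepsilon)\neq\emptyset$), so no universal $(c,m,R)$ comes out. The sentence ``the remainder absorbed through the global bound $|\Delta_h^r f|\le\|f\|_{\Lambda_s}|h|^s$'' is exactly where the argument loses uniformity.

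The paper avoids this entirely by exploiting the \emph{compact} support of $\psi_\omega$: for any polynomial $P$ of degree $\le r-1$ one has $\langle f,\psi_\omega\rangle=\langle f-P,\psi_\omega\rangle$, and then $|\langle f,\psi_\omega\rangle|\le\|(f-P)\mathbf 1_{A_0I}\|_{L^\infty}\|\psi_\omega\|_{L^1}$. Optimising over $P$ and invoking a Whitney-type inequality (Lemma~\ref{thm-691}) gives
\[
|I|^{-\frac12-\frac sn}\,|\langle f,\psi_\omega\rangle|\;\le\;C\sup_{(x,y)\in T_{A_0}(I)}\frac{\Delta_r f(x,y)}{y^s},
\]
a \emph{purely local} estimate with a universal constant. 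From this, $I\in W_j(s,f,\varepsilon)$ immediately yields a point in $T_{A_0}(I)\cap S_{r,i}(s,f,c\varepsilon)$ for some $i\in\{j+1,\dots,j+m\}$, and the hyperbolic inflation is then a one-line consequence of the bounded hyperbolic diameter of $T_{A_0}(I)\cup T(I)$. If you want to rescue your route, you must manufacture the same locality; the Fourier construction cannot do this because dividing by $[2i\sin(h_\alpha\cdot\xi/2)]^r$ destroys compact support on the space side.
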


Towards the proof of the above theorem,
we first establish the following Whitney
type inequality.

\begin{lemma}\label{thm-691}
Let $A_0\in(2,\infty)$ and $r\in\mathbb N$.
Then  there exist positive constants $C$ and $A$
such that, for any
$f\in \mathfrak{C}$ and $I\in \mathcal D$,
\begin{equation*}
\inf_{P\in\Pi_{r-1}^n}  \|(f-
P)\mathbf{1}_{A_0 I}\|_{L^\infty  }
\leq C \sup_{y\in[\frac1A\ell(I),
\frac12\ell(I)]} \sup _{x\in AI}\Delta_r f(x,y),
\end{equation*}
where $\Pi_{r-1}^n$ denotes the linear space
of all polynomials on ${\mathbb{R}^n}$
of total degree not greater than $r-1$ and
$\Delta_r f$ is as in
\eqref{nnfpaw} with $k$ replaced
by $r$.
\end{lemma}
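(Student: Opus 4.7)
The plan is to prove the Whitney-type inequality by constructing an explicit polynomial $P \in \Pi_{r-1}^n$ approximating $f$ on $A_0 I$ and then representing the error $f - P$ pointwise as a finite linear combination of $r$-th order differences of $f$ at step sizes in $[\ell(I)/A, \ell(I)/2]$.

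First, by the joint translation and dilation invariance of both sides of the inequality, I would reduce to the canonical case $I = [0,1]^n$ with $\ell(I) = 1$; the constants $C$ and $A$ will then depend only on $n$, $r$, and $A_0$. In this normalization I choose a step parameter $h$ satisfying $h \in [1/A, 1/2]$ with $(r-1)h \leq 1$ (so the tensor-product grid $\mathcal{G} := \{(j_1 h, \ldots, j_n h) : 0 \leq j_i \leq r-1\} \subset I$ of $r^n$ points is contained in $I$ and unisolvent for $\Pi_{r-1}^n$). I let $P$ be the unique Lagrange interpolant of $f$ at $\mathcal{G}$ in $\Pi_{r-1}^n$, so that $g := f - P$ vanishes on $\mathcal{G}$.

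The key tool is the walking identity, obtained by solving the relation $\Delta_{h e_i}^r g(x) = \sum_{j=0}^r (-1)^{r-j} \binom{r}{j} g(x + j h e_i)$ for $g(x)$:
\begin{align*}
g(x) = (-1)^r \Delta_{h e_i}^r g(x) + \sum_{j=1}^r (-1)^{j+1} \binom{r}{j}\, g(x + j h e_i).
\end{align*}
Since $\Delta^r$ annihilates $\Pi_{r-1}^n$, we have $\Delta_{h e_i}^r g = \Delta_{h e_i}^r f$. I iterate this identity coordinate-by-coordinate, each time advancing one coordinate of $x \in A_0 I$ by $\pm h$ toward a grid coordinate, until after $O(A_0/h)$ per-coordinate steps (so $O(A_0 r n)$ total) the argument is absorbed into $\mathcal{G}$, where $g$ vanishes. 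This yields an explicit representation $g(x) = \sum_k c_k(x) \Delta_{h e_{i_k}}^r f(z_k)$ with a bounded number of terms (depending only on $n$, $r$, $A_0$) and centers $z_k \in AI$ for $A$ chosen large enough to contain all intermediate walking points.

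Taking the supremum over $x \in A_0 I$ and noting $|h e_i| = h \in [1/A, 1/2]$ yields the claimed bound, and rescaling back to a general dyadic cube $I$ completes the proof. The main obstacle is the combinatorial bookkeeping of the iterated walking identity: a naive unpacking would produce exponentially many terms, so the walk must be arranged so that in each step the $r$ future points $x + j h e_i$ ($j = 1, \ldots, r$) simultaneously match grid lines in the currently-walked coordinate, leaving only one ``active'' branch of the recursion to continue. The tensor-product structure of $\mathcal{G}$, with $r$ equally spaced grid values in each coordinate, is precisely tailored to make this bookkeeping work and keep the total number of terms bounded by $C(n, r, A_0)$.
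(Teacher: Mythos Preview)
Your walking argument has a genuine gap. Stepping by $\pm h e_i$ leaves the residue $x_i\bmod h$ invariant, so unless the starting point already satisfies $x_i\in h\mathbb{Z}$ for every $i$, the iteration never lands on the grid $\mathcal G=\{0,h,\dots,(r-1)h\}^n$ and the recursion does not terminate. Your proposed fix, that after one step the $r$ future points $x+jhe_i$ ``simultaneously match grid lines'' in coordinate $i$, forces $x_i+h,\dots,x_i+rh\in\{0,h,\dots,(r-1)h\}$, i.e.\ $x_i=-h$; for generic $x_i\in A_0I$ this fails. Worse, the inequality your argument would establish if it worked is actually false: you are bounding $\|f-P\|_{L^\infty(A_0I)}$ by coordinate-direction $r$-th differences with a \emph{single} fixed step $h$, but for $f(x)=\cos(2\pi x_1/h)$ one has $\Delta^r_{he_i}f\equiv 0$ for every $i$ while $\inf_{P\in\Pi^n_{r-1}}\|f-P\|_{L^\infty(A_0I)}>0$. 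The supremum over a \emph{range} of step sizes $y\in[\ell(I)/A,\ell(I)/2]$ in the lemma is essential, not cosmetic, and your construction never uses more than one value of $h$.

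The paper's proof takes a different route that sidesteps this obstruction. It first quotes a classical coordinate-wise Whitney inequality (Ditzian) bounding $\inf_P\|(f-P)\mathbf 1_{A_0I}\|_{L^\infty}$ by $\max_i\sup_{0<h\le A_0\ell(I)/r}\sup_x|\Delta^r_{he_i}f(x)|$, which allows arbitrarily small steps; the remaining work is to show that this supremum over all small $h$ is dominated by the supremum over $h$ in the fixed window $[\ell(I)/A,\ell(I)/2]$. That reduction combines the elementary identity $|\Delta^r_{ah}f|\lesssim\sum_{j=0}^{(a-1)r}|\Delta^r_hf(\cdot+jh)|$ with an integral-averaging lemma from DeVore--Lorentz to pass from $\sup_{0<h\le H}$ to $\sup_{c_1\ell(I)\le h\le c_2\ell(I)}$. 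If you want to rescue a direct interpolation approach you would have to let the step (and hence the grid, and hence $P$) depend on $x$, which loses the $\inf_P$; getting around this is essentially as hard as the paper's argument.
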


\begin{proof}
Fix $I\in\mathcal D$.
For any $i\in\{1,\ldots,n\}$, let $e_i$ be
the unit vector in the $i$-th direction.
From \cite[Lemma 2.1]{d}, it follows that
\begin{align}\label{awof}
\inf_{P\in\Pi_{r-1}^n}  \|(f-P)\mathbf{1}_{A_0
I}\|_{L^\infty  }
& \lesssim \max_{\{i\in\mathbb{N}:1\le i\le n\}}\sup_{\{h\in\mathbb{R}:|h|\le
\frac{A_0\ell(I)}{r}\}}
\sup _{x,x\pm \frac{rhe_i}{2}\in A_0 I}
\Delta^r_{he_i} f(x).
\end{align}
Let
$
A_0I:=[a_1,b_1]\times\cdots\times[a_{n},
b_{n}]
$
and
$$
Q_{e_{i}}:=[a_1,b_1]\times\cdots\times
[a_{i-1},b_{i-1}]\times\{0\}\times
[a_{i+1},b_{i+1}]
\times\cdots\times[a_{n},b_{n}].
$$
Observe that, for any $a\in\mathbb N$,
\begin{align}\label{fsadf}
\left|D^r_{ah}[f(\xi+(\cdot)e_i)](\tau)
\right|\lesssim\sum_{j=0}^{(a-1)r}
\left|D^r_{h}[f(\xi+(\cdot+jh)e_i)](\tau)\right|.
\end{align}
By this and \cite[p.\,185, Lemma 5.1]{dl93},
we conclude that
\begin{align*}
&\sup_{0<h\le\frac{A_0\ell(I)}{r}}
\sup _{x,x\pm \frac{rhe_i}{2}\in A_0 I}
\left|\Delta^r_{he_i} f(x)\right|\\
& \quad=\sup_{0<h\le\frac{A_0\ell(I)}{r}}
\sup _{x\in A_0 I,x+ rhe_i\in A_0 I}
\left|D^r_{he_i} f(x)\right|
=\sup_{\xi\in Q_{e_{i}}}\sup_{0<h\le\frac{
A_0\ell(I)}{r}}
\sup _{\tau,\tau+rh\in[a_i,b_i]} \left|D^r_{h}[f(\xi+(\cdot)e_i)](\tau)\right|\\
&\quad =\sup_{\xi\in Q_{e_{i}}}\sup_{0<
h\le\frac{A_0\ell(I)}{4r}}
\sup _{\tau,\tau+4rh\in[a_i,b_i]}
\left|D^r_{4h}[f(\xi+(\cdot)e_i)](\tau)
\right|\\
&\quad\lesssim\sup_{\xi\in Q_{e_{i}}}\sup_{
0<h\le\frac{A_0\ell(I)}{4r}}
\max_{0\le j\le3r}
\sup _{\tau,\tau+4rh\in[a_i,b_i]}
\left|D^r_{h}[f(\xi+(\cdot+jh)e_i)](\tau)
\right|\\
&\quad\lesssim\sup_{\xi\in Q_{e_{i}}}
\frac{1}{\ell(I)}\int_{\frac{
A_0\ell(I)}{8r^2}}^{\frac{A_0\ell(I)}{4r}}
\sup _{\tau,\tau+rh\in[a_i,b_i]}
\left|D^r_{h}[f(\xi+(\cdot)e_i)](\tau)
\right|\,dh.
\end{align*}
which further implies that there exists
a positive integer $m$
such that
\begin{align}\label{,opw}
\sup_{0<h\le\frac{A_0\ell(I)}{r}}
\sup _{x,x\pm \frac{rhe_i}{2}\in A_0 I}
\left|\Delta^r_{he_i} f(x)\right|
\lesssim\sup_{\xi\in Q_{e_{i}}}\frac{1}{
\ell(I)}
\int_{\frac{\ell(I)}{2^{m+1}}}^{\frac{2^{m} \ell(I)}{2}}
\sup _{\tau,\tau+rh\in[a_i,b_i]} \left|
D^r_{h}[f(\xi+(\cdot)e_i)](\tau)
\right|\,dh.
\end{align}
Using \eqref{fsadf}, we find that, for
any $a\in\mathbb N$,
\begin{align*}
&\int_{\frac{a\ell(I)}{4}}^{\frac{a\ell(I)}{2}}
\sup _{\tau,\tau+rh\in[a_i,b_i]}
\left|D^r_{h}[f(\xi+(\cdot)e_i)](\tau)
\right|\,dh\\
&\quad\lesssim\int_{\frac{\ell(I)}{4}
}^{\frac{\ell(I)}{2}}
\sup _{\tau,\tau+rah\in[a_i,b_i]}
\left|D^r_{ah}[f(\xi+(\cdot)e_i)](\tau)
\right|\,dh\\
&\quad\lesssim\max_{\{j\in\mathbb{Z}_+:0\le j\le(a-1)r\}}
\int_{\frac{\ell(I)}{4}
}^{\frac{\ell(I)}{2}}
\sup _{\tau,\tau+rah\in[a_i,b_i]}
\left|D^r_{h}[f(\xi+(\cdot+jh)e_i)]
(\tau)\right|\,dh\\&\quad\lesssim\sup_{
\frac{\ell(I)}{4}\le h\le\frac{\ell(I)}{2}}
\sup _{\tau,\tau+rh\in[a_i,b_i]} \left|
D^r_{h}[f(\xi+(\cdot)e_i)](\tau)
\right|.
\end{align*}
From this and \eqref{,opw}, we infer that
\begin{align*}
\sup_{0<h\le\frac{A_0\ell(I)}{r}}
\sup _{x,x\pm \frac{rhe_i}{2}\in A_0 I}
\left|\Delta^r_{he_i} f(x)\right|
&\lesssim
\sup_{\frac{\ell(I)}{2^{m+1}}\le h\le\frac{\ell(I)}{2}}
 \sup _{x,x\pm \frac{rhe_i}2\in A_0 I}
 \left|\Delta^r_{he_i} f(x)\right|,
\end{align*}
which, combined with \eqref{awof},
further implies
that
\begin{align*}
\inf_{P\in\Pi_{r-1}^n}
\|(f-P)\mathbf{1}_{A_0 I}\|_{L^\infty
}
& \lesssim \max_{\{i\in\mathbb{N}:1\le i\le n\}}\sup_{
\frac{\ell(I)}{2^{m+1}}\le h\le\frac{\ell(I)}{2}}
\sup _{x,x\pm \frac{rhe_i}{2}\in A_0 I}
\left|\Delta^r_{he_i} f(x)\right|\\
& \le\sup_{\frac{\ell(I)}{2^{m+1}}\leq y\leq
\frac{\ell(I)}{2}}
\sup _{x\in A_0 I}\Delta_r f(x,y).
\end{align*}
This finishes the proof of Lemma
\ref{thm-691}.
\end{proof}

To prove Theorem \ref{cor-6-2-0},
we first need the following conclusion
about
wavelets and differences.
In what follows, for any $p\in(0,\infty]$,
we use $L^p$ to denote the Lebesgue
space equipped with the well-known (quasi-)norm.

\begin{lemma}\label{thm-6-1}
Let  $s\in(0,\infty)$ and $r\in\mathbb
N$ with $r>s$.
Assume that
the regularity parameter $L\in\mathbb
N$ of the Daubechies wavelet system
$\{\psi_\omega\}_{\omega\in\Omega}$
satisfies that
$L\geq r-1$.
Then  there exist positive constants
$C$ and $A_0$ such that, for any
$f\in\Lambda_s$ and $I\in \mathcal D$,
\begin{equation*}
|I|^{-\frac12 -\frac sn}	\max_{\{l\in\mathbb{N}:1\leq
l<2^n\}} |\langle f,\psi_{(\ell,I)}\rangle|
\leq C\sup _{(x,y)\in T_{A_0}(I)}\frac{\Delta_r f(x,y)}
 { y^s},
\end{equation*}
where $T_{A_0}(I):=A_0I\times[ \frac{\ell(I)}{A_0}
,\frac{\ell(I)}2]$
and $\Delta_r f$ is as in
\eqref{nnfpaw} with $k$ replaced
by $r$.
\end{lemma}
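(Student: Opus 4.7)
The plan is to combine three ingredients: the vanishing moments of the Daubechies wavelets, the Whitney-type inequality of Lemma \ref{thm-691}, and a homogeneity argument that introduces the factor $y^s$.

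First I will use the properties of the Daubechies system. For any $\omega=(\ell,I)\in\Omega_1$, the function $\psi_{(\ell,I)}$ is a translated and dilated copy of $\psi_\ell\in\mathfrak{C}_{\mathrm{c}}^L$, so there is a constant $c=c(n,L)\in(1,\infty)$ such that $\operatorname{supp}\psi_{(\ell,I)}\subset cI$ and $\|\psi_{(\ell,I)}\|_{L^1}\lesssim |I|^{1/2}$. Moreover, since $L\geq r-1$, the wavelet $\psi_{(\ell,I)}$ annihilates polynomials of degree at most $r-1$. Therefore, for every $P\in\Pi_{r-1}^n$,
\begin{equation*}
\langle f,\psi_{(\ell,I)}\rangle=\langle f-P,\psi_{(\ell,I)}\rangle,
\end{equation*}
and a direct H\"older estimate gives
\begin{equation*}
|\langle f,\psi_{(\ell,I)}\rangle|\leq \|\psi_{(\ell,I)}\|_{L^1}\,\|(f-P)\mathbf{1}_{cI}\|_{L^\infty}\lesssim |I|^{1/2}\,\|(f-P)\mathbf{1}_{cI}\|_{L^\infty}.
\end{equation*}
Taking the infimum over $P\in\Pi_{r-1}^n$ on the right-hand side yields a bound in terms of the local polynomial approximation error of $f$ on $cI$.

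Next I apply Lemma \ref{thm-691} with $A_0:=c$. It delivers positive constants $C'$ and $A$ (depending only on $n,r,c$) such that
\begin{equation*}
\inf_{P\in\Pi_{r-1}^n}\|(f-P)\mathbf{1}_{cI}\|_{L^\infty}\leq C'\sup_{y\in[\ell(I)/A,\,\ell(I)/2]}\ \sup_{x\in AI}\Delta_r f(x,y).
\end{equation*}

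Finally, choose $A_0:=\max\{c,A\}$, so that $T_{A_0}(I)=A_0I\times[\ell(I)/A_0,\ell(I)/2]$ contains every pair $(x,y)$ appearing in the above double supremum. On this range, $y\sim \ell(I)\sim |I|^{1/n}$, so $y^s\sim |I|^{s/n}$. Writing $\Delta_r f(x,y)=y^s\cdot y^{-s}\Delta_r f(x,y)$, I obtain
\begin{equation*}
\sup_{y\in[\ell(I)/A_0,\,\ell(I)/2]}\ \sup_{x\in A_0I}\Delta_r f(x,y)\lesssim |I|^{s/n}\sup_{(x,y)\in T_{A_0}(I)}\frac{\Delta_r f(x,y)}{y^s}.
\end{equation*}
Combining the three displayed inequalities gives
\begin{equation*}
|\langle f,\psi_{(\ell,I)}\rangle|\lesssim |I|^{1/2+s/n}\sup_{(x,y)\in T_{A_0}(I)}\frac{\Delta_r f(x,y)}{y^s},
\end{equation*}
which, after dividing by $|I|^{1/2+s/n}$ and taking the maximum over $\ell\in\{1,\ldots,2^n-1\}$, is precisely the desired estimate. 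No step is genuinely hard here; the only point to be careful with is the bookkeeping of the constants $c$ and $A$ coming from the wavelet support and from Lemma \ref{thm-691}, so that a single choice of $A_0$ accommodates both the support condition and the Whitney supremum region.
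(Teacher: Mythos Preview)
Your proof is correct and follows essentially the same approach as the paper: use the vanishing moments and compact support of the wavelets to bound $|\langle f,\psi_{(\ell,I)}\rangle|$ by $|I|^{1/2}$ times the local polynomial approximation error, invoke the Whitney-type inequality of Lemma~\ref{thm-691}, and then rescale by $y^s\sim|I|^{s/n}$. Your handling of the constants is in fact slightly more careful than the paper's, which silently relabels $A_0$; the only cosmetic point is that Lemma~\ref{thm-691} is stated for $A_0\in(2,\infty)$, so you should take $c>2$ (harmless, since one can always enlarge the support cube).
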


\begin{proof}
Fix $I\in\mathcal D$.
Let $\omega \in\Omega_1$ be such that
$I_\omega=I$.
Assume that $A_0\in(1,\infty)$ is such
that
$\mathop\mathrm{\,supp\,} \psi_\omega \subset
A_0 I $  and $\int_{\mathbb{R}^n} x^\alpha \psi_\omega (x)
\, dx=0$ for any
$\alpha\in\mathbb Z_+^n$ with $|\alpha|\leq
L$.
Let $\Pi_L^n$ denote the linear space of all
polynomials on ${\mathbb{R}^n}$
of total degree not greater than $L$.
Then, for any $P\in\Pi_L^n$,
\begin{align*}
|\langle f,\psi_{\omega}\rangle|
=\left|\int_{ \mathbb{R}^n} [f(x) -P(x)]
\psi_\omega(x)\, dx \right|\leq \|(f-P)
\mathbf{1}_{A_0 I}\|_{L^\infty  }
\|\psi_\omega\|_{L^1}
\lesssim |I|^{\frac12}  \|(f-P)\mathbf{1}_{
A_0 I}\|_{L^\infty  }.
\end{align*}
By this and Lemma \ref{thm-691}, we then obtain
\begin{align*}
|\langle f,\psi_{\omega}\rangle|&\lesssim
|I|^{\frac12} \inf_{P\in\Pi_L^n}
\|(f-P)\mathbf{1}_{A_0 I}\|_{L^\infty  }
\lesssim |I|^{\frac12} \sup _{x\in A_0 I}
\sup_{\frac{\ell(I)}{A_0}\leq y\leq \frac{\ell(I)}{2}}
\Delta_r f(x,y).
\end{align*}
This further implies the desired estimate and hence
finishes the proof of Lemma \ref{thm-6-1}.
\end{proof}

\begin{proof} [Proof of Theorem \ref{cor-6-2-0}]
By Lemma \ref{thm-6-1}, there exists a constant
$A_0\in(1,\infty)$ such that,
for any $I\in W_j(s,f, \varepsilon)$,
$$ \varepsilon\leq |I|^{-\frac12 -\frac sn}	
\max_{\{l\in\mathbb{N}:1\leq l<2^n\}} |\langle f,\psi_{(\ell,I)}\rangle|
\lesssim \sup_{(x, y) \in T_{A_0}(I)}\frac{\Delta_r f(x,y)}
 { y^s}.$$
Let $m\in\mathbb N$ be such that  $2^{m-1}\le
A_0<2^m.$
Then there
exists a positive constant $c\in(0,1)$ such
that, for any $I\in W_j(s,f, \varepsilon)$,
$T_{A_0}(I)\cap[\bigcup_{i=j+1}^{j+m}S_{r,
i}(s,f, c\varepsilon)]\neq\emptyset.$
From Lemma \ref{lem-3-2-0}, it follows that
there
exists a positive constant $R$ such that, for
any $I\in W_j(s,f, \varepsilon)$,
$
T_{A_0}(I) \subset  \bigcup_{i=j+1}^{j+m}
[S_{r,i}(s,f, c\varepsilon)]_R.
$
This shows that
$T_j(s,f,\varepsilon) \subset\bigcup_{i=j+1}^{j+m}[S_{r,i}(s,f,
c\varepsilon)]_R,$ which completes
the proof of Theorem \ref{cor-6-2-0}.
\end{proof}

\begin{theorem}\label{cor-6-7-0}
Let  $s,\varepsilon\in(0,\infty)$, $r\in\mathbb
N$ with $r>s$, and  $f\in\Lambda_s$.
Assume that
the regularity parameter $L\in\mathbb N$ of the
Daubechies wavelet system $\{\psi_\omega\}_{
\omega\in\Omega}$ satisfies that
$L\geq r-1$ and $L>s$.
Then there exist positive constants $c\in (0, 1)$,
$m\in\mathbb N$,
and  $R\in(1,\infty)$ such that, for any
$j\in\mathbb Z_+$,
$$S_{r,j}(s,f, \varepsilon)\subset
\bigcup_{i=j-m}^{j+m}\left[T_i^0(s,f,
c\varepsilon)\right]_R,$$
where, for any $j\in\mathbb Z$ with $j<0$, $T_j^0(s,f, c\varepsilon):=\emptyset$
and the constants $m$ and $R$ may depend on $f$.
\end{theorem}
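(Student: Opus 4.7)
I would approach Theorem \ref{cor-6-7-0} by contradiction via a good/bad decomposition of the Daubechies wavelet expansion of $f$, followed by a scale-by-scale estimate of the $r$-th order difference.

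Fix a constant $c \in (0,1)$ to be chosen later, and split the wavelet indices as
$$
\Omega_G := \left\{\omega \in \Omega : |\langle f,\psi_\omega\rangle| \le c\varepsilon\, |I_\omega|^{s/n + 1/2}\right\}, \quad \Omega_B := \Omega \setminus \Omega_G,
$$
writing $f = f_G + f_B$ with $f_G := \sum_{\omega \in \Omega_G} \langle f, \psi_\omega\rangle \psi_\omega$. Every $\omega \in \Omega_B$ satisfies $I_\omega \in W^0(s,f,c\varepsilon)$, so $T(I_\omega) \subset T_i^0(s,f,c\varepsilon)$ whenever $I_\omega \in \mathcal{D}_i$. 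The wavelet characterization of $\Lambda_s$ (Lemma \ref{asqw}, which uses $L > s$) gives $\|f_G\|_{\Lambda_s} \le C_0\,c\varepsilon$, hence $\Delta_r f_G(x,y) \le C_0\,c\varepsilon\,y^s$ uniformly. Choosing $c$ so small that $C_0 c \le 1/4$, every $(x,y) \in S_{r,j}(s,f,\varepsilon)$ satisfies
\begin{equation*}
\Delta_r f_B(x,y) \ge \Delta_r f(x,y) - \Delta_r f_G(x,y) > \tfrac{3}{4}\varepsilon\, y^s,
\end{equation*}
which reduces the theorem to controlling $\Delta_r f_B(x,y)$ from above whenever $(x,y)$ is hyperbolically far from every $T_i^0(s,f,c\varepsilon)$ with $|i-j|$ small.

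Next, I would fix such $(x,y)$ with $y \in (2^{-j-1},2^{-j}]$ and $h$ with $|h| = y$, and decompose $\Delta_h^r f_B(x) = \sum_{i \ge 0} \Delta_h^r f_B^i(x)$, where $f_B^i$ collects the bad wavelets with $I_\omega \in \mathcal{D}_i$. Using the scaling identity $\Delta_h^r \psi_\omega(\cdot) = 2^{in/2}\Delta_{2^i h}^r \psi_{\ell_\omega}(2^i\cdot - k)$, the regularity $\psi_{\ell_\omega} \in \mathfrak{C}_{\mathrm{c}}^L$ with $L > s$, the universal bound $|\langle f,\psi_\omega\rangle| \lesssim \|f\|_{\Lambda_s}|I_\omega|^{s/n+1/2}$, and the bounded overlap of the Daubechies wavelet translates, one obtains at each scale
$$
|\Delta_h^r f_B^i(x)| \le C\|f\|_{\Lambda_s} y^s \min\!\left\{(2^{i-j})^{L-s},\, (2^{i-j})^{-s}\right\}.
$$
Summing over $i \le j-m$ and $i \ge j+m$ yields a tail of at most $C'\|f\|_{\Lambda_s} y^s 2^{-m\min(L-s,\,s)}$, which is strictly smaller than $\varepsilon y^s / 4$ once $m \in \mathbb{N}$ is chosen large enough (depending on $\|f\|_{\Lambda_s}/\varepsilon$). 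For the middle scales $|i-j| \le m$, a nonzero contribution of $\omega \in \Omega_B$ to $\Delta_h^r f_B^i(x)$ forces $x + ah \in \mathrm{supp}\,\psi_\omega$ for some $a \in \{-r/2,\ldots,r/2\}$, and hence $x$ must lie within Euclidean distance $\lesssim 2^m y$ of $I_\omega$. Using Lemma \ref{lem-3-2-0} and the basic comparisons between Euclidean distance and the Poincar\'e metric $\rho$ collected in the Appendix, one can choose $R = R(m) \in (1,\infty)$ so that this cannot occur when $(x,y) \notin \bigcup_{i=j-m}^{j+m} [T_i^0(s,f,c\varepsilon)]_R$; thus every $\Delta_h^r f_B^i(x)$ with $|i-j| \le m$ vanishes. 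Combining the tail and middle estimates yields $\Delta_r f_B(x,y) < \varepsilon y^s / 4$, contradicting the lower bound from the first step, so $(x,y)$ must belong to $\bigcup_{i=j-m}^{j+m}[T_i^0(s,f,c\varepsilon)]_R$, as required.

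The hard part will be the scale-by-scale estimate of $\Delta_h^r \psi_\omega$ under the minimal regularity assumption $L \ge r-1$. Since $L$ can be strictly less than $r$, one cannot use the standard $|\Delta_h^r g| \lesssim |h|^r\|\nabla^r g\|_\infty$; one must instead use the variant $|\Delta_h^r g| \lesssim |h|^L\|\nabla^L g\|_\infty$, which follows from writing $\Delta_h^r = \Delta_h^{r-L}\Delta_h^L$ and representing $\Delta_h^L g$ as an $L$-fold integral of $\nabla^L g$. It is precisely the additional hypothesis $L > s$ that produces the positive exponent $L-s$ required to absorb the coarse-scale tail, while the dependence of $m$ and $R$ on $f$ enters through the quantitative threshold $\|f\|_{\Lambda_s}/\varepsilon$ needed to beat the tail against $\varepsilon y^s/4$; the remainder of the argument is careful bookkeeping over dyadic scales.
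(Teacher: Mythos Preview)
Your argument is correct and is a mild reorganization of the paper's proof. The paper isolates the scale analysis into a standalone pointwise inequality (Lemma~\ref{thm-6-5-0}): it splits the full wavelet series of $f$ into four groups by scale and position, bounds the fine and coarse tails by $\|f\|_{\Lambda_s}y^s[R^{-s}+R^{-(r-s)}]$, shows the far middle group vanishes by support, and leaves the near middle group as $\sup_{\omega\in\mathcal A_{R,A}(x,y)}|I_\omega|^{-1/2-s/n}|\langle f,\psi_\omega\rangle|$; the theorem then follows by choosing $R_1$ so the tail is below $\varepsilon/2$, which forces a large nearby coefficient. You instead peel off the good coefficients first via the wavelet characterization of $\Lambda_s$ and run the identical fine/coarse/middle analysis only on the bad part; the analytic ingredients (trivial bound at fine scales, smoothness bound at coarse scales, compact support at middle scales, Lemma~\ref{lem-3-2-0} to pass to hyperbolic neighborhoods) are the same, and the dependence of $m,R$ on $\|f\|_{\Lambda_s}/\varepsilon$ arises in both for the same reason. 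One genuine refinement in your version: under the minimal hypothesis $L\ge r-1$ the wavelets need not be $C^r$, so the coarse-scale bound should carry exponent $L-s$ rather than $r-s$; you handle this correctly via $\Delta_h^r=\Delta_h^{r-L}\Delta_h^L$, whereas the paper's estimate~\eqref{lkj} tacitly uses $|\Delta_h^r\psi_\omega|\lesssim|I_\omega|^{-1/2-r/n}|h|^r$, which strictly speaking needs $L\ge r$.
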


To show Theorem \ref{cor-6-7-0}, we first
need the following conclusion about
wavelets and differences.
For any given  $(x,y)\in\mathbb{R}_+^{n+1}$
and $A,R\in(0,\infty)$,
let  $\mathcal{A}_{R,A}(x,y)$ denote the
collection of all $\omega \in\Omega$ such that
$\frac y R \leq \ell(I_{\omega}) \le Ry$
and $|x-c_{I_{\omega}}|\leq A Ry.$
\begin{lemma}\label{thm-6-5-0}
Let  $s\in(0,\infty)$ and $r\in\mathbb N$
with $r>s$.
Assume that the regularity parameter $L\in\mathbb N$ of
the Daubechies wavelet system $\{
\psi_\omega\}_{\omega\in\Omega}$ satisfies
that $L\geq r-1$ and $L>s$.
Then there exist positive constants  $A$ and
$C$ such that,
for any $R\in(1,\infty)$, $f\in\Lambda_s$, and
$(x,y)\in\mathbb{R}_+^{n+1}$,
\begin{equation*}
\frac{\Delta_r f(x,y)}
 { y^s}\leq C\left\{\sup_{
\omega\in\mathcal{A}_{R,A}(x,y)}
|I_{\omega}|^{-\frac12-\frac sn}
|\langle f,\psi_{\omega}\rangle|+
\|f\|_{\Lambda_s}
\left[\left(\frac 1R\right)^s+\left(
\frac 1R\right)^{r-s}\right]\right\},
\end{equation*}
where $\Delta_r f$ is as in
\eqref{nnfpaw} with $k$ replaced
by $r$.
\end{lemma}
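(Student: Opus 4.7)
The plan is to expand $f$ in the Daubechies wavelet basis, apply $\Delta_h^r$ term by term for each $h$ with $|h|=y$, and then organize the resulting series by the scale $j$ of the wavelet $\psi_\omega$ (so that $\ell(I_\omega)=2^{-j}$). Writing
\[
M:=\sup_{\omega\in\mathcal{A}_{R,A}(x,y)} |I_\omega|^{-\frac{1}{2}-\frac{s}{n}}|\langle f,\psi_\omega\rangle|
\]
for the first term on the right-hand side, I would fix $A$ large enough, depending only on $r$ and on the diameters of $\operatorname{supp}\varphi, \operatorname{supp}\psi_1,\ldots,\operatorname{supp}\psi_{2^n-1}$, that any wavelet whose support meets the stencil $\{x+(r/2-i)h:0\le i\le r\}$ and whose scale lies in $[y/R,Ry]$ automatically belongs to $\mathcal{A}_{R,A}(x,y)$. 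The three ingredients to be used per scale are the crude bound $\|\psi_\omega\|_{L^\infty}\lesssim 2^{jn/2}$, the smoothness bound $|\Delta_h^r\psi_\omega(x)|\lesssim y^r 2^{j(n/2+r)}$ obtained via a Taylor expansion of order $r$ (which is why the regularity hypothesis $L\ge r-1$ enters, exactly as in Lemma \ref{lem-4-5-0}), and the universal coefficient decay $|\langle f,\psi_\omega\rangle|\lesssim \|f\|_{\Lambda_s} 2^{-j(s+n/2)}$, which follows from the vanishing moments of $\psi_\omega$ (valid because $L>s$) by Taylor expansion of $f$ at $c_{I_\omega}$.

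Next, I would partition the relevant scales into three ranges according to the size of $\ell(I_\omega)$ relative to $y$: the \emph{middle} range $y/R\le\ell(I_\omega)\le Ry$, the \emph{large} range $\ell(I_\omega)>Ry$, and the \emph{small} range $\ell(I_\omega)<y/R$. Thanks to the compact supports, only $O(1)$ wavelets per scale contribute at the given $x$ (they must satisfy $|x-c_{I_\omega}|\lesssim\max(\ell(I_\omega),y)$), so no delicate spatial summation is needed. For the middle range, membership in $\mathcal{A}_{R,A}(x,y)$ gives $|\langle f,\psi_\omega\rangle|\le M|I_\omega|^{s/n+1/2}$; combining this with the $L^\infty$ bound when $\ell(I_\omega)<y$ (contributing $\lesssim M\cdot 2^{-js}$ per scale) and with the smoothness bound when $\ell(I_\omega)\ge y$ (contributing $\lesssim M y^r 2^{j(r-s)}$ per scale), both geometric series peak at the crossover $2^{-j}\sim y$ and together sum to $\lesssim M y^s$. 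For the large range, the smoothness bound combined with the $\Lambda_s$-coefficient decay produces a series in $2^{j(r-s)}$ (positive exponent since $r>s$), dominated by the boundary scale $2^{-j}\sim Ry$ and summing to $\lesssim \|f\|_{\Lambda_s} y^s R^{-(r-s)}$. For the small range, the crude $L^\infty$ bound combined with the same coefficient decay gives a series in $2^{-js}$, dominated by $2^{-j}\sim y/R$ and summing to $\lesssim \|f\|_{\Lambda_s} y^s R^{-s}$. Dividing through by $y^s$ and taking the supremum over all $h$ with $|h|=y$ then yields the stated inequality; the finitely many scaling wavelets with $\omega\in\Omega_0$ (scale $j=0$) are absorbed into either the middle or large bucket depending on whether $Ry\ge1$, and are handled identically.

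The main obstacle is not any individual scale estimate but the bookkeeping surrounding convergence and the uniform choice of $A$. Since $\Lambda_s\not\subset L^2$, the wavelet expansion of $f$ must be justified to converge at least locally uniformly to $f$ at the stencil points; I would handle this in the standard way using the fine-scale coefficient decay above together with the local finiteness coming from compact supports of the mother wavelets. The geometric requirement that $A$ be chosen independently of $R$ and $y$ so as to capture every middle-scale wavelet whose stencil contributions survive reduces to an elementary bound in terms of $r$ and the diameters of the mother supports, and is therefore purely combinatorial. Everything else is a careful but routine summation in the three scale ranges.
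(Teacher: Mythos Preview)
Your proposal is correct and follows essentially the same approach as the paper: expand $f$ in the Daubechies basis, bound $|\Delta_h^r\psi_\omega(x)|$ by the minimum of the crude $L^\infty$ bound and the $y^r$-smoothness bound, and split the sum by scale into small ($\ell(I_\omega)<y/R$), large ($\ell(I_\omega)>Ry$), and middle ranges, using the $\Lambda_s$-coefficient decay for the outer ranges and the quantity $M$ for the middle one. The only cosmetic difference is that the paper isolates a fourth piece $\Gamma_4$ (middle scale but spatially far, i.e.\ $|x-c_{I_\omega}|>ARy$) and shows its contribution vanishes, whereas you accomplish the same thing up front by choosing $A$ so that every middle-scale wavelet whose support meets the stencil already lies in $\mathcal{A}_{R,A}(x,y)$; these are contrapositives of the same support computation.
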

\begin{proof}
Assume that $A_0\in(1,\infty)$ is such that,
for any
$\omega \in\Omega$,
$\mathop\mathrm{\,supp\,} \psi_\omega \subset
A_0 I_\omega $.
For any $y\in(0,\infty)$, let $h\in\mathbb{R}^n$
be such that $|h|=y$.
For any given $\omega \in\Omega$ and for any
$x\in\mathbb{R}^n$,
let $$W_{I_\omega,h}(x):=\max_{\{i\in\mathbb Z:
0\le i\le r\}}\mathbf{1}_{A_0I_{\omega}}\left(x
+\left[\frac r2-i\right]h\right).$$
Then,  for any
$x\in\mathbb{R}^n$,
\begin{align}\label{lkj}
|\Delta^r_h(\psi_\omega)(x)|\lesssim\min
\left\{|I_\omega|^{-\frac 12},|I_\omega|^{-\frac 12-r}y^r\right\}W_{I_\omega,h}(x).
\end{align}
From Lemma \ref{asqw}, we deduce that
\begin{align*}
f&=\sum_{\omega \in\Omega}\langle f,
\psi_{\omega}\rangle \psi_\omega
=\sum_{i=1}^4\sum_{\omega \in\Gamma_i}
\langle f,\psi_{\omega}\rangle \psi_\omega=:
\sum_{i=1}^4f_i,
\end{align*}
where
$
\Gamma_1:=\{\omega \in\Omega:\ell(I_{
\omega})\le R^{-1}y\}$,
$$
\Gamma_2:=\{\omega \in\Omega:\ell(I_{
\omega})\geq Ry\},\
\Gamma_3:=\mathcal{A}_{R,A}(x,y),
$$
and
$
\Gamma_4:=\{\omega \notin\mathcal{A}_{R,A}(x,y): R^{-1}y\le\ell(I_{\omega})\le Ry\}.
$
For $f_1$, using Lemma \ref{asqw} and \eqref{lkj},
we obtain, for any $x\in\mathbb{R}^n$,
\begin{align*}
\left|\Delta^r_h(f_1)(x)\right|&=\left|\sum_{
\omega \in\Gamma_1}\langle f,\psi_{\omega}\rangle \Delta^r_h(\psi_\omega)(x)\right|
\lesssim\sum_{\omega \in\Gamma_1}|\langle f,\psi_{\omega}\rangle||I_\omega|^{-\frac 12}
W_{I_\omega,h}(x)\\
&\lesssim\sum_{\{j\in\mathbb Z_+:2^{-j}\le R^{-1}y\}}
\sum_{\{\omega \in\Gamma_1:I_\omega\in\mathcal{D}_j\}}|
\langle f,\psi_{\omega}\rangle||I_\omega|^{-
\frac 12-\frac sn} |I_\omega|^{\frac sn}
W_{I_\omega,h}(x)\\
&\lesssim\sum_{\{j\in\mathbb Z_+:2^{-j}\le
R^{-1}y\}}2^{-js}\|f\|_{\Lambda_s}
\lesssim y^s \frac 1{R^s}\|f\|_{\Lambda_s}.
\end{align*}

As for $f_2$, by Lemma \ref{asqw} and
\eqref{lkj}, we conclude that, for any
$x\in\mathbb{R}^n$,
\begin{align*}
\left|\Delta^r_h(f_2)(x)\right|
&=\left|\sum_{\omega \in\Gamma_2}\langle
f,\psi_{\omega}\rangle \Delta^r_h(
\psi_\omega)(x)\right|\lesssim\sum_{\omega
\in\Gamma_2}|\langle f,\psi_{\omega}\rangle|
|I_\omega|^{-\frac 12-\frac rn}y^r W_{I_\omega,
h}(x)\\
&\lesssim\sum_{\{j\in\mathbb Z_+:2^{-j}\geq Ry\}}
\sum_{\{\omega \in\Gamma_1:I_\omega\in
\mathcal{D}_j\}}|\langle f,\psi_{\omega}
\rangle||I_\omega|^{-\frac 12-\frac sn}
|I_\omega|^{\frac sn-\frac rn}y^r W_{I_\omega,h}(x)\\
&\lesssim\sum_{\{j\in\mathbb Z_+:2^{-j}
\geq Ry\}}2^{-j(s-r)}y^r\|f\|_{\Lambda_s}
\lesssim y^s \frac 1{R^{r-s}}\|f\|_{\Lambda_s}.
\end{align*}

Next, we deal with $f_3$.
From Lemma \ref{asqw} and \eqref{lkj},
we infer that
\begin{align*}
\left|\Delta^r_h(f_3)(x)\right|&=\left|
\sum_{\omega \in\Gamma_3}\langle f,\psi_{
\omega}\rangle \Delta^r_h(\psi_\omega)(x)
\right|
\le\sup_{\omega\in\mathcal{A}_{R,A}(x,y)}
|I_{\omega}|^{-\frac12-\frac sn}
|\langle f,\psi_{\omega}\rangle|
\sum_{\omega \in\Gamma_3}|I_\omega|^{
\frac 12+\frac sn}\left|\Delta^r_h(
\psi_\omega)(x)\right|\\
&\lesssim\sup_{\omega\in\mathcal{A}_{R,A}
(x,y)}
|I_{\omega}|^{-\frac12-\frac sn}
|\langle f,\psi_{\omega}\rangle|
\left[\sum_{\{j\in\mathbb Z_+:2^{-j}\le y\}}
\sum_{\{\omega \in\Gamma_3:I_\omega\in
\mathcal{D}_j\}} |I_\omega|^{\frac sn}
W_{I_\omega,h}(x)\right.\\
&\quad\left.+
\sum_{\{j\in\mathbb Z_+:2^{-j}> y\}}
\sum_{\{\omega \in\Gamma_3:I_\omega
\in\mathcal{D}_j\}}
|I_\omega|^{\frac sn-\frac rn}y^r
W_{I_\omega,h}(x)\right]\\
&\lesssim\sup_{\omega\in\mathcal{A}_{R,
A}(x,y)}
|I_{\omega}|^{-\frac12-
\frac sn}
|\langle f,\psi_{\omega}\rangle| y^s.
\end{align*}

Finally, we estimate $f_4$.
We have
\begin{align*}
\Delta^r_h(f_4)(x)&=\sum_{\omega
\in\Gamma_4}\langle f,\psi_{\omega}
\rangle \Delta^r_h(\psi_\omega)(x).
\end{align*}
Observe that, for any $\omega \in\Omega$,
$\Delta^r_h(\psi_\omega)$ is supported in
$$\left\{x\in\mathbb{R}^n:\text{for some}\ i\in\{0,\ldots,r\},\
x+\left(\frac r2-i\right)h\in A_0I\right\}.$$
In what follows, for any $I:=2^{-j} ( k+[0,1)^n) \in\mathcal
D$ with $j\in\mathbb{Z}_+$ and $k\in \mathbb{Z}_+^n$,
let $c(I):=2^{-j}k$ and $\ell(I):=2^{-j}$.
Moreover, for any $\omega\in\Gamma_4$,
$\frac {\ell(I_\omega)}{R}\le y\le R\ell(I_\omega)$
and
$|x-c(I_\omega)|\geq ARy$.
From this, it follows that, if $A$ is
sufficiently large,
then, for any $i\in\{0,\ldots,r\}$,
\begin{align*}
\left|x+\left(\frac r2-i\right)h-c(
I_\omega)\right|\geq|x-
c(I_\omega)|-\left(\frac r2-i\right)y
\geq\left(AR-\frac r2\right)y\geq
\left(A-\frac r2\right)\ell(I_\omega)
\geq2A_0\ell(I_\omega),
\end{align*}
which further implies that $x+(
\frac r2-i)h\notin A_0 I$.
By this, we obtain, for any $\omega
\in\Gamma_4$,
$\Delta^r_h(\psi_\omega)(x)=0$,
which further implies that
$\Delta^r_h(f_4)(x)=0$.

Combining the estimates for
$\{f_i\}_{i=1}^4$,
we conclude that, for any $h\in
\mathbb{R}^n$ with $|h|=y$,
\begin{equation*}
\left|\frac{\Delta^r_h (f)(x)}
 { y^s}
\right| \lesssim\sup_{\omega
\in\mathcal{A}_{R,A}(x,y)}
|I_{\omega}|^{-\frac12-\frac sn}
|\langle f,\psi_{\omega}\rangle|+
\|f\|_{\Lambda_s}
\left[\left(\frac 1R\right)^s
+\left(\frac 1R\right)^{r-s}\right],
\end{equation*}
which completes
the proof of Lemma \ref{thm-6-5-0}.
\end{proof}

\begin{proof} [Proof of Theorem
\ref{cor-6-7-0}]
By Lemma \ref{thm-6-5-0}, we conclude
that  there exists a constant
$R_1\in(1,\infty)$, depending on
$\varepsilon$
and $f$, such that, for any
$(x,y)\in\mathbb{R}_+^{n+1}$,
\begin{equation*}
\frac{\Delta_r f(x,y)}
 { y^s} \leq C\sup_{
\omega\in\mathcal{A}_{R_1,A}(x,y)}
|I_{\omega}|^{-\frac12-\frac sn}
|\langle f,\psi_{\omega}\rangle|
+\frac {\varepsilon}2,
\end{equation*}
which further implies that, for any given
$(x,y)\in S_{r,j}(s,f, \varepsilon)$,
there exists
$\omega\in \Omega$ such that
\begin{equation}\label{kibo2}
\frac y R_1 \leq \ell(I_{\omega})
\le R_1y,\  |x-c_{I_{\omega}}|\leq A R_1y,
\end{equation}
and
$c\varepsilon\leq
|I_{\omega}|^{-\frac12 -\frac sn}
|\langle f,\psi_{\omega}\rangle|.$
Let $m\in\mathbb N$ be such that
$2^{-m-j-1}\le \ell(I_{\omega})<2^{m-j}.$
Then
$$I_{\omega}\in \bigcup_{i=j-m}^{j+m}
W_i^0(s,f, c\varepsilon),$$
where, for any $i<0$, $W_i^0(s,f, c
\varepsilon):=\emptyset$.
This further implies that
$$T(I_{\omega})\cap
\left[\bigcup_{i=j-m}^{j+m}T_{i}^0(s,f,
\varepsilon)\right]\neq\emptyset,$$
where,
for any $i<0$, $T_i^0(s,f, c\varepsilon)
:=\emptyset$.
From \eqref{kibo2}, it follows that
there exists a positive constant $R\in(1,
\infty)$ such that $(x,y)\in T(I_{\omega})_R$.
This proves that
$S_{r,j}(s,f, \varepsilon)
\subset\bigcup_{i=j-m}^{j+m}
[T_{i}^0(s,f, \varepsilon)]_R,$
which completes
the proof of Theorem \ref{cor-6-7-0}.
\end{proof}

\subsection{Proof of Theorem
\ref{pppz23}}\label{sec:3-3}

We also need the following convexification
concept of quasi-normed lattices
of function sequences.

\begin{definition}\label{tuhua}
Let $X$ be a quasi-normed lattice
of function sequences and $u\in(0,\infty)$.
The \emph{$u$-convexification} $X^u$ of $X$
is defined by setting
$X^u:=\{\mathbf{F}\in\mathscr{M}_{
\mathbb{Z}_+}(\mathbb{R}^n):\mathbf{F}=
\{| f_j|^u\}_{j\in\mathbb{Z}_+}\in X\}$
equipped with the quasi-norm $\|\mathbf{F}
\|_{X^u}:=\|\,\{| f_j|^u\}_{j\in\mathbb{Z}_+}
\|_{X}^\frac{1}{u}$
for any $\mathbf{F}\in X^u$.
\end{definition}
\begin{lemma}\label{a2.15s}
Let $R\in(0,\infty)$ and $\delta\in(0,1/4)$ with
$R>\delta$.
Let $X$  be a quasi-normed lattice of function
sequences satisfying Assumption \ref{a1}
for some $u\in(0,\infty)$.
Then there exists a positive constant $C$ such that,
for any sequence $\{A_j\}_{j\in\mathbb Z_+}$ with $A_j$
for any $j\in\mathbb Z_+$
being a measurable subset of $\mathbb{R}^{n+1}_+$,
\begin{equation*}
\left\|\left\{\int_{0}^\infty \mathbf{1}_{
(A_j)_R}(\cdot,y)\,\frac{dy}{y}
\right\}_{j\in\mathbb Z_+}\right\|_{X^u}
\le C\left\|\left\{\int_{0}^\infty
\mathbf{1}_{(A_j)_{\delta}}(\cdot,y)\,
\frac{dy}{y}
\right\}_{j\in\mathbb Z_+}\right\|_{X^u}.
\end{equation*}
\end{lemma}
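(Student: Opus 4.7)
\smallskip

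\noindent\textbf{Proof proposal.} The plan is to discretize the hyperbolic neighborhoods via a Vitali-type selection, convert hyperbolic indicators into Euclidean ones by integrating out the vertical variable, and then apply Assumption \ref{a1} with a suitably large dilation parameter $\beta$. Fix $j\in\mathbb Z_+$. First, I would choose (by Zorn's lemma) a maximal $(\delta/4)$-separated subset $\{\mathbf{z}_{j,k}\}_{k\in\mathbb N}\subset A_j$ with respect to the hyperbolic metric $\rho$. By maximality,
\[
A_j\subset\bigcup_{k\in\mathbb N} B_\rho(\mathbf{z}_{j,k},\delta/4),\qquad\text{so}\qquad (A_j)_R\subset\bigcup_{k\in\mathbb N} B_\rho(\mathbf{z}_{j,k},R+\delta/4),
\]
while the balls $\{B_\rho(\mathbf{z}_{j,k},\delta/8)\}_k$ are pairwise disjoint and contained in $(A_j)_\delta$ (since $R>\delta$ and $\mathbf{z}_{j,k}\in A_j$). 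Separability of $\mathbb{R}^{n+1}_+$ guarantees countability.

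The second step is to identify, for each pair $(k,j)$, two Euclidean balls $B_{k,j}^{\mathrm{in}}\subset B_{k,j}^{\mathrm{out}}$ in $\mathbb{R}^n$ centered at the horizontal projection of $\mathbf{z}_{j,k}$, whose radii are comparable to $(\mathbf{z}_{j,k})_{n+1}$ with constants depending only on $\delta$ and $R$, respectively. Invoking the basic hyperbolic-metric facts collected in the Appendix (in the spirit of \eqref{dafg}, \eqref{dafg1}, \eqref{dafg2}), I would establish the two pointwise bounds
\[
\int_0^\infty\mathbf{1}_{B_\rho(\mathbf{z}_{j,k},R+\delta/4)}(x,y)\,\frac{dy}{y}\le C_{R,\delta}\,\mathbf{1}_{B_{k,j}^{\mathrm{out}}}(x)
\]
and
\[
\int_0^\infty\mathbf{1}_{B_\rho(\mathbf{z}_{j,k},\delta/8)}(x,y)\,\frac{dy}{y}\ge c_{\delta}\,\mathbf{1}_{B_{k,j}^{\mathrm{in}}}(x),
\]
where $B_{k,j}^{\mathrm{out}}\subset\beta B_{k,j}^{\mathrm{in}}$ for some $\beta=\beta(R,\delta)\in(1,\infty)$ independent of $k,j$. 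Summing the first inequality over $k$ (using that the union bound gives at most a sum of indicators) and, by disjointness, summing the second as an equality, I obtain
\[
\int_0^\infty\mathbf{1}_{(A_j)_R}(x,y)\,\frac{dy}{y}\le C_{R,\delta}\sum_{k\in\mathbb N}\mathbf{1}_{\beta B_{k,j}^{\mathrm{in}}}(x),\quad c_\delta\sum_{k\in\mathbb N}\mathbf{1}_{B_{k,j}^{\mathrm{in}}}(x)\le\int_0^\infty\mathbf{1}_{(A_j)_\delta}(x,y)\,\frac{dy}{y}.
\]

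The third step is to combine these pointwise bounds with the monotonicity of $\|\cdot\|_X$ (which transfers to $\|\cdot\|_{X^u}$ by definition) and Assumption \ref{a1}. Indeed, using the lattice property and the identity $\|\cdot\|_{X^u}=\|\{|\cdot|^u\}\|_X^{1/u}$,
\[
\left\|\left\{\int_0^\infty\mathbf{1}_{(A_j)_R}(\cdot,y)\,\frac{dy}{y}\right\}_{j\in\mathbb Z_+}\right\|_{X^u}\le C_{R,\delta}\left\|\left\{\left|\sum_{k\in\mathbb N}\mathbf{1}_{\beta B_{k,j}^{\mathrm{in}}}\right|^u\right\}_{j\in\mathbb Z_+}\right\|_X^{1/u},
\]
and Assumption \ref{a1} (applied with the parameter $\beta$) bounds the last quantity by a constant times the corresponding expression with $B_{k,j}^{\mathrm{in}}$ in place of $\beta B_{k,j}^{\mathrm{in}}$. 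Reversing the lattice estimate via the second displayed inequality yields the claim, with $C=C(R,\delta,u)$.

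The main obstacle I expect is the second step: carefully translating the geometric comparison between hyperbolic balls and their Euclidean shadows into the two clean pointwise bounds above, so that the same Euclidean ball $B_{k,j}^{\mathrm{in}}$ appears on both sides (up to the fixed dilation $\beta$). The vertical layering of $B_\rho(\mathbf{z}_{j,k},\delta/8)$ is essential for producing the lower bound with a constant independent of $k,j$, while keeping $\beta$ finite in the upper bound requires knowing that the Euclidean radius of a hyperbolic ball of fixed radius centered at $\mathbf{z}$ is proportional to $z_{n+1}$; this is precisely the content of the metric estimates already prepared in the Appendix. A secondary point is that Assumption \ref{a1} is stated for sequences indexed by $k\in\mathbb N$, so care must be taken to re-index the (at most) countable families $\{B_{k,j}^{\mathrm{in}}\}_k$ uniformly in $j$, which causes no real difficulty.
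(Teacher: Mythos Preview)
Your proposal is correct and follows essentially the same approach as the paper's proof: both discretize each $A_j$ via a maximal $\rho$-separated net, bound $(A_j)_R$ from above by a union of large hyperbolic balls and $(A_j)_\delta$ from below by a disjoint union of small ones, convert these to concentric Euclidean balls via the comparison estimates \eqref{dafg}--\eqref{dafg2}, integrate out the vertical variable, and finish with Assumption~\ref{a1}. The only differences are cosmetic choices of constants (you use a $\delta/4$-net and $\delta/8$-balls, the paper uses a $2\delta$-net and $\delta$-balls).
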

\begin{proof}
Fix $j\in\mathbb Z_+$. By the Zorn lemma,
we conclude that there exists
a maximal subset $P_j$ of $A_j$ such that
$\rho(\mathbf{z},\mathbf{z}' )\ge 2\delta$
for any $\mathbf{z},\mathbf{z}' \in P_j$
with $\mathbf{z}\neq\mathbf{z}'$.
From this, we deduce that
$A_j\subset \bigcup_{\mathbf{z}\in P_j}
B_\rho(\mathbf{z}, 2\delta)$,
which further implies that
$(A_j)_R\subset \bigcup_{\mathbf{z}\in P_j}
B_\rho(\mathbf{z}, R+2\delta)$.
Using this and both (i) and (iv) of Lemma
\ref{ddaf},  we conclude that
$P_j$ for any $j\in\mathbb Z_+$ is a
countable set and
there exists a positive constant $C$
such that,
for any $j\in\mathbb Z_+$ and
$(x,y)\in{\mathbb R}_+^{n+1}$,
\begin{align*}
\mathbf{1}_{(A_j)_R}(x,y) &\leq
\sum_{\mathbf{z}\in P_j}\mathbf{1}_{
B_\rho (\mathbf{z}, R+2\delta)}(x,y)
\le\sum_{(z,z_{n+1})\in P_j}
\mathbf{1}_{B(z, Cz_{n+1})}(x)
\mathbf{1}_{[C^{-1}z_{n+1}, Cz_{n+1}]}(y),
\end{align*}
which further implies that
\begin{align}\label{boooi}
\int_{0}^\infty \mathbf{1}_{(A_j)_R}(x,y)
\,\frac{dy}{y}&\leq
\sum_{(z,z_{n+1})\in P_j}
\mathbf{1}_{B(z, Cz_{n+1})}(x)
\int_{0}^\infty\mathbf{1}_{[C^{-1}z_{n+1},
Cz_{n+1}]}(y)\,\frac{dy}{y}\nonumber\\
&\lesssim\sum_{(z,z_{n+1})\in P_j}
\mathbf{1}_{B(z, Cz_{n+1})}(x).
\end{align}
For any $\mathbf{z}:=(z,z_{n+1})\in P_j$,
let $I_{\mathbf{z}}:=B(z,\delta z_{n+1}/4)$.
From Lemma \ref{ddaf}(i),
we infer that
\begin{align}\label{boooi2}
B_\rho (\mathbf{z}, \delta)\supset
I_{\mathbf{z}}\times
\left[\left(1-\frac \delta4\right)z_{n+1},
\left(1+\frac \delta4\right)z_{n+1}\right]
=:J(\mathbf{z}, \delta).
\end{align}
Let $\alpha\in(1,\infty)$ be such that
$\alpha p/q>1$.
Observe that, for any $x\in\mathbb{R}^n$,
$
\mathbf{1}_{B(z, Cz_{n+1})}(x)=\mathbf{1}_{
(4C/\delta)I_{\mathbf{z}}}(x).
$
From this, Assumption \ref{a1}, \eqref{boooi},
and
\eqref{boooi2},
we deduce that
\begin{align*}
\left\|\left\{\int_{0}^\infty \mathbf{1}_{
(A_j)_R}(\cdot,y)\,\frac{dy}{y}
\right\}_{j\in\mathbb Z_+}\right\|_{X^u}&
\lesssim\left\|\left\{
\sum_{(z,z_{n+1})\in P_j}
\mathbf{1}_{B(z, Cz_{n+1})}
\right\}_{j\in\mathbb Z_+}\right\|_{X^u}
\lesssim\left\|\left\{
\sum_{(z,z_{n+1})\in P_j}
\mathbf{1}_{I_{\mathbf{z}}}\right\}_{j\in
\mathbb Z_+}\right\|_{X^u}\nonumber\\
&\lesssim\left\|\left\{
\sum_{(z,z_{n+1})\in P_j}
\int_{0}^\infty\mathbf{1}_{J(\mathbf{z}, \delta)}(\cdot,y)\,\frac{dy}{y}\right\}_{j\in\mathbb Z_+}\right\|_{X^u}\nonumber\\
&\lesssim\left\|\left\{
\sum_{(z,z_{n+1})\in P_j}
\int_{0}^\infty\mathbf{1}_{B_\rho (\mathbf{z}, \delta)}(\cdot,y)\,\frac{dy}{y}\right\}_{j\in\mathbb Z_+}\right\|_{X^u}\nonumber\\
&\lesssim\left\|\left\{\int_{0}^\infty \mathbf{1}_{(A_j)_{\delta}}(\cdot,y)\,\frac{dy}{y}
\right\}_{j\in\mathbb Z_+}\right\|_{X^u}\nonumber,
\end{align*}
which completes the proof of Lemma \ref{a2.15s}.
\end{proof}

\begin{lemma}\label{a2.15ss}
Let $R\in(0,\infty)$ and let
$X$  be a quasi-normed lattice of function
sequences satisfying Assumption \ref{a1}
for some $u\in(0,\infty)$.
Then there exists a positive constant $C$ such that,
for any $k\in\mathbb N$ and any sequence
$\{I_{k,j}\}_{j\in\mathbb Z_+}$ with
$I_{k,j}\in \mathcal D$ for any $j\in\mathbb Z_+$,
\begin{equation*}
\left\|\left\{\int_{0}^\infty \mathbf{1}_{
(\bigcup_{k\in\mathbb N}T(I_{k,j}))_R}(\cdot,y)\,\frac{dy}{y}
\right\}_{j\in\mathbb Z_+}\right\|_{X^u}
\le C\left\|\left\{\int_{0}^\infty
\mathbf{1}_{\bigcup_{k\in\mathbb N}T(
I_{k,j})}(\cdot,y)\,\frac{dy}{y}
\right\}_{j\in\mathbb Z_+}\right\|_{X^u}.
\end{equation*}
\end{lemma}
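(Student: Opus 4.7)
The plan is to reduce this to a direct calculation using the dyadic structure of the sets $T(I) = I \times (\ell(I)/2, \ell(I)]$, bypassing Lemma \ref{a2.15s} in favor of a sharper, structure-aware estimate, and then invoking Assumption \ref{a1} to dilate cubes. For each $j \in \mathbb Z_+$ and each $i \in \mathbb Z_+$, let $\mathcal V_{j,i}$ be the collection of distinct cubes among $\{I_{k,j}\}_{k\in\mathbb N}$ lying in $\mathcal D_i$, and set $V_{j,i} := \bigcup_{I \in \mathcal V_{j,i}} I$. Since cubes at a common dyadic level are pairwise disjoint and the tents $T(I)$ with $I \in \mathcal D_i$ share the $y$-range $(2^{-i-1},2^{-i}]$ while tents at different levels have disjoint $y$-ranges, the set $A_j := \bigcup_{k\in\mathbb N} T(I_{k,j})$ admits the clean decomposition
\begin{equation*}
A_j = \bigcup_{i\in\mathbb Z_+} V_{j,i} \times (2^{-i-1}, 2^{-i}],
\qquad
\int_0^\infty \mathbf{1}_{A_j}(x,y)\,\frac{dy}{y}
= (\log 2) \sum_{i\in\mathbb Z_+} \sum_{I \in \mathcal V_{j,i}} \mathbf{1}_I(x).
\end{equation*}

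Next, using the hyperbolic geometry facts in the Appendix (in particular the type of bounds extracted in \eqref{dafg1} and \eqref{dafg2}), any point $(y, y_{n+1}) \in (V_{j,i} \times (2^{-i-1}, 2^{-i}])_R$ satisfies $y_{n+1} \in [c_R 2^{-i}, C_R 2^{-i}]$ and $\mathrm{dist}(y, V_{j,i}) \le C_R 2^{-i}$ for constants $c_R, C_R$ depending only on $R$. Because $V_{j,i}$ is a union of cubes of side $2^{-i}$, its Euclidean $C_R 2^{-i}$-neighborhood is contained in $\bigcup_{I \in \mathcal V_{j,i}} \beta I$ with $\beta := 1 + 2 C_R$. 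Integrating in $y$ over the bounded range $[c_R 2^{-i}, C_R 2^{-i}]$ (which contributes a factor $\log(C_R/c_R)$) then gives the pointwise bound
\begin{equation*}
\int_0^\infty \mathbf{1}_{(A_j)_R}(x,y)\,\frac{dy}{y}
\lesssim \sum_{i\in\mathbb Z_+} \sum_{I \in \mathcal V_{j,i}} \mathbf{1}_{\beta I}(x).
\end{equation*}

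To finish, I would enumerate the pairs $(i, I)$ with $I \in \mathcal V_{j,i}$ as a single index $k$ for each $j$, replace each dyadic cube $I$ by its inscribed ball $b_I \subset I$ (and observe $I \subset \sqrt n\, b_I$, so $\beta I \subset \beta\sqrt n\, b_I$), and apply Assumption \ref{a1} with dilation factor $\beta\sqrt n$. This yields
\begin{equation*}
\left\|\left\{\sum_{i\in\mathbb Z_+} \sum_{I \in \mathcal V_{j,i}} \mathbf{1}_{\beta I}\right\}_{j\in\mathbb Z_+}\right\|_{X^u}
\lesssim \left\|\left\{\sum_{i\in\mathbb Z_+} \sum_{I \in \mathcal V_{j,i}} \mathbf{1}_{I}\right\}_{j\in\mathbb Z_+}\right\|_{X^u},
\end{equation*}
and the right-hand side equals $(\log 2)^{-1} \|\{\int_0^\infty \mathbf{1}_{A_j}(\cdot,y)\,dy/y\}_{j\in\mathbb Z_+}\|_{X^u}$ by the identity above, giving the lemma.

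The main technical obstacle is the hyperbolic-to-Euclidean comparison in the middle step: one must check that an $R$-neighborhood of a horizontal strip $V_{j,i} \times (2^{-i-1}, 2^{-i}]$ is contained in a modestly enlarged strip with $y$-width bounded by a constant depending only on $R$, so that the $dy/y$-integration contributes only a bounded factor. This is exactly where the structure of tents (uniform $y$-scale $\sim 2^{-i}$) is essential and is what allows dropping the extra $\|\int_0^\infty \mathbf{1}_{(A_j)_\delta}(\cdot,y)\,dy/y\|_{X^u}$ barrier present in Lemma \ref{a2.15s}.
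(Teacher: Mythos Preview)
Your proof is correct and follows essentially the same approach as the paper's: both bound the hyperbolic $R$-neighborhood of each tent $T(I)$ by a Euclidean box of comparable scale (the paper via Lemmas~\ref{lem-3-2-0} and~\ref{lem-3-24}, you via the same underlying estimates), integrate in $y$ to reduce to a sum of indicators of dilated cubes/balls, and then invoke Assumption~\ref{a1}. Your grouping by dyadic level and explicit removal of duplicates via $\mathcal V_{j,i}$ is a minor organizational variation on the paper's direct enumeration by $k$; it has the small benefit of making the final identity $\int_0^\infty \mathbf{1}_{A_j}\,dy/y = (\log 2)\sum_{i,I}\mathbf{1}_I$ exact rather than an inequality, which cleanly justifies the last step (the paper's final $\lesssim$ tacitly uses that the relevant cubes are distinct).
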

\begin{proof}
By Lemmas \ref{lem-3-2-0} and \ref{lem-3-24},
we conclude that
there exist positive constants $C_1,C_2$
such that, for any
$(x,y)\in{\mathbb R}_+^{n+1}$,
\begin{align*}
\mathbf{1}_{(\bigcup_{k\in\mathbb N}
T(I_{k,j}))_R}(x,y)
&\leq\sum_{k\in\mathbb N}\mathbf{1}_{
(T(I_{k,j}))_R}(x,y)\le\sum_{k\in\mathbb N}
\mathbf{1}_{B_\rho((c(I_{k,j}),\ell(I_{k,j
})), C_1+R)}(x,y)\\
&\le\sum_{k\in\mathbb N}
\mathbf{1}_{B(c(I_{k,j}),C_2\ell(I_{k,j}))
}(x)
\mathbf{1}_{[C_2^{-1}\ell(I_{k,j}), C_2
\ell(I_{k,j})]}(y),
\end{align*}
which further implies that
\begin{align*}
\int_{0}^\infty \mathbf{1}_{(\bigcup_{
k\in\mathbb N}T(I_{k,j}))_R}(x,y) \,\frac{dy}{y}&\leq
\sum_{k\in \mathbb N}
\mathbf{1}_{B(c(I_{k,j}),C_2\ell(I_{k,j}))}(x)
\int_{0}^\infty\mathbf{1}_{[C_2^{-1}\ell(I_{k,j}), C_2\ell(I_{k,j})]}(y)
\,\frac{dy}{y}\\\nonumber&
\lesssim\sum_{k\in \mathbb N}
\mathbf{1}_{B(c(I_{k,j}),C_2\ell(I_{k,j}))}(x).
\end{align*}
From this and Assumption \ref{a1},
we infer that
\begin{align*}
\left\|\left\{\int_{0}^\infty \mathbf{1}_{
(\bigcup_{k\in\mathbb N}T(I_{k,j}))_R}
(\cdot,y)\,\frac{dy}{y}
\right\}_{j\in\mathbb Z_+}\right\|_{X^u}&
\lesssim\left\|\left\{
\sum_{k\in \mathbb N}
\mathbf{1}_{B(c(I_{k,j}),C_2\ell(I_{k,j}))}
\right\}_{j\in\mathbb Z_+}\right\|_{X^u}
\lesssim\left\|\left\{
\sum_{k\in \mathbb N}
\mathbf{1}_{I_{k,j}}\right\}_{j\in
\mathbb Z_+}\right\|_{X^u}\nonumber\\
&\lesssim\left\|\left\{\int_{0}^\infty
\mathbf{1}_{\bigcup_{k\in\mathbb N}
T(I_{k,j})}(\cdot,y)\,\frac{dy}{y}
\right\}_{j\in\mathbb Z_+}\right\|_{X^u}
\nonumber,
\end{align*}
which completes the proof of Lemma
\ref{a2.15s}.
\end{proof}

\begin{proof}[Proof of Theorem \ref{pppz23}]
We first show that
$\mathop\mathrm{\,dist\,}(f,
\Lambda_X^{s})_{\Lambda_s}\lesssim
\text{the right-hand side of \eqref{pofwj}}.$
Let $\varepsilon_1>\varepsilon$ and
$\delta$ be the same  as  in Theorem
\ref{thm-3adf}.
From Theorem \ref{cor-6-2-0}, Lemma
\ref{dda2f}, Assumption \ref{a1}, and
Lemma \ref{a2.15s},
it follows that there exist positive
constants $c\in(0,1)$ and $R\in(1,\infty)$
such that
\begin{align*}
\left\|\left\{
\sum_{I\in W_j(s, f, \varepsilon)}
\mathbf{1}_{I}
\right\}_{j\in\mathbb{Z}_+}
\right\|_{X}&\sim\left\|\left\{\int_{0}^\infty
\mathbf{1}_{T_{j}(s,f, \varepsilon)}(\cdot,y)
\,\frac{dy}{y}
\right\}_{j\in\mathbb Z_+}\right\|_{X^u}^u
\lesssim
\left\|\left\{\int_{0}^\infty \mathbf{1}_{
(S_{r,j+1}(s,f, c\varepsilon))_R}(
\cdot,y)\,\frac{dy}{y}
\right\}_{j\in\mathbb Z_+}\right\|_{X^u}^u\\
&\lesssim
\left\|\left\{\int_{0}^\infty \mathbf{1}_{
(S_{r,j+1}(s,f, c\varepsilon))_\delta}
(\cdot,y)\,\frac{dy}{y}
\right\}_{j\in\mathbb Z_+}\right\|_{X^u}^u,
\end{align*}
which, together with Theorem \ref{thm-3adf}
and Assumption \ref{a1},
further implies that
\begin{align*}
\left\|\left\{
\sum_{I\in W_j(s, f, \varepsilon)}
\mathbf{1}_{I}
\right\}_{j\in\mathbb{Z}_+}
\right\|_{X}&\lesssim
\left\|\left\{\int_{0}^\infty \mathbf{1}_{
S_{r,j}(s,f, c\varepsilon_1)}(\cdot,y)\,\frac{dy}{y}
\right\}_{j\in\mathbb Z_+}\right\|_{X^u}^u.
\end{align*}
This, combined with Corollary \ref{pp}(i),
implies the desired conclusion.

Now, we prove that
$\text{the right-hand side of \eqref{pofwj}}
\lesssim\mathop\mathrm{\,dist\,}(f,
\Lambda_X^{s})_{\Lambda_s}.$
Indeed, using Lemma \ref{dda2f}, Theorem
\ref{cor-6-7-0}, and
Lemma \ref{a2.15ss},
we conclude that  there exist positive
constants $c\in(0,1)$ and $R\in(1,\infty)$
such that
\begin{align*}
\left\|\left\{\int_{0}^\infty \mathbf{1}_{S_{r,j}(s,f, \varepsilon)}(\cdot,y)\,\frac{dy}{y}
\right\}_{j\in\mathbb Z_+}\right\|_{X^u}^u&
\lesssim
\left\|\left\{\int_{0}^\infty \mathbf{1}_{
(T_{j}^0(s,f, c\varepsilon))_R}(\cdot,y)\,
\frac{dy}{y}
\right\}_{j\in\mathbb Z_+}\right\|_{X^u}^u
\\
&\lesssim
\left\|\left\{\int_{0}^\infty \mathbf{1}_{T_{j}^0(s,f, c\varepsilon)}(\cdot,y)\,\frac{dy}{y}
\right\}_{j\in\mathbb Z_+}\right\|_{X^u}^u
\sim\left\|\left\{
\sum_{I\in W_j^0(s, f, c\varepsilon)}
\mathbf{1}_{I}
\right\}_{j\in\mathbb{Z}_+}
\right\|_{X},
\end{align*}
which completes the proof of Theorem
\ref{pppz23}.
\end{proof}
\begin{proof}[Proof of Theorem
\ref{pppz2a}]
Observe that $f\in\overline{\Lambda_X^{s}}^{
\Lambda_s}$ if and only if
$f\in \Lambda_s$
and $\mathop\mathrm{\,dist\,}(f, \Lambda_X^{s})_{
\Lambda_s}=0$.
From this and  Theorem \ref{pppz23},
it follows that, when $f\in \Lambda_s$,
$\mathop\mathrm{\,dist\,}(f, \Lambda_X^{s})_{
\Lambda_s}=0$
if and only if, for any $\varepsilon\in(0,\infty)$,
$$
\left\|\left\{ \left|\int_{0}^\infty\mathbf{1}_{
S_{r,j}(s,f, \varepsilon)}
(\cdot,y)\,\frac{dy}{y} \right|^u \right\}_{
j\in\mathbb Z_+}
\right\|_{X}^{\frac 1u} +\left\|\left\{
\sum_{I\in V_0(s, f, \varepsilon)}
\mathbf{1}_{I}\delta_{0,j}
\right\}_{j\in\mathbb{Z}_+}
\right\|_{X}
<\infty.
$$
Thus, (i) $\Longleftrightarrow$ (ii),
which completes the proof of Theorem \ref{pppz2a}.
\end{proof}

\subsection{Proof of Theorem \ref{thm-7-11}
}\label{sec:3-4}

\begin{proof}[Proof of Theorem \ref{thm-7-11}]
We first show that
$\mathop\mathrm{\,dist\,}(f, \Lambda_X^{s})_{\Lambda_s}\lesssim
\text{the right-hand side of \eqref{mop}}.$
Let $\varepsilon_1>\varepsilon$ and $\delta$ be
the same  as  in Theorem \ref{thm-3adf}.
From Theorem \ref{cor-6-2-0} and Lemma \ref{dda2f},
it follows that  there exist positive constants
$c\in(0,1)$ and $R\in(1,\infty)$ such that
\begin{align*}
\nu\left(\left\{T_{j}(s,f, \varepsilon)
\right\}_{j\in\mathbb Z_+}\right)
\lesssim
\nu\left(\left\{\left(S_{r,j+1}(s,f,
c\varepsilon)\right)_R
\right\}_{j\in\mathbb Z_+}\right)
\lesssim
\nu\left(\left\{\left(S_{r,j+1}(s,f,
c\varepsilon)\right)_\delta
\right\}_{j\in\mathbb Z_+}\right),
\end{align*}
which, together with Theorem \ref{thm-3adf},
Definition \ref{Debqf2s}, and Assumption \ref{pplp},
further implies that,
\begin{align*}
\mathrm{if}\ \nu\left(\left\{S_{r,j}(s,f,
c\varepsilon_1)
\right\}_{j\in\mathbb Z_+}\right)<\infty,\
\mathrm{then}\
\left\|\left\{
\sum_{I\in W_j(s, f, \varepsilon)}
\mathbf{1}_{I}
\right\}_{j\in\mathbb{Z}_+}
\right\|_{X}<\infty.
\end{align*}
This, combined with Corollary \ref{pp}(i),
further implies the desired conclusion.

Next, we show that
\begin{align}\label{modq}
\text{the right-hand side of \eqref{mop}
}&\lesssim\mathop\mathrm{
\,dist\,}\left(f, \Lambda_X^{s}\right)_{
\Lambda_s}.
\end{align}
Indeed, by Lemma \ref{dda2f} and Theorem
\ref{cor-6-7-0},
we find that  there exist positive constants
$c\in(0,1)$ and $R\in(1,\infty)$ such that
\begin{align*}
\nu\left(\left\{S_{r,j}(s,f, \varepsilon)
\right\}_{j\in\mathbb Z_+}\right)&\lesssim
\nu\left(\left\{\left(T_{j}^0(s,f, c\varepsilon)
\right)_R\right\}_{j\in\mathbb Z_+}\right)
\lesssim\nu
\left(\left\{T_{j}^0(s,f, c\varepsilon)
\right\}_{j\in\mathbb Z_+}\right),
\end{align*}
which, together with Assumption \ref{pplp}
and Definition \ref{Debqf2s},
further implies that,
\begin{align*}
\mathrm{if}\ \left\|\left\{
\sum_{I\in W_j(s, f, c\varepsilon)}
\mathbf{1}_{I}
\right\}_{j\in\mathbb{Z}_+}
\right\|_{X}<\infty,\ \mathrm{then}\
\nu\left(\left\{S_{r,j}(s,f, \varepsilon)
\right\}_{j\in\mathbb Z_+}\right)<\infty.
\end{align*}
From this, we deduce that \eqref{modq} holds.
This finishes the proof of Theorem \ref{thm-7-11}.
\end{proof}

\section{Applications to specific
spaces\label{sea8}}

In this section, we apply the main framework-based results
obtained in the previous sections
to some specific well-known function spaces,
precisely, to Sobolev spaces and $\mathrm{J}_s(\mathop\mathrm{\,bmo\,})$
in Subsection \ref{2.1}, to Besov spaces in Subsection \ref{5.2}, to Triebel--Lizorkin spaces in
Subsection \ref{5.3}, to Besov-type spaces in Subsection \ref{fwefm}, to Triebel--Lizorkin-type spaces in
Subsection \ref{fwefm2}.
These examples demonstrate that the results presented in
this article are of wide general applicability and operational
feasibility,
and they can definitely be applied to more function spaces;
see, for example, \cite{F1,F2,s011a}.

\subsection{Sobolev spaces and  $\mathrm{J}_s(\mathop\mathrm{\,bmo\,})$}\label{2.1}

We begin with Sobolev spaces $W^{1,p}$ with
$p\in(1,\infty)$.
To this end, we need to introduce more concept.
Let $q,p\in(0,\infty]$. The \emph{space}
$L^p(l^q)_{\mathbb{Z}_+}$ is
defined to be the set of all
$\mathbf{F}:=\{f_j\}_{j\in\mathbb{Z}_+}
\in \mathscr{M}_{\mathbb{Z}_+}$
such that
$\|\mathbf{F}\|_{L^p(l^q)_{\mathbb{Z}_+}}
:=\|[\sum_{j\in\mathbb{Z}_+}
|f_j|^q]^{\frac{1}{q}}\|_{L^p}<\infty,$
where the usual modification is made
when $q=\infty$.
The space $F_{\infty,q}(\mathbb{R}^n,\mathbb{Z}_+)$
is defined to
 be the set of all
$\mathbf{F}:=\{f_j\}_{j\in\mathbb{Z}_+}
\in \mathscr{M}_{\mathbb{Z}_+}$
such that
\begin{align*}
\|f\|_{F_{\infty ,q}(\mathbb{R}^n,\mathbb{Z}_+)}:=
\sup_{l\in\mathbb Z_+,m\in\mathbb Z^n}
\left\{\fint_{I_{l,m}}\sum_{j=l}^{\infty}
\left|f_j(x)\right|^q\,dx\right\}^{\frac 1q}<\infty,
\end{align*}
where the usual modification is made
when $q=\infty$.

From these definitions, it is easy to infer that
$L^p(l^q)_{\mathbb{Z}_+}$ and
$F_{\infty,q}(\mathbb{R}^n,\mathbb{Z}_+)$
are quasi-normed lattices of function sequences.

Here, and thereafter, the \emph{Hardy--Littlewood
maximal operator} $\mathcal M$
is defined by setting, for any $f\in L_{
{\mathop\mathrm{\,loc\,}}}^1$
and $x\in\mathbb{R}^n$,
\begin{equation*}
\mathcal M(f)(x):=\sup_{B\ni x}\frac1{|B|}
\int_B|f(y)|\,dy,
\end{equation*}
where the supremum is taken over all balls
$B\subset\mathbb{R}^n$
containing $x$.

\begin{lemma}\label{as123jl}
Let  $s,p\in(0,\infty)$ and $q\in(0,\infty]$.
Then $ L^p(l^q)_{\mathbb{Z}_+}$ satisfies
Assumption \ref{a1}.
\end{lemma}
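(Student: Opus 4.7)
The plan is to verify Assumption \ref{a1} for $X=L^p(l^q)_{\mathbb{Z}_+}$ by choosing a convexification exponent $u$ so that $X^u$ can be identified with a standard mixed-norm Lebesgue space, and then establishing the resulting dilation inequality by duality combined with the Fefferman--Stein vector-valued maximal inequality.

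Concretely, I first pick $u\in(0,\infty)$ so that $P:=pu>1$ and $Q:=qu>1$ (with the convention $\infty\cdot u:=\infty$); this is possible because $p$ is finite. Using the identity $\|h\|_{L^r}^{1/u}=\|h^{1/u}\|_{L^{ru}}$ for nonnegative $h$ and interchanging the $(\cdot)^{1/u}$-power with the $l^q$-layer, a direct computation gives
\begin{align*}
\bigg\|\bigg\{\bigg|\sum_{k\in\mathbb{N}}\mathbf{1}_{\beta B_{k,j}}\bigg|^u\bigg\}_j\bigg\|_{L^p(l^q)}^{1/u}
=\bigg\|\bigg\{\sum_{k\in\mathbb{N}}\mathbf{1}_{\beta B_{k,j}}\bigg\}_j\bigg\|_{L^P(l^Q)},
\end{align*}
and an analogous identity with $B_{k,j}$ in place of $\beta B_{k,j}$. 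Thus the task reduces to proving the dilation inequality
\begin{align*}
\bigg\|\bigg\{\sum_{k}\mathbf{1}_{\beta B_{k,j}}\bigg\}_j\bigg\|_{L^P(l^Q)}
\le C\bigg\|\bigg\{\sum_{k}\mathbf{1}_{B_{k,j}}\bigg\}_j\bigg\|_{L^P(l^Q)}.
\end{align*}

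For the main case $P,Q\in(1,\infty)$, I argue by duality. Given any sequence $\{g_j\}_j$ of nonnegative functions with $\|\{g_j\}\|_{L^{P'}(l^{Q'})}\le 1$, the elementary estimate $|\beta B|^{-1}\int_{\beta B}g\le\mathcal{M}g(y)$ (valid for every $y\in B$) yields $\int_{\beta B_{k,j}}g_j\le\beta^n\int_{B_{k,j}}\mathcal{M}g_j$. Summing over $k$ and $j$ and then applying H\"older's inequality in the pair $L^P(l^Q)$--$L^{P'}(l^{Q'})$ together with the Fefferman--Stein vector-valued maximal inequality (valid since $P',Q'>1$), one obtains
\begin{align*}
\int\sum_j\bigg(\sum_k\mathbf{1}_{\beta B_{k,j}}\bigg)g_j
&\le\beta^n\int\sum_j\bigg(\sum_k\mathbf{1}_{B_{k,j}}\bigg)\mathcal{M}g_j\\
&\le C\bigg\|\bigg\{\sum_k\mathbf{1}_{B_{k,j}}\bigg\}\bigg\|_{L^P(l^Q)}.
\end{align*}
Taking the supremum over admissible $\{g_j\}$ concludes this case.

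The main obstacle is the endpoint $q=\infty$ (hence $Q=\infty$ and $Q'=1$), where Fefferman--Stein on $L^{P'}(l^1)$ fails because $\mathcal{M}$ is unbounded on $L^1$. To handle it, I would linearize the supremum via a measurable argmax selector $j^\ast(x)$, writing $\sup_j\sum_k\mathbf{1}_{\beta B_{k,j}}=\sum_j(\sum_k\mathbf{1}_{\beta B_{k,j}})\mathbf{1}_{E_j}$ with $E_j:=\{j^\ast=j\}$ pairwise disjoint, and run the same duality scheme against the disjointly supported sequences $\{g\mathbf{1}_{E_j}\}_j$, which satisfy $\sum_j g\mathbf{1}_{E_j}\le g$. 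The remaining step is then a scalar bound of the form $\|\sum_j\mathcal{M}(g\mathbf{1}_{E_j})\|_{L^{P'}}\le C\|g\|_{L^{P'}}$, which I would prove by grouping the balls $\{B_{k,j}\}$ by dyadic scale, applying the scale-wise pointwise estimate $\sum_{k\in G_m^{(j)}}\mathbf{1}_{\beta B_{k,j}}\le C\,\mathcal{M}(\sum_{k\in G_m^{(j)}}\mathbf{1}_{B_{k,j}})$ (which follows by volume-counting since all such balls of fixed scale $\sim 2^{-m}$ lie in a single Euclidean ball around $x$ of comparable radius), and summing over scales using Littlewood--Paley-type orthogonality in $L^{P'}$.
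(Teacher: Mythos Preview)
Your argument for the case $q\in(0,\infty)$ is correct and is a genuine alternative to the paper's proof. The paper chooses $u=1/q$, which collapses the $\ell^q$-layer to an $\ell^1$-sum, merges it with the $k$-sum, and then applies the pointwise bound $\mathbf{1}_{\beta B}\lesssim[\mathcal M(\mathbf 1_B)]^\alpha$ followed by the scalar-indexed Fefferman--Stein inequality in $L^{\alpha p/q}(\ell^\alpha)$. You instead keep the vector structure, pass to $L^P(\ell^Q)$ with $P,Q>1$, and apply Fefferman--Stein on the \emph{dual} side after the averaging trick $\int_{\beta B}g\le\beta^n\int_B\mathcal Mg$. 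Both routes are short; the paper's has the advantage that the double index $(j,k)$ is treated symmetrically, while yours makes the role of duality transparent.

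The endpoint $q=\infty$, however, has a genuine gap. The scalar inequality you isolate,
\[
\Bigl\|\sum_j\mathcal M(g\mathbf 1_{E_j})\Bigr\|_{L^{P'}}\le C\|g\|_{L^{P'}}
\]
for pairwise disjoint $\{E_j\}$, is \emph{false}. Take $n=1$, $g=\mathbf 1_{[0,1]}$, and $E_j=[(j-1)/N,j/N]$ for $j=1,\dots,N$; then for every $x\in[0,1]$ one has $\sum_{j=1}^N\mathcal M(\mathbf 1_{E_j})(x)\gtrsim\sum_{j=1}^N(|j-Nx|+1)^{-1}\sim\log N$, so the left side blows up with $N$ while the right side is $1$. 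Your proposed remedy (grouping balls by scale and invoking ``Littlewood--Paley-type orthogonality'') does not address this: the inequality you are trying to rescue contains no balls at all, and there is no orthogonality mechanism available for maximal functions of indicators of arbitrary disjoint sets. The scale-wise pointwise bound you write down is correct in itself, but it belongs to a different argument and does not plug the hole in the duality scheme.

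The reason duality breaks here is structural: passing to the dual puts $\ell^1$ on the sequence side, where the vector maximal inequality fails. The paper avoids this by working on the primal side throughout: it invokes Fefferman--Stein directly in $L^p(\ell^\infty)$ (after reducing to $p>1$ via the same convexification trick as in the finite-$q$ case), which keeps $\ell^\infty$ rather than $\ell^1$ in the picture. If you want to complete the $q=\infty$ case, you should abandon duality and argue directly, mirroring what the paper does for finite $q$.
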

\begin{proof}
We consider first the case $q<\infty$. Let
$X:= L^p(l^q)_{\mathbb{Z}_+}$ and $u:=1/q$.
It follows that
\begin{align*}
\left\|\left\{\sum_{k\in\mathbb N}\mathbf{1}_{
\beta B_{k,j}}
\right\}_{j\in\mathbb Z_+}\right\|_{
X^{\frac 1q}}&=
\left\|\sum_{j\in\mathbb{Z}_+}\sum_{k\in\mathbb N
}\mathbf{1}_{\beta B_{k,j}}
\right\|_{L^{\frac{p}{q}}}
\end{align*}
Let $\alpha\in(1,\infty)$ be such that
$\alpha p/q>1$.
Observe that
$
\mathbf{1}_{\beta B_{k,j}}
\lesssim [
\mathcal M(\mathbf{1}_{B_{k,j}})]^{\alpha}.
$
From this and the Fefferman--Stein vector-valued
inequality
 (see, for instance, \cite[Theorem 1(a)]{FS}),
we deduce that
\begin{align*}
\left\|\left\{\sum_{k\in\mathbb N}\mathbf{1}_{
\beta B_{k,j}}
\right\}_{j\in\mathbb Z_+}\right\|_{X^{\frac 1q}}&\lesssim\left\|\left\{\sum_{k\in\mathbb N}
\left[\mathcal M(\mathbf{1}_{B_{k,j}})\right]^{
\alpha}
\right\}_{j\in\mathbb Z_+}\right\|_{L^{\frac{p}{q}}}
\lesssim\left\|\left\{\sum_{j\in\mathbb{Z}_+}
\sum_{k\in \mathbb N}
\left[\mathcal M(\mathbf{1}_{B_{k,j}})
\right]^{\alpha}\right\}^{\frac 1{\alpha}}
\right\|_{L^{\frac{\alpha p}{q}}}^{\alpha}\nonumber\\
&\lesssim\left\|\left\{\sum_{j\in\mathbb{Z}_+}
\sum_{k\in \mathbb N}
\mathbf{1}_{B_{k,j}}\right\}^{\frac 1{\alpha}}
\right\|_{L^{\frac{\alpha p}{q}}}^{\alpha}
=\left\|\left\{\sum_{k\in\mathbb N}\mathbf{
1}_{B_{k,j}}
\right\}_{j\in\mathbb Z_+}\right\|_{
X^{\frac 1q}},\nonumber
\end{align*}
which completes the proof of Lemma \ref{as123}
in case $q<\infty$. In turn, the case
$q=\infty$ follows in a straightforward
manner from the fact that
 Fefferman--Stein vector-valued maximal
 inequality holds also on  $L^p(\ell^\infty)_{\mathbb{Z}_+}$, with $p\in(1,\infty)$, 
 and we may reduce to the
 case $p>1$ as before. This finishes the proof of Lemma \ref{as123jl}.
\end{proof}

Using Theorems \ref{thm-2-622a} and
\ref{pppz2a},  \cite[Corollary 2]{T20},
Lemmas \ref{as123jl} and \ref{as12312},
we obtain the following conclusions. We note that a 1-dimensional 
analogue of Theorem \ref{bvcaso45} was given by Nicolau and Soler i Gibert in  \cite{ns}.

\begin{theorem}\label{bvcaso45}
Let  $p\in(1,\infty)$. Assume that
the regularity parameter $L\in\mathbb N$
of the Daubechies wavelet system
$\{\psi_\omega\}_{\omega\in\Omega}$ satisfies that
$L\geq 2$. Let $\varphi$
be the same as in \eqref{fanofn}.
Then the following statements hold.
\begin{itemize}
\item[\rm(i)]
For any $f\in\Lambda_1$,
\begin{align*}
\mathop\mathrm{\,dist\,}\left(f, W^{1,p}\cap\Lambda_1
\right)_{\Lambda_1}
\sim\inf\left\{\varepsilon\in(0,\infty):\left\|\left[
\sum_{I \in W^0(1, f, \varepsilon)}\mathbf{1}_{I}
\right]^{\frac 12}\right\|_{L^p}<\infty\right\}
\end{align*}
with positive equivalence constants independent of $f$,
where
\begin{align}\label{fdam;}
W^0(1, f, \varepsilon):= \left\{ I\in\mathcal D:\max_{\{\omega\in\Omega:I_{\omega}
=I\}}|\langle f,\psi_{\omega}\rangle|
>\varepsilon |I|^{\frac 1n+\frac 12}\right\}.
\end{align}
\item[\rm(ii)]
For any $f\in \mathfrak{C}_0\cap \Lambda_1$,
 \begin{align*}
&\mathop\mathrm{\,dist\,}\left(f, W^{1,p}\cap\Lambda_1
\right)_{\Lambda_1}\\
&\quad\sim \inf\left\{\varepsilon\in(0,\infty):\left\|\left[\int_{0}^1 \mathbf{1}_{\{
(x,y)\in\mathbb{R}^n\times(0,1]:y^{-1}
\Delta_2 f(x, y)>\varepsilon\}}(\cdot,y)\,\frac {dy}{y}
\right]^{\frac 12}\right\|_{L^p}<\infty\right\}.
\end{align*}
with positive equivalence constants independent
of $f$.
\item[\rm(iii)]
$f\in\overline{W^{1,p}\cap\Lambda_1
}^{\Lambda_1}$
if and only if
$f\in \Lambda_1$ and,
for any $\varepsilon\in(0,\infty)$,
\begin{align*}
\left\|\left[
\sum_{I \in W^0(1, f, \varepsilon)}\mathbf{1}_{I}
\right]^{\frac 1q}\right\|_{L^p}<\infty,
\end{align*}
where $W^0(1, f, \varepsilon)$ is the same as
in \eqref{fdam;}.
\item[\rm(iv)]
$f\in\overline{W^{1,p}\cap\Lambda_1
}^{\Lambda_1}$
if and only if
$f\in \Lambda_1$ and,
for any $\varepsilon\in(0,\infty)$,
 \begin{align*}
\left\|\left[\int_{0}^1 \mathbf{1}_{\{(x,y)
\in\mathbb{R}^n\times(0,1]:y^{-1} \Delta_2 f(x, y)>\varepsilon\}}(\cdot,y)\,\frac {dy}{y}
\right]^{\frac 1q}\right\|_{L^p}<\infty
\end{align*}
and
\begin{align*}
\limsup_{k\in\mathbb{Z}^n,\;|k|\rightarrow\infty} \left|\int_{\mathbb{R}^n}\varphi(x-k)f(x)\,dx\right|=0.
\end{align*}
\end{itemize}
\end{theorem}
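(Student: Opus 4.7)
The overall strategy is to realize $W^{1,p}\cap\Lambda_1$ as a Daubechies $1$-Lipschitz $X$-based space with the quasi-normed lattice $X:=L^p(\ell^2)_{\mathbb{Z}_+}$, and then to invoke the four general theorems already established earlier in the paper. Since $W^{1,p}=F^{1}_{p,2}$ for $p\in(1,\infty)$, the classical Daubechies wavelet characterization of $F^{1}_{p,2}$ recorded in \cite[Corollary 2]{T20} gives
\begin{align*}
\|f\|_{W^{1,p}}\sim
\left\|\left\{\sum_{\{\omega\in\Omega:\,I_\omega\in\mathcal{D}_j\}}
|I_\omega|^{-\frac{1}{n}-\frac{1}{2}}\left|\langle f,\psi_\omega\rangle\right|
\mathbf{1}_{I_\omega}\right\}_{j\in\mathbb{Z}_+}\right\|_{L^p(\ell^2)_{\mathbb{Z}_+}},
\end{align*}
so $W^{1,p}\cap\Lambda_1=\Lambda_X^1$ for this choice of $X$. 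Moreover, Lemma \ref{as123jl} applied with $q=2$ shows that $X$ satisfies Assumption \ref{a1} with exponent $u=1/2$, so the hypotheses of Theorems \ref{pppz23} and \ref{pppz2a} are met (with $s=1$, $r=2$, and $L\ge 2$).

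For (i), apply Theorem \ref{thm-2-6}. The key observation is that, for each $j\in\mathbb{Z}_+$, the cubes in $W_j^0(1,f,\varepsilon)\subset\mathcal{D}_j$ are pairwise disjoint, so that $(\sum_{I\in W_j^0}\mathbf{1}_I)^2=\sum_{I\in W_j^0}\mathbf{1}_I$ pointwise. Consequently,
\begin{align*}
\left\|\left\{\sum_{I\in W_j^0(1,f,\varepsilon)}\mathbf{1}_I\right\}_{j\in\mathbb{Z}_+}\right\|_{L^p(\ell^2)}
=\left\|\left[\sum_j\sum_{I\in W_j^0(1,f,\varepsilon)}\mathbf{1}_I\right]^{\frac12}\right\|_{L^p}
=\left\|\left[\sum_{I\in W^0(1,f,\varepsilon)}\mathbf{1}_I\right]^{\frac12}\right\|_{L^p},
\end{align*}
which turns the right-hand side of \eqref{afpmfp} into the form stated in (i). Conclusion (iii) then follows at once from Theorem \ref{thm-2-622a} (with $1/q$ there read as $1/2$).

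For (ii), apply Theorem \ref{pppz23} with $u=1/2$. Because the sets $\{S_{2,j}(1,f,\varepsilon)\}_{j\in\mathbb{Z}_+}$ are pairwise disjoint in the $y$-variable, the same pointwise identity as above gives
\begin{align*}
\left\|\left\{\left|\int_0^1\mathbf{1}_{S_{2,j}(1,f,\varepsilon)}(\cdot,y)\,\frac{dy}{y}\right|^{\frac12}\right\}_{j\in\mathbb{Z}_+}\right\|_{L^p(\ell^2)}
=\left\|\left[\int_0^1\mathbf{1}_{S_2(1,f,\varepsilon)}(\cdot,y)\,\frac{dy}{y}\right]^{\frac12}\right\|_{L^p},
\end{align*}
which matches the expression in (ii). The second term in the right-hand side of \eqref{pofwj} vanishes under the assumption $f\in\mathfrak{C}_0$: since $|\int_{\mathbb R^n}\varphi(x-k)f(x)\,dx|\le\|\varphi\|_{L^1}\sup_{x\in k+\mathrm{supp}\,\varphi}|f(x)|\to0$ as $|k|\to\infty$, the set $V_0(1,f,\varepsilon)$ is finite for every $\varepsilon\in(0,\infty)$, and the $L^p(\ell^2)$ norm of the associated indicator sequence is simply $(\sharp V_0(1,f,\varepsilon))^{1/p}<\infty$; hence the corresponding infimum equals $0$. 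Finally, (iv) follows from Theorem \ref{pppz2a} combined with Proposition \ref{pp}: for $X=L^p(\ell^2)$ the family $\mathcal{A}$ of Proposition \ref{pp} is exactly the set of sequences $\{a_k\}\subset\mathbb{Z}^n$ with pairwise distinct entries tending to $\infty$ (the $I_{0,a_k}$ being disjoint unit cubes), which gives $\varepsilon_0=\limsup_{|k|\to\infty}|\int_{\mathbb R^n}\varphi(x-k)f(x)\,dx|$, so that the vanishing of this limsup is equivalent to the finiteness of the second Theorem \ref{pppz2a} condition for every $\varepsilon\in(0,\infty)$.

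The only substantive verification is the identification $W^{1,p}\cap\Lambda_1=\Lambda_X^1$ and the $q=2$ instance of Assumption \ref{a1}; both are already in place, and the rest of the argument is a direct translation of the four general theorems to the present concrete lattice. Accordingly, no serious technical obstacle is expected.
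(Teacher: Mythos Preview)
Your proposal is correct and follows essentially the same route as the paper: identify $W^{1,p}\cap\Lambda_1$ with $\Lambda_X^1$ for $X=L^p(\ell^2)_{\mathbb{Z}_+}$ via \cite[Corollary~2]{T20}, verify Assumption~\ref{a1} through Lemma~\ref{as123jl}, and then read off (i)--(iv) from Theorems~\ref{thm-2-6}, \ref{thm-2-622a}, \ref{pppz23}, and \ref{pppz2a}. Two minor remarks: your description of $\mathcal{A}$ as sequences with \emph{pairwise distinct} entries is slightly off (any sequence with $|a_k|\to\infty$ belongs to $\mathcal{A}$ here), but this does not affect the identification of $\varepsilon_0$ with the $\limsup$; and for (ii) you could alternatively invoke Corollary~\ref{pp2} directly, since the hypothesis there ($|E|=\infty\Rightarrow\|\{\mathbf{1}_E\delta_{0,j}\}\|_X=\infty$) is immediate for $X=L^p(\ell^2)_{\mathbb{Z}_+}$.
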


When $X:=L^p(l^2)_{\mathbb{Z}_+}$, for any
$f\in X$,
the
\emph{Lipschitz deviation degree} $\varepsilon_pf$ is defined by
setting
 $$\varepsilon_pf:=\inf\left\{\varepsilon
 \in(0,\infty):\left\|\left[\int_{0}^1 \mathbf{1}_{\{(x,y)
\in\mathbb{R}^n\times(0,1]:y^{-1} \Delta_2 f(x, y)>\varepsilon\}}(\cdot,y)\,\frac {dy}{y}
\right]^{\frac 12}\right\|_{L^p}<\infty\right\}.$$
Applying Theorem \ref{bvcaso45}, we can
characterize the Lipschitz
deviation degree $\varepsilon_p$ by using the distance in the
Lipschitz space
and we omit
the details of its proof.

\begin{theorem}\label{dsa124}
Let  $p\in(0,\infty)$
and $f\in \mathfrak{C}_0\cap \Lambda_1$.
\begin{itemize}
  \item[\rm (i)] If $\varepsilon\in(0,
  \varepsilon_pf)$, then
  $\|[\int_{0}^1 \mathbf{1}_{S_{2}(1,f,
  \varepsilon)}(\cdot,y)\,\frac {dy}{y}]^{\frac 12}\|_{L^p}=\infty$
 and,
  for any $(x,y)\in\mathbb{R}^n\times(0,1]$
  with
  $(x,y)\notin S_{2}(1,f, \varepsilon)$,
  $\frac{\Delta_2 f(x, y)}{y}\le\varepsilon_pf$;
  \item[\rm (ii)] If $\varepsilon\in(
  \varepsilon_pf,\infty)$,
  then $\|[\int_{0}^1 \mathbf{1}_{
  S_{2}(1,f, \varepsilon)}(\cdot,y)\,\frac {dy}{y}]^{\frac 12}
  \|_{L^p}<\infty$ and,
  for any
  $(x,y)\in S_{2}(1,f, \varepsilon)$,
  $\frac{\Delta_2 f(x, y)}{y}>\varepsilon_pf$.
\item[\rm (iii)] $
\mathop\mathrm{\,dist\,}(f, W^{1,p}\cap
\Lambda_1)_{\Lambda_1}
\sim \varepsilon_pf
$
with positive equivalence constants independent of $f$.
\item[\rm(iv)]
$f\in\overline{W^{1,p}\cap\Lambda_1
}^{\Lambda_1}$
if and only if
$f\in \Lambda_1$, $\varepsilon_pf=0$,
and
\begin{align*}
\limsup_{k\in\mathbb{Z}^n,\;|k|\rightarrow\infty} \left|\int_{\mathbb{R}^n}\varphi(x-k)f(x)\,dx\right|=0.
\end{align*}
\end{itemize}
\end{theorem}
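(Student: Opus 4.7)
The plan is to derive Theorem \ref{dsa124} from Theorem \ref{bvcaso45} together with the elementary monotonicity of $\varepsilon\mapsto S_{2}(1,f,\varepsilon)$. The crucial observation is that, for any fixed $f\in\mathfrak{C}_0\cap\Lambda_1$, the set $S_{2}(1,f,\varepsilon)$ shrinks as $\varepsilon$ increases, so the quantity
\begin{align*}
N(\varepsilon):=\left\|\left[\int_{0}^{1}\mathbf{1}_{S_{2}(1,f,\varepsilon)}(\cdot,y)\,\frac{dy}{y}\right]^{\frac{1}{2}}\right\|_{L^p}
\end{align*}
is nonincreasing in $\varepsilon$. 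By definition, $\varepsilon_pf$ is the infimum of the set $A:=\{\varepsilon\in(0,\infty):N(\varepsilon)<\infty\}$. Monotonicity of $N$ then forces $N(\varepsilon)=\infty$ for every $\varepsilon\in(0,\varepsilon_pf)$ (otherwise $\varepsilon\in A$ contradicts $\varepsilon_pf=\inf A$) and $N(\varepsilon)<\infty$ for every $\varepsilon\in(\varepsilon_pf,\infty)$ (choose any $\varepsilon'\in A\cap(\varepsilon_pf,\varepsilon)$ and apply monotonicity). This proves the first halves of (i) and (ii).

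The pointwise assertions in (i) and (ii) are then immediate from the definition of $S_{2}(1,f,\varepsilon)$. If $(x,y)\notin S_{2}(1,f,\varepsilon)$ with $\varepsilon<\varepsilon_pf$, then $\Delta_2 f(x,y)/y\le\varepsilon<\varepsilon_pf$; if $(x,y)\in S_{2}(1,f,\varepsilon)$ with $\varepsilon>\varepsilon_pf$, then $\Delta_2 f(x,y)/y>\varepsilon>\varepsilon_pf$. No work is needed beyond unpacking the definition.

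For (iii), Theorem \ref{bvcaso45}(ii) yields $\mathop\mathrm{\,dist\,}(f,W^{1,p}\cap\Lambda_1)_{\Lambda_1}\sim\inf\{\varepsilon\in(0,\infty):N(\varepsilon)<\infty\}$, and the right-hand side is $\varepsilon_pf$ by definition. For (iv), Theorem \ref{bvcaso45}(iv) characterizes membership in $\overline{W^{1,p}\cap\Lambda_1}^{\Lambda_1}$ by $f\in\Lambda_1$ together with finiteness of the corresponding $L^p$ quantity for every $\varepsilon>0$ and the limsup condition on $\int_{\mathbb{R}^n}\varphi(\cdot-k)f(x)\,dx$; invoking the monotonicity of $N$ once more, the finiteness for every $\varepsilon>0$ is equivalent to $\inf A=0$, i.e., $\varepsilon_pf=0$. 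No serious obstacle is anticipated: the whole argument is bookkeeping of definitions, and the only mild subtlety is to confirm that the exponent appearing in Theorem \ref{bvcaso45}(ii) and (iv) matches the exponent $1/2$ in the definition of $\varepsilon_pf$ corresponding to the Sobolev space $W^{1,p}$ (so that the quantity controlled by Theorem \ref{bvcaso45} coincides with $N(\varepsilon)$).
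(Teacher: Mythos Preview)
Your proposal is correct and matches the paper's approach: the paper explicitly states that Theorem \ref{dsa124} follows by applying Theorem \ref{bvcaso45} and omits the details, and your argument supplies precisely those details via the monotonicity of $N(\varepsilon)$ and the definitions. The ``mild subtlety'' you flag about the exponent is not an issue here, since $W^{1,p}=F^1_{p,2}$ forces $q=2$ throughout Theorem \ref{bvcaso45}, so the $1/q$ appearing in parts (iii) and (iv) there is indeed $1/2$.
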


As the other application of Theorem \ref{bvcaso45},
we have the following proper inclusions.

\begin{theorem}\label{fwqf2}
Let  $p\in(1,\infty)$.
Then
$W^{1,p}\cap\Lambda_1\subsetneqq
\overline{W^{1,p}\cap\Lambda_1}^{\Lambda_1}
\subsetneqq\Lambda_1$.
\end{theorem}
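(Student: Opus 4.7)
The plan is to establish the two strict inclusions separately by producing explicit witnesses on each side. For the outer inclusion $\overline{W^{1,p}\cap\Lambda_1}^{\Lambda_1}\subsetneqq\Lambda_1$, the natural candidate is a $\Lambda_1$-function that does not decay at infinity, such as the constant $f\equiv 1$. For the inner inclusion $W^{1,p}\cap\Lambda_1\subsetneqq\overline{W^{1,p}\cap\Lambda_1}^{\Lambda_1}$, the natural candidate is a smooth slowly decaying function that fails $L^p$-integrability but satisfies both conditions in Theorem~\ref{dsa124}(iv), for which I would take $f(x):=(1+|x|^2)^{-\alpha/2}$ with $\alpha\in(0,n/p]$.

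For the first witness I would argue that $\mathop\mathrm{\,dist\,}(1,W^{1,p}\cap\Lambda_1)_{\Lambda_1}\geq 1$, hence $1\notin\overline{W^{1,p}\cap\Lambda_1}^{\Lambda_1}$. The key observation is that every $g\in\Lambda_1$ is uniformly continuous (the Zygmund second-difference condition implies a modulus of continuity $\omega_g(t)\lesssim t\log(1/t)$), and any uniformly continuous function in $L^p(\mathbb{R}^n)$ must vanish at infinity: if $|g(x_n)|\geq\eta$ along a sequence with $|x_n|\to\infty$, then uniform continuity yields $|g|\geq\eta/2$ on balls of a fixed radius centered at $x_n$, which along a subsequence can be made pairwise disjoint and force $\|g\|_{L^p}=\infty$, a contradiction. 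Therefore $\|1-g\|_{L^\infty}\geq 1$ for every $g\in W^{1,p}\cap\Lambda_1$, which gives the distance bound.

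For the second witness, $f(x)=(1+|x|^2)^{-\alpha/2}$ is smooth, bounded, and tends to $0$ at infinity, and $D^2f$ is globally bounded, so $|\Delta_h^2f(x)|\lesssim|h|^2\leq|h|$ for $|h|\leq 1$; hence $f\in\mathfrak{C}_0\cap\Lambda_1$. The choice $\alpha p\leq n$ gives $\|f\|_{L^p}=\infty$, so $f\notin W^{1,p}$. To place $f$ in the closure I would invoke Theorem~\ref{dsa124}(iv). The limsup condition is immediate since
\[
\left|\int_{\mathbb{R}^n}\varphi(x-k)f(x)\,dx\right|\leq \|\varphi\|_{L^1}\sup_{x\in k+\mathop\mathrm{\,supp\,}\varphi}|f(x)|\to 0
\]
as $|k|\to\infty$. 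For $\varepsilon_pf=0$, I would use the bound $\Delta_2f(x,y)\leq y^2\sup_{|z-x|\leq y}|D^2f(z)|$ together with $|D^2f(z)|\to 0$ as $|z|\to\infty$: for each $\varepsilon>0$, the set $S_2(1,f,\varepsilon)$ is contained in $\{|x|\leq M_\varepsilon\}\times(\varepsilon/\|D^2f\|_{L^\infty},1]$, so the function $x\mapsto\int_0^1\mathbf{1}_{S_2(1,f,\varepsilon)}(x,y)\,dy/y$ is bounded with compact support and lies in $L^{p/2}$.

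The main technical step, and the only non-routine one, is the verification $\varepsilon_pf=0$; it simultaneously exploits the global $L^\infty$-bound on $D^2f$ (to bound $y$ away from $0$ on the exceptional set) and the decay $D^2f\to 0$ at infinity (to localize in $x$), so that $S_2(1,f,\varepsilon)$ is compactified for every $\varepsilon>0$. Once this is established, Theorem~\ref{dsa124}(iv) yields $f\in\overline{W^{1,p}\cap\Lambda_1}^{\Lambda_1}\setminus W^{1,p}$, completing the argument for the inner strict inclusion and hence the theorem.
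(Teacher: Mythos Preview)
Your proof is correct but takes a genuinely different route from the paper's. The paper constructs both witnesses as explicit wavelet series supported essentially on $[0,1]^n$: for the outer strict inclusion it takes $f_0=\sum_{I\subset[0,1]^n}|I|^{1/n+1/2}\psi_\omega$ and checks via the wavelet criterion of Theorem~\ref{bvcaso45}(i),(iii) that $W^0(1,f_0,\tfrac12)$ contains all dyadic subcubes of $[0,1]^n$, forcing the relevant $L^p$-quantity to diverge; for the inner strict inclusion it damps the coefficients at level $j$ by $j^{-1/2}$, so that for each $\varepsilon>0$ only finitely many scales contribute to $W^0(1,f_0,\varepsilon)$ (placing $f_0$ in the closure) while the $F^1_{p,2}$-norm, equivalently the $W^{1,p}$-norm, still diverges.

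Your approach is more elementary and classical. For the outer inclusion you avoid the paper's machinery entirely: the observation that $\Lambda_1\cap L^p\subset\mathfrak{C}_0$ (uniform continuity from the Zygmund modulus plus $L^p$-integrability) gives $\|1-g\|_{\Lambda_1}\ge\|1-g\|_{L^\infty}\ge1$ directly. For the inner inclusion you use the difference-based criterion Theorem~\ref{dsa124}(iv) rather than the wavelet criterion, and the verification that $S_2(1,f,\varepsilon)$ is contained in a compact set for each $\varepsilon>0$ is clean because $D^2f$ is bounded and vanishes at infinity. What the paper's approach buys is uniformity: the same wavelet construction, with obvious modifications, proves the analogous strict inclusions for all the Besov, Triebel--Lizorkin, and $\tau$-type spaces treated later in Section~\ref{sea8}. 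Your argument is tailored to the Sobolev case (the failure of $L^p$-integrability is what drives $f\notin W^{1,p}$) and would need a different witness for, say, $B^s_{\infty,q}$.
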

To prove Theorem \ref{fwqf2}, we need the
following conclusion, which
is about the wavelet characterization of
the Lipschitz  space (see, for instance,
\cite[Proposition 1.11]{T20}).

\begin{lemma}\label{asqw}
Let  $s\in(0,\infty)$. Assume that
the regularity parameter $L\in\mathbb N$ of
the Daubechies wavelet system $\{\psi_\omega
\}_{\omega\in\Omega}$ satisfies that
$L>s$. Then $f\in \Lambda_s$
if and only if its wavelet coefficients
$\{\langle f,\psi_{\omega}\rangle\}_{\omega\in
\Omega}$
satisfy that
\begin{align*}
\left\|\{\langle f,\psi_{\omega}\rangle
\}_{\omega\in \Omega}\right\|_{b^s_{\infty,
\infty}}:=\sup_{\omega\in \Omega}
|I_\omega|^{-\frac sn-\frac12}\left|\langle f,
\psi_{\omega}\rangle\right|
<\infty
\end{align*}
and
$
f=\sum_{\omega\in\Omega} \langle f,
\psi_{\omega}\rangle \psi_\omega
$
converges unconditionally in $\mathcal{S}'$.
Moreover, the representation is unique and
$
{\mathcal I}:f\mapsto\{\langle f,
\psi_{\omega}\rangle\}_{\omega\in \Omega}
$
is an isomorphic map of $\Lambda_s$ onto
$b^s_{\infty,\infty}$, where
$b^s_{\infty,\infty}$ is defined by setting
$$
b^s_{\infty,\infty}:=\left\{\{\lambda_\omega
\}_{\omega\in \Omega}\in\mathbb{C}^{\Omega}:\|\{\lambda_\omega\}_{\omega\in \Omega}\|_{b^s_{\infty,\infty}}<\infty\right\}.
$$
\end{lemma}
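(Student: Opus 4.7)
The plan is to prove the two implications separately and then deduce unconditional convergence, uniqueness, and the isomorphism claim. The result is a classical characterization going back to Meyer \cite{Me92} and stated as \cite[Proposition 1.11]{T20}; I sketch the standard argument based on vanishing moments and regularity of the Daubechies wavelets.

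For the forward direction, suppose $f\in\Lambda_s$. For $\omega\in\Omega_1$, use that $\psi_\omega$ has vanishing moments of order up to $L$: since $\int x^\alpha \psi_\omega(x)\,dx=0$ for $|\alpha|\le L$, we have $\langle f,\psi_\omega\rangle=\langle f-P,\psi_\omega\rangle$ for every $P\in\Pi_L^n$. Thus $|\langle f,\psi_\omega\rangle|\le \|(f-P)\mathbf{1}_{A_0 I_\omega}\|_{L^\infty}\|\psi_\omega\|_{L^1}$, where $A_0$ depends on the support of $\psi_\omega$. Since $L>s$, the local best polynomial approximation satisfies $\inf_P\|(f-P)\mathbf{1}_{A_0I_\omega}\|_{L^\infty}\lesssim \|f\|_{\Lambda_s}\ell(I_\omega)^s$ by a Whitney-type estimate controlled by the Lipschitz modulus of $f$. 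Combining this with $\|\psi_\omega\|_{L^1}\sim |I_\omega|^{1/2}$ produces $|\langle f,\psi_\omega\rangle|\lesssim \|f\|_{\Lambda_s}|I_\omega|^{s/n+1/2}$; for $\omega\in\Omega_0$ this follows directly from $\|f\|_{L^\infty}\le \|f\|_{\Lambda_s}$ and $\|\varphi_k\|_{L^1}\lesssim 1$.

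For the reverse direction, given a sequence $\{\lambda_\omega\}_\omega$ with finite $b^s_{\infty,\infty}$ norm, define $f:=\sum_\omega \lambda_\omega\psi_\omega$. Because $\{\psi_\omega:I_\omega\in\mathcal D_j\}$ has uniformly bounded overlap and $\|\psi_\omega\|_{L^\infty}\lesssim |I_\omega|^{-1/2}$, the partial sum at level $j$ satisfies $\|\sum_{I_\omega\in\mathcal D_j}\lambda_\omega\psi_\omega\|_{L^\infty}\lesssim 2^{-js}\|\{\lambda_\omega\}\|_{b^s_{\infty,\infty}}$, so $f\in L^\infty$ after summing $j\in\mathbb Z_+$. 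To obtain the $s$-Lipschitz bound, control $\Delta_h^r f(x)$ with $|h|=y$ by splitting the wavelet sum at the scale $\ell(I_\omega)=y$. For $\ell(I_\omega)\le y$ use the crude bound $|\Delta_h^r\psi_\omega|\lesssim |I_\omega|^{-1/2}$; for $\ell(I_\omega)>y$, since $L>s$ forces the integer $L\ge r=\lfloor s\rfloor+1$, Taylor expansion yields $|\Delta_h^r\psi_\omega|\lesssim y^r|I_\omega|^{-1/2-r/n}$. Summing the two geometric series against the multiplier $|\lambda_\omega|\le \|\{\lambda_\omega\}\|_{b^s_{\infty,\infty}}|I_\omega|^{s/n+1/2}$ gives $|\Delta_h^r f(x)|\lesssim y^s$, i.e.\ $f\in\Lambda_s$ with quantitative norm control.

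For unconditional convergence in $\mathcal S'$, pair an arbitrary $\phi\in\mathcal S$ with the wavelet series: the vanishing moments of $\psi_\omega$ plus Taylor expansion of $\phi$ give decay $|\langle \psi_\omega,\phi\rangle|\lesssim_\phi |I_\omega|^{(L+1)/n+1/2}(1+|c_{I_\omega}|)^{-N}$ for any $N$, which when multiplied by $|\lambda_\omega|\lesssim |I_\omega|^{s/n+1/2}$ is summable absolutely since $L>s$; the same estimate handles rearrangements, proving unconditional convergence. Uniqueness follows by testing $\sum_\omega \lambda_\omega \psi_\omega=0$ against any $\psi_{\omega_0}\in\mathcal S\subset \mathcal S'$ and invoking orthonormality. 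The two norm equivalences then upgrade $\mathcal I$ to an isomorphism onto $b^s_{\infty,\infty}$. The main obstacle is the quantitative scale-by-scale control of $\Delta_h^r$ applied to the wavelet expansion, ensuring that the smoothness budget $L>s$ exactly matches both the Taylor order needed for large scales and the vanishing-moment order needed for small scales to produce the $y^s$-bound.
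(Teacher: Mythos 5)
Your argument is essentially correct, but note that the paper does not prove this lemma at all: it is quoted as a known result, with the proof delegated to \cite[Proposition~1.11]{T20} (ultimately going back to Meyer \cite{Me92}). What you have written is the standard self-contained argument behind that citation, and its structure is sound: vanishing moments plus a Whitney-type estimate (the paper proves exactly such an inequality later, in Lemma~\ref{thm-691}) give the bound $|\langle f,\psi_\omega\rangle|\lesssim\|f\|_{\Lambda_s}|I_\omega|^{\frac sn+\frac12}$; conversely, splitting the series at the scale $\ell(I_\omega)\sim y$, using the crude sup-norm bound below that scale and the $r$-th order mean-value bound $|\Delta_h^r\psi_\omega|\lesssim y^r|I_\omega|^{-\frac12-\frac rn}$ above it (legitimate since $L>s$ forces $L\ge r$, so the wavelets are $\mathfrak C^r$), yields the $y^s$-bound after summing the two geometric series; this is precisely how the paper itself estimates $f_1$ and $f_2$ in the proof of Lemma~\ref{thm-6-5-0}, so your route is compatible with the paper's toolkit while the paper itself simply imports the statement.

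Two small repairs are needed, neither fatal. First, your uniqueness step asserts $\psi_{\omega_0}\in\mathcal S$; Daubechies wavelets are compactly supported and only $\mathfrak C^L$, hence not Schwartz functions. The fix is immediate: your level-by-level estimate shows the series converges uniformly (so its sum is a bounded continuous function), and one may then integrate term by term against the compactly supported continuous function $\psi_{\omega_0}$ and invoke orthonormality; the same remark is needed to verify $\langle f,\psi_\omega\rangle=\lambda_\omega$ when proving surjectivity of $\mathcal I$. Second, in the unconditional-convergence step the moment-cancellation estimate applies only to $\omega\in\Omega_1$; the terms with $\omega\in\Omega_0$ have no vanishing moments and must be handled separately, but there the rapid decay of the test function alone gives $|\langle\varphi_k,\phi\rangle|\lesssim_N(1+|k|)^{-N}$, which is summable. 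With these adjustments (and noting that the summability over $j\ge0$ needs only positivity of the resulting exponent, not specifically $L>s$), the proof is complete.
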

\begin{proof}[Proof of Theorem \ref{fwqf2}]
  We first show that  $\overline{W^{1,p}
  \cap\Lambda_1}^{\Lambda_1}
  \subsetneqq\Lambda_1$. Let
$$f_0:=\sum_{\{\omega\in \Omega: I_\omega\in
\mathcal{D},\ I_\omega\subset[0,1]^n\}}
|I_\omega|^{\frac sn+\frac 12}\psi_{\omega}.$$
From  Lemma \ref{asqw},
it follows that $f_0\in\Lambda_1$. Observe that,
for any
$I\in\mathcal{D}$ and $I\subset[0,1]^n$,
$$\max_{\{\omega\in\Omega:I_{\omega} =I\}}
\left|\int_{\mathbb{R}^n} f_0(x) \psi_{\omega}(x)\, dx\right|
= |I|^{\frac sn+\frac 12}.$$
By this, we conclude that
\begin{align*}
\left\|\left[
\sum_{I \in W^0(1, f_0, \frac12)}\mathbf{1}_{I}
\right]^{\frac 12}\right\|_{L^p}&=\left\|\left[
\sum_{\{I\subset[0,1]^n: I \in \mathcal{D}\}}\mathbf{1}_{I}
\right]^{\frac 12}\right\|_{L^p}=\infty,
\end{align*}
which, combined with Theorem \ref{bvcaso45},
further implies that
$
\mathop\mathrm{\,dist\,}\left(f_0,
\mathrm{J}_s(\mathop\mathrm{\,bmo\,})
\right)_{\Lambda_s}\gtrsim\frac12.
$
From this, it follows that $f_0\notin
\overline{W^{1,p}\cap\Lambda_s
}^{\Lambda_s}$.
Thus, $\overline{W^{1,p}\cap
\Lambda_s}^{\Lambda_s}
  \subsetneqq\Lambda_s$.

  Now, we show $W^{1,p}\cap\Lambda_s
  \subsetneqq
\overline{W^{1,p}\cap\Lambda_s}^{\Lambda_s}$.
Let
$f_0:=\sum_{j=0}^{\infty}\frac{1}{\sqrt{j}}
\sum_{\{\omega\in \Omega:I_\omega
\in\mathcal{D}_j,I_\omega\subset[0,1]^n\}}
|I_\omega|^{\frac sn+\frac 12}\psi_{\omega}$.
From  Lemma \ref{asqw},
it follows that $f_0\in\Lambda_s$. Observe that,
for any
$I\in\mathcal{D}_j$ and $I\subset[0,1]^n$,
$$\max_{\{\omega\in\Omega: I_\omega=I\}}\left|
\int_{\mathbb{R}^n} f_0(x) \psi_{\omega}(x)\, dx\right|
= \frac1{\sqrt{j}}|I|^{\frac sn+\frac 12}.$$
By this, we conclude that, for any $\varepsilon\in(0,\infty)$,
\begin{align*}
\left\|\left[
\sum_{I \in W^0(s, f_0, \varepsilon)}
\mathbf{1}_{I}
\right]^{\frac 12}\right\|_{L^p}&
\le\left\|\left[\sum_{\{j\in\mathbb{N}:j<
1/\varepsilon^2\}}\mathbf{1}_{[0,1]^n}
\right]^{\frac 12}\right\|_{L^p}<\infty,
\end{align*}
which, combined with Theorem \ref{bvcaso45},
further implies that $f_0\in\overline{
W^{1,p}\cap\Lambda_s}^{\Lambda_s}$.
Moreover,
\begin{align*}
\|f_0\|_{W^{1,p}}=\left\|\left[\sum_{j=0}^{\infty}
\frac1j\left|
\sum_{\{\omega\in \Omega: I_\omega\in
\mathcal{D}_j,\, I_\omega\subset[0,1]^n\}}
\mathbf{1}_{I_\omega}(x)
\right|
^2\,dx\right]^{\frac 12}\right\|_{L^p}
\geq \left\|\left[\sum_{j=0}^{\infty}
\frac1j
\mathbf{1}_{[0,1]^n}
\right]^{\frac 12}\right\|_{L^p}=\infty,
\end{align*}
which
further implies that $f_0\notin W^{1,p}\cap\Lambda_s$.
Thus, $ W^{1,p}\cap\Lambda_s\subsetneqq\overline{
W^{1,p}\cap\Lambda_s}^{\Lambda_s}
$,
which completes the proof of Theorem \ref{fwqf2}.
\end{proof}

We now turn to the space $\mathrm{J}_s(\mathop\mathrm{\,bmo\,})$ with $s\in(0,\infty)$.
We also need more concepts.
For any cube  $I\subset\mathbb{R}^n$, we
always use $\ell(I)$ to
denote its edge length and let $
\widehat I:=I\times [0, \ell(I))$.
For any given  measurable set $A\subset \mathbb{R}_+^{n+1}:=\mathbb{R}^n\times(0,
\infty)$, let
\begin{align}\label{asdf}
M(A):=\sup_{I\in \mathcal D} \frac 1 {|I|}
\int_I \left[\int_0^{\ell(I)}
\mathbf{1}_A(x,y) \,\frac {dy} {y}\right]\,
dx=\sup_{I\in\mathcal D} \frac 1 {|I|}
\iint_{\widehat I} \frac {\mathbf{1}_A(x,y)}y
\,dy\,dx.
\end{align}

Let $d\mu (x,x_{n+1}):=
\frac {dx\,dx_{n+1}} {x_{n+1}}.$
Since we assume that the edge length of
any cube in $\mathcal D$ is at most $1$,
we infer that, for any $A\subset \mathbb{R}_+^{n+1}$,
$M(A)=M\left(A\cap (\mathbb{R}^n\times (0, 1])\right).$
In other words, $M(A)<\infty$ is equivalent
to that $\mathbf{1}_{A\cap (\mathbb{R}^n
\times (0, 1])}d\mu$
is a Carleson measure.

\begin{lemma}\label{de1}
Let $A\subset \mathbb{R}^{n+1}_+$ be a
measurable set such that  $M(A)<\infty$, where $M$
is the same as in \eqref{asdf}. Assume that
there  exist constants $\delta, \delta'\in
(0, 1/10)$ such that, for any $\mathbf{z}\in A$,
\begin{equation*}
|B_\rho(\mathbf{z},\delta) \cap A |\ge \delta'
|B_\rho(\mathbf{z},\delta)|.
\end{equation*}
Then, for any $R\in(0,\infty)$, $M(A_R)<\infty$.
\end{lemma}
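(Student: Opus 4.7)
The plan is to cover $A$ (and hence $A_R$) by a controlled union of hyperbolic balls, then to convert the assumed Euclidean density of $A$ inside $B_\rho(\mathbf{z},\delta)$ into a hyperbolic-measure density, and finally to use $M(A)<\infty$ to count the covering balls inside an enlargement of each Carleson box. First, by Zorn's lemma, choose a maximal $2\delta$-separated subset $P\subset A$ in the hyperbolic metric, exactly as in the proof of Lemma \ref{a2.15s}. Maximality gives $A\subset \bigcup_{\mathbf{z}\in P}B_\rho(\mathbf{z},2\delta)$, hence
\[
A_R\subset \bigcup_{\mathbf{z}\in P}B_\rho(\mathbf{z},R+2\delta).
\]
The comparisons between $\rho$-balls and Euclidean boxes in Lemma \ref{ddaf} show that, for some constant $C=C(R,\delta,n)$, every $\mathbf{z}=(z,z_{n+1})\in P$ whose ball $B_\rho(\mathbf{z},R+2\delta)$ meets $\widehat I$ already satisfies $\mathbf{z}\in \widetilde I:=(CI)\times(0,C\ell(I)]$, while $B_\rho(\mathbf{z},\delta)\subset (CI)\times(0,C\ell(I)]=:\widetilde{\widetilde I}$ (possibly after enlarging $C$).

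Next I translate the density hypothesis into hyperbolic terms. Since the measure $d\mu(x,y)=y^{-1}\,dx\,dy$ is invariant under the natural dilation-translation action on $\mathbb{R}_+^{n+1}$, the quantity $c_r:=\mu(B_\rho(\mathbf{z},r))$ depends only on $r$ and $n$. On $B_\rho(\mathbf{z},\delta)$ one has $y\sim z_{n+1}$, so
\[
\mu\bigl(B_\rho(\mathbf{z},\delta)\cap A\bigr)\sim \frac{1}{z_{n+1}}\bigl|B_\rho(\mathbf{z},\delta)\cap A\bigr|\geq \frac{\delta'}{z_{n+1}}\bigl|B_\rho(\mathbf{z},\delta)\bigr|\sim \delta' c_\delta.
\]
Because the balls $\{B_\rho(\mathbf{z},\delta)\}_{\mathbf{z}\in P}$ are pairwise disjoint (by the $2\delta$-separation), summing over $\mathbf{z}\in P\cap \widetilde I$ gives
\[
\#(P\cap\widetilde I)\cdot \delta' c_\delta \lesssim \mu\Bigl(A\cap \bigcup_{\mathbf{z}\in P\cap\widetilde I}B_\rho(\mathbf{z},\delta)\Bigr)\leq \mu\bigl(A\cap \widetilde{\widetilde I}\bigr).
\]

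To finish, bound $\iint_{\widehat I}\mathbf{1}_{A_R}\,y^{-1}\,dx\,dy$ by $c_{R+2\delta}\cdot\#(P\cap\widetilde I)$, and bound $\mu(A\cap\widetilde{\widetilde I})$ by $M(A)|I|$ up to multiplicative constants: when $\widetilde{\widetilde I}\subset \mathbb{R}^n\times(0,1]$, cover it by $O(1)$ Carleson boxes $\widehat J$ with $J\in\mathcal D$, $\ell(J)\sim \ell(I)$, and invoke the definition of $M(A)$ together with $M(A)=M(A\cap (\mathbb{R}^n\times(0,1]))$. The main obstacle is the boundary effect when $C\ell(I)>1$, since then $\widetilde{\widetilde I}$ extends into $\{y>1\}$, a region invisible to $M(A)$; however, only large cubes $\ell(I)\gtrsim 1$ are affected, and for $\mathbf{z}\in P\cap \widetilde I$ with $z_{n+1}>\tfrac12$ the disjoint Euclidean balls $B_\rho(\mathbf{z},\delta)$ have Lebesgue volume $\gtrsim 1$ inside a set of Euclidean volume $O(|I|)$, so their number is $O(|I|)$. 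Combining the two regimes yields
\[
\iint_{\widehat I}\frac{\mathbf{1}_{A_R}(x,y)}{y}\,dx\,dy \leq C(R,\delta,\delta',n)\bigl(M(A)+1\bigr)|I|
\]
for every $I\in\mathcal D$, whence $M(A_R)<\infty$, as desired.
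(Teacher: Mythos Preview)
Your overall strategy---maximal $2\delta$-separated set $P\subset A$, density hypothesis converted to $\mu$-density, disjointness of the small balls, and localization to an enlarged Carleson box---is exactly the paper's. However, there is a genuine error: the measure $d\mu(x,y)=y^{-1}\,dx\,dy$ is \emph{not} invariant under dilations $(x,y)\mapsto(\lambda x,\lambda y)$ when $n\geq 1$ (the invariant hyperbolic measure is $y^{-(n+1)}\,dx\,dy$). Indeed, Lemma~\ref{ddaf}(iii) gives $\mu(B_\rho(\mathbf{z},\delta))\sim \delta^{n+1}z_{n+1}^{\,n}$, which depends on $z_{n+1}$. Consequently your quantity $c_r=\mu(B_\rho(\mathbf{z},r))$ is \emph{not} a constant, and the two counting steps
\[
\iint_{\widehat I}\mathbf{1}_{A_R}\,\frac{dx\,dy}{y}\leq c_{R+2\delta}\cdot\#(P\cap\widetilde I),
\qquad
\#(P\cap\widetilde I)\cdot\delta' c_\delta\lesssim \mu\bigl(A\cap\widetilde{\widetilde I}\bigr)
\]
both fail as written.

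The fix is to drop the intermediate cardinality $\#(P\cap\widetilde I)$ and work directly with sums of $\mu$-measures, which is what the paper does: from $A_R\cap\widehat I\subset\bigcup_{\mathbf{z}\in P\cap\widetilde I}B_\rho(\mathbf{z},R+2\delta)$ one gets
\[
\mu(A_R\cap\widehat I)\leq\sum_{\mathbf{z}\in P\cap\widetilde I}\mu\bigl(B_\rho(\mathbf{z},R+2\delta)\bigr)
\lesssim\sum_{\mathbf{z}\in P\cap\widetilde I}\mu\bigl(B_\rho(\mathbf{z},\delta)\bigr)
\lesssim\sum_{\mathbf{z}\in P\cap\widetilde I}\mu\bigl(A\cap B_\rho(\mathbf{z},\delta)\bigr)
\leq\mu\bigl(A\cap\widetilde{\widetilde I}\bigr),
\]
where the first $\lesssim$ is precisely Lemma~\ref{ddaf}(iv) (the \emph{ratio} $\mu(B_\rho(\mathbf{z},R+2\delta))/\mu(B_\rho(\mathbf{z},\delta))$ is bounded independently of $\mathbf{z}$, even though each factor separately is not constant), the second uses your $\mu$-density estimate, and the last uses disjointness. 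Your localization $B_\rho(\mathbf{z},\delta)\subset\widetilde{\widetilde I}=\widehat{C_{(R)}I}$ and your boundary treatment for $\{y>1\}$ are fine (in that regime $\ell(I)\sim 1$, so the paper simply bounds $\mu(\widehat{C_{(R)}I}\cap\{y>1\})\lesssim|I|$ directly).
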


\begin{proof}
By the definition of $M$, we conclude that,
to show the present lemma,
it suffices to prove that, for any $I\in\mathcal{D}$,
\begin{align}\label{qer}
\mu(A_R\cap \widehat I)\lesssim |I|.
\end{align}
We first claim that
\begin{align}\label{qer2}
A_R\cap \widehat I\subset \left(
\widetilde{A}\right)_R,
\end{align}
where $\widetilde{A}:=\{ \mathbf{z}\in A:\rho(\mathbf{z}, \widehat I) < R\}$.
Indeed, for any $z\in A_R\cap \widehat I$,
$$
R\ge\rho(\mathbf{z}, A)
=\inf_{\mathbf{y}\in A\cap B_\rho(\mathbf{z},
R)}
\rho(\mathbf{z}, \mathbf{y}) = \inf\left\{
\rho(\mathbf{z}, \mathbf{y}):\mathbf{y}\in A, \  \rho\left(\mathbf{y},
\widehat I\right)<R\right\}
=\rho\left(\mathbf{z},\widetilde A\right).
$$
From this, it follows that $A_R\cap
\widehat I\subset (\widetilde{A})_R$.
Thus, the claim \eqref{qer2} holds.
Observe that,  for any $\mathbf{z}:=(z, z_{n+1})
\in A$,
\begin{align}\label{5-2}
\mu(A\cap B_\rho(\mathbf{z},\delta))&=\int_{
B_\rho(\mathbf{z},\delta)}
\frac {\mathbf{1}_A(x,x_{n+1})} {x_{n+1}}
\,dx\,dx_{n+1}
\sim \frac {|B_\rho(\mathbf{z},\delta)\cap A|}
{z_{n+1}}
\gtrsim \frac {|B_\rho(\mathbf{z},\delta)|}
{z_{n+1}}
\sim\mu(B_\rho(\mathbf{z},\delta)).
\end{align}
By the Zorn lemma, we conclude that there exists
a maximal subset $P$ of $\widetilde{A}$
such that
$\rho(\mathbf{z},\mathbf{z}' )\ge 2\delta$
for any $\mathbf{z},\mathbf{z}' \in P$ with
$\mathbf{z}\neq\mathbf{z}'$.
From this, we deduce that
$(\widetilde{A})_R\subset \bigcup_{\mathbf{z}
\in P} B_\rho(\mathbf{z}, R+2\delta)$.
Using this, the claim \eqref{qer2}, Lemma \ref{ddaf}(iv),
and \eqref{5-2},
we conclude that
\begin{align}\label{3-4}
\mu\left(A_R\cap\widehat I\right) &\leq
\sum_{\mathbf{z}\in P} \mu \left( B_\rho
(\mathbf{z}, R+2\delta)\right)
\lesssim \sum_{\mathbf{z}\in P}
\mu ( B_\rho (\mathbf{z}, \delta))
\lesssim\sum_{\mathbf{z}\in  P}
\mu\left(A \cap B_\rho (\mathbf{z}, \delta)\right)\nonumber\\
&=\mu\left(A \cap \left[
\bigcup_{\mathbf{z}\in P} B_\rho(\mathbf{z},
\delta) \right]\right).
\end{align}
We next claim that there exists a constant
$C_{(R)}\in(1,\infty)$ such that,
for any $\mathbf{z}\in\widetilde A$,
\begin{equation}\label{5-3}
B_\rho(\mathbf{z},\delta) \subset
\widehat {C_{(R)} I}.
\end{equation}
Assuming that this claim holds for the moment,
then, by \eqref{3-4},  we find that
\begin{align*}
\mu\left(A_R\cap\widehat I\right)&\leq \mu\left(A
\cap
\widehat {C_{(R)} I}\cap (\mathbb{R}^n\times
(0,1])\right)
+\mu\left(A \cap \widehat {C_{(R)} I}\cap
[\mathbb{R}^n\times (1,\infty)]\right)\\
&\lesssim|C_{(R)} I|+\sum_{\{j\in\mathbb Z_+: 2^j\leq C_{(R)}\}}\int_{2^j \ell(I)}^{2^{j+1} \ell(I)}
\int_{C_{(R)} I}\,dx\,\frac {dx_{n+1}}{x_{n+1}}
\lesssim|C_{(R)} I|\lesssim|I|,
\end{align*}
which is the desired conclusion \eqref{qer}.

Therefore, to
show the present lemma, it remains to prove
the above claim \eqref{5-3}.
Fix $\mathbf{z}=(z, z_{n+1})\in\widetilde A$.
By the definition of $\widetilde A$,
there exists
$\mathbf q=(q, q_{n+1})\in \widehat I$ such
that $\rho(\mathbf{z}, \mathbf  q) <R$,
which further implies that
$|z_{n+1}-q_{n+1}|^2
 \lesssim q_{n+1} z_{n+1}\lesssim \ell(I)
 z_{n+1}.$
From this, we infer that
\begin{align*}
z_{n+1}^2& \lesssim \max\left\{
(z_{n+1}-q_{n+1})^2,q_{n+1}^2\right\}
\lesssim\max\left\{\ell(I) z_{n+1},
\left[\ell(I)\right]^2\right\},
\end{align*}
which further implies that
$0<z_{n+1}\lesssim \ell(I).$
Using this, we obtain
$|q-z|^2 \lesssim z_{n+1}q_{n+1}\lesssim
\left[\ell(I)\right]^2,$
which further implies that
\begin{align}\label{daff}
|q-z|\lesssim \ell(I).
\end{align}
From \eqref{dafg2}, it follows that,
for any $\mathbf{p}=(p, p_{n+1})
\in B_\rho(\mathbf{z}, \delta)$,
$|p-z|\lesssim z_{n+1} \lesssim \ell(I),$
which, combined with \eqref{daff} and Lemma
\ref{ddaf}, further implies
that there exists a constant $C_{(R)}
\in(1,\infty)$, depending on $R$, such that
$|p-q|\leq  |p-z|+ |z-q|\le C_{(R)} \ell(I)$
and  $ 0<p_{n+1} \leq C_{(R)} \ell(I)$.
By this, we conclude that, for any $\mathbf{p}
\in B_\rho(\mathbf{z}, \delta)$,
$\mathbf{p}\in \widehat{C_{(R)} I}$ holds.
This finishes the proof of the above claim
\eqref{5-3} and hence Lemma \ref{de1}.
\end{proof}

\begin{lemma}\label{as12312}
Let $q\in(0,\infty]$.
Then $F_{\infty,q}(\mathbb{R}^n,\mathbb{Z}_+)$
satisfies Assumption \ref{pplp}.
\end{lemma}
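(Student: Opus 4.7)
The natural candidate for the Carleson-type measure is
$$\nu\left(\{A_j\}_{j\in\mathbb Z_+}\right) := M\left(\bigcup_{j\in\mathbb Z_+} A_j\right),$$
with $M$ as in \eqref{asdf}, exactly as indicated in the example preceding the statement. My plan is to verify that this $\nu$ satisfies the four axioms (i)--(iv) of Definition \ref{Debqf2s} and then to reduce both sides of the equivalence in Assumption \ref{pplp} to a single explicit dyadic quantity.

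For the Carleson-type axioms, I would dispose of (i), (ii), and (iv) at once: (i) and (ii) follow from the monotonicity and subadditivity of $M$ on sets, while (iv) is immediate because $M$ depends only on the union of the $A_j$'s, and the shifts $S_L$ and $S_R$ either shrink that union (using $A_{-1}=\emptyset$) or preserve it. The decisive axiom is (iii), which I will derive from Lemma \ref{de1}: since $\bigcup_{j}(A_j)_R = (\bigcup_{j} A_j)_R$ and since the lower-density hypothesis on each $A_j$ transfers to the union $A:=\bigcup_{j} A_j$, Lemma \ref{de1} gives $M(A_R)<\infty$ whenever $M(A)<\infty$, while the converse direction is trivial from $A\subset A_R$.

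For the genuine content---the equivalence in Assumption \ref{pplp}---I plan to show that both sides are determined (up to multiplicative constants) by the single quantity
$$Q(A) := \sup_{I'\in\mathcal D}\frac{1}{|I'|}\sum_{I\in A,\ I\subset I'}|I|.$$
On the $F_{\infty,q}$ side, the pairwise disjointness of the cubes in $A\cap\mathcal D_j$ gives $|\sum_{I\in A\cap\mathcal D_j}\mathbf{1}_I|^q = \sum_{I\in A\cap\mathcal D_j}\mathbf{1}_I$ for $q\in(0,\infty)$; averaging over the dyadic cube $I_{l,m}$ and using that, for $I\in\mathcal D_j$ with $j\ge l$, $I\cap I_{l,m}$ is either $I$ or empty, reduces the norm to $Q(A)^{1/q}$ after taking $\sup_{l,m}$. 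On the Carleson side, the key geometric observation is that the boxes $\{T(I):I\in\mathcal D\}$ are pairwise disjoint (different levels yield disjoint $y$-ranges; the same level yields spatial disjointness), and that for any $I\in\mathcal D_j$ and $I'\in\mathcal D_l$ the intersection $T(I)\cap\widehat{I'}$ equals $T(I)$ up to a null set if $I\subset I'$ and is empty otherwise. A short Fubini computation then yields
$$\iint_{\widehat{I'}}\frac{\mathbf{1}_{\bigcup_{I\in A}T(I)}(x,y)}{y}\,dy\,dx = \ln 2\cdot\sum_{I\in A,\ I\subset I'}|I|,$$
so that $M(\bigcup_{I\in A}T(I)) = (\ln 2)\cdot Q(A)$. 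Comparing the two expressions shows that the $F_{\infty,q}$-norm of $\{\sum_{I\in A\cap\mathcal D_j}\mathbf{1}_I\}_{j\in\mathbb Z_+}$ and $\nu(\{\bigcup_{I\in A\cap\mathcal D_j}T(I)\}_{j\in\mathbb Z_+})$ are finite or infinite simultaneously.

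The main subtlety I anticipate is the endpoint $q=\infty$: the ``usual modification'' in the definition of the $F_{\infty,q}$-norm replaces the inner $\ell^q$-sum by a supremum, so the identity $|\sum_I\mathbf{1}_I|^q = \sum_I\mathbf{1}_I$ no longer linearises the average cleanly. There I would interpret the modification consistently with the wavelet description of $F^s_{\infty,q}$ fixed in Definition \ref{sanfnlk}(ii) (which is itself of Carleson type in this range) and re-derive the reduction to $Q(A)$ by hand. Apart from this endpoint bookkeeping, the proof is a direct dyadic calculation resting only on the disjointness of Carleson boxes and on Lemma \ref{de1}.
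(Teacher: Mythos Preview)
Your proposal is correct and follows essentially the same route as the paper: both take $\nu(\{A_j\}_j)=M(\bigcup_j A_j)$, reduce each side of the equivalence in Assumption~\ref{pplp} to the same dyadic quantity $\sup_{I'\in\mathcal D}|I'|^{-1}\sum_{I\in A,\,I\subset I'}|I|$ via the disjointness of the Carleson boxes $T(I)$, and invoke Lemma~\ref{de1} for the crucial axiom~(iii). The only differences are cosmetic: you spell out the verification of axioms (i), (ii), (iv) and the transfer of the density condition to $\bigcup_j A_j$ (this is Lemma~\ref{dda2f}(i) in the paper), whereas the paper compresses these routine checks into the final sentence ``From this and Lemma~\ref{de1}\dots''; and you explicitly flag the $q=\infty$ endpoint, which the paper's computation (raising the norm to the $q$-th power) handles only implicitly.
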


\begin{proof}
For any $\{A_j\}_{j\in\mathbb{Z}_+}
\in\mathscr{P}_{\mathbb{Z}_+}(\mathbb{R}^{n+1}_+)$, let
$\nu(\{A_j\}_{j\in\mathbb Z_+}):=M(\bigcup_{j\in\mathbb Z_+}A_j)$.
Then we have,
for any $A\subset \mathcal D$,
\begin{align*}
\left\|\left\{
\sum_{I\in A\cap \mathcal D_j}
\mathbf{1}_{I}
\right\}_{j\in\mathbb{Z}_+}
\right\|_{F_{\infty,q}(\mathbb{R}^n,\mathbb{Z}_+)}^q&
=\sup_{l\in\mathbb Z_+,m\in\mathbb Z^n}
\fint_{I_{l,m}}\sum_{j=l}^{\infty}
\sum_{I\in A\cap \mathcal D_j}
\mathbf{1}_{I}(x)\,dx\sim
\sup_{J\in\mathcal D}
\frac{1}{|J|}
\sum_{\{I\in A:
I\subset J\}}|I|\\
&\sim\sup_{J\in\mathcal D}\frac{1}{|J|}\sum_{\{I\in A:
I\subset J\}}\int_{I}\int_{\frac12\ell(I)}^{
\ell(I)}\frac{1}{y}\,dy\,dx
\sim M\left(\bigcup_{I\in A}T(I)\right)\\
&= M\left(\bigcup_{j\in\mathbb Z_+}\bigcup_{
I\in A\cap \mathcal D_j}T(I)\right)
=\nu\left(\left\{\bigcup_{I\in A\cap \mathcal D_j}T(I)\right\}_{j\in\mathbb{Z}_+}\right).
\end{align*}
From this and Lemma \ref{de1}, it follows that
$F_{\infty,q}(\mathbb{R}^n,\mathbb{Z}_+)$ satisfies
Assumption \ref{pplp},
which completes the proof of Lemma \ref{as12312}.
\end{proof}

Using  Theorems \ref{thm-2-622a} and \ref{thm-7-117},
\cite[Corollary 2]{T20}, and Lemmas  \ref{as123jl} and
\ref{as12312}, we obtain the following conclusions.

\begin{theorem}\label{bvcaso3}
Let  $s\in(0,\infty)$ and $p\in(0,\infty)$.
Assume that $r\in\mathbb N$ with $r>s$
and that the regularity parameter $L\in\mathbb N$ of the
Daubechies wavelet system $\{\psi_\omega\}_{
\omega\in\Omega}$ satisfies that
$L>s$ and $L\geq r-1$. Let $\varphi$
be the same as in \eqref{fanofn}.
Then the following statements hold.
\begin{itemize}
\item[\rm(i)]
For any $f\in\Lambda_s$,
$$
\mathop\mathrm{\,dist\,}\left(f,  \mathrm{J}_s(\mathop\mathrm{\,bmo\,})\right)_{
\Lambda_s}\sim
\inf\left\{\varepsilon\in(0,\infty):M\left(
\bigcup_{I \in W^0(s, f, \varepsilon)} T(I)\right)
<\infty\right\}
$$
with positive equivalence constants independent
of $f$, where $W^0(s, f, \varepsilon)$ is the
same as in \eqref{fdam;}.
\item[\rm(ii)]
For any $f\in\Lambda_s$,
$\mathop\mathrm{\,dist\,}(f,  \mathrm{J}_s(\mathop\mathrm{\,bmo\,}))_{
\Lambda_s}\sim \inf\{\varepsilon\in(0,\infty):
M( S_r(s,f, \varepsilon))<\infty\}$
with positive equivalence constants independent of $f$.
\item[\rm(iii)]
$f\in\overline{\mathrm{J}_s(\mathop\mathrm{
\,bmo\,})
}^{\Lambda_s}$
if and only if
$f\in \Lambda_s$ and,
for any $\varepsilon\in(0,\infty)$,
$M(\bigcup_{I \in W^0(s, f, \varepsilon)}
T(I))<\infty.$
\item[\rm(iv)]
$f\in\overline{\mathrm{J}_s(\mathop\mathrm{\,bmo\,})
}^{\Lambda_s}$
if and only if
$f\in \Lambda_s$ and,
for any $\varepsilon\in(0,\infty)$,
$M( S_r(s,f, \varepsilon))<\infty$
and
\begin{align*}
\limsup_{k\in\mathbb{Z}^n,\;|k|\rightarrow\infty} \left|\int_{\mathbb{R}^n}\varphi(x-k)f(x)\,dx\right|=0.
\end{align*}
\end{itemize}
\end{theorem}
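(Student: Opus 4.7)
The plan is to recognize Theorem \ref{bvcaso3} as an instance of the general framework of Section \ref{2222} applied to the specific quasi-normed lattice $X := F_{\infty,2}(\mathbb{R}^n,\mathbb{Z}_+)$. By the well-known identification $\mathrm{J}_s(\mathop\mathrm{\,bmo\,}) = F^s_{\infty,2}$ (see, e.g., \cite[Section 2.3]{T83}) together with the wavelet characterization of $F^s_{\infty,2}$ in \cite[Corollary 2]{T20}, one has
$\mathrm{J}_s(\mathop\mathrm{\,bmo\,})\cap\Lambda_s=\Lambda_X^{s}$
with the above $X$. Thus both the wavelet statement (i) and the difference statement (ii) will follow from Theorems \ref{thm-2-622a} and \ref{thm-7-117} respectively, once one translates the abstract sequence-space quasi-norms into the concrete Carleson functional $M$ of \eqref{asdf}.

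For part (i), the first step is to invoke Theorem \ref{thm-2-622a}. The remaining work is to translate the quasi-norm on the right-hand side into $M$-form; this is carried out in the proof of Lemma \ref{as12312}, which shows
\begin{align*}
\left\|\left\{\sum_{I\in A\cap\mathcal D_j}\mathbf{1}_I\right\}_{j\in\mathbb{Z}_+}\right\|_{F_{\infty,2}(\mathbb{R}^n,\mathbb{Z}_+)}^2 \sim M\left(\bigcup_{I\in A}T(I)\right)
\end{align*}
for any $A\subset\mathcal D$. Applying this with $A=W^0(s,f,\varepsilon)$ converts the quasi-norm condition of Theorem \ref{thm-2-622a} into the finiteness of $M(\bigcup_{I\in W^0(s,f,\varepsilon)}T(I))$, giving (i). Part (iii) is then immediate from (i) by noting that $f\in\overline{\mathrm{J}_s(\mathop\mathrm{\,bmo\,})}^{\Lambda_s}$ if and only if $f\in\Lambda_s$ and $\mathop\mathrm{\,dist\,}(f,\mathrm{J}_s(\mathop\mathrm{\,bmo\,}))_{\Lambda_s}=0$, which in turn is equivalent to having the critical threshold equal to zero.

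For part (ii), the plan is to apply Theorem \ref{thm-7-11} with the Carleson-type measure $\nu(\{A_j\}_{j\in\mathbb{Z}_+}):=M(\bigcup_{j\in\mathbb{Z}_+}A_j)$, whose admissibility is verified in Lemma \ref{as12312}. Under this choice one has $\nu(\{S_{r,j}(s,f,\varepsilon)\}_{j\in\mathbb{Z}_+})=M(S_r(s,f,\varepsilon))$. A key technical point is that the second summand in \eqref{mop}, namely $\nu(\bigcup_{I\in V_0(s,f,\varepsilon)}T(I))$, can be absorbed: since $V_0(s,f,\varepsilon)\subset\mathcal D_0$, one has $\bigcup_{I\in V_0}T(I)\subset\mathbb{R}^n\times(1/2,1]$, so the integral $\frac{1}{|J|}\iint_{\widehat J}\frac{\mathbf{1}_{\bigcup T(I)}}{y}\,dy\,dx$ vanishes whenever $\ell(J)\le 1/2$ and is bounded by $\log 2$ when $\ell(J)=1$. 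Hence this term is uniformly bounded and contributes nothing to the critical index, which yields (ii). Part (iv) is again obtained from (ii) by reducing to the case of distance zero; the $\limsup$ condition on $\varphi$-coefficients, when interpreted via Proposition \ref{pp}, reflects the analogous reduction for the wavelet threshold at level $j=0$.

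The main obstacle is the careful bookkeeping in the translation between the abstract framework and the concrete Carleson quantities. In particular, one must keep track of the regularity assumption $L>s$ and $L\geq r-1$ on the Daubechies wavelet system, verify the identification $\mathrm{J}_s(\mathop\mathrm{\,bmo\,})\cap\Lambda_s=\Lambda_X^s$ at the level of norms (not just sets), and confirm the reduction in part (iv) by checking that $\nu(\bigcup_{I\in V_0}T(I))<\infty$ is automatic, so that only the differences condition $M(S_r(s,f,\varepsilon))<\infty$ is operative. None of the steps requires new estimates beyond what is already established in Sections \ref{oijjl} and \ref{s3} and Lemma \ref{as12312}; the theorem is obtained essentially by specialization.
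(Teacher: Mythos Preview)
Your approach---specializing the abstract framework to $X=F_{\infty,2}(\mathbb{R}^n,\mathbb{Z}_+)$ via the identification $\mathrm{J}_s(\mathop\mathrm{\,bmo\,})=F^s_{\infty,2}$ and Lemma~\ref{as12312}---is exactly the paper's. Your observation that the $V_0$ contribution in \eqref{mop} is automatically bounded by $\log 2$ is correct and is precisely the detail one must supply beyond the paper's one-line proof. One labeling quibble: for part~(i) you need the distance Theorem~\ref{thm-2-6}, not the closure Theorem~\ref{thm-2-622a} (you later correctly cite Theorem~\ref{thm-7-11} for~(ii)).

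There is, however, a real problem with your handling of part~(iv). Since the second summand in \eqref{mop} is always finite for this $X$, Corollary~\ref{pp2} does \emph{not} apply here: its hypothesis ``$|E|=\infty\Rightarrow\|\{\mathbf 1_E\delta_{0,j}\}\|_X=\infty$'' fails for $X=F_{\infty,2}$ (indeed $\|\{\mathbf 1_E\delta_{0,j}\}\|_{F_{\infty,2}}\le 1$ for every $E$). Consequently neither Theorem~\ref{thm-7-117} nor Proposition~\ref{pp} produces any $\limsup$ condition. In fact the $\limsup$ clause in~(iv) appears to be a slip in the statement itself: the constant $f\equiv 1$ lies in $\mathrm{J}_s(\mathop\mathrm{\,bmo\,})\subset\overline{\mathrm{J}_s(\mathop\mathrm{\,bmo\,})}^{\Lambda_s}$, yet $\limsup_{|k|\to\infty}\bigl|\int\varphi(x-k)\,dx\bigr|=\bigl|\int\varphi\bigr|=1\neq 0$. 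Compare the parallel Theorem~\ref{fasnof4}(iv) for $F^s_{\infty,q}$, where this clause is (correctly) absent. Your argument actually establishes the correct version of~(iv), namely the one without the $\limsup$; the vague appeal to Proposition~\ref{pp} cannot recover the extra condition as stated.
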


\begin{remark}
In both (i) and (ii) of Theorem \ref{bvcaso3}, if  $s\in(0,1]$,
then the conclusions are precisely
\cite[Theorems 1 and 3]{ss}, while
the other cases of Theorem \ref{bvcaso3} are completely new.
\end{remark}

Applying Theorem \ref{bvcaso3}, we characterize
the Lipschitz deviation
degree $\varepsilon_{r,s}$ in \eqref{1-8b} by using
the distance in the Lipschitz space
and we omit
the details of its proof.

\begin{theorem}\label{dsa12422}
Let  $s\in(0,\infty)$ and $p\in(0,\infty)$.
Assume that $r\in\mathbb N$ with $r>s$
and $f\in\Lambda_s$.
\begin{itemize}
  \item[\rm (i)] If $\varepsilon\in(0,\varepsilon_{r,s}f
  )$, then  $M( S_r(s,f, \varepsilon))=\infty$ and,
  for any $(x,y)\in\mathbb{R}^n\times(0,1]$ with
  $(x,y)\notin S_{r}(s,f, \varepsilon)$,
  $\frac{\Delta_r f(x, y)}{y^s}\le\varepsilon_{r,s}f $;
  \item[\rm (ii)] If $\varepsilon\in(\varepsilon_{r,s}f,
  \infty)$,
  then  $M( S_r(s,f, \varepsilon))<\infty$ and,
  for any
  $(x,y)\in S_{r}(s,f, \varepsilon)$,
  $\frac{\Delta_r f(x, y)}{y^s}>\varepsilon_{r,s}f$.
  \item[\rm (iii)] $
\mathop\mathrm{\,dist\,}\left(f, \mathrm{J}_s(\mathop\mathrm{\,bmo\,})\right)_{\Lambda_s}
\sim \varepsilon_{r,s}f
$
with positive equivalence constants independent of $f$.
\item[\rm(iv)]
$f\in\overline{\mathrm{J}_s(\mathop\mathrm{\,bmo\,})
}^{\Lambda_s}$
if and only if
$f\in \Lambda_s$ and $\varepsilon_{r,s}f=0$
and
\begin{align*}
\limsup_{k\in\mathbb{Z}^n,\;|k|\rightarrow\infty} \left|\int_{\mathbb{R}^n}\varphi(x-k)f(x)\,dx\right|=0.
\end{align*}
\end{itemize}
\end{theorem}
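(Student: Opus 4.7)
The plan is to deduce all four parts from prior results with essentially no new analysis, since the Lipschitz deviation constant $\varepsilon_{r,s}f$ is defined precisely as the critical threshold for the Carleson-type condition that appears in Theorem \ref{bvcaso3}. Concretely, by the formula \eqref{1-8b},
\[
\varepsilon_{r,s}f = \inf\left\{\varepsilon\in(0,\infty): J_{r,s,\varepsilon}(\mathop{\mathrm{bmo}})(f)<\infty\right\} = \inf\left\{\varepsilon\in(0,\infty): M(S_r(s,f,\varepsilon))<\infty\right\},
\]
where the second equality follows from the Fubini-type rewriting $J_{r,s,\varepsilon}(\mathop{\mathrm{bmo}})(f)=M(S_r(s,f,\varepsilon))$ coming from the definition of $M$ in \eqref{asdf}.

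For part (i), I would fix $\varepsilon\in(0,\varepsilon_{r,s}f)$. Monotonicity of $M(S_r(s,f,\cdot))$ in $\varepsilon$, together with the above reformulation of $\varepsilon_{r,s}f$ as an infimum, immediately gives $M(S_r(s,f,\varepsilon))=\infty$. The pointwise bound $\Delta_r f(x,y)/y^s\le \varepsilon\le \varepsilon_{r,s}f$ for $(x,y)\notin S_r(s,f,\varepsilon)$ is then just the definition of $S_r(s,f,\varepsilon)$. Part (ii) is handled the same way: for $\varepsilon>\varepsilon_{r,s}f$, the reformulation gives $M(S_r(s,f,\varepsilon))<\infty$ (using that the set is monotone decreasing in $\varepsilon$ and the infimum is achieved in the limit), while the strict inequality $\Delta_r f(x,y)/y^s>\varepsilon>\varepsilon_{r,s}f$ on $S_r(s,f,\varepsilon)$ is again immediate from the definition.

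For part (iii), I would invoke Theorem \ref{bvcaso3}(ii), which states
\[
\mathop{\mathrm{dist}}(f,\mathrm{J}_s(\mathop{\mathrm{bmo}}))_{\Lambda_s}\sim \inf\left\{\varepsilon\in(0,\infty): M(S_r(s,f,\varepsilon))<\infty\right\},
\]
and observe that the right-hand side equals $\varepsilon_{r,s}f$ by the reformulation above. For part (iv), I would apply Theorem \ref{bvcaso3}(iv), which characterizes $f\in\overline{\mathrm{J}_s(\mathop{\mathrm{bmo}})}^{\Lambda_s}$ by the conjunction of $f\in\Lambda_s$, the condition $M(S_r(s,f,\varepsilon))<\infty$ for every $\varepsilon\in(0,\infty)$, and the vanishing of $\limsup_{|k|\to\infty}|\int \varphi(x-k)f(x)\,dx|$. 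The middle condition is equivalent to $\varepsilon_{r,s}f=0$ by the definition of infimum, completing the equivalence.

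Since each step reduces to quoting a definition or an earlier theorem, there is no real obstacle: the only slightly delicate point is verifying that $\varepsilon>\varepsilon_{r,s}f$ in fact forces $M(S_r(s,f,\varepsilon))<\infty$ (rather than just $<\infty$ in the limit), but this follows because $M(S_r(s,f,\cdot))$ is nonincreasing and the set $\{\varepsilon': M(S_r(s,f,\varepsilon'))<\infty\}$ is therefore an open ray containing $(\varepsilon_{r,s}f,\infty)$. All assertions in the theorem are therefore essentially tautological consequences of the machinery already developed.
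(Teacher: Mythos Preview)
Your proposal is correct and matches the paper's approach: the paper itself omits the proof entirely, stating only that it follows by applying Theorem \ref{bvcaso3}, and you have supplied precisely the routine details (definition of the infimum, monotonicity of $\varepsilon\mapsto S_r(s,f,\varepsilon)$ and of $M$, and the identification $J_{r,s,\varepsilon}(\mathop\mathrm{bmo})(f)=M(S_r(s,f,\varepsilon))$) that make this reduction explicit. One small wording point: the set $\{\varepsilon':M(S_r(s,f,\varepsilon'))<\infty\}$ need not be open at its left endpoint, but your monotonicity argument already gives what you need, namely that it contains $(\varepsilon_{r,s}f,\infty)$.
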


Using Theorem \ref{bvcaso3}, we can obtain the following
proper inclusions.

\begin{theorem}\label{fwqf}
If $s\in(0,\infty)$, then
$\mathrm{J}_s(\mathop\mathrm{\,bmo\,})\subsetneqq
\overline{\mathrm{J}_s(\mathop\mathrm{\,bmo\,})}^{
\Lambda_s}\subsetneqq\Lambda_s$.
\end{theorem}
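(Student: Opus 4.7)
\emph{Proof proposal.} The plan is to mimic the proof of Theorem \ref{fwqf2}, exhibiting two explicit wavelet series as witnesses and invoking Theorem \ref{bvcaso3}(iii) (the Carleson-type characterization of the closure via $M$), together with Lemma \ref{asqw} (wavelet characterization of $\Lambda_s$) and the standard wavelet characterization of $\mathrm{J}_s(\mathop\mathrm{\,bmo\,}) = F^s_{\infty,2}$ recalled in Section \ref{5.3}.

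For the strict inclusion $\overline{\mathrm{J}_s(\mathop\mathrm{\,bmo\,})}^{\Lambda_s} \subsetneqq \Lambda_s$, I would take
$$
f_0 := \sum_{\{\omega \in \Omega:\, I_\omega \in \mathcal{D},\, I_\omega \subset [0,1]^n\}} |I_\omega|^{\frac{s}{n} + \frac{1}{2}} \psi_\omega.
$$
Lemma \ref{asqw} gives $f_0 \in \Lambda_s$ since the normalized coefficients $|I_\omega|^{-s/n - 1/2} \langle f_0, \psi_\omega\rangle$ are uniformly bounded by $1$. For $\varepsilon = 1/2$, every $I \in \mathcal{D}$ with $I \subset [0,1]^n$ belongs to $W^0(s, f_0, 1/2)$, so $A := \bigcup_{I \in W^0(s, f_0, 1/2)} T(I)$ covers $[0,1]^n \times (0, 1]$ up to a null set; testing with $J = [0,1]^n \in \mathcal{D}_0$ in the definition of $M$ yields $M(A) \geq \int_0^1 dy/y = \infty$, and Theorem \ref{bvcaso3}(iii) then forces $f_0 \notin \overline{\mathrm{J}_s(\mathop\mathrm{\,bmo\,})}^{\Lambda_s}$.

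For the strict inclusion $\mathrm{J}_s(\mathop\mathrm{\,bmo\,}) \subsetneqq \overline{\mathrm{J}_s(\mathop\mathrm{\,bmo\,})}^{\Lambda_s}$, I would set
$$
f_0 := \sum_{j=0}^\infty \frac{1}{\sqrt{j+1}} \sum_{\{\omega \in \Omega:\, I_\omega \in \mathcal{D}_j,\, I_\omega \subset [0,1]^n\}} |I_\omega|^{\frac{s}{n} + \frac{1}{2}} \psi_\omega.
$$
Lemma \ref{asqw} again gives $f_0 \in \Lambda_s$. For any $\varepsilon \in (0, \infty)$, a cube $I \in \mathcal{D}_j$ lies in $W^0(s, f_0, \varepsilon)$ only if $1/\sqrt{j+1} > \varepsilon$ and $I \subset [0,1]^n$, so $\bigcup_{I \in W^0(s, f_0, \varepsilon)} T(I) \subset [0,1]^n \times (2^{-\lceil 1/\varepsilon^2 \rceil}, 1]$. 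A direct computation then shows $M(\bigcup_{I \in W^0(s, f_0, \varepsilon)} T(I)) \lesssim 1/\varepsilon^2 < \infty$, so by Theorem \ref{bvcaso3}(iii) one has $f_0 \in \overline{\mathrm{J}_s(\mathop\mathrm{\,bmo\,})}^{\Lambda_s}$.

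The main obstacle is then to verify $f_0 \notin \mathrm{J}_s(\mathop\mathrm{\,bmo\,}) = F^s_{\infty,2}$. Using the wavelet characterization of $F^s_{\infty,2}$ recalled in Section \ref{5.3} and testing at $J = [0,1]^n \in \mathcal{D}_0$: since $|\langle f_0, \psi_\omega\rangle|^2 = (j+1)^{-1} |I_\omega|^{2s/n + 1}$ for $\omega \in \Omega_1$ with $I_\omega \in \mathcal{D}_j$ and $I_\omega \subset J$, and there are $2^n - 1$ such $\omega$ per cube, the relevant Littlewood--Paley-type quantity equals
\begin{align*}
\frac{1}{|J|} \sum_{\{\omega \in \Omega_1:\, I_\omega \subset J\}} |I_\omega|^{-\frac{2s}{n} - 1} |\langle f_0, \psi_\omega\rangle|^2 |I_\omega|
&= \frac{2^n - 1}{|J|} \sum_{j=0}^\infty \frac{1}{j+1} \sum_{\{I \in \mathcal{D}_j:\, I \subset J\}} |I| \\
&= (2^n - 1) \sum_{j=0}^\infty \frac{1}{j+1} = \infty,
\end{align*}
since $\sum_{I \in \mathcal{D}_j,\, I \subset J} |I| = |J|$ for every $j \in \mathbb{Z}_+$. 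Hence the $F^s_{\infty,2}$ quasi-norm of $f_0$ is infinite, so $f_0 \notin \mathrm{J}_s(\mathop\mathrm{\,bmo\,})$, completing the plan.
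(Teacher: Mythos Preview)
Your proposal is correct and follows essentially the same approach as the paper's proof: identical witness functions (the paper uses $1/\sqrt{j}$ where you use $1/\sqrt{j+1}$, which is a harmless cosmetic difference that in fact avoids a minor $j=0$ issue), the same invocation of Lemma \ref{asqw} and Theorem \ref{bvcaso3}(iii), the same Carleson-measure computations for $M$, and the same divergent-harmonic-series verification via the $F^s_{\infty,2}$ wavelet norm that $f_0 \notin \mathrm{J}_s(\mathop\mathrm{\,bmo\,})$.
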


\begin{proof}
  We first show that  $\overline{\mathrm{J}_s(
  \mathop\mathrm{\,bmo\,})
  }^{\Lambda_s}
  \subsetneqq\Lambda_s$. Let
$$f_0:=\sum_{\{\omega\in \Omega:I_\omega\in\mathcal{D},\,
I_\omega\subset[0,1]^n\}}
|I_\omega|^{\frac sn+\frac 12}\psi_{\omega}.$$ From
Lemma \ref{asqw},
it follows that $f_0\in\Lambda_s$. Observe that,
for any
$I\in\mathcal{D}$ and $I\subset[0,1]^n$,
$$\max_{\{\omega\in\Omega: I_\omega=I\}}\left|\int_{
\mathbb{R}^n} f_0(x) \psi_{\omega}(x)\, dx\right|
= |I|^{\frac sn+\frac 12}.$$
By this, we conclude that
\begin{align*}
M\left(\bigcup_{I \in W^0(s, f_0, \frac12)}
T(I)\right)&=
\sup_{I\in \mathcal D} \frac 1 {|I|} \int_I
\left[\int_0^{\ell(I)} \mathbf{1}_{\bigcup_{I
 \in W^0(s, f_0, \frac12)} T(I)}(x,y) \,
 \frac {dy} {y}\right]\, dx\\\notag
&\geq \int_{[0,1]^n} \left[\int_0^{1} \mathbf{1}_{
\bigcup_{I \in W^0(s, f_0, \frac12)} T(I)}(x,y)
\,\frac {dy} {y}\right]\, dx\\\notag
&=\int_{[0,1]^n} \left[\int_0^{1}  \,\frac {dy} {y}
\right]\, dx=\infty,
\end{align*}
which, combined with Theorem \ref{bvcaso3},
further implies that
$
\mathop\mathrm{\,dist\,}\left(f_0,  \mathrm{J}_s(\mathop\mathrm{\,bmo\,})\right)_{
\Lambda_s}\gtrsim\frac12.
$
From this, it follows that $f_0\notin\overline{
\mathrm{J}_s(\mathop\mathrm{
\,bmo\,})}^{\Lambda_s}$.
Thus, $\overline{\mathrm{J}_s(\mathop\mathrm{
\,bmo\,})}^{\Lambda_s}
  \subsetneqq\Lambda_s$.

Now, we show
$\mathrm{J}_s(\mathop\mathrm{
\,bmo\,})\subsetneqq
\overline{\mathrm{J}_s(\mathop\mathrm{
\,bmo\,})}^{\Lambda_s}$.
Let
$f_0:=\sum_{j=0}^{\infty}\frac{1}{\sqrt{j}}
\sum_{\{\omega\in \Omega: I_\omega\in\mathcal{D}_j,\,
I_\omega\subset[0,1]^n\}}
|I_\omega|^{\frac sn+\frac 12}\psi_{\omega}$.
From  Lemma \ref{asqw},
it follows that $f_0\in\Lambda_s$. Observe that,
for any
$I\in\mathcal{D}_j$ and $I\subset[0,1]^n$,
$$\max_{\{\omega\in\Omega: I_\omega=I\}}\left|
\int_{\mathbb{R}^n} f_0(x) \psi_{\omega}(x)\,
dx\right|
= \frac1{\sqrt{j}}|I|^{\frac sn+\frac 12}.$$
By this, we conclude that, for any $\varepsilon
\in(0,\infty)$,
\begin{align*}
M\left(\bigcup_{I \in W^0(s, f_0, \varepsilon)}
T(I)\right)&
=
\sup_{I\in \mathcal D} \frac 1 {|I|} \int_I \left[
\int_0^{\ell(I)} \mathbf{1}_{\bigcup_{I \in W^0(
s, f_0, \varepsilon)} T(I)}(x,y) \,\frac {dy} {y}
\right]\, dx\\\notag&
\le\sup_{x\in[0,1]^n} \left[\int_0^{1} \mathbf{1}_{
\bigcup_{I \in W^0(s, f_0, \varepsilon)} T(I)}(x,y)
\,\frac {dy} {y}\right]\\\notag&
\le\sup_{x\in[0,1]^n} \left[\int_{2^{-1/\varepsilon^2}
}^{1}  \,\frac {dy} {y}\right]<\infty,
\end{align*}
which, combined with Theorem \ref{bvcaso3},
further implies that $f_0\in\overline{\mathrm{J}_s(
\mathop\mathrm{\,bmo\,})
}^{\Lambda_s}$.
Moreover,
\begin{align*}
\|f_0\|_{\mathrm{J}_s(\mathop\mathrm{\,bmo\,})}
&\sim\sup_{l\in\mathbb Z_+,m\in\mathbb Z^n}
\left\{\fint_{I_{l,m}}\sum_{j=\ell}^{\infty}
\frac1j\left|
\sum_{\{\omega\in \Omega: I_\omega\in\mathcal{D}_j,\,
I_\omega\subset[0,1]^n\}}
\mathbf{1}_{I_\omega}(x)
\right|
^2\,dx\right\}^{\frac 12}\\\notag
&\geq \left\{\int_{[0,1]^n}\sum_{j=0}^{\infty}\frac1j
\left|
\sum_{\{\omega\in \Omega: I_\omega\in\mathcal{D}_j,\,
I_\omega\subset[0,1]^n\}}
\mathbf{1}_{I_\omega}(x)\right|
^2\,dx\right\}^{\frac 12}
\\\notag
&\sim \left\{\int_{[0,1]^n}\sum_{j=0}^{\infty}
\frac1j\left|
\sum_{\{I\in\mathcal{D}_j: I\subset[0,1]^n\}}
\mathbf{1}_{I}(x)\right|
^2\,dx\right\}^{\frac 12}= \left\{\int_{[0,1]^n}\sum_{j=0}^{\infty}
\frac1j
\,dx\right\}^{\frac 12}=\infty,
\end{align*}
which
further implies that $f_0\notin\mathrm{J}_s(
\mathop\mathrm{\,bmo\,})$.
Thus, $ \mathrm{J}_s(\mathop\mathrm{\,bmo\,})\subsetneqq
\overline{\mathrm{J}_s(\mathop\mathrm{\,bmo\,})}^{\Lambda_s}
$,
which completes the proof of Theorem \ref{fwqf}.
\end{proof}

\subsection{Besov spaces}\label{5.2}

Let $\{\phi_j\}_{j\in\mathbb{Z}_+}$
be the same as in \eqref{eq-phi1} and \eqref{eq-phik}.
As usual, we  denote by $\mathcal{S}({\mathbb{R}^n})$
the space of all Schwartz functions on ${\mathbb{R}^n}$,
equipped with the well-known topology
determined by a countable family of norms,
and by
$\mathcal{S}'({\mathbb{R}^n})$ its topological
dual space (that is,  the space of all tempered
distributions on $\mathbb{R}^n$),
equipped with the weak-$\ast$ topology.
Now, we present the concept of Besov spaces
as follows; see, for instance,
\cite[Definition 1.1]{T20}.

\begin{definition}\label{df-Triebel}
Let $p,q\in (0,\infty]$ and $s\in {\mathbb R}$.
Then the
\emph{Besov space} $B^{s}_{p,q}$
is defined to be the set of
all $f\in\mathcal S'$ such that
$$
\|f\|_{B^{s}_{p,q}}:=\left\{\sum_{j=0}^{\infty}
\left[2^{js}
\left\|\phi_j\ast f\right\|_{L^p}\right]^{q}
\right\}^{\frac 1q}<\infty,
$$
where the usual modification is made
when $q=\infty$.
\end{definition}

Let $q,p\in(0,\infty]$. The \emph{space
$l^q(L^p)_{\mathbb{Z}_+}$} is
defined to be the set of all
$\mathbf{F}:=\{f_j\}_{j\in\mathbb{Z}_+}
\in \mathscr{M}_{\mathbb{Z}_+}$
such that
$\|\mathbf{F}\|_{l^q(L^p)_{\mathbb{Z}_+}}
:=[\sum_{j\in\mathbb{Z}_+}
\|f_j\|^q_{L^p}]^{\frac{1}{q}}<\infty,$
where the usual modification is made
when $q=\infty$.
From the definition of the space $l^q(L^p
)_{\mathbb{Z}_+}$, it is easy to deduce that
$l^q(L^p)_{\mathbb{Z}_+}$ is a quasi-normed
lattice of function sequences.

The following lemma is about the wavelet
characterization of
Besov spaces (see, for instance,
\cite[Proposition 1.11]{T20}).

\begin{lemma}\label{as}
Let  $s\in(0,\infty)$ and $p,q\in(0,\infty]$.
Assume that
the regularity parameter $L\in\mathbb N$ of
the Daubechies wavelet system $\{\psi_\omega\}_{\omega\in\Omega}$
satisfies that
$L>\max\{s,n(\max\{\frac 1p,1\}-1)-s\}$.
Then $B^s_{p,q}\cap\Lambda_s
=\Lambda_X^{s}$
with equivalent quasi-norm, where $\Lambda_X^{s}$
is the
Daubechies $s$-Lipschitz $X$-based space with
$X:=l^q(L^p)_{\mathbb{Z}_+}$.
\end{lemma}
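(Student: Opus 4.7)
The plan is to unwind the definition of $\Lambda_X^{s}$ for the particular choice $X=\ell^q(L^p)_{\mathbb{Z}_+}$ and recognize the resulting expression as the standard Daubechies wavelet sequence-space norm of the Besov space $B^{s}_{p,q}$, after which the claim reduces to the usual wavelet characterization of $B^{s}_{p,q}$ from the literature. The key structural fact that makes this painless is that, for each fixed $j\in\mathbb{Z}_+$, the cubes $\{I_\omega:\omega\in\Omega,\ I_\omega\in\mathcal D_j\}$ (as $I_\omega$ varies) partition $\mathbb{R}^n$ up to a null set, so the inner sum inside the $L^p$ norm in the definition of $\|f\|_{\Lambda_X^{s}}$ has no overlap of supports.

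More precisely, writing $\omega=(\ell_\omega,I_\omega)\in\Omega_1$ with $\ell_\omega\in\mathcal N$ and noting $|I_\omega|=2^{-jn}$ when $I_\omega\in\mathcal D_j$, the first step will be the identity
\begin{align*}
\left\|\sum_{\{\omega\in\Omega:\,I_\omega\in\mathcal D_j\}}
|I_\omega|^{-\frac sn-\frac 12}|\langle f,\psi_\omega\rangle|\mathbf{1}_{I_\omega}\right\|_{L^p}^p
&=2^{j(s+\frac n2)p}\sum_{I\in\mathcal D_j}|I|\Bigl(\sum_{\ell\in\mathcal N}|\langle f,\psi_{(\ell,I)}\rangle|\Bigr)^p.
\end{align*}
Using $|I|=2^{-jn}$, the elementary equivalence $\sum_{\ell\in\mathcal N}|a_\ell|\sim(\sum_{\ell\in\mathcal N}|a_\ell|^p)^{1/p}$ (with constants depending only on $n,p$ since $\sharp\mathcal N=2^n-1$), and taking $q$-th powers and summing in $j$, I obtain
\begin{align*}
\|f\|_{\Lambda_X^{s}}^q
&\sim\sum_{j=0}^\infty 2^{j(s+\frac n2-\frac np)q}
\Bigl(\sum_{\{\omega\in\Omega:\,I_\omega\in\mathcal D_j\}}
|\langle f,\psi_\omega\rangle|^p\Bigr)^{\frac qp}.
\end{align*}
The contribution from $\Omega_0$ (the scaling functions) fits into the $j=0$ term up to a constant. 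The right-hand side is precisely the standard Daubechies wavelet sequence-space norm $\|\{\langle f,\psi_\omega\rangle\}_{\omega\in\Omega}\|_{b^{s}_{p,q}}$ of $B^{s}_{p,q}$.

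At this point I would invoke the Daubechies wavelet characterization of inhomogeneous Besov spaces under the regularity hypothesis $L>\max\{s,n(\max\{1/p,1\}-1)-s\}$ (for instance, Triebel's wavelet theorem, a reference version of which is already cited in the excerpt as \cite[Proposition 1.11]{T20}), which yields $\|f\|_{B^{s}_{p,q}}\sim\|\{\langle f,\psi_\omega\rangle\}_\omega\|_{b^{s}_{p,q}}$ for every $f\in\mathcal S'$ whose wavelet coefficients lie in $b^{s}_{p,q}$, together with unconditional convergence of the wavelet expansion in a suitable topology. Combining this equivalence with the chain of identities above shows that, for $f\in\Lambda_s$, the membership $f\in\Lambda_X^{s}$ is equivalent to $f\in B^{s}_{p,q}$ with comparable norms, which is precisely the claim $B^{s}_{p,q}\cap\Lambda_s=\Lambda_X^{s}$ with equivalent quasi-norms.

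The proof is almost entirely bookkeeping; the only mildly delicate point is to be consistent about how the factor $\mathfrak{C}_0$ or lack thereof enters: the space $\Lambda_X^{s}$ is defined as a subset of $\Lambda_s$, so both sides are automatically intersected with $\Lambda_s$, and no additional density or decay argument is needed. The main (still minor) obstacle is ensuring that the regularity condition $L>\max\{s,n(\max\{1/p,1\}-1)-s\}$ stated here matches the hypothesis actually required by the cited wavelet characterization; if a small gap appears I would quote the sharper statement from \cite{Me92,t06,t08,t10} instead.
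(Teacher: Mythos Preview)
Your proposal is correct and matches the paper's approach: the paper does not give a proof of this lemma at all but simply states it with a citation to the wavelet characterization of Besov spaces in \cite[Proposition 1.11]{T20}, and your argument is precisely the bookkeeping that reduces the definition of $\Lambda_X^{s}$ with $X=\ell^q(L^p)_{\mathbb{Z}_+}$ to the standard sequence-space norm $b^{s}_{p,q}$ before invoking that same cited result. In other words, you have written out explicitly what the paper leaves to the reader and the reference.
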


\begin{lemma}\label{as123}
Let  $p\in(0,\infty)$ and $q\in(0,\infty]$.
Then $l^q(L^p)_{\mathbb{Z}_+}$ satisfies Assumption \ref{a1}.
\end{lemma}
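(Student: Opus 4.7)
The plan is to choose a convexification exponent $u\in(0,\infty)$ with $up>1$, to reduce Assumption \ref{a1} to a levelwise $L^{up}$-estimate, and then to invoke the Fefferman--Stein vector-valued maximal inequality exactly as in the proof of Lemma \ref{as123jl}, but with the roles of the inner and outer norms exchanged. Concretely, I would set $u:=1$ when $p\in(1,\infty)$ and $u:=2/p$ when $p\in(0,1]$, so that in every case $up>1$.

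A direct computation using Definition \ref{tuhua} shows that the $u$-convexification satisfies $\|\mathbf{F}\|_{(l^q(L^p)_{\mathbb{Z}_+})^u}=[\sum_{j\in\mathbb{Z}_+}\|f_j\|_{L^{up}}^{uq}]^{1/(uq)}$, with the usual supremum modification when $q=\infty$. Thus the inequality in Assumption \ref{a1} reduces to
\begin{equation*}
\left[\sum_{j\in\mathbb{Z}_+}\left\|\sum_{k\in\mathbb{N}}\mathbf{1}_{\beta B_{k,j}}\right\|_{L^{up}}^{uq}\right]^{\frac{1}{uq}}
\lesssim\left[\sum_{j\in\mathbb{Z}_+}\left\|\sum_{k\in\mathbb{N}}\mathbf{1}_{B_{k,j}}\right\|_{L^{up}}^{uq}\right]^{\frac{1}{uq}}.
\end{equation*}
Since the $l^{uq}$-quasi-norm acts only on the outer index $j$, it suffices to prove the stronger uniform-in-$j$ bound
\begin{equation*}
\left\|\sum_{k\in\mathbb{N}}\mathbf{1}_{\beta B_{k,j}}\right\|_{L^{up}}\lesssim\left\|\sum_{k\in\mathbb{N}}\mathbf{1}_{B_{k,j}}\right\|_{L^{up}}
\end{equation*}
for each fixed $j\in\mathbb{Z}_+$, and then take the $l^{uq}$-norm of both sides.

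To obtain this levelwise bound, I would fix any $\alpha\in(1,\infty)$ and use the elementary lower bound $\mathcal{M}(\mathbf{1}_{B_{k,j}})\ge c\beta^{-n}$ pointwise on $\beta B_{k,j}$ to get $\mathbf{1}_{\beta B_{k,j}}\le C_{(\alpha,\beta)}[\mathcal{M}(\mathbf{1}_{B_{k,j}})]^{\alpha}$ pointwise on $\mathbb{R}^n$; summing in $k$ produces $\sum_{k}\mathbf{1}_{\beta B_{k,j}}\le C\sum_{k}[\mathcal{M}(\mathbf{1}_{B_{k,j}})]^{\alpha}$. Since $up\alpha>1$ and $\alpha>1$, the Fefferman--Stein vector-valued maximal inequality on $L^{up\alpha}(\ell^{\alpha})$ (see, for instance, \cite[Theorem 1(a)]{FS}), combined with the identity $\mathbf{1}_{B_{k,j}}^{\alpha}=\mathbf{1}_{B_{k,j}}$, yields after raising both sides to the $\alpha$-th power that $\|\sum_{k}[\mathcal{M}(\mathbf{1}_{B_{k,j}})]^{\alpha}\|_{L^{up}}\lesssim\|\sum_{k}\mathbf{1}_{B_{k,j}}\|_{L^{up}}$. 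Chaining the two displays delivers the required levelwise estimate.

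No serious obstacle is expected: the role of the inequality in $l^q(L^p)_{\mathbb{Z}_+}$ is purely \emph{outer-index passive}, so the argument is strictly simpler than the $L^p(l^q)_{\mathbb{Z}_+}$ case handled in Lemma \ref{as123jl}, where the $l^q$-sum and the $L^p$-norm interact and force the use of $u=1/q$. The only small care needed here is in the endpoint $q=\infty$, where the $l^{uq}$-norm degenerates to a supremum over $j$, and in the case $p\le 1$, where the choice $u=2/p>1/p$ is precisely what brings $up$ into the range of validity of the Fefferman--Stein inequality.
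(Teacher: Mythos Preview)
Your argument is correct, but the paper takes a much more elementary route. Instead of choosing $u$ so that $up>1$ and invoking the Fefferman--Stein maximal inequality, the paper sets $u:=1/p$, which collapses the inner norm to $L^1$. Then $\|\sum_{k}\mathbf{1}_{\beta B_{k,j}}\|_{L^1}=\sum_{k}|\beta B_{k,j}|=\beta^{n}\sum_{k}|B_{k,j}|=\beta^{n}\|\sum_{k}\mathbf{1}_{B_{k,j}}\|_{L^1}$, and taking the outer $\ell^{q/p}$-norm finishes the proof in one line with no maximal function at all. What your approach buys is robustness: it would survive in settings where an $L^1$ reduction is unavailable (and indeed mirrors the paper's strategy for $L^p(l^q)$ and the Morrey-type lattices, where such a reduction fails). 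What the paper's approach buys is that it is strictly elementary---just measure additivity and the scaling $|\beta B|=\beta^n|B|$---and works uniformly for all $p\in(0,\infty)$ without needing to split cases or appeal to \cite{FS}.
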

\begin{proof}
Let $\beta\in(1,\infty)$, $X:=l^q(L^p)_{\mathbb{Z}_+}$,
and $u:=1/p$.
It follows that
\begin{align*}
\left\|\left\{\sum_{k\in\mathbb N}\mathbf{1}_{\beta B_{k,j}}
\right\}_{j\in\mathbb Z_+}\right\|_{X^{\frac 1p}}&=\left[\sum_{j\in\mathbb{Z}_+}
\left\|\sum_{k\in\mathbb N}\mathbf{1}_{\beta B_{k,j}}
\right\|^{\frac{q}{p}}_{L^1}
\right]^{\frac{p}{q}}=\left[\sum_{j\in\mathbb{Z}_+}
\left\{\sum_{k\in\mathbb N}|\beta B_{k,j}|\right\}^{\frac{q}{p}}
\right]^{\frac{p}{q}}\lesssim\left[\sum_{j\in\mathbb{Z}_+}
\left\{\sum_{k\in\mathbb N}|B_{k,j}|\right\}^{\frac{q}{p}}
\right]^{\frac{p}{q}}\\
&=\left[\sum_{j\in\mathbb{Z}_+}
\left\|\sum_{k\in\mathbb N}\mathbf{1}_{B_{k,j}}
\right\|^{\frac{q}{p}}_{L^1}
\right]^{\frac{p}{q}}\lesssim\left\|\left\{\sum_{k\in\mathbb N}\mathbf{1}_{B_{k,j}}
\right\}_{j\in\mathbb Z_+}\right\|_{X^{\frac 1p}},
\end{align*}
which completes the proof of Lemma \ref{as123}.
\end{proof}

\begin{lemma}\label{as12k312}
Let $q\in(0,\infty)$.
Then $l^q(L^\infty)_{\mathbb{Z}_+}$ satisfies
Assumption \ref{pplp}.
\end{lemma}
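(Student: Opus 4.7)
The plan is to use essentially the candidate $\nu$ already suggested in the example preceding the lemma, namely
\[
\nu\bigl(\{A_j\}_{j\in\mathbb{Z}_+}\bigr):=\sharp\left\{j\in\mathbb{Z}_+:A_j\neq\emptyset\right\},
\]
defined on $\mathscr{P}_{\mathbb{Z}_+}(\mathbb{R}^{n+1}_+)$, and then to verify in turn that (a) $\nu$ is a Carleson-type measure in the sense of Definition \ref{Debqf2s}, and (b) the two quantities displayed in Assumption \ref{pplp} are simultaneously finite or infinite when $X=l^q(L^\infty)_{\mathbb{Z}_+}$.

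For (b), the key observation is a pointwise triviality. For each fixed $j\in\mathbb{Z}_+$ the cubes in $\mathcal{D}_j$ are pairwise disjoint, so the function $\sum_{I\in A\cap\mathcal{D}_j}\mathbf{1}_I$ takes only the values $0$ and $1$. Consequently
\[
\left\|\sum_{I\in A\cap\mathcal{D}_j}\mathbf{1}_I\right\|_{L^\infty}
=\begin{cases}1 & \text{if }A\cap\mathcal{D}_j\neq\emptyset,\\ 0 & \text{otherwise.}\end{cases}
\]
Taking the $\ell^q$ norm in $j$ yields
\[
\left\|\left\{\sum_{I\in A\cap\mathcal{D}_j}\mathbf{1}_I\right\}_{j\in\mathbb{Z}_+}\right\|_{l^q(L^\infty)_{\mathbb{Z}_+}}^q
=\sharp\left\{j\in\mathbb{Z}_+:A\cap\mathcal{D}_j\neq\emptyset\right\}.
\]
On the other hand, since each Whitney box $T(I)$ is nonempty, $\bigcup_{I\in A\cap\mathcal{D}_j}T(I)\neq\emptyset$ iff $A\cap\mathcal{D}_j\neq\emptyset$, so $\nu(\{\bigcup_{I\in A\cap\mathcal{D}_j}T(I)\}_{j\in\mathbb{Z}_+})$ equals the same count. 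The two expressions are thus literally equal up to a $q$-th power, which is all that is needed.

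For (a), the four axioms are straightforward set-counting facts. Monotonicity (i) is immediate from $\{A_j\}\subset\{B_j\}$. Subadditivity (ii) follows from $\{j:A_j\cup B_j\neq\emptyset\}\subset\{j:A_j\neq\emptyset\}\cup\{j:B_j\neq\emptyset\}$ with $C=1$. For the hyperbolic neighborhood condition (iii), note that $(A_j)_R\supset A_j$, so $(A_j)_R\neq\emptyset$ iff $A_j\neq\emptyset$, and hence $\nu(\{(A_j)_R\}_{j\in\mathbb{Z}_+})=\nu(\{A_j\}_{j\in\mathbb{Z}_+})$ regardless of the density hypothesis \eqref{5-1-00}. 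Finally the shift invariance (iv) follows from $\nu(S_L\{A_j\})=\sharp\{j\geq 1:A_j\neq\emptyset\}\leq\nu(\{A_j\})$ and, using the convention $A_{-1}:=\emptyset$, $\nu(S_R\{A_j\})=\sharp\{j\geq 0:A_j\neq\emptyset\}=\nu(\{A_j\})$.

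There is really no analytical obstacle here: the whole point of taking $p=\infty$ is that the $L^\infty$ norm of an indicator sum over disjoint dyadic cubes collapses to a $\{0,1\}$-valued quantity, reducing everything to counting nonempty scales. The only step that requires a moment of care is the right shift in (iv), where the convention $A_{-1}:=\emptyset$ built into Definition of $S_R$ ensures that inserting an empty term at scale $0$ does not increase $\nu$.
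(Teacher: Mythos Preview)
Your proposal is correct and follows exactly the paper's approach: the paper defines the same $\nu(\{A_j\}_{j\in\mathbb Z_+}):=\sharp\{j\in\mathbb Z_+:A_j\neq\emptyset\}$ and performs the identical computation showing that the $l^q(L^\infty)_{\mathbb Z_+}$ norm raised to the $q$-th power equals this count. Your write-up is in fact more thorough than the paper's, which omits the explicit verification of the four Carleson-type measure axioms (i)--(iv) for $\nu$ that you carry out.
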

\begin{proof}
For any $\{A_j\}_{j\in\mathbb{Z}_+}
\in\mathscr{P}_{\mathbb{Z}_+}(\mathbb{R}^{n+1}_+)$, let
$\nu(\{A_j\}_{j\in\mathbb Z_+}):=\sharp\{j\in\mathbb Z_+: A_j\neq\emptyset\}$.
Then we have,
for any $A\subset  \mathcal D$,
\begin{align*}
\left\|\left\{
\sum_{I\in A\cap \mathcal D_j}
\mathbf{1}_{I}
\right\}_{j\in\mathbb{Z}_+}
\right\|_{l^q(L^\infty)_{\mathbb{Z}_+}}^q
:=&\,\sum_{j\in\mathbb{Z}_+}
\left\|\sum_{I\in A\cap \mathcal D_j}
\mathbf{1}_{I}\right\|^q_{L^\infty}
=\sharp\left\{j\in\mathbb Z_+:\bigcup_{I\in A\cap
\mathcal D_j}T(I)\neq\emptyset\right\}\\
=&\,\nu\left(\left\{\bigcup_{I\in A\cap \mathcal D_j}T(I)\right\}_{j\in\mathbb{Z}_+}\right).
\end{align*}
From this, it follows that
$l^q(L^\infty)_{\mathbb{Z}_+}$ satisfies Assumption
 \ref{pplp},
which completes the proof of Lemma \ref{as12k312}.
\end{proof}

Using Theorem \ref{thm-7-11}, Lemmas \ref{as},
\ref{as123}, and \ref{as12k312}, Proposition \ref{pp},
and Corollary \ref{pp2}, we obtain the following conclusions.

\begin{theorem}\label{falsnlf}
Let  $s\in(0,\infty)$, $p\in(0,\infty)$, and
$q\in(0,\infty]$.
Assume that $r\in\mathbb N$ with $r>s$ and that
the regularity parameter $L\in\mathbb N$ of the
Daubechies wavelet system $\{\psi_\omega\}_{\omega\in\Omega}$ satisfies that
$L>\max\{s,n(\max\{\frac 1p,1\}-1)-s\}$ and $L
\geq r-1$. Let $\varphi$
be the same as in \eqref{fanofn}. Then the
following statements hold.
\begin{itemize}
\item[\rm(i)]
For any $f\in\Lambda_s$,
\begin{align*}
\mathop\mathrm{\,dist\,}\left(f, B^s_{p,q}\cap\Lambda_s
\right)_{\Lambda_s}
\sim\inf\left\{\varepsilon\in(0,\infty):\left\{\sum_{j=0}^\infty\left[
\sum_{I \in W^0(s, f, \varepsilon),I\in\mathcal{D}_j}|I|
\right]^{\frac qp}\right\}^{\frac 1q}<\infty\right\}
\end{align*}
with positive equivalence constants independent
of $f$.
\item[\rm(ii)]
For any $f\in\Lambda_s$,
\begin{align*}
\mathop\mathrm{\,dist\,}\left(f, B^s_{p,q}
\cap\Lambda_s
\right)_{\Lambda_s}
&\sim \inf\left\{\varepsilon\in(0,\infty):\left\{\sum_{j=0}^{\infty}
\left[\mu( S_{r,j}(s,f, \varepsilon))\right]^{
\frac qp}\right\}^{\frac 1q}<\infty\right\}\\
&\quad +\limsup_{k\in\mathbb{Z}^n,\;|k|\rightarrow\infty}\left|
\int_{\mathbb{R}^n}\varphi(x-k)f(x)\,dx\right|
\end{align*}
with positive equivalence constants independent of $f$.
\item[\rm(iii)]
For any $f\in\Lambda_s$ and $q\in(0,\infty)$,
$$
\mathop\mathrm{\,dist\,}\left(f, B^s_{\infty,q}
\right)_{\Lambda_s}\sim
\inf\left\{\varepsilon\in(0,\infty):\sharp\{j\in\mathbb Z_+:W_j(s, f, \varepsilon
)\neq\emptyset\}<\infty
\right\}
$$
with positive equivalence constants independent of $f$.
\item[\rm(iv)]
For any $f\in\Lambda_s$ and $q\in(0,\infty)$,
$$
\mathop\mathrm{\,dist\,}\left(f, B^s_{\infty,q}
\right)_{\Lambda_s}\sim
\inf\left\{\varepsilon\in(0,\infty):\sharp\{j\in\mathbb Z_+:S_{r,j}(s, f,
\varepsilon)\neq\emptyset\}<\infty
\right\}
$$
with positive equivalence constants independent of $f$.
\end{itemize}
\end{theorem}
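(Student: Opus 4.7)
The plan is to derive all four parts by selecting appropriate quasi-normed lattices $X$ of function sequences and invoking the general framework results established earlier. For parts (i) and (ii) I take $X := l^q(L^p)_{\mathbb{Z}_+}$; for the endpoint cases (iii) and (iv) I take $X := l^q(L^\infty)_{\mathbb{Z}_+}$. By Lemma \ref{as}, under the given regularity assumption on $L$, the Daubechies $s$-Lipschitz $X$-based space $\Lambda_X^s$ coincides with $B^s_{p,q} \cap \Lambda_s$ in all cases, so the abstract distance characterizations transfer directly.

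For part (i), I apply Theorem \ref{thm-2-6} with $X = l^q(L^p)$, which only requires $X$ to be a quasi-normed lattice of function sequences. The computation uses pairwise disjointness of the dyadic cubes in $W_j^0(s,f,\varepsilon) \subset \mathcal{D}_j$, giving
\[
\bigg\|\sum_{I \in W_j^0(s,f,\varepsilon)} \mathbf{1}_I \bigg\|_{L^p}^p = \sum_{I \in W_j^0(s,f,\varepsilon)} |I|,
\]
so the sequence norm in Theorem \ref{thm-2-6} becomes exactly the iterated sum on the right-hand side of (i). For (ii), I invoke Corollary \ref{pp2}. Its hypotheses hold: Lemma \ref{as123} verifies Assumption \ref{a1} with convexification parameter $u = 1/p$, and $|E| = \infty$ forces $\|\{\mathbf{1}_E \delta_{0,j}\}_j\|_{l^q(L^p)} = |E|^{1/p} = \infty$, as required by the extra hypothesis of Corollary \ref{pp2}. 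With $u = 1/p$, setting $g_j(x) := \int_0^1 \mathbf{1}_{S_{r,j}(s,f,\varepsilon)}(x,y)\,dy/y$ and applying Fubini yields $\int_{\mathbb{R}^n} g_j(x)\,dx = \mu(S_{r,j}(s,f,\varepsilon))$, hence
\[
\big\|\{|g_j|^{1/p}\}_j\big\|_{l^q(L^p)} = \bigg\{\sum_{j=0}^\infty \big[\mu(S_{r,j}(s,f,\varepsilon))\big]^{q/p}\bigg\}^{1/q},
\]
which matches the infimum condition in (ii); the limsup term then arises directly from the second summand in Corollary \ref{pp2}.

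For (iii), I again apply Theorem \ref{thm-2-6}, now with $X = l^q(L^\infty)$. Since $\|\sum_I \mathbf{1}_I\|_{L^\infty}$ equals $1$ or $0$ according as $W_j^0 \neq \emptyset$ or not, the abstract norm reduces to $[\sharp\{j : W_j^0(s,f,\varepsilon) \neq \emptyset\}]^{1/q}$; the sets $W_j^0$ and $W_j$ agree for $j \geq 1$ and differ at $j=0$ by at most one element, so the two cardinalities give the same infimum threshold in $\varepsilon$. For (iv), since Assumption \ref{a1} fails for $L^\infty$, I instead apply Theorem \ref{thm-7-11} with the Carleson-type measure $\nu(\{A_j\}_j) := \sharp\{j \in \mathbb{Z}_+ : A_j \neq \emptyset\}$ verified in Lemma \ref{as12k312}; the $V_0$-contribution in \eqref{mop} is trivially finite, since $\nu$ applied to any single set lies in $\{0,1\}$, so only the $\varepsilon_{X,\nu}f$ term survives and it coincides with the infimum appearing in (iv).

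The main delicacy is the matching of the convexification exponent $u$ in Definition \ref{asdfs34} with the Lebesgue exponent $p$: the identity $\|\{|g_j|^{1/p}\}_j\|_{l^q(L^p)} = \{\sum_j (\int|g_j|)^{q/p}\}^{1/q}$ combined with Fubini is precisely what collapses the abstract condition $\|\cdot\|_X < \infty$ into the explicit hyperbolic-measure condition $\sum_j [\mu(S_{r,j})]^{q/p} < \infty$. Once this computation is in place, the remaining verifications are routine monotonicity arguments showing that the critical thresholds for $\varepsilon$ on the two sides of each equivalence agree.
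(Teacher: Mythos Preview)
Your proposal is correct and follows essentially the same route as the paper: specialize the abstract framework results to $X=l^q(L^p)_{\mathbb{Z}_+}$ (for (i)--(ii)) and $X=l^q(L^\infty)_{\mathbb{Z}_+}$ (for (iii)--(iv)), invoking Lemma~\ref{as} to identify $\Lambda_X^s$ with $B^s_{p,q}\cap\Lambda_s$, Lemma~\ref{as123} for Assumption~\ref{a1}, and Lemma~\ref{as12k312} for Assumption~\ref{pplp}. Your explicit norm computations---the disjointness argument for (i), the Fubini identity $\int g_j=\mu(S_{r,j})$ matched with the convexification exponent $u=1/p$ for (ii), and the observation that the $V_0$-term in Theorem~\ref{thm-7-11} is automatically bounded by $1$ for (iv)---are exactly the calculations the paper suppresses in its one-line proof sketch.
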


\begin{remark}
To the best of our knowledge,
Theorem \ref{falsnlf} is completely new.
\end{remark}

Using Theorem \ref{pppz2a}, Corollary \ref{pp2}, and
Lemmas \ref{as}, \ref{as123}, and \ref{as12k312},
we obtain the following conclusions.

\begin{theorem}\label{fasnof6}
Let  $s\in(0,\infty)$, $p\in(0,\infty)$, and $q\in(0,\infty]$.
Assume that $r\in\mathbb N$ with $r>s$ and that
the regularity parameter $L\in\mathbb N$ of the
Daubechies wavelet system $\{\psi_\omega\}_{\omega\in\Omega}$ satisfies that
$L>\max\{s,n(\max\{\frac 1p,1\}-1)-s\}$ and $L\geq r-1$.
Let $\varphi$
be the same as in \eqref{fanofn}.
Then the following statements hold.
\begin{itemize}
\item[\rm(i)]
$f\in\overline{B^s_{p,q}\cap\Lambda_s
}^{\Lambda_s}$
if and only if
$f\in \Lambda_s$ and,
for any $\varepsilon\in(0,\infty)$,
\begin{align*}
\left\{\sum_{j=0}^\infty\left[
\sum_{I \in W^0(s, f, \varepsilon),I\in
\mathcal{D}_j}|I|
\right]^{\frac qp}\right\}^{\frac 1q}<\infty.
\end{align*}
\item[\rm(ii)]
$f\in\overline{B^s_{p,q}\cap\Lambda_s
}^{\Lambda_s}$
if and only if
$f\in \Lambda_s$ and,
for any $\varepsilon\in(0,\infty)$,
 \begin{align*}
\left\{\sum_{j=0}^{\infty}
\left[\mu( S_{r,j}(s,f, \varepsilon))
\right]^{\frac qp}\right\}^{\frac 1q}<\infty\
\mathrm{and}\
\limsup_{k\in\mathbb{Z}^n,\;|k|\rightarrow\infty}\left|
\int_{\mathbb{R}^n}\varphi(x-k)f(x)\,dx\right|=0.
\end{align*}
\item[\rm(iii)]
$f\in\overline{B^s_{\infty,q}
}^{\Lambda_s}$
if and only if
$f\in \Lambda_s$ and,
for any $\varepsilon\in(0,\infty)$,
$
\sharp\{j\in\mathbb Z_+:W_j(s, f, \varepsilon)\neq\emptyset\}<\infty.
$
\item[\rm(iv)]
$f\in\overline{B^s_{\infty,q}
}^{\Lambda_s}$
if and only if
$f\in \Lambda_s$ and,
for any $\varepsilon\in(0,\infty)$,
$
\sharp\{j\in\mathbb Z_+:S_{r,j}(s, f, \varepsilon)\neq\emptyset\}<\infty.
$
\end{itemize}
\end{theorem}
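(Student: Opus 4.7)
The plan is to reduce Theorem \ref{fasnof6} to the already-established distance characterizations by using the tautology $f\in\overline{V}^{\Lambda_s}\iff f\in\Lambda_s\ \text{and}\ \mathrm{dist}(f, V)_{\Lambda_s}=0$, together with the observation that the infima appearing in Theorem \ref{falsnlf} vanish iff the corresponding quantity is finite for every $\varepsilon\in(0,\infty)$. This is exactly the mechanism used in the proof of Theorem \ref{pppz2a}, and the argument will be parallel.

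For parts (i) and (ii), first invoke Lemma \ref{as} to identify $B^s_{p,q}\cap\Lambda_s$ with the Daubechies $s$-Lipschitz $X$-based space $\Lambda^{s}_{X}$ for $X:=l^q(L^p)_{\mathbb{Z}_+}$, and Lemma \ref{as123} to ensure that Assumption \ref{a1} holds (for $u=1/p$). Then Theorem \ref{pppz2a} applied to this $X$ yields (i) directly once one rewrites $\|\{\sum_{I\in W^0_j(s,f,\varepsilon)}\mathbf{1}_I\}_{j}\|_{l^q(L^p)_{\mathbb{Z}_+}}^q=\sum_{j}[\sum_{I\in W^0_j(s,f,\varepsilon)}|I|]^{q/p}$. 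To get (ii), one needs also to replace the wavelet description by the difference description: this is possible because Corollary \ref{pp2} applies to $X=l^q(L^p)_{\mathbb{Z}_+}$ when $p\in(0,\infty)$ (the hypothesis $|E|=\infty\Rightarrow\|\{\mathbf{1}_E\delta_{0,j}\}\|_X=\infty$ is immediate since $\|\mathbf{1}_E\|_{L^p}=|E|^{1/p}=\infty$). Equivalently, one combines the difference form of the distance in Theorem \ref{falsnlf}(ii) with the vanishing-at-infinity term $\limsup_{|k|\to\infty}|\int\varphi(x-k)f(x)\,dx|=0$, each being zero exactly when the two finiteness conditions in (ii) hold for every $\varepsilon\in(0,\infty)$.

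For the endpoint parts (iii) and (iv), Assumption \ref{a1} fails on $l^q(L^\infty)_{\mathbb{Z}_+}$, so one instead uses Lemma \ref{as12k312}, which furnishes the Carleson-type measure $\nu(\{A_j\}_{j\in\mathbb{Z}_+}):=\sharp\{j\in\mathbb{Z}_+:A_j\neq\emptyset\}$ verifying Assumption \ref{pplp}. With this $\nu$, Theorem \ref{thm-7-117} gives the endpoint closure characterization; (iii) follows upon noting that $\|\{\sum_{I\in W_j(s,f,\varepsilon)}\mathbf{1}_I\}_j\|_{l^q(L^\infty)_{\mathbb{Z}_+}}^q=\sharp\{j:W_j(s,f,\varepsilon)\neq\emptyset\}$, and (iv) by the analogous identity with $S_{r,j}(s,f,\varepsilon)$ in place of the $T(I)$-union on the side of the Carleson-type measure. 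The $V_0(s,f,\varepsilon)$ term in Theorem \ref{thm-7-117} collapses into (or is absorbed by) the $j=0$ contribution for (iii) and into the condition at infinity in (iv), by Proposition \ref{pp}.

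The only mildly delicate point I foresee is the bookkeeping between the wavelet index sets $W^0_j,W_j,V_0$ and the trapezoids $T(I)$ when translating between the hypotheses of Theorems \ref{pppz2a} and \ref{thm-7-117} and the explicit formulas in Theorem \ref{fasnof6}; in particular for (iv) one must verify that when $p=\infty$ the vanishing-at-infinity condition is not needed (it is automatically encoded in the finiteness of $\sharp\{j:S_{r,j}(s,f,\varepsilon)\neq\emptyset\}$ because the $j=0$ slab covers all scales $y\in(1/2,1]$ uniformly in $x$). Modulo this translation, each of the four statements is an immediate consequence of the corresponding already-proved distance formula.
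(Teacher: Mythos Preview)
Your proposal is correct and takes essentially the same route as the paper, which derives Theorem \ref{fasnof6} by citing Theorem \ref{pppz2a}, Corollary \ref{pp2}, and Lemmas \ref{as}, \ref{as123}, \ref{as12k312} (with Theorem \ref{thm-7-117} implicit for the endpoint parts, as you correctly note). One refinement of your last remark: in (iv) the $V_0$-term from Theorem \ref{thm-7-117} is trivially finite because $\nu(\bigcup_{I\in V_0}T(I))\leq 1$ for the counting measure $\nu$ of Lemma \ref{as12k312}, rather than being absorbed into the $S_{r,0}$ slab.
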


\begin{remark}
To the best of our knowledge,
Theorem \ref{fasnof6} is completely new.
\end{remark}

When $X:=l^q(L^p)_{\mathbb{Z}_+}$, for any
$f\in X$,
the
\emph{Lipschitz deviation degree} $\varepsilon_{p,q}f$ is defined by
setting
 $$\varepsilon_{p,q}f:=\inf\left\{\varepsilon
 \in(0,\infty):\left\{\sum_{j=0}^{\infty}
\left[\mu( S_{r,j}(s,f, \varepsilon))\right]^{
\frac qp}\right\}^{\frac 1q}<\infty\right\}.$$
When $X:=l^q(L^\infty)_{\mathbb{Z}_+}$, for any
$f\in X$,
the
\emph{Lipschitz deviation degree} $\varepsilon_{\infty,q}f$ is defined by
setting
 $$\varepsilon_{\infty,q}f:=\inf\left\{\varepsilon
 \in(0,\infty):\sharp\{j\in\mathbb Z_+:S_{r,j}(s, f,
\varepsilon)\neq\emptyset\}<\infty\right\}.$$
Applying Theorems \ref{falsnlf} and \ref{fasnof6}, we can
characterize the Lipschitz
deviation degree $\varepsilon_{p,q}$ for any
$p\in (0,\infty)$ in Theorem \ref{t5.20}
and for $p=\infty$ in Theorem \ref{t5.21} by using the distance in the
Lipschitz space
and we omit
the details of its proof.

\begin{theorem}\label{t5.20}
Let  $s,p\in(0,\infty)$,
$q\in(0,\infty]$,
and $f\in\Lambda_s\cap \mathfrak{C}_0$.
\begin{itemize}
  \item[\rm (i)] If $\varepsilon\in(0,
  \varepsilon_{p,q}f)$, then
  $\{\sum_{j=0}^{\infty}
[\mu( S_{r,j}(s, f,
\varepsilon))]^{
\frac qp}\}^{\frac 1q}=\infty$
and,
  for any $(x,y)\in\mathbb{R}^n\times(0,1]$
  with
  $(x,y)\notin S_{r}(s, f,
\varepsilon)$,
  $\frac {\Delta_r f(x, y)}{y^s}\le\varepsilon_{p,q}f$;
  \item[\rm (ii)] If $\varepsilon\in(
  \varepsilon_{p,q}f,\infty)$,
  then $\{\sum_{j=0}^{\infty}
[\mu( S_{r,j}(s,f, \varepsilon))]^{
\frac qp}\}^{\frac 1q}<\infty$ and,
  for any
  $(x,y)\in S_{r}(s, f,
\varepsilon)$,
  $\frac {\Delta_rf(x, y)}{y^s}>\varepsilon_{p,q}f$.
 \item[\rm (iii)] $
\mathop\mathrm{\,dist\,}(f, B^s_{p,q}\cap
\Lambda_s)_{\Lambda_s}
\sim \varepsilon_{p,q}f
$
with positive equivalence constants independent of $f$.
\item[\rm(iv)]
$f\in\overline{B^s_{p,q}\cap\Lambda_s
}^{\Lambda_s}$
if and only if
$f\in \Lambda_s$, $\varepsilon_{p,q}f=0$,
and
\begin{align*}
\limsup_{k\in\mathbb{Z}^n,\;|k|\rightarrow\infty} \left|\int_{\mathbb{R}^n}\varphi(x-k)f(x)\,dx\right|=0.
\end{align*}
\end{itemize}
\end{theorem}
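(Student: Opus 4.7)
The plan is to derive all four parts by combining the definition of $\varepsilon_{p,q}f$ with Theorems \ref{falsnlf} and \ref{fasnof6}, which have already established the difference characterizations of the distance and the closure for Besov spaces.

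For parts (i) and (ii), the key ingredient is the elementary monotonicity
\[
S_{r,j}(s,f,\varepsilon_2)\subset S_{r,j}(s,f,\varepsilon_1)\quad\text{whenever}\quad 0<\varepsilon_1\le\varepsilon_2,
\]
which makes $\varepsilon\mapsto\{\sum_{j=0}^\infty[\mu(S_{r,j}(s,f,\varepsilon))]^{q/p}\}^{1/q}$ non-increasing. For (i), since $\varepsilon_{p,q}f$ is by definition the infimum of all $\varepsilon$ for which this quantity is finite, any $\varepsilon\in(0,\varepsilon_{p,q}f)$ lies outside that set, so the sum must be infinite; the pointwise bound is immediate, because $(x,y)\notin S_r(s,f,\varepsilon)$ forces $\Delta_r f(x,y)/y^s\le\varepsilon<\varepsilon_{p,q}f$. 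For (ii), given $\varepsilon>\varepsilon_{p,q}f$ I would pick $\varepsilon'\in(\varepsilon_{p,q}f,\varepsilon)$ belonging to the defining set; by monotonicity, finiteness at $\varepsilon'$ transfers to $\varepsilon$. The pointwise lower bound is likewise immediate from the very definition of $S_r(s,f,\varepsilon)$.

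For (iii), I would apply Theorem \ref{falsnlf}(ii), which gives
\[
\mathop\mathrm{\,dist\,}\left(f, B^s_{p,q}\cap\Lambda_s\right)_{\Lambda_s}\sim \varepsilon_{p,q}f+\limsup_{k\in\mathbb Z^n,\,|k|\to\infty}\left|\int_{\mathbb R^n}\varphi(x-k)f(x)\,dx\right|,
\]
and then observe that under the assumption $f\in\mathfrak C_0$ the second term vanishes. Indeed, since $\varphi\in\mathfrak C_{\mathrm c}^L$ with $\mathop\mathrm{supp}\varphi\subset B(\mathbf{0},R_0)$ for some $R_0\in(0,\infty)$,
\[
\left|\int_{\mathbb R^n}\varphi(x-k)f(x)\,dx\right|\le\|\varphi\|_{L^\infty}\int_{B(k,R_0)}|f(x)|\,dx,
\]
which tends to $0$ as $|k|\to\infty$ by dominated convergence, using that $f\in\mathfrak C_0$ is bounded and vanishes at infinity. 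This yields (iii). For (iv), I would use exactly the same vanishing of the limsup term, applied this time inside Theorem \ref{fasnof6}(ii): the condition characterizing $f\in\overline{B^s_{p,q}\cap\Lambda_s}^{\Lambda_s}$ collapses to the single requirement $\varepsilon_{p,q}f=0$, while the stated limsup equality is automatically satisfied for every $f\in\mathfrak C_0\cap\Lambda_s$.

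There is essentially no substantial obstacle once Theorems \ref{falsnlf} and \ref{fasnof6} are in hand; the proof is a routine consolidation. The only point that needs a short justification is the vanishing of the wavelet-scaling-coefficient limsup, which is a straightforward dominated convergence argument based on the hypothesis $f\in\mathfrak{C}_0$ and the compact support of $\varphi$.
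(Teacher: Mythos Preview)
Your proposal is correct and follows exactly the approach the paper intends: the paper itself omits the proof, stating only that Theorem \ref{t5.20} is obtained by applying Theorems \ref{falsnlf} and \ref{fasnof6}, which is precisely what you do. Your additional observations---the monotonicity of $\varepsilon\mapsto S_{r,j}(s,f,\varepsilon)$ for (i)--(ii), and the vanishing of the scaling-coefficient limsup via the compact support of $\varphi$ and $f\in\mathfrak{C}_0$ for (iii)--(iv)---are the routine details the paper leaves implicit.
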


\begin{theorem}\label{t5.21}
Let  $s,q\in(0,\infty)$
and $f\in\Lambda_s$.
\begin{itemize}
    \item[\rm (i)] If $\varepsilon\in(0,
  \varepsilon_{\infty,q}f)$, then
  $\sharp\{j\in\mathbb Z_+:S_{r,j}(s, f,
\varepsilon)\neq\emptyset\}=\infty$
and,
  for any $(x,y)\in\mathbb{R}^n\times(0,1]$
  with
  $(x,y)\notin S_{r}(s, f,
\varepsilon)$,
  $\frac{\Delta_r f(x, y)}{y^s}\le\varepsilon_{\infty,q}f$;
  \item[\rm (ii)] If $\varepsilon\in(
  \varepsilon_{\infty,q}f,\infty)$,
  then $\sharp\{j\in\mathbb Z_+:S_{r,j}(s, f,
\varepsilon)\neq\emptyset\}<\infty$ and,
  for any
  $(x,y)\in S_{r}(s, f,
\varepsilon)$,
  $\frac{\Delta_r f(x, y)}{y^s}>\varepsilon_{\infty,q}f$.
\item[\rm (iii)] $
\mathop\mathrm{\,dist\,}(f, B^s_{\infty,q})_{\Lambda_s}
\sim \varepsilon_{\infty,q}f
$
with positive equivalence constants independent of $f$.
\item[\rm(iv)]
$f\in\overline{B^s_{\infty,q}
}^{\Lambda_s}$
if and only if
$f\in \Lambda_s$, $\varepsilon_{\infty,q}f=0$.
\end{itemize}
\end{theorem}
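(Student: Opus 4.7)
The proposal is that Theorem \ref{t5.21} follows almost entirely by combining the definition of the Lipschitz deviation degree $\varepsilon_{\infty,q}f$ with the main framework results already proved earlier in the article (Theorems \ref{pppz23}, \ref{falsnlf}, and \ref{fasnof6}). The statement is essentially a restatement of the characterizations from Subsection \ref{5.2} in terms of the invariant $\varepsilon_{\infty,q}f$, so there is no real technical obstacle; I only need to assemble the pieces.

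For parts (i) and (ii), my plan is to argue directly from the definition
\[
\varepsilon_{\infty,q}f=\inf\left\{\varepsilon\in(0,\infty):\sharp\{j\in\mathbb Z_+:S_{r,j}(s,f,\varepsilon)\neq\emptyset\}<\infty\right\}.
\]
If $\varepsilon<\varepsilon_{\infty,q}f$, then by definition of the infimum the set $\{j:S_{r,j}(s,f,\varepsilon)\neq\emptyset\}$ must be infinite, since otherwise $\varepsilon$ would belong to the defining set and hence bound the infimum from above, contradicting $\varepsilon<\varepsilon_{\infty,q}f$. Moreover, any point $(x,y)\notin S_r(s,f,\varepsilon)$ satisfies $\Delta_r f(x,y)/y^s\le\varepsilon<\varepsilon_{\infty,q}f$ by the very definition of $S_r(s,f,\varepsilon)$, giving the desired inequality. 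For (ii), if $\varepsilon>\varepsilon_{\infty,q}f$, choose any $\varepsilon'\in(\varepsilon_{\infty,q}f,\varepsilon)$ which lies in the defining set (again by definition of the infimum), note that $S_{r,j}(s,f,\varepsilon)\subset S_{r,j}(s,f,\varepsilon')$, and conclude finiteness of the index set. The pointwise inequality $\Delta_r f(x,y)/y^s>\varepsilon_{\infty,q}f$ on $S_r(s,f,\varepsilon)$ follows again directly from the definition of $S_r$.

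For part (iii), I would simply invoke Theorem \ref{falsnlf}(iv), which states that
\[
\mathop\mathrm{\,dist\,}(f,B^s_{\infty,q})_{\Lambda_s}\sim
\inf\left\{\varepsilon\in(0,\infty):\sharp\{j\in\mathbb Z_+:S_{r,j}(s,f,\varepsilon)\neq\emptyset\}<\infty\right\},
\]
and recognize the right-hand side as precisely $\varepsilon_{\infty,q}f$. For part (iv), I would use the elementary fact that $f\in\overline{B^s_{\infty,q}}^{\Lambda_s}$ iff $f\in\Lambda_s$ and $\mathop\mathrm{\,dist\,}(f,B^s_{\infty,q})_{\Lambda_s}=0$; combined with (iii) (or alternatively invoking Theorem \ref{fasnof6}(iv) directly together with (i) and (ii)), this gives the claimed equivalence. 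There is no main obstacle: the work has all been done upstream, and the role of this theorem is to repackage the earlier characterizations in terms of the single scalar invariant $\varepsilon_{\infty,q}f$.
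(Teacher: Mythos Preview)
Your proposal is correct and follows exactly the route the paper itself indicates: the paper explicitly states that Theorem~\ref{t5.21} is obtained by applying Theorems~\ref{falsnlf} and~\ref{fasnof6} and omits the details. Your unpacking of (i) and (ii) from the definition of $\varepsilon_{\infty,q}f$ together with the monotonicity $S_{r,j}(s,f,\varepsilon_2)\subset S_{r,j}(s,f,\varepsilon_1)$ for $\varepsilon_1<\varepsilon_2$, and your direct identification of (iii) with Theorem~\ref{falsnlf}(iv) and of (iv) with Theorem~\ref{fasnof6}(iv), are precisely the intended assembly.
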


Using Theorem \ref{falsnlf}, we can obtain the following
proper inclusions.

\begin{theorem}\label{fwqf33}
Let  $s\in(0,\infty)$, $p\in(0,\infty)$, and $q\in(0,\infty]$. Then
$B^s_{p,q}\cap\Lambda_s\subsetneqq
\overline{B^s_{p,q}\cap\Lambda_s
}^{\Lambda_s}\subsetneqq\Lambda_s$.
\end{theorem}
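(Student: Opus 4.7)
The approach is to exhibit explicit witnesses for each of the two strict inclusions, using Theorem \ref{falsnlf}(i) (for distance from below) together with the wavelet characterization of $\Lambda_s$ (Lemma \ref{asqw}) and of $B^s_{p,q}\cap\Lambda_s$ (Lemma \ref{as}), in the spirit of the proofs of Theorems \ref{fwqf2} and \ref{fwqf}.

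For the upper inclusion $\overline{B^s_{p,q}\cap\Lambda_s}^{\Lambda_s}\subsetneqq\Lambda_s$, I take $f_1\equiv1$. Clearly $f_1\in\Lambda_s$ since $\Delta_rf_1\equiv 0$. By the vanishing-moment property of the Daubechies wavelets, $\langle f_1,\psi_\omega\rangle=0$ for every $\omega\in\Omega_1$, whereas, by the partition-of-unity property of the Daubechies scaling function, $\langle f_1,\varphi(\cdot-k)\rangle=\int\varphi=1$ for every $k\in\mathbb Z^n$. Hence, for each $\varepsilon\in(0,1)$, the set $W_0^0(s,f_1,\varepsilon)$ contains every unit cube $k+[0,1)^n$, so $\sum_{I\in W_0^0(s,f_1,\varepsilon)}|I|=\infty$. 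Substituting into the right-hand side of Theorem \ref{falsnlf}(i) gives $\mathop\mathrm{\,dist\,}(f_1,B^s_{p,q}\cap\Lambda_s)_{\Lambda_s}\gtrsim 1$, hence $f_1\notin\overline{B^s_{p,q}\cap\Lambda_s}^{\Lambda_s}$.

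For the lower inclusion $B^s_{p,q}\cap\Lambda_s\subsetneqq\overline{B^s_{p,q}\cap\Lambda_s}^{\Lambda_s}$, I split into the cases $q<\infty$ and $q=\infty$. When $q<\infty$, with any fixed choice $\omega_I\in\Omega_1$ satisfying $I_{\omega_I}=I$, set
\[
f_2:=\sum_{j=1}^\infty j^{-1/q}\sum_{\substack{I\in\mathcal D_j\\ I\subset[0,1]^n}}|I|^{\frac sn+\frac12}\psi_{\omega_I}.
\]
Lemma \ref{asqw} yields $f_2\in\Lambda_s$ with norm $\sim\sup_jj^{-1/q}=1$, whereas Lemma \ref{as} gives $\|f_2\|_{B^s_{p,q}}\sim(\sum_j j^{-1})^{1/q}=\infty$. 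For each $\varepsilon>0$, only the finitely many indices $j$ with $j^{-1/q}>\varepsilon$ produce a non-empty $W_j^0(s,f_2,\varepsilon)$, rendering the infimum in Theorem \ref{falsnlf}(i) equal to zero and placing $f_2\in\overline{B^s_{p,q}\cap\Lambda_s}^{\Lambda_s}$.

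The endpoint $q=\infty$ requires a different construction, since the amplitude $j^{-1/q}$ degenerates to $1$. I choose instead
\[
f_2:=\sum_{j=1}^\infty j^{-1}\sum_{\substack{I\in\mathcal D_j\\ I\subset[0,j^{2p/n}]^n}}|I|^{\frac sn+\frac12}\psi_{\omega_I},
\]
enlarging the spatial support with $j$ while simultaneously decreasing the amplitude. Lemma \ref{asqw} again yields $f_2\in\Lambda_s$. The wavelet coefficient function at level $j$ equals $j^{-1}\mathbf 1_{[0,j^{2p/n}]^n}$ up to constants, with $L^p$-norm $\sim j$, so Lemma \ref{as} gives $\|f_2\|_{B^s_{p,\infty}}=\infty$. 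Yet, for each fixed $\varepsilon>0$, only the levels $j<1/\varepsilon$ contribute to $W_j^0(s,f_2,\varepsilon)$, bounding $\sup_j|\bigcup_{I\in W_j^0(s,f_2,\varepsilon)}I|^{1/p}$ by a constant multiple of $\varepsilon^{-2}$, which via Theorem \ref{falsnlf}(i) places $f_2\in\overline{B^s_{p,\infty}\cap\Lambda_s}^{\Lambda_s}$. The main technical point is precisely this endpoint: the tension between $\|c_j\|_{L^p}\to\infty$ (needed to escape $B^s_{p,\infty}$) and $\sup_j|\{c_j>\varepsilon\}|^{1/p}<\infty$ for each $\varepsilon$ (needed for membership in the $\Lambda_s$-closure) forces one to spread the wavelet mass over growing spatial supports with proportionally shrinking amplitude, which is not needed in the $q<\infty$ regime.
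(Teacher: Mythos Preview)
Your proof is correct and, in one respect, more complete than the paper's own argument.

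For the strict inclusion $\overline{B^s_{p,q}\cap\Lambda_s}^{\Lambda_s}\subsetneqq\Lambda_s$, the paper takes $f_0=\sum_{\omega:\,I_\omega\subset[0,1]^n}|I_\omega|^{s/n+1/2}\psi_\omega$ and obtains divergence of $\bigl\{\sum_j[\ldots]^{q/p}\bigr\}^{1/q}$ from the infinite sum over levels~$j$. Your choice $f_1\equiv 1$ instead forces divergence at a single level ($j=0$) through the infinitely many nonzero scaling-function coefficients. Both work, but note that the paper's witness actually fails when $q=\infty$: with the usual $\sup_j$ modification one gets $\sup_j 1=1<\infty$, so the distance estimate collapses. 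Your constant function avoids this, since already the $j=0$ term $\sum_{I\in\mathcal D_0}|I|$ is infinite for every $\varepsilon<|\int\varphi|$, regardless of $q$.

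For $B^s_{p,q}\cap\Lambda_s\subsetneqq\overline{B^s_{p,q}\cap\Lambda_s}^{\Lambda_s}$ with $q<\infty$, your construction coincides with the paper's (up to choosing one $\omega_I$ per cube rather than all). The paper's proof, however, does not address $q=\infty$ at all---the amplitude $j^{-1/q}$ degenerates---so your separate treatment of that endpoint, spreading mass over cubes $[0,j^{2p/n}]^n$ with amplitude $j^{-1}$, genuinely fills a gap. The verification you give (finite $\sup_j$ for each $\varepsilon>0$, yet $\|f_2\|_{B^s_{p,\infty}}\sim\sup_j j=\infty$) is correct. One cosmetic remark: the justification ``partition-of-unity property'' for $\int\varphi=1$ is fine (integrating $\sum_k\varphi(\cdot-k)=1$ over a fundamental domain gives exactly this), though all the argument needs is $\int\varphi\neq 0$.
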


\begin{proof}
We first show that  $\overline{B^s_{p,q}\cap\Lambda_s
}^{\Lambda_s}\subsetneqq\Lambda_s$. Let
$f_0:=\sum_{\{\omega\in \Omega: I_\omega\in
\mathcal{D},\ I_\omega\subset[0,1]^n\}}
|I_\omega|^{\frac sn+\frac 12}\psi_{\omega}.$
From  Lemma \ref{asqw},
it follows that $f_0\in\Lambda_1$. Observe that,
for any
$I\in\mathcal{D}$ and $I\subset[0,1]^n$,
$$\max_{\{\omega\in\Omega: I_\omega=I\}}
\left|\int_{\mathbb{R}^n} f_0(x) \psi_{\omega}(x)\, dx\right|
= |I|^{\frac sn+\frac 12}.$$
By this, we conclude that
\begin{align*}
\left[\sum_{j\in\mathbb{Z}_+}
\left\|\sum_{\{I \in W^0(s, f_0, \frac12): I\in\mathcal{D}_j\}}
\mathbf{1}_{I}\right\|^q_{L^p}
\right]^{\frac{1}{q}}&=\left[\sum_{j\in\mathbb{Z}_+}
\left\|\sum_{\{I\subset[0,1]^n: I \in \mathcal{D}_j\}}
\mathbf{1}_{I}\right\|^q_{L^p}
\right]^{\frac{1}{q}}
=\infty,
\end{align*}
which, combined with Theorem \ref{falsnlf},
further implies that
$
\mathop\mathrm{\,dist\,}(f_0,
B^s_{p,q}\cap\Lambda_s
)_{\Lambda_s}\gtrsim\frac12.
$
From this, it follows that $f_0\notin
\overline{B^s_{p,q}\cap\Lambda_s
}^{\Lambda_s}$.
Thus, $\overline{B^s_{p,q}\cap\Lambda_s}^{\Lambda_s}
  \subsetneqq\Lambda_s$.

  Now, we show $B^s_{p,q}\cap\Lambda_s
  \subsetneqq
\overline{B^s_{p,q}\cap\Lambda_s}^{\Lambda_s}$.
Let
$f_0:=\sum_{j=0}^{\infty}j^{-\frac1q}
\sum_{\{\omega\in \Omega:I_\omega
\in\mathcal{D}_j,I_\omega\subset[0,1]^n\}}
|I_\omega|^{\frac sn+\frac 1q}\psi_{\omega}$.
From  Lemma \ref{asqw},
it follows that $f_0\in\Lambda_s$. Observe that,
for any
$I\in\mathcal{D}_j$ and $I\subset[0,1]^n$,
$$\max_{\{\omega\in\Omega:I_{\omega} =I\}}\left|
\int_{\mathbb{R}^n} f_0(x) \psi_{\omega}(x)\, dx\right|
= j^{-\frac1q}|I|^{\frac sn+\frac 1q}.$$
By this, we conclude that, for any $\varepsilon\in(0,\infty)$,
\begin{align*}
\left[\sum_{j\in\mathbb{Z}_+}
\left\|\sum_{\{I \in W^0(s, f_0, \varepsilon): I\in\mathcal{D}_j\}}
\mathbf{1}_{I}\right\|^q_{L^p}
\right]^{\frac{1}{q}}&
\le \left[\sum_{\{j\in\mathbb{Z}_+:j<
1/\varepsilon^q\}}
\left\|\mathbf{1}_{[0,1]^n}\right\|^q_{L^p}
\right]^{\frac{1}{q}}<\infty,
\end{align*}
which, combined with Theorem \ref{fasnof6},
further implies that $f_0\in\overline{
B^s_{p,q}\cap\Lambda_s}^{\Lambda_s}$.
Moreover,
\begin{align*}
\|f_0\|_{B^s_{p,q}}=\left[\sum_{j\in\mathbb{Z}_+}
j^{-1}\left\|
\sum_{\{\omega\in \Omega:I_\omega\in
\mathcal{D}_j,I_\omega\subset[0,1]^n\}}
\mathbf{1}_{I_\omega}
\right\|^q_{L^p}
\right]^{\frac{1}{q}}
\geq \left[\sum_{j\in\mathbb{Z}_+}
j^{-1}\left\|\mathbf{1}_{[0,1]^n}
\right\|^q_{L^p}\right]^{\frac{1}{q}}
=\infty,
\end{align*}
which
further implies that $f_0\notin B^s_{p,q}\cap\Lambda_s$.
Thus, $ B^s_{p,q}\cap\Lambda_s\subsetneqq\overline{
B^s_{p,q}\cap\Lambda_s}^{\Lambda_s}
$,
which completes the proof of Theorem \ref{fwqf33}.
\end{proof}

\begin{theorem}\label{fwqf331}
Let  $s\in(0,\infty)$ and $q\in(0,\infty)$. Then
$B^s_{\infty,q}\subsetneqq
\overline{B^s_{\infty,q}
}^{\Lambda_s}\subsetneqq\Lambda_s$.
\end{theorem}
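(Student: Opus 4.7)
The plan is to prove Theorem \ref{fwqf331} in close parallel to Theorem \ref{fwqf33}, exploiting the wavelet characterizations provided by Theorems \ref{falsnlf}(iii) and \ref{fasnof6}(iii) together with Lemma \ref{asqw}. There are two proper inclusions to establish, and each is handled by exhibiting a suitable test function whose wavelet coefficients are designed to discriminate the two spaces.

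For the first inclusion $\overline{B^s_{\infty,q}}^{\Lambda_s}\subsetneqq\Lambda_s$, the plan is to take
$$f_0:=\sum_{\{\omega\in\Omega_1:\, I_\omega\subset[0,1]^n\}}|I_\omega|^{\frac sn+\frac12}\psi_\omega.$$
Lemma \ref{asqw} immediately gives $f_0\in\Lambda_s$ since the normalized coefficients all equal $1$. For any $j\in\mathbb{Z}_+$ there exist $\omega\in\Omega_1$ with $I_\omega\in\mathcal{D}_j$ and $I_\omega\subset[0,1]^n$, so $W_j(s,f_0,\tfrac12)\neq\emptyset$. Hence $\sharp\{j\in\mathbb{Z}_+:W_j(s,f_0,\tfrac12)\neq\emptyset\}=\infty$, and Theorem \ref{falsnlf}(iii) forces $\mathop\mathrm{\,dist\,}(f_0,B^s_{\infty,q})_{\Lambda_s}\gtrsim\tfrac12$, so $f_0\notin\overline{B^s_{\infty,q}}^{\Lambda_s}$.

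For the second inclusion $B^s_{\infty,q}\subsetneqq\overline{B^s_{\infty,q}}^{\Lambda_s}$, the idea is to pick one wavelet per scale and tune the magnitude so the normalized coefficients vanish but the $B^s_{\infty,q}$ norm diverges logarithmically. Concretely, for each $j\in\mathbb{N}$ fix $\omega_j\in\Omega_1$ with $I_{\omega_j}\in\mathcal{D}_j$ and $I_{\omega_j}\subset[0,1]^n$, and set
$$g_0:=\sum_{j=1}^{\infty}j^{-\frac1q}|I_{\omega_j}|^{\frac sn+\frac12}\psi_{\omega_j}.$$
Lemma \ref{asqw} gives $g_0\in\Lambda_s$ (since $j^{-1/q}\leq1$). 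For any $\varepsilon\in(0,\infty)$ the index set $\{j:j^{-1/q}>\varepsilon\}$ is finite, so $W_j(s,g_0,\varepsilon)=\emptyset$ for all large $j$ and Theorem \ref{fasnof6}(iii) places $g_0$ in $\overline{B^s_{\infty,q}}^{\Lambda_s}$. On the other hand, via Lemma \ref{as} one has $\|g_0\|_{B^s_{\infty,q}}^q\sim\sum_{j=1}^\infty j^{-1}=\infty$, so $g_0\notin B^s_{\infty,q}$.

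The arguments are essentially routine once the correct wavelet test functions are identified; the only delicate point — and the one I would expect to double check carefully — is calibrating the coefficient decay $j^{-1/q}$ in the second construction. It has to be slow enough that the $l^q$ norm in the $\Lambda_X^s$-characterization of $B^s_{\infty,q}$ diverges, yet fast enough that for every threshold $\varepsilon$ only finitely many scales remain active; the borderline choice $j^{-1/q}$ achieves both simultaneously, and one should verify that the $L^\infty$ part of the $l^q(L^\infty)_{\mathbb{Z}_+}$ quasi-norm really gives the $\sum j^{-1}$ divergence (rather than, say, a convergent geometric sum), which follows from the fact that $\sup_x\mathbf{1}_{I_{\omega_j}}(x)=1$ for each $j$.
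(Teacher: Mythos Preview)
Your proposal is correct and follows essentially the same approach as the paper's proof: the same two test functions built from Daubechies wavelets with coefficients $|I_\omega|^{s/n+1/2}$ (for the first inclusion) and $j^{-1/q}|I_\omega|^{s/n+1/2}$ (for the second), analyzed via Theorems \ref{falsnlf}(iii) and \ref{fasnof6}(iii). The only cosmetic difference is that for the second inclusion you pick a \emph{single} wavelet $\omega_j$ per scale whereas the paper sums over all $\omega$ with $I_\omega\in\mathcal{D}_j$, $I_\omega\subset[0,1]^n$; since the relevant quantity is an $L^\infty$-norm at each level, both choices give the same $\sum_j j^{-1}$ divergence and the same finiteness of $\{j:W_j(s,g_0,\varepsilon)\neq\emptyset\}$.
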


\begin{proof}
  We first show that  $\overline{B^s_{\infty,q}
}^{\Lambda_s}\subsetneqq\Lambda_s$. Let
$f_0:=\sum_{\{\omega\in \Omega:I_\omega\in
\mathcal{D},I_\omega\subset[0,1]^n\}}
|I_\omega|^{\frac sn+\frac 12}\psi_{\omega}.$
From  Lemma \ref{asqw},
it follows that $f_0\in\Lambda_s$. Observe that,
for any
$I\in\mathcal{D}$ and $I\subset[0,1]^n$,
$$\max_{\{\omega\in\Omega:I_{\omega} =I\}}
\left|\int_{\mathbb{R}^n} f_0(x) \psi_{\omega}(x)\, dx\right|
= |I|^{\frac sn+\frac 12}.$$
By this, we conclude that
\begin{align*}
\left[\sum_{j\in\mathbb{Z}_+}
\left\|\sum_{\{I \in W^0(s, f_0, \frac12): I\in\mathcal{D}_j\}}
\mathbf{1}_{I}\right\|^q_{L^\infty}
\right]^{\frac{1}{q}}&=\left[\sum_{j\in\mathbb{Z}_+}
1
\right]^{\frac{1}{q}}
=\infty,
\end{align*}
which, combined with Theorem \ref{falsnlf},
further implies that
$
\mathop\mathrm{\,dist\,}(f_0,
B^s_{\infty,q}
)_{\Lambda_s}\gtrsim\frac12.
$
From this, it follows that $f_0\notin
\overline{B^s_{\infty,q}
}^{\Lambda_s}$.
Thus, $\overline{B^s_{\infty,q}}^{\Lambda_s}
  \subsetneqq\Lambda_s$.

  Now, we show $B^s_{\infty,q}
  \subsetneqq
\overline{B^s_{\infty,q}}^{\Lambda_s}$.
Let
$f_0:=\sum_{j=0}^{\infty}j^{-\frac1q}
\sum_{\{\omega\in \Omega:I_\omega
\in\mathcal{D}_j,I_\omega\subset[0,1]^n\}}
|I_\omega|^{\frac sn+\frac 1q}\psi_{\omega}$.
From  Lemma \ref{asqw},
it follows that $f_0\in\Lambda_s$. Observe that,
for any
$I\in\mathcal{D}_j$ and $I\subset[0,1]^n$,
$$\max_{\{\omega\in\Omega:I_{\omega} =I\}}\left|
\int_{\mathbb{R}^n} f_0(x) \psi_{\omega}(x)\, dx\right|
= j^{-\frac1q}|I|^{\frac sn+\frac 1q}.$$
By this, we conclude that, for any $\varepsilon\in(0,\infty)$,
\begin{align*}
\sharp\left\{j\in\mathbb Z_+:W_j(s, f, \varepsilon)\neq\emptyset\right\}&
\le
\sum_{\{j\in\mathbb{Z}_+: j<
1/\varepsilon^q\}}1<\infty,
\end{align*}
which, combined with Theorem \ref{fasnof6},
further implies that $f_0\in\overline{
B^s_{\infty,q}}^{\Lambda_s}$.
Moreover,
\begin{align*}
\|f_0\|_{B^s_{\infty,q}}=\left[\sum_{j\in\mathbb{Z}_+}
j^{-1}
\left\|
\sum_{\{\omega\in \Omega:I_\omega\in
\mathcal{D}_j,I_\omega\subset[0,1]^n\}}
\mathbf{1}_{I_\omega}
\right\|^q_{L^\infty}
\right]^{\frac{1}{q}}
\geq \left[\sum_{j\in\mathbb{Z}_+}
j^{-1}
\left\|
\mathbf{1}_{[0,1]^n}
\right\|^q_{L^\infty}
\right]^{\frac{1}{q}}
=\infty,
\end{align*}
which
further implies that $f_0\notin B^s_{\infty,q}$.
Thus, $ B^s_{\infty,q}\subsetneqq\overline{
B^s_{\infty,q}}^{\Lambda_s}
$,
which completes the proof of Theorem \ref{fwqf331}.
\end{proof}

\subsection{Triebel--Lizorkin spaces}\label{5.3}

Next, we present the concept of Triebel--Lizorkin
spaces
as follows; see, for instance,
\cite[Definition 1.1]{T20}.
 Let $\{\phi_j\}_{j\in\mathbb{Z}_+}$
be the same as in \eqref{eq-phi1} and \eqref{eq-phik}.

\begin{definition}\label{sanfnlk}
Let $q\in (0,\infty]$ and $s\in {\mathbb R}$.
\begin{itemize}
\item[(i)]
If $p\in (0,\infty)$, then the
\emph{Triebel--Lizorkin space} $F^{s}_{p,q}$
is defined to be the set of
all $f\in\mathcal S'$ such that
$$
\|f\|_{F^{s}_{p ,q}}:=
\left\|\left\{\sum_{j=0}^{\infty}
\left|2^{js}\phi_j\ast f\right|^q\right\}^{
\frac 1q}\right\|_{L^p}<\infty.
$$
\item[(ii)]
The
\emph{Triebel--Lizorkin space} $F^{s}_{\infty,q}$
is defined to be the set of
all $f\in\mathcal S'$ such that
$$
\|f\|_{F^{s}_{\infty ,q}}:=
\sup_{\genfrac{}{}{0pt}{}{J\in\mathbb Z_+,
M\in\mathbb Z^n}{I_{J,M}\in\mathcal{D}}}
\left\{\fint_{I_{J,M}}\sum_{j=J}^{\infty}
\left|2^{js}\phi_j\ast f(x)\right|^q\,dx
\right\}^{\frac 1q}<\infty.
$$
\end{itemize}
\end{definition}

The following lemma is about the wavelet
characterization of
Triebel--Lizorkin spaces (see, for instance,
\cite[Corollary 2]{T20}).

\begin{lemma}\label{asljk}
Let  $s\in(0,\infty)$ and $p,q\in(0,\infty]$.
Assume that
the regularity parameter $L\in\mathbb N$ of
the Daubechies wavelet system $\{\psi_\omega
\}_{\omega\in\Omega}$ satisfies that
$L>\max\{s,n(\max\{\frac 1p,\frac 1q,1\}-1)-s\}$.
Then $F^s_{p,q}\cap\Lambda_s
=\Lambda_X^{s}$,
where $\Lambda_X^{s}$ is the
Daubechies $s$-Lipschitz $X$-based space with
$X:=L^p(l^q)_{\mathbb{Z}_+}$ when $p\neq\infty$
or with $X:=F_{\infty,q}(\mathbb{R}^n,\mathbb{Z}_+)$
when $p=\infty$.
\end{lemma}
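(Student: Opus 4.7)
The plan is to reduce the statement to the standard wavelet characterization of Triebel--Lizorkin spaces, as given in \cite[Corollary 2]{T20} (see also \cite[Section 6.10]{Me92}), and simply verify that the norm of $\Lambda_X^s$ matches, up to equivalence, the wavelet sequence space norm for $F^s_{p,q}$. The key algebraic observation is that when $I_\omega\in\mathcal D_j$ one has $|I_\omega|=2^{-jn}$, so
\begin{align*}
\|f\|_{\Lambda_X^s}
=\left\|\left\{2^{js}\sum_{\{\omega\in\Omega:I_\omega\in\mathcal D_j\}}
|I_\omega|^{-\frac12}|\langle f,\psi_\omega\rangle|\mathbf 1_{I_\omega}
\right\}_{j\in\mathbb Z_+}\right\|_X,
\end{align*}
and $|I_\omega|^{-1/2}\mathbf 1_{I_\omega}=2^{jn/2}\mathbf 1_{I_\omega}=\|\psi_\omega\|_{L^\infty}^{-1}\|\psi_\omega\|_{L^\infty}\mathbf 1_{I_\omega}$ is precisely (up to a harmless constant) the normalization appearing in Triebel's wavelet characterization.

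For $p\in(0,\infty)$ with $X=L^p(l^q)_{\mathbb Z_+}$, the right-hand side above is, up to equivalent quasi-norms, the $L^p(l^q)$ norm of the Triebel--Lizorkin wavelet sequence, so \cite[Theorem~3.5 and Corollary~2]{T20} yields $f\in F^s_{p,q}$ if and only if $\|f\|_{\Lambda_X^s}<\infty$, with equivalence of quasi-norms. Since $f\in \Lambda_s$ is assumed, Lemma \ref{asqw} guarantees that the wavelet expansion of $f$ converges unconditionally in $\mathcal S'$ and produces $f$ itself, so no ambiguity in the distributional identification arises; the regularity assumption $L>\max\{s,n(\max\{1/p,1/q,1\}-1)-s\}$ is exactly what \cite[Corollary 2]{T20} requires to apply both the synthesis and the analysis estimates.

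For $p=\infty$ with $X=F_{\infty,q}(\mathbb R^n,\mathbb Z_+)$, one uses the analogous wavelet characterization of $F^s_{\infty,q}$, which, in Triebel's formulation, replaces the $L^p$ outside norm by the Carleson-type supremum
\begin{align*}
\sup_{\genfrac{}{}{0pt}{}{J\in\mathbb Z_+,M\in\mathbb Z^n}{I_{J,M}\in\mathcal D}}\left\{\fint_{I_{J,M}}\sum_{j=J}^{\infty}(\cdots)^q\,dx\right\}^{1/q}.
\end{align*}
This supremum, applied to the wavelet square function built from $\{|I_\omega|^{-s/n-1/2}|\langle f,\psi_\omega\rangle|\mathbf 1_{I_\omega}\}$, is by definition the $F_{\infty,q}(\mathbb R^n,\mathbb Z_+)$-norm of the associated sequence, so the wavelet characterization from \cite[Theorem~3.26]{T20} again yields the claimed equality $F^s_{\infty,q}\cap\Lambda_s=\Lambda_X^s$.

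The main technical obstacle (which is already handled in \cite{T20}) is the $p=\infty$ endpoint: the passage from the Littlewood--Paley definition to the wavelet formulation requires almost-diagonal/almost-orthogonality estimates tailored to the Carleson supremum rather than to pointwise maximal inequalities, and this is where the lower bound $L>s$ and the precise regularity of the Daubechies system are essential. Since the quoted result in \cite{T20} covers both ranges uniformly, once the algebraic rewriting above is noted the lemma follows as a direct translation, with no further work required beyond invoking \cite[Corollary 2]{T20}.
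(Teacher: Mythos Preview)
Your proposal is correct and matches the paper's approach: the paper does not give its own proof of this lemma but simply cites \cite[Corollary 2]{T20}, and your argument is precisely the unpacking of that citation, verifying that the $\Lambda_X^s$ norm coincides with the wavelet sequence norm for $F^s_{p,q}$ in both the $p<\infty$ and $p=\infty$ cases.
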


Using Theorem \ref{thm-7-11}, Lemmas
\ref{asljk}, \ref{as123jl}, and \ref{as12312},
Proposition \ref{pp},
and Corollary \ref{pp2}, we obtain the following conclusions.

\begin{theorem}\label{bvcaso}
Let  $s\in(0,\infty)$, $p\in(0,\infty)$, and $q\in(0,\infty]$.
Assume that $r\in\mathbb N$ with $r>s$
and that
the regularity parameter $L\in\mathbb N$ of
the Daubechies wavelet system $\{\psi_\omega
\}_{\omega\in\Omega}$ satisfies that
$L>\max\{s,n(\max\{\frac1p,\frac1q,1\}-1)-s\}$
and $L\geq r-1$. Let $\varphi$
be the same as in \eqref{fanofn}.
Then the following statements hold.
\begin{itemize}
\item[\rm(i)]
For any $f\in\Lambda_s$,
\begin{align*}
\mathop\mathrm{\,dist\,}\left(f, F^s_{p,q}\cap\Lambda_s
\right)_{\Lambda_s}
\sim\inf\left\{\varepsilon\in(0,\infty):\left\|\left[
\sum_{I \in W^0(s, f, \varepsilon)}\mathbf{1}_{I}
\right]^{\frac 1q}\right\|_{L^p}<\infty\right\}
\end{align*}
with positive equivalence constants independent of $f$.
\item[\rm(ii)]
For any $f\in\Lambda_s$,
\begin{align*}
\mathop\mathrm{\,dist\,}\left(f, F^s_{p,q}\cap\Lambda_s
\right)_{\Lambda_s}
&\sim \inf\left\{\varepsilon\in(0,\infty):
\left\|\left[\int_{0}^1 \mathbf{1}_{ S(s, f,
\varepsilon)}(\cdot,y)\,\frac {dy}{y}
\right]^{\frac 1q}\right\|_{L^p}<\infty\right\}\\
&\quad +
\limsup_{k\in\mathbb{Z}^n,\;|k|\rightarrow\infty}
\left|\int_{\mathbb{R}^n}\varphi(x-k)f(x)\,dx\right|.
\end{align*}
with positive equivalence constants independent of $f$.
\item[\rm(iii)]
For any $f\in\Lambda_s$,
$$
\mathop\mathrm{\,dist\,}\left(f, F^s_{
\infty,q}\right)_{\Lambda_s}\sim
\inf\left\{\varepsilon\in(0,\infty):M\left(\bigcup_{I \in W(s, f, \varepsilon)}
T(I)\right)<\infty\right\}
$$
with positive equivalence constants independent of $f$.
\item[\rm(iv)]
For any $f\in\Lambda_s$,
$\mathop\mathrm{\,dist\,}(f, F^s_{\infty,q})_{\Lambda_s}
\sim \inf\{\varepsilon\in(0,\infty):
M( S_r(s,f, \varepsilon))<\infty\}$
with positive equivalence constants independent of $f$.
\end{itemize}
\end{theorem}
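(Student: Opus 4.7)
The plan is to derive all four parts of Theorem \ref{bvcaso} as corollaries of the abstract framework by first identifying $F^s_{p,q}\cap\Lambda_s$ with an appropriate Daubechies $s$-Lipschitz $X$-based space via Lemma \ref{asljk}, and then plugging the lattice $X$ into the general theorems of Section \ref{2222}. Specifically, for $p\in(0,\infty)$ we take $X:=L^p(\ell^q)_{\mathbb Z_+}$ (which satisfies Assumption \ref{a1} with $u=1/q$ by Lemma \ref{as123jl}), and for $p=\infty$ we take $X:=F_{\infty,q}(\mathbb{R}^n,\mathbb Z_+)$ (which satisfies Assumption \ref{pplp} by Lemma \ref{as12312} with Carleson-type measure $\nu(\{A_j\}_{j\in\mathbb Z_+}):=M(\bigcup_{j\in\mathbb Z_+}A_j)$).

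For parts (i) and (ii), I will apply Theorem \ref{thm-2-6} and Corollary \ref{pp2}, respectively. The main computation is to translate the abstract $X$-quasinorm into the concrete $L^p$-expressions appearing in the statement. For each fixed $j$, the function $f_j:=\sum_{I\in W_j^0(s,f,\varepsilon)}\mathbf 1_I$ is $\{0,1\}$-valued because the cubes in $\mathcal D_j$ are pairwise disjoint, so $|f_j|^q=f_j$; summing over $j$ and using $L^p(\ell^q)$-norm then yields $\|(\sum_{I\in W^0(s,f,\varepsilon)}\mathbf 1_I)^{1/q}\|_{L^p}$, which is exactly part (i). For part (ii), choose $u=1/q$ in Theorem \ref{pppz23}; since the sets $\{S_{r,j}(s,f,\varepsilon)\}_{j\in\mathbb Z_+}$ partition $S_r(s,f,\varepsilon)\cap(\mathbb R^n\times(0,1])$ in the $y$-variable, the sum $\sum_j\int_0^1\mathbf 1_{S_{r,j}}\frac{dy}{y}$ collapses to $\int_0^1\mathbf 1_{S_r(s,f,\varepsilon)}\frac{dy}{y}$, yielding the desired $L^p$-norm. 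The hypothesis of Corollary \ref{pp2} is trivially satisfied since $\|\{\mathbf 1_E\delta_{0,j}\}\|_{L^p(\ell^q)}=|E|^{1/p}=\infty$ whenever $|E|=\infty$, producing the $\limsup$-tail term.

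For parts (iii) and (iv), I will apply Theorem \ref{thm-2-6} and Theorem \ref{thm-7-11}, respectively, with $\nu$ as above. By Assumption \ref{pplp} (verified in Lemma \ref{as12312}), the condition $\|\{\sum_{I\in W_j^0(s,f,\varepsilon)}\mathbf 1_I\}_j\|_X<\infty$ is equivalent to $M(\bigcup_{I\in W^0(s,f,\varepsilon)}T(I))<\infty$. The key observation is that the scaling-function contribution $V_0(s,f,\varepsilon)\subset\mathcal D_0$ is automatically harmless: since $\bigcup_{I\in V_0(s,f,\varepsilon)}T(I)\subset\mathbb R^n\times(\tfrac12,1]$, one computes $M(\mathbb R^n\times(\tfrac12,1])=\log 2<\infty$, so this set always has finite Carleson mass. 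Consequently $M(\bigcup_{I\in W^0}T(I))<\infty$ iff $M(\bigcup_{I\in W}T(I))<\infty$, which yields part (iii); and for part (iv) the extra term $\inf\{\varepsilon:\nu(\bigcup_{I\in V_0}T(I))<\infty\}$ in Theorem \ref{thm-7-11} equals $0$, while $\varepsilon_{X,\nu}f$ unfolds via $\nu(\{S_{r,j}(s,f,\varepsilon)\}_j)=M(S_r(s,f,\varepsilon))$.

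The main obstacle, though largely bookkeeping, is making the parameter choice $u=1/q$ cohere with Assumption \ref{a1} and carefully tracking how the abstract norm identities exploit the $\{0,1\}$-valued structure of the indicator sums and the disjointness of $\{S_{r,j}\}_j$ in the $y$-direction. Once these translations are in place, the four assertions follow by direct substitution, and no further hard analysis is needed beyond the general machinery already established.
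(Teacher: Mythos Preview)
Your proposal is correct and follows essentially the same route as the paper, which derives Theorem \ref{bvcaso} by specializing the abstract framework (Theorems \ref{thm-2-6}, \ref{pppz23}/Corollary \ref{pp2}, and \ref{thm-7-11}) to the lattices $X=L^p(\ell^q)_{\mathbb Z_+}$ and $X=F_{\infty,q}(\mathbb R^n,\mathbb Z_+)$ via Lemmas \ref{asljk}, \ref{as123jl}, and \ref{as12312}. Your explicit observation that $M\bigl(\bigcup_{I\in V_0(s,f,\varepsilon)}T(I)\bigr)\le M(\mathbb R^n\times(1/2,1])=\log 2<\infty$, which kills the scaling-function term in (iii) and (iv), is exactly the mechanism the paper leaves implicit; the paper lists Proposition \ref{pp} among its tools, but in the Carleson setting your direct argument is cleaner and suffices.
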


\begin{remark}
In (iii) and (iv) of Theorem \ref{bvcaso}, if
$s\in(0,1]$ and $q=2$,
then the conclusions are precisely
\cite[Theorems 1 and 3]{ss}, while
Theorem \ref{bvcaso} in other
cases is completely new.
\end{remark}

Using Theorem \ref{pppz2a}, Corollary \ref{pp2}, and
Lemmas \ref{asljk}, \ref{as123jl}, and \ref{as12312},
we obtain the following conclusions.

\begin{theorem}\label{fasnof4}
Let  $s\in(0,\infty)$, $p\in(0,\infty)$, and
$q\in(0,\infty]$.
Assume that $r\in\mathbb N$ with $r>s$
and that
the regularity parameter $L\in\mathbb N$ of the
Daubechies wavelet system $\{\psi_\omega\}_{
\omega\in\Omega}$ satisfies that
$L>\max\{s,n(\max\{\frac1p,\frac1q,1\}-1)-s\}$
and $L\geq r-1$. Let $\varphi$
be the same as in \eqref{fanofn}.
Then the following statements hold.
\begin{itemize}
\item[\rm(i)]
$f\in\overline{F^s_{p,q}\cap\Lambda_s
}^{\Lambda_s}$
if and only if
$f\in \Lambda_s$ and,
for any $\varepsilon\in(0,\infty)$,
$\|[\sum_{I \in W^0(s, f, \varepsilon)}
\mathbf{1}_{I}]^{\frac 1q}\|_{L^p}<\infty.$
\item[\rm(ii)]
$f\in\overline{F^s_{p,q}\cap\Lambda_s
}^{\Lambda_s}$
if and only if
$f\in \Lambda_s$ and,
for any $\varepsilon\in(0,\infty)$,
 \begin{align*}
\left\|\left[\int_{0}^1 \mathbf{1}_{
S_r(s, f, \varepsilon)}(\cdot,y)\,\frac {dy}{y}
\right]^{\frac 1q}\right\|_{L^p}<\infty\ \mathrm{and}\
\limsup_{k\in\mathbb{Z}^n,\;|k|\rightarrow\infty} \left|\int_{\mathbb{R}^n}\varphi(x-k)f(x)\,dx\right|=0.
\end{align*}
\item[\rm(iii)]
$f\in\overline{F^s_{\infty,q}
}^{\Lambda_s}$
if and only if
$f\in \Lambda_s$ and,
for any $\varepsilon\in(0,\infty)$,
$M(\bigcup_{I \in W(s, f, \varepsilon)}
T(I))<\infty.$
\item[\rm(iv)]
$f\in\overline{F^s_{\infty,q}
}^{\Lambda_s}$
if and only if
$f\in \Lambda_s$ and,
for any $\varepsilon\in(0,\infty)$,
$M( S_r(s,f, \varepsilon))<\infty.$
\end{itemize}
\end{theorem}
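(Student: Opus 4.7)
The plan is to derive Theorem \ref{fasnof4} directly from the distance characterizations already established in Theorem \ref{bvcaso}, following exactly the pattern used to deduce Theorem \ref{pppz2a} from Theorem \ref{pppz23} and Theorem \ref{thm-7-117} from Theorem \ref{thm-7-11}. The governing observation is that, by the very definition of the closure of a subset of $\Lambda_s$ with respect to the $\Lambda_s$-topology, a function $f\in\Lambda_s$ belongs to $\overline{F_{p,q}^s\cap\Lambda_s}^{\Lambda_s}$ if and only if $\mathop\mathrm{\,dist\,}(f,F_{p,q}^s\cap\Lambda_s)_{\Lambda_s}=0$. Thus each of the four equivalences reduces to reading off when the corresponding right-hand side in Theorem \ref{bvcaso} equals zero.

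More concretely, for part (i), applying Theorem \ref{bvcaso}(i) I observe that $\mathop\mathrm{\,dist\,}(f,F_{p,q}^s\cap\Lambda_s)_{\Lambda_s}=0$ if and only if the infimum over $\varepsilon\in(0,\infty)$ of $\varepsilon$ for which $\|[\sum_{I\in W^0(s,f,\varepsilon)}\mathbf{1}_I]^{1/q}\|_{L^p}<\infty$ is zero; since the family $W^0(s,f,\varepsilon)$ shrinks as $\varepsilon$ increases, this infimum vanishes exactly when the $L^p$ quantity is finite for every $\varepsilon\in(0,\infty)$, giving (i). Similarly (iii) follows by invoking Theorem \ref{bvcaso}(iii) and the monotonicity of $\bigcup_{I\in W(s,f,\varepsilon)}T(I)$ in $\varepsilon$, using that $M$ is monotone with respect to set inclusion.

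Parts (ii) and (iv) proceed analogously, starting from Theorem \ref{bvcaso}(ii) and (iv) respectively. In (ii) the distance is equivalent to the sum of two nonnegative quantities, namely the $\varepsilon$-infimum involving $\int_0^1\mathbf{1}_{S_r(s,f,\varepsilon)}(\cdot,y)\frac{dy}{y}$ and the term $\limsup_{|k|\to\infty}|\int_{\mathbb{R}^n}\varphi(x-k)f(x)\,dx|$; the distance vanishes iff both summands vanish, which precisely yields the two conditions listed in (ii). Part (iv) is the simplest: Theorem \ref{bvcaso}(iv) equates the distance with $\inf\{\varepsilon\in(0,\infty):M(S_r(s,f,\varepsilon))<\infty\}$, so this infimum is zero exactly when $M(S_r(s,f,\varepsilon))<\infty$ for every $\varepsilon\in(0,\infty)$.

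Since each implication is immediate once Theorem \ref{bvcaso} is granted, there is no real technical obstacle here; the only point requiring a brief verification is the monotonicity of the relevant set families in $\varepsilon$ (both $W^0(s,f,\varepsilon)$, $W(s,f,\varepsilon)$, and $S_r(s,f,\varepsilon)$ are decreasing in $\varepsilon$), which ensures that the $\varepsilon$-infimum in each formula vanishes precisely when the finiteness condition holds for all positive $\varepsilon$. Hence the proof is essentially a four-line consequence of Theorem \ref{bvcaso}, completely parallel to the arguments already written for Theorems \ref{pppz2a} and \ref{thm-7-117}.
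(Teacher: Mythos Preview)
Your proposal is correct and follows essentially the same route as the paper: the closure characterization is obtained from the corresponding distance characterization via the trivial equivalence $f\in\overline{V}^{\Lambda_s}\Longleftrightarrow \mathop\mathrm{\,dist\,}(f,V)_{\Lambda_s}=0$, together with the monotonicity in $\varepsilon$ of the sets $W^0(s,f,\varepsilon)$, $W(s,f,\varepsilon)$, and $S_r(s,f,\varepsilon)$. The only organizational difference is that the paper cites the general closure results (Theorems \ref{pppz2a}, \ref{thm-2-622a}, \ref{thm-7-117}) specialized through Lemmas \ref{asljk}, \ref{as123jl}, and \ref{as12312}, whereas you invoke the already-specialized distance formula Theorem \ref{bvcaso} directly; the underlying argument is identical.
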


\begin{remark}
To the best of our knowledge,
Theorem \ref{fasnof4} is completely new.
\end{remark}

Let  $p\in(0,\infty)$ and $q\in(0,\infty]$.
When $X:=L^p(l^q)_{\mathbb{Z}_+}$, for any
$f\in X$,
the
\emph{Lipschitz deviation degree} $\varepsilon_{p,q}f$ is defined by
setting
 $$\varepsilon_{p,q}f:=\inf\left\{\varepsilon
 \in(0,\infty):\left\|\left[\int_{0}^1 \mathbf{1}_{
S_r(s, f, \varepsilon)}(\cdot,y)\,\frac {dy}{y}
\right]^{\frac 1q}\right\|_{L^p}<\infty\right\}.$$
When $X:=F_{\infty,q}(\mathbb{R}^n,\mathbb{Z}_+)$, for any
$f\in X$,
the
\emph{Lipschitz deviation degree} $\varepsilon_{\infty,q}f$ is defined by
setting
 $$\varepsilon_{\infty,q}f:=\inf\left\{\varepsilon
 \in(0,\infty):M( S_r(s,f, \varepsilon))<\infty\right\}.$$
Applying Theorems \ref{bvcaso} and \ref{fasnof4}, we can
characterize the Lipschitz
deviation degree $\varepsilon_{p,q}$ for any $p\in (0,\infty)$
in Theorem \ref{t5.30} and for $p=\infty$ in Theorem \ref{t5.31}
by using the distance in the
Lipschitz space and we omit
the details of its proof.

\begin{theorem}\label{t5.30}
Let  $s,p\in(0,\infty)$, $q\in(0,\infty]$,
and $f\in\Lambda_s\cap \mathfrak{C}_0$.
\begin{itemize}
  \item[\rm (i)] If $\varepsilon\in(0,
  \varepsilon_{p,q}f)$, then
  $\|[\int_{0}^1 \mathbf{1}_{
S_r(s, f, \varepsilon)}(\cdot,y)\,\frac {dy}{y}
]^{\frac 1q}\|_{L^p}=\infty$
and,
  for any $(x,y)\in\mathbb{R}^n\times(0,1]$
  with
  $(x,y)\notin S_{r}(s, f,
\varepsilon)$,
  $\frac{\Delta_r f(x, y)}{y^s}\le\varepsilon_{p,q}f$;
  \item[\rm (ii)] If $\varepsilon\in(
  \varepsilon_{p,q}f,\infty)$,
  then $\|[\int_{0}^1 \mathbf{1}_{
S_r(s, f, \varepsilon)}(\cdot,y)\,\frac {dy}{y}
]^{\frac 1q}\|_{L^p}<\infty$ and,
  for any
  $(x,y)\in S_{r}(s, f,
\varepsilon)$,
  $\frac{\Delta_r f(x, y)}{y^s}>\varepsilon_{p,q}f$.
 \item[\rm (iii)] $
\mathop\mathrm{\,dist\,}(f, F^s_{p,q}\cap
\Lambda_s)_{\Lambda_s}
\sim \varepsilon_{p,q}f
$
with positive equivalence constants independent of $f$.
\item[\rm(iv)]
$f\in\overline{F^s_{p,q}\cap\Lambda_s
}^{\Lambda_s}$
if and only if
$f\in \Lambda_s$, $\varepsilon_{p,q}f=0$,
and
\begin{align*}
\limsup_{k\in\mathbb{Z}^n,\;|k|\rightarrow\infty} \left|\int_{\mathbb{R}^n}\varphi(x-k)f(x)\,dx\right|=0.
\end{align*}
\end{itemize}
\end{theorem}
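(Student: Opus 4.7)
The plan is to derive Theorem \ref{t5.30} as a direct consequence of Theorems \ref{bvcaso} and \ref{fasnof4} applied with $X:=L^p(\ell^q)_{\mathbb Z_+}$, using only the fact that for $f\in\mathfrak{C}_0$ the low-frequency remainder vanishes at infinity.

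For parts (i) and (ii), I would simply unpack the definition of $\varepsilon_{p,q}f$ as the infimum of the set $E_f:=\{\varepsilon\in(0,\infty):\|[\int_0^1\mathbf{1}_{S_r(s,f,\varepsilon)}(\cdot,y)\frac{dy}{y}]^{1/q}\|_{L^p}<\infty\}$. Monotonicity of the set $S_r(s,f,\varepsilon)$ in $\varepsilon$ shows that $E_f$ is an upward-closed interval, so $\varepsilon<\varepsilon_{p,q}f$ forces the integral norm to be infinite, and $\varepsilon>\varepsilon_{p,q}f$ forces it to be finite. The complementary pointwise statements are immediate from the very definition of $S_r(s,f,\varepsilon)$: if $(x,y)\notin S_r(s,f,\varepsilon)$ then $\Delta_r f(x,y)/y^s\leq\varepsilon<\varepsilon_{p,q}f$, and if $(x,y)\in S_r(s,f,\varepsilon)$ then $\Delta_r f(x,y)/y^s>\varepsilon>\varepsilon_{p,q}f$.

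For part (iii), the plan is to invoke Theorem \ref{bvcaso}(ii), which gives
\begin{align*}
\mathop\mathrm{\,dist\,}\left(f,F^s_{p,q}\cap\Lambda_s\right)_{\Lambda_s}
\sim \varepsilon_{p,q}f+\limsup_{|k|\to\infty}\left|\int_{\mathbb R^n}\varphi(x-k)f(x)\,dx\right|.
\end{align*}
The hypothesis $f\in\mathfrak{C}_0$, combined with the fact that $\varphi\in\mathfrak{C}_{\mathrm c}^L$ has compact support, shows by dominated convergence that $\int_{\mathbb R^n}\varphi(x-k)f(x)\,dx\to 0$ as $|k|\to\infty$, so the limsup term vanishes and (iii) follows. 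Part (iv) follows in the same spirit from Theorem \ref{fasnof4}(ii): the statement there that ``for every $\varepsilon\in(0,\infty)$ the integral norm is finite'' is precisely the condition $\varepsilon_{p,q}f=0$, by the definition of the Lipschitz deviation constant.

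There is essentially no obstacle; all the deep work has been carried out in Theorems \ref{bvcaso} and \ref{fasnof4}, and the only point requiring a line of justification is the decay of the low-frequency coefficients $\int_{\mathbb R^n}\varphi(x-k)f(x)\,dx$ for $f\in\mathfrak{C}_0$, which is a standard dominated convergence argument using the compact support of $\varphi$. The proof is therefore short enough to omit the details, as the authors already indicate in their statement preceding the theorem.
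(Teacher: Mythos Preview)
Your proposal is correct and follows exactly the route the paper indicates: the authors explicitly state before Theorem~\ref{t5.30} that it is obtained by applying Theorems~\ref{bvcaso} and~\ref{fasnof4} and that they omit the details. Your unpacking of the infimum for (i)--(ii), your invocation of Theorem~\ref{bvcaso}(ii) together with the dominated-convergence argument killing the low-frequency $\limsup$ for $f\in\mathfrak{C}_0$ in (iii), and your reduction of (iv) to Theorem~\ref{fasnof4}(ii) are precisely what the paper has in mind.
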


\begin{theorem}\label{t5.31}
Let  $s,q\in(0,\infty)$
and $f\in\Lambda_s$.
\begin{itemize}
    \item[\rm (i)] If $\varepsilon\in(0,
  \varepsilon_{\infty,q}f)$, then
  $M( S_r(s,f, \varepsilon))=\infty$
and,
  for any $(x,y)\in\mathbb{R}^n\times(0,1]$
  with
  $(x,y)\notin S_{r}(s, f,
\varepsilon)$,
  $\frac{\Delta_r f(x, y)}{y^s}\le\varepsilon_{\infty,q}f$;
  \item[\rm (ii)] If $\varepsilon\in(
  \varepsilon_{\infty,q}f,\infty)$,
  then $M( S_r(s,f, \varepsilon))<\infty$ and,
  for any
  $(x,y)\in S_{r}(s, f,
\varepsilon)$,
  $\frac{\Delta_r f(x, y)}{y^s}>\varepsilon_{\infty,q}f$.
\item[\rm (iii)] $
\mathop\mathrm{\,dist\,}(f, F^s_{\infty,q})_{\Lambda_s}
\sim \varepsilon_{\infty,q}f
$
with positive equivalence constants independent of $f$.
\item[\rm(iv)]
$f\in\overline{F^s_{\infty,q}
}^{\Lambda_s}$
if and only if
$f\in \Lambda_s$, $\varepsilon_{\infty,q}f=0$.
\end{itemize}
\end{theorem}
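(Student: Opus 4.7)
The plan is to derive Theorem \ref{t5.31} as a formal consequence of Theorems \ref{bvcaso}(iv) and \ref{fasnof4}(iv) together with the very definition of the Lipschitz deviation degree
\[
\varepsilon_{\infty,q}f:=\inf\left\{\varepsilon\in(0,\infty): M(S_r(s,f,\varepsilon))<\infty\right\}.
\]
The key structural fact I will invoke repeatedly is the monotonicity property that $\varepsilon_1<\varepsilon_2$ implies $S_r(s,f,\varepsilon_2)\subset S_r(s,f,\varepsilon_1)$, from which it follows that $M(S_r(s,f,\cdot))$ is nonincreasing. Consequently, the admissible set in the definition of $\varepsilon_{\infty,q}f$ is an interval of the form $(\varepsilon_{\infty,q}f,\infty)$ or $[\varepsilon_{\infty,q}f,\infty)$.

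For part (i), if $\varepsilon\in(0,\varepsilon_{\infty,q}f)$, then $\varepsilon$ lies below the infimum, so by the monotonicity above $M(S_r(s,f,\varepsilon))=\infty$. For the pointwise assertion, any $(x,y)\in\mathbb{R}^n\times(0,1]$ with $(x,y)\notin S_r(s,f,\varepsilon)$ satisfies $\Delta_r f(x,y)/y^s\le \varepsilon <\varepsilon_{\infty,q}f$ directly from the definition of $S_r(s,f,\varepsilon)$. For part (ii), if $\varepsilon\in(\varepsilon_{\infty,q}f,\infty)$, then $\varepsilon$ is strictly above the infimum, so monotonicity again yields a $\varepsilon'\in(\varepsilon_{\infty,q}f,\varepsilon)$ in the admissible set and $M(S_r(s,f,\varepsilon))\le M(S_r(s,f,\varepsilon'))<\infty$; the second clause is the immediate pointwise consequence of $(x,y)\in S_r(s,f,\varepsilon)$ together with $\varepsilon>\varepsilon_{\infty,q}f$.

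For part (iii), I will quote Theorem \ref{bvcaso}(iv), which states exactly that
\[
\mathop\mathrm{\,dist\,}(f,F^s_{\infty,q})_{\Lambda_s}\sim \inf\left\{\varepsilon\in(0,\infty): M(S_r(s,f,\varepsilon))<\infty\right\},
\]
and recognise the right-hand side as $\varepsilon_{\infty,q}f$ by definition. For part (iv), Theorem \ref{fasnof4}(iv) characterises $f\in\overline{F^s_{\infty,q}}^{\Lambda_s}$ as $f\in\Lambda_s$ together with $M(S_r(s,f,\varepsilon))<\infty$ for every $\varepsilon\in(0,\infty)$. A short bi-implication closes the argument: if $\varepsilon_{\infty,q}f=0$ then for every $\varepsilon>0$ there exists $\varepsilon'\in(0,\varepsilon)$ in the admissible set, so by monotonicity $M(S_r(s,f,\varepsilon))\le M(S_r(s,f,\varepsilon'))<\infty$; conversely, if $M(S_r(s,f,\varepsilon))<\infty$ holds for every $\varepsilon>0$, the infimum defining $\varepsilon_{\infty,q}f$ is $0$.

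There is no real obstacle here: the theorem is a clean packaging of the earlier deep results of Section \ref{2222} and Subsection \ref{5.3} in terms of the invariant $\varepsilon_{\infty,q}f$. The only point requiring mild care is making sure that the infimum is handled correctly at the boundary $\varepsilon=\varepsilon_{\infty,q}f$ itself, which is why the strict inequalities in (i) and (ii) are stated exactly as they are; this is settled solely by the monotonicity of $\varepsilon\mapsto M(S_r(s,f,\varepsilon))$. Consequently the proof will be short, essentially a remark, and we may omit routine details as done for the parallel Theorems \ref{dsa124}, \ref{dsa12422}, \ref{t5.20}, \ref{t5.21}, and \ref{t5.30}.
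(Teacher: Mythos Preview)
Your proposal is correct and follows exactly the approach indicated by the paper, which explicitly states that Theorem \ref{t5.31} is obtained by applying Theorems \ref{bvcaso} and \ref{fasnof4} and omits the details. Your handling of the monotonicity of $\varepsilon\mapsto M(S_r(s,f,\varepsilon))$ to justify parts (i) and (ii), and your direct invocation of Theorem \ref{bvcaso}(iv) for (iii) and Theorem \ref{fasnof4}(iv) for (iv), are precisely the omitted routine verifications.
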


\begin{theorem}\label{fwqf26}
Let  $s\in(0,\infty)$, $p\in(0,\infty)$, and
$q\in(0,\infty]$.
Then
$F^s_{p,q}\cap\Lambda_s\subsetneqq
\overline{F^s_{p,q}\cap\Lambda_s}^{\Lambda_s}
\subsetneqq\Lambda_s$.
\end{theorem}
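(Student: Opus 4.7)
The plan is to mirror the strategy already used in the proofs of Theorems \ref{fwqf33} and \ref{fwqf331}, constructing two explicit wavelet-series witnesses: one lying in $\Lambda_s$ but at a positive $\Lambda_s$-distance from $F^s_{p,q}\cap\Lambda_s$ (proving $\overline{F^s_{p,q}\cap\Lambda_s}^{\Lambda_s}\subsetneqq\Lambda_s$), and another lying in the $\Lambda_s$-closure of $F^s_{p,q}\cap\Lambda_s$ but not in $F^s_{p,q}$ itself. The characterization of the distance by wavelet coefficients from Theorem \ref{bvcaso}(i) and the characterization of the closure from Theorem \ref{fasnof4}(i), together with the wavelet norm equivalence of Lemma \ref{asqw}, reduce the problem to routine estimates of sums of indicator functions in $L^p(\ell^q)$.

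For the first strict inclusion, I would take
$$f_0:=\sum_{\{\omega\in\Omega:I_\omega\in\mathcal{D},\ I_\omega\subset[0,1]^n\}}|I_\omega|^{\frac sn+\frac12}\psi_\omega.$$
Lemma \ref{asqw} immediately gives $f_0\in\Lambda_s$. For any $I\subset[0,1]^n$ with $I\in\mathcal{D}$ and $\varepsilon=\tfrac12$, the defining inequality for $W^0(s,f_0,\tfrac12)$ is satisfied, so $\sum_{I\in W^0(s,f_0,1/2)\cap\mathcal{D}_j}\mathbf{1}_I=\mathbf{1}_{[0,1]^n}$ for each $j\in\mathbb{Z}_+$, which makes
$$\left\|\left[\sum_{I\in W^0(s,f_0,\frac12)}\mathbf{1}_I\right]^{\frac1q}\right\|_{L^p}=\left\|\left[\sum_{j=0}^\infty\mathbf{1}_{[0,1]^n}\right]^{\frac1q}\right\|_{L^p}=\infty.$$
Theorem \ref{bvcaso}(i) then forces $\mathop\mathrm{dist}(f_0,F^s_{p,q}\cap\Lambda_s)_{\Lambda_s}\gtrsim\tfrac12$, so $f_0\notin\overline{F^s_{p,q}\cap\Lambda_s}^{\Lambda_s}$.

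For the second strict inclusion, I would take
$$f_0:=\sum_{j=1}^{\infty}j^{-\frac1q}\sum_{\{\omega\in\Omega:I_\omega\in\mathcal{D}_j,\ I_\omega\subset[0,1]^n\}}|I_\omega|^{\frac sn+\frac12}\psi_\omega,$$
with the usual convention when $q=\infty$ (then take the coefficients $(\log(j+2))^{-1}$ instead, which keeps the $F^s_{p,\infty}$ norm divergent while making the distance quantity finite for each $\varepsilon>0$). Again $f_0\in\Lambda_s$ by Lemma \ref{asqw}. Then for $I\in\mathcal{D}_j$ with $I\subset[0,1]^n$ the maximal wavelet coefficient equals $j^{-1/q}|I|^{s/n+1/2}$, so for any $\varepsilon\in(0,\infty)$ only the scales $j<\varepsilon^{-q}$ contribute to $W^0(s,f_0,\varepsilon)$; hence
$$\left\|\left[\sum_{I\in W^0(s,f_0,\varepsilon)}\mathbf{1}_I\right]^{\frac1q}\right\|_{L^p}\le\left\|\left[\sum_{\{j\in\mathbb N:j<\varepsilon^{-q}\}}\mathbf{1}_{[0,1]^n}\right]^{\frac1q}\right\|_{L^p}<\infty,$$
and Theorem \ref{fasnof4}(i) places $f_0\in\overline{F^s_{p,q}\cap\Lambda_s}^{\Lambda_s}$. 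On the other hand, by the wavelet characterization of $F^s_{p,q}$ (Lemma \ref{asljk}),
$$\|f_0\|_{F^s_{p,q}}\gtrsim\left\|\left[\sum_{j=1}^{\infty}j^{-1}\mathbf{1}_{[0,1]^n}\right]^{\frac1q}\right\|_{L^p}=\infty,$$
so $f_0\notin F^s_{p,q}\cap\Lambda_s$, finishing the proof.

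The structural content is entirely a transcription of the $B^s_{p,q}$ arguments (Theorems \ref{fwqf33}, \ref{fwqf331}) to the Triebel--Lizorkin setting; the only point requiring a bit of care is the endpoint $q=\infty$, where the divergent harmonic sum has to be replaced by a slowly divergent series (such as $\sum_j(\log(j+2))^{-1}$) that is unbounded in $\ell^\infty$ but whose truncations to $j<\varepsilon^{-1}\log$-levels remain controlled in $L^p$; I expect this to be the only mildly nontrivial technical step.
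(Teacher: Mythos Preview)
For $q\in(0,\infty)$ your argument is essentially identical to the paper's proof: the same two wavelet witnesses, the same appeal to Theorem~\ref{bvcaso}(i), Theorem~\ref{fasnof4}(i), and Lemmas~\ref{asqw} and~\ref{asljk}.

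Your treatment of the endpoint $q=\infty$, however, contains a genuine error. With coefficients $a_j=(\log(j+2))^{-1}$ and $f_0$ supported on the tower of dyadic cubes inside $[0,1]^n$, the wavelet norm in $F^s_{p,\infty}$ is
\[
\|f_0\|_{F^s_{p,\infty}}\sim\Bigl\|\sup_{j\ge 0}a_j\,\mathbf{1}_{[0,1]^n}\Bigr\|_{L^p}=(\log 2)^{-1}<\infty,
\]
so this $f_0$ actually \emph{belongs} to $F^s_{p,\infty}\cap\Lambda_s$ and cannot witness the strict inclusion $F^s_{p,\infty}\cap\Lambda_s\subsetneqq\overline{F^s_{p,\infty}\cap\Lambda_s}^{\Lambda_s}$. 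More generally, any $f_0$ built from cubes inside a fixed bounded set with $\ell^\infty$-bounded normalized coefficients automatically lies in $F^s_{p,\infty}$, since the $L^p(\ell^\infty)$ norm only sees $\sup_j a_j$. The same obstruction hits your first witness when $q=\infty$: for that $f_0$ one has $\sup_j\sum_{I\in W^0_j(s,f_0,1/2)}\mathbf 1_I=\mathbf 1_{[0,1]^n}$, whose $L^p$ norm is finite, so Theorem~\ref{bvcaso}(i) does \emph{not} force a positive distance. To handle $q=\infty$ one must spread the wavelet coefficients horizontally (over infinitely many unit cubes at a single scale, with slowly decaying weights) rather than vertically over scales; the paper's own proof is written only for the case $q<\infty$ and does not address this either.
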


\begin{proof}
We first show that
$\overline{F^s_{p,q}\cap\Lambda_s}^{\Lambda_s}
\subsetneqq\Lambda_s$. Let
$f_0:=\sum_{\{\omega\in \Omega:I_\omega\in
\mathcal{D},I_\omega\subset[0,1]^n\}}
|I_\omega|^{\frac sn+\frac 12}\psi_{\omega}.$
From  Lemma \ref{asqw},
it follows that $f_0\in\Lambda_s$. Observe that,
for any
$I\in\mathcal{D}$ and $I\subset[0,1]^n$,
$$\max_{\{\omega\in\Omega:I_{\omega} =I\}}
\left|\int_{\mathbb{R}^n} f_0(x) \psi_{\omega}(x)\, dx\right|
= |I|^{\frac sn+\frac 12}.$$
By this, we conclude that
\begin{align*}
\left\|\left[
\sum_{I \in W^0(s, f_0, \frac12)}\mathbf{1}_{I}
\right]^{\frac 1q}\right\|_{L^p}&=\left\|\left[
\sum_{\{I\subset[0,1]^n: I \in \mathcal{D}\}}\mathbf{1}_{I}
\right]^{\frac 1q}\right\|_{L^p}=\infty,
\end{align*}
which, combined with Theorem \ref{bvcaso},
further implies that
$
\mathop\mathrm{\,dist\,}\left(f_0,
\mathrm{J}_s(\mathop\mathrm{\,bmo\,})
\right)_{\Lambda_s}\gtrsim\frac12.
$
From this, it follows that $f_0\notin
\overline{F^s_{p,q}\cap\Lambda_s
}^{\Lambda_s}$.
Thus, $\overline{F^s_{p,q}\cap\Lambda_s}^{\Lambda_s}
  \subsetneqq\Lambda_s$.

  Now, we show $F^s_{p,q}\cap\Lambda_s
  \subsetneqq
\overline{F^s_{p,q}\cap\Lambda_s}^{\Lambda_s}$.
Let
$f_0:=\sum_{j=0}^{\infty}j^{-\frac1q}
\sum_{\{\omega\in \Omega:I_\omega
\in\mathcal{D}_j,I_\omega\subset[0,1]^n\}}
|I_\omega|^{\frac sn+\frac 12}\psi_{\omega}$.
From  Lemma \ref{asqw},
it follows that $f_0\in\Lambda_s$. Observe that,
for any
$I\in\mathcal{D}_j$ and $I\subset[0,1]^n$,
$$\max_{\{\omega\in\Omega:I_{\omega} =I\}}\left|
\int_{\mathbb{R}^n} f_0(x) \psi_{\omega}(x)\, dx\right|
= j^{-\frac1q}|I|^{\frac sn+\frac 12}.$$
By this, we conclude that, for any $\varepsilon\in(0,\infty)$,
\begin{align*}
\left\|\left[
\sum_{I \in W^0(s, f_0, \varepsilon)}
\mathbf{1}_{I}
\right]^{\frac 1q}\right\|_{L^p}&
\le\left\|\left[\sum_{\{j\in\mathbb{N}: j<
1/\varepsilon^q\}}\mathbf{1}_{[0,1]^n}
\right]^{\frac 1q}\right\|_{L^p}<\infty,
\end{align*}
which, combined with Theorem \ref{fasnof4},
further implies that $f_0\in\overline{
F^s_{p,q}\cap\Lambda_s}^{\Lambda_s}$.
Moreover,
\begin{align*}
\|f_0\|_{F^s_{p,q}}&=\left\|\left[\sum_{j=0}^{\infty}
\frac1j\left|
\sum_{\{\omega\in \Omega:I_\omega\in
\mathcal{D}_j,I_\omega\subset[0,1]^n\}}
\mathbf{1}_{I_\omega}(x)
\right|
^q\,dx\right]^{\frac 1q}\right\|_{L^p}
\geq \left\|\left[\sum_{j=0}^{\infty}
\frac1j
\mathbf{1}_{[0,1]^n}
\right]^{\frac 1q}\right\|_{L^p}=\infty,
\end{align*}
which
further implies that $f_0\notin F^s_{p,q}\cap\Lambda_s$.
Thus, $ F^s_{p,q}\cap\Lambda_s\subsetneqq\overline{
F^s_{p,q}\cap\Lambda_s}^{\Lambda_s}
$,
which completes the proof of Theorem \ref{fwqf26}.
\end{proof}

\begin{theorem}\label{fwqfqq}
If $s\in(0,\infty)$ and
$q\in(0,\infty)$, then
$F^s_{\infty,q}\subsetneqq
\overline{F^s_{\infty,q}}^{
\Lambda_s}\subsetneqq\Lambda_s$.
\end{theorem}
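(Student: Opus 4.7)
The plan is to mirror the two-step template used in the proofs of Theorems \ref{fwqf331} (the Besov endpoint) and \ref{fwqf26} (the Triebel--Lizorkin case with $p<\infty$), building two explicit test functions $f_0\in\Lambda_s$ via the Daubechies wavelet expansion and then invoking the distance formulas of Theorem \ref{bvcaso}(iii) and Theorem \ref{fasnof4}(iii) together with the wavelet norm equivalence of Lemma \ref{asljk}. Throughout the argument Lemma \ref{asqw} supplies membership in $\Lambda_s$ from a uniform bound on normalized wavelet coefficients.

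\textbf{Step 1: $\overline{F^s_{\infty,q}}^{\Lambda_s}\subsetneqq\Lambda_s$.} I would set
\[
f_0:=\sum_{\{\omega\in\Omega:\,I_\omega\in\mathcal{D},\,I_\omega\subset[0,1]^n\}}|I_\omega|^{\frac sn+\frac12}\psi_\omega,
\]
which is in $\Lambda_s$ by Lemma \ref{asqw}. For $\varepsilon=\tfrac12$, every $I\subset[0,1]^n$ with $I\in\mathcal{D}$ lies in $W(s,f_0,\tfrac12)$, so testing $M$ in Theorem \ref{bvcaso}(iii) against $J=[0,1]^n$ gives
\[
M\left(\bigcup_{I\in W(s,f_0,\frac12)}T(I)\right)\geq\int_{[0,1]^n}\int_0^1\frac{dy}{y}\,dx=\infty,
\]
whence $\mathop\mathrm{\,dist\,}(f_0,F^s_{\infty,q})_{\Lambda_s}\gtrsim\tfrac12$ and $f_0\notin\overline{F^s_{\infty,q}}^{\Lambda_s}$.

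\textbf{Step 2: $F^s_{\infty,q}\subsetneqq\overline{F^s_{\infty,q}}^{\Lambda_s}$.} I would take
\[
f_0:=\sum_{j=1}^{\infty}j^{-\frac1q}\sum_{\{\omega\in\Omega:\,I_\omega\in\mathcal{D}_j,\,I_\omega\subset[0,1]^n\}}|I_\omega|^{\frac sn+\frac12}\psi_\omega,
\]
which lies in $\Lambda_s$ by Lemma \ref{asqw} since the normalized coefficients are bounded by $1$. For any $\varepsilon\in(0,\infty)$, membership of $I\in\mathcal{D}_j$ in $W(s,f_0,\varepsilon)$ forces $j^{-1/q}>\varepsilon$, so only scales $j<\varepsilon^{-q}$ contribute and $\bigcup_{I\in W(s,f_0,\varepsilon)}T(I)\subset[0,1]^n\times(2^{-N-1},1]$ with $N:=\lfloor\varepsilon^{-q}\rfloor$. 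Splitting an arbitrary test cube $J\in\mathcal D$ into the cases $\ell(J)\leq 2^{-N-1}$ and $\ell(J)>2^{-N-1}$ yields the Carleson bound $M(\bigcup T(I))\leq(N+1)\log 2<\infty$, so by Theorem \ref{fasnof4}(iii) we conclude $f_0\in\overline{F^s_{\infty,q}}^{\Lambda_s}$. On the other hand, by Lemma \ref{asljk} and the definition of the $F_{\infty,q}(\mathbb{R}^n,\mathbb{Z}_+)$ quasi-norm, testing at $l=0$, $I_{0,m}=[0,1]^n$ gives
\[
\|f_0\|_{F^s_{\infty,q}}^q\sim\fint_{[0,1]^n}\sum_{j=1}^{\infty}j^{-1}\,dx=\sum_{j=1}^{\infty}\frac1j=\infty,
\]
so $f_0\notin F^s_{\infty,q}$.

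The only delicate point is the Carleson estimate in Step 2: one must check that, once the admissible scales are capped at $N\sim\varepsilon^{-q}$, the supremum defining $M$ is genuinely uniform in $J\in\mathcal D$ regardless of the position and edge length of $J$. This is routine but requires separating the cases where $J$ meets $[0,1]^n$ at scales above or below $2^{-N-1}$; everything else reduces to the computations already displayed in the proofs of Theorems \ref{fwqf331} and \ref{fwqf26}.
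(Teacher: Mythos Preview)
Your proof is correct and follows essentially the same template as the paper. The only difference is in Step~1: the paper observes (via Theorem~\ref{fasnof4}(iii)) that the closure $\overline{F^s_{\infty,q}}^{\Lambda_s}$ coincides with $\overline{\mathrm{J}_s(\mathop\mathrm{\,bmo\,})}^{\Lambda_s}$ and then simply invokes Theorem~\ref{fwqf}, whereas you carry out the explicit Carleson-measure computation directly---but that computation is precisely the content of the proof of Theorem~\ref{fwqf}, so nothing is really different. Your Step~2 is identical to the paper's, and your case split on $\ell(J)$ for the finiteness of $M$ is in fact a bit more careful than the paper's corresponding line.
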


\begin{proof}
  We first show that  $\overline{F^s_{\infty,q}
  }^{\Lambda_s}
  \subsetneqq\Lambda_s$.  From Theorem \ref{fasnof4},
  we infer that $\overline{F^s_{\infty,q}
  }^{\Lambda_s}=\overline{\mathrm{J}_s(\mathop\mathrm{
\,bmo\,})}^{\Lambda_s}$,
which, combined with Theorem \ref{fwqf}, further
implies that $\overline{F^s_{\infty,q}
  }^{\Lambda_s}
  \subsetneqq\Lambda_s$.

  Now, we show $F^s_{\infty,q}\subsetneqq
\overline{F^s_{\infty,q}}^{\Lambda_s}$.
Let
$f_0:=\sum_{j=0}^{\infty}j^{-\frac1q}
\sum_{\{\omega\in \Omega:I_\omega\in\mathcal{D}_j,
I_\omega\subset[0,1]^n\}}
|I_\omega|^{\frac sn+\frac 12}\psi_{\omega}$.
From  Lemma \ref{asqw},
it follows that $f_0\in\Lambda_s$. Observe that,
for any
$I\in\mathcal{D}_j$ and $I\subset[0,1]^n$,
$$\max_{\{\omega\in\Omega:I_{\omega} =I\}}\left|
\int_{\mathbb{R}^n} f_0(x) \psi_{\omega}(x)\,
dx\right|
= j^{-\frac1q}|I|^{\frac sn+\frac 12}.$$
By this, we conclude that, for any $\varepsilon
\in(0,\infty)$,
\begin{align*}
M\left(\bigcup_{I \in W^0(s, f_0, \varepsilon)}
T(I)\right)&
=
\sup_{I\in \mathcal D} \frac 1 {|I|} \int_I \left[
\int_0^{\ell(I)} \mathbf{1}_{\bigcup_{I \in W^0(
s, f_0, \varepsilon)} T(I)}(x,y) \,\frac {dy} {y}
\right]\, dx\\\notag&
= \int_{[0,1]^n} \left[\int_0^{1} \mathbf{1}_{
\bigcup_{I \in W^0(s, f_0, \varepsilon)} T(I)}(x,y)
\,\frac {dy} {y}\right]\, dx\\\notag&
\le\int_{[0,1]^n} \left[\int_{2^{-1/\varepsilon^q}
}^{1}  \,\frac {dy} {y}\right]\, dx<\infty,
\end{align*}
which, combined with Theorem \ref{fasnof4},
further implies that $f_0\in\overline{F^s_{\infty,q}
}^{\Lambda_s}$.
Moreover,
\begin{align*}
\|f_0\|_{F^s_{\infty,q}}
&\sim\sup_{l\in\mathbb Z_+,m\in\mathbb Z^n}
\left\{\fint_{I_{l,m}}\sum_{j=\ell}^{\infty}
\frac1j\left|
\sum_{\{\omega\in \Omega:I_\omega\in\mathcal{D}_j,
I_\omega\subset[0,1]^n\}}
\mathbf{1}_{I_\omega}(x)
\right|
^q\,dx\right\}^{\frac 1q}\\\notag
&\geq \left\{\int_{[0,1]^n}\sum_{j=0}^{\infty}\frac1j
\left|
\sum_{\{\omega\in \Omega:I_\omega\in\mathcal{D}_j,
I_\omega\subset[0,1]^n\}}
\mathbf{1}_{I_\omega}(x)\right|
^q\,dx\right\}^{\frac 1q}
\\\notag
&\sim \left\{\int_{[0,1]^n}\sum_{j=0}^{\infty}
\frac1j\left|
\sum_{\{I\in\mathcal{D}_j: I\subset[0,1]^n\}}
\mathbf{1}_{I}(x)\right|
^q\,dx\right\}^{\frac 1q}
= \left\{\int_{[0,1]^n}\sum_{j=0}^{\infty}
\frac1j
\,dx\right\}^{\frac 1q}=\infty,
\end{align*}
which
further implies that $f_0\notin F^s_{\infty,q}$.
Thus, $ F^s_{\infty,q}\subsetneqq
\overline{F^s_{\infty,q}}^{\Lambda_s}
$,
which completes the proof of Theorem \ref{fwqfqq}.
\end{proof}

\subsection{Besov-type spaces}\label{fwefm}

The Besov-type and the Triebel--Lizorkin-type spaces
were introduced in \cite{yy08,yy10,ysy10} to
connect Triebel--Lizorkin spaces and $Q$ spaces,
and these spaces, together with Besov-type spaces,
were intensively investigated in
\cite{ht23,lsuyy12,syy10,yy10,yy13,ysy10,ysy20}.
Furthermore, they also have a close relation
with Besov--Morrey and Triebel--Lizorkin--Morrey
spaces introduced in \cite{ky94,tx},
and these spaces
are systematically studied
in \cite{HMS16,HS13,s08,s09,s10,st07,s011,s011a}.
We refer to \cite{syy10,yyz14,yzy15,yzy15-a}
for more studies on Besov-type and
Triebel--Lizorkin-type spaces and
to \cite{hl,hlms,hms23,hst23,hst23-2,lz,lz12,mps}
for more variants and their applications.

For any $\gamma:=(\gamma_1,\ldots,\gamma_n)\in \mathbb{Z}_+^n$,
let $|\gamma|:=\sum^n_{i=1}\gamma_i$.
For any $p$, $q\in\mathbb{R}$, let
$p\vee q:=\max\{p,q\}$ and
$p\wedge q:=\min\{p,q\}$;
furthermore, let
$p_+:=\max\{p,0\}$. Let $\mathscr{Q}$ be the set of
all dyadic cubes.
For any $Q\in\mathscr{Q}$,
let $j_Q:=-\log_2\ell(Q).$
Now, we recall the concept of Besov-type spaces.
Let $s\in\mathbb{R}$,
$p,q\in(0,\infty]$, and $\tau\in[0,\infty)$.
The \emph{space}
$(l^qL^p_\tau)_{\mathbb{Z}_+}$ is
defined to be the set of all
$\mathbf{f}:=\{f_j\}_{j\in\mathbb{Z}_+}\in
\mathscr{M}_{\mathbb{Z}_+}$ such that
\begin{align*}
\|\mathbf{f}\|_{(l^qL^p_\tau)_{\mathbb{Z}_+}}
:=\sup_{Q\in\mathscr{Q}}
\left\{\frac{1}{|Q|^\tau}\left[
\sum^\infty_{j=j_Q\vee 0}
\left\|f_j\right\|_{L^p(Q)}^q\right]^{\frac{1}{q}}
\right\}<\infty,
\end{align*}
where the usual modification is made
when $q=\infty$. Let $\{\phi_j\}_{j\in\mathbb{Z}_+}$
be the same as in \eqref{eq-phi1} and \eqref{eq-phik}.
The \emph{Besov-type
space $B^{s,\tau}_{p,q}$}
is defined to be the set of all
$f\in \mathcal{S}'$
such that
\begin{align*}
\|f\|_{B^{s,\tau}_{p,q}}:=
\left\|\left\{2^{js}\phi_j*f\right\}_{j\in\mathbb{Z}_+}
\right\|_{(l^qL^p_\tau)_{\mathbb{Z}_+}}<\infty.
\end{align*}
From the definition of the space $(l^qL^p_\tau
)_{\mathbb{Z}_+}$,
it is easy to infer that
$(l^qL^p_\tau)_{\mathbb{Z}_+}$ is a quasi-normed
lattice of function sequences.

The following lemma is about the wavelet characterization of
Besov-type spaces (see, for instance, \cite[Section 4.2]{ysy10}
or \cite[Theorem 6.3(i)]{lsuyy12}).

\begin{lemma}\label{as2}
Let  $s\in(0,\infty)$,
$p,q\in(0,\infty]$, and $\tau\in[0,\infty)$.
Assume that
the regularity parameter $L\in\mathbb N$ of
the Daubechies wavelet system
$\{\psi_\omega\}_{\omega\in\Omega}$ satisfies that
\begin{align}\label{4.8}
L> \left[s+n\left(\tau+\frac{1}{1\wedge p\wedge q}
-\frac{1}{p\vee 1}\right)+\frac{n}{p}\right]\vee
\left[-s+n\left(\tau+
\frac{1}{1\wedge p\wedge q}+\frac{1}{p}
-1\right)+2\frac{n}{p}\right].
\end{align}
Then $B^{s,\tau}_{p,q}\cap\Lambda_s
=\Lambda_X^{s}$,
where $\Lambda_X^{s}$ is the
Daubechies $s$-Lipschitz $X$-based space with $X:=(l^qL^p_\tau)_{\mathbb{Z}_+}$.
\end{lemma}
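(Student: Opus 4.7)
The approach is to reduce the claimed identity to the classical wavelet characterization of Besov-type spaces established in \cite[Section~4.2]{ysy10} and \cite[Theorem~6.3(i)]{lsuyy12}. Under the regularity hypothesis \eqref{4.8}, that result identifies $B^{s,\tau}_{p,q}$, up to equivalent quasi-norms, with the space of tempered distributions whose Daubechies wavelet coefficients $\{\langle f,\psi_\omega\rangle\}_{\omega\in\Omega}$ belong to the discrete sequence space $b^{s,\tau}_{p,q}$ whose quasi-norm is
\begin{equation*}
\|\{\lambda_\omega\}\|_{b^{s,\tau}_{p,q}}:=\sup_{Q\in\mathscr{Q}}\frac{1}{|Q|^\tau}\left[\sum_{j=j_Q\vee 0}^\infty\left(\sum_{\{I\in\mathcal D_j:\,I\subset Q\}}|I|^{1-p(s/n+1/2)}\Bigl(\max_{\{\omega:\,I_\omega=I\}}|\lambda_\omega|\Bigr)^p\right)^{q/p}\right]^{1/q},
\end{equation*}
with the usual modifications when $p=\infty$ or $q=\infty$.

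The next step is to verify that $\|f\|_{\Lambda_X^s}\sim\|\{\langle f,\psi_\omega\rangle\}_\omega\|_{b^{s,\tau}_{p,q}}$ for $X:=(l^qL^p_\tau)_{\mathbb Z_+}$. Unpacking Definition \ref{asdfs} and grouping indices by the underlying dyadic cube, I would first write
\begin{equation*}
g_j(x):=\sum_{\{\omega\in\Omega:\,I_\omega\in\mathcal D_j\}}|I_\omega|^{-s/n-1/2}|\langle f,\psi_\omega\rangle|\mathbf 1_{I_\omega}(x)=\sum_{I\in\mathcal D_j}|I|^{-s/n-1/2}\left(\sum_{\{\omega:\,I_\omega=I\}}|\langle f,\psi_\omega\rangle|\right)\mathbf 1_I(x),
\end{equation*}
and note that for each $I\in\mathcal D_j$ with $j\ge 1$ exactly $2^n-1$ indices $\omega$ satisfy $I_\omega=I$ (and precisely one when $j=0$), so the inner sum is comparable to the corresponding maximum with constants depending only on $n$. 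Since for any $Q\in\mathscr{Q}$ with $j_Q\le j$ the cubes $\{I\in\mathcal D_j:I\subset Q\}$ are pairwise disjoint and partition $Q$, the quantity $\|g_j\|_{L^p(Q)}^p$ reduces to a straightforward finite sum over those cubes; substituting this into the $(l^qL^p_\tau)$-quasi-norm then produces $\|\{\langle f,\psi_\omega\rangle\}\|_{b^{s,\tau}_{p,q}}$ up to dimensional constants, and the analogous identification holds at the endpoint exponents.

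Combining this identification with the cited wavelet characterization then yields, for any $f\in\Lambda_s$, that $f\in B^{s,\tau}_{p,q}$ is equivalent to $\|f\|_{\Lambda_X^s}<\infty$, which is precisely the assertion $B^{s,\tau}_{p,q}\cap\Lambda_s=\Lambda_X^s$ with equivalent quasi-norms. The inclusion $\Lambda_X^s\subset\Lambda_s$ is immediate from Definition \ref{asdfs}, while Lemma \ref{asqw} ensures that the wavelet expansion of any $f\in\Lambda_s$ converges to $f$ in $\mathcal S'$, so no ambiguity arises in identifying $f$ with the distributional sum of its wavelet series. The principal technical concern is to check that the threshold \eqref{4.8} suffices to invoke the wavelet characterization in \cite{ysy10,lsuyy12}; however \eqref{4.8} is precisely calibrated to meet the molecular and atomic thresholds appearing in those references, so once the sequence-space identification above is in hand the conclusion follows by bookkeeping.
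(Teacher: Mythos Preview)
Your approach is exactly what the paper does: it simply cites the wavelet characterization of $B^{s,\tau}_{p,q}$ from \cite[Section~4.2]{ysy10} and \cite[Theorem~6.3(i)]{lsuyy12} and leaves the identification of the sequence norm with $\|\cdot\|_{\Lambda_X^s}$ implicit, whereas you spell out that bookkeeping. One small correction: for $I\in\mathcal D_0$ there are $2^n$ indices $\omega$ with $I_\omega=I$ (one from $\Omega_0$ and $2^n-1$ from $\Omega_1$), not one, but since the count is still bounded by $2^n$ your comparability with the maximum is unaffected.
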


\begin{lemma}\label{pro822}
Let  $p\in(1,\infty)$, $q\in[1,\infty]$, $\tau\in[0,\frac1p)$,
and $r\in(1,\infty)$. Then there exists a positive constant $C$ such that,
for any $\{f_{j,\ell}\}_{j\in\mathbb{Z}_+}\in
\mathscr{M}_{\mathbb{Z}_+}$ with $\ell\in\mathbb{N}$,
$$
\left\|\left\{\left(\sum_{\ell=1}^\infty\left
[\mathcal{M}(f_{j,\ell})\right]^r\right)^\frac1r\right\}_{j\in\mathbb{Z}_+}
\right\|_{(l^qL^p_\tau)_{\mathbb{Z}_+}}\le
C\left\|
\left\{\left(\sum_{\ell=1}^\infty|f_{j,\ell}|^r\right)^\frac1r
\right\}_{j\in\mathbb{Z}_+}
\right\|_{(l^qL^p_\tau)_{\mathbb{Z}_+}}.
$$
\end{lemma}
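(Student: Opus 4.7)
The plan is to reduce Lemma \ref{pro822} to the classical vector-valued Fefferman--Stein maximal inequality on $L^p(\mathbb{R}^n)$ (which applies since $p,r\in(1,\infty)$) via a standard local/nonlocal decomposition adapted to the Morrey-type norm. Fix a dyadic cube $Q$; for each $j\in\mathbb{Z}_+$ and $\ell\in\mathbb{N}$, write $f_{j,\ell}=f_{j,\ell}^0+\sum_{k=1}^\infty f_{j,\ell}^k$, where $f_{j,\ell}^0:=f_{j,\ell}\mathbf{1}_{2Q}$ and $f_{j,\ell}^k:=f_{j,\ell}\mathbf{1}_{2^{k+1}Q\setminus 2^kQ}$, and estimate the contribution of each piece to $\|(\sum_\ell[\mathcal M(f_{j,\ell})]^r)^{1/r}\|_{L^p(Q)}$ separately.

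For the local piece $f_{j,\ell}^0$, the classical Fefferman--Stein inequality gives
$$\left\|\left(\sum_\ell[\mathcal M(f_{j,\ell}^0)]^r\right)^{1/r}\right\|_{L^p(Q)}\lesssim\left\|\left(\sum_\ell|f_{j,\ell}|^r\right)^{1/r}\right\|_{L^p(2Q)}.$$
Covering $2Q$ by finitely many dyadic cubes $\{Q'\}$ of the same side length (so $j_{Q'}=j_Q$), one splits the $L^p(2Q)$-norm into a finite sum of $L^p(Q')$-norms; summing over $j\ge j_Q\vee 0$, taking the $q$-th root and dividing by $|Q|^\tau=|Q'|^\tau$, the local contribution is controlled by a constant multiple of the right-hand side of the claimed inequality.

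For the nonlocal piece $f_{j,\ell}^k$ with $k\ge 1$, a direct size estimate gives, for any $x\in Q$,
$$\mathcal M(f_{j,\ell}^k)(x)\lesssim\fint_{2^{k+1}Q}|f_{j,\ell}(y)|\,dy.$$
Since $r\ge 1$, Minkowski's integral inequality yields
$$\left(\sum_\ell[\mathcal M(f_{j,\ell}^k)(x)]^r\right)^{1/r}\lesssim\fint_{2^{k+1}Q}\left(\sum_\ell|f_{j,\ell}(y)|^r\right)^{1/r}dy,$$
and H\"older's inequality with exponents $p,p'$ produces the factor $|2^{k+1}Q|^{-1/p}$. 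Integrating in $x\in Q$ then gives
$$\left\|\left(\sum_\ell[\mathcal M(f_{j,\ell}^k)]^r\right)^{1/r}\right\|_{L^p(Q)}\lesssim 2^{-kn/p}\left\|\left(\sum_\ell|f_{j,\ell}|^r\right)^{1/r}\right\|_{L^p(2^{k+1}Q)}.$$

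Finally, one covers $2^{k+1}Q$ by $O(1)$ dyadic cubes $\{Q_k^i\}$ of side length $2^{k+1}\ell(Q)$; these satisfy $j_{Q_k^i}=j_Q-k-1\le j_Q$, so the sum over $j\ge j_Q\vee 0$ is dominated by the sum over $j\ge j_{Q_k^i}\vee 0$. After taking the $q$-th root and dividing by $|Q|^\tau$, this produces the prefactor $|Q_k^i|^\tau/|Q|^\tau\sim 2^{kn\tau}$, so the $k$-th piece contributes
$$2^{-kn/p}\cdot 2^{kn\tau}=2^{-kn(\frac{1}{p}-\tau)}$$
times the right-hand side of the desired inequality. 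Summing the geometric series converges precisely because $\tau<\frac{1}{p}$, and combining with the local estimate and taking the supremum over $Q$ yields the lemma (with the obvious convention $\ell^\infty$ in place of $\ell^q$ when $q=\infty$). The main obstacle is simply the careful bookkeeping relating the sums over $j\ge j_Q\vee 0$ to those over $j\ge j_{Q_k^i}\vee 0$ after the nonlocal covering; the hypothesis $\tau<1/p$ is precisely what makes the resulting geometric series in $k$ summable.
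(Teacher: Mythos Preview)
Your proof is correct and follows essentially the same approach as the paper: a local/nonlocal decomposition of each $f_{j,\ell}$ into dyadic annuli around $Q$, Fefferman--Stein for the local piece, the pointwise bound plus Minkowski and H\"older for the far pieces, and summation of the resulting geometric series using $\tau<1/p$. The only cosmetic difference is that the paper works with a ball $B$ satisfying $Q\subset B\subset\sqrt{n}Q$ and its dilates $2^{k+1}B$, while you use the dilates $2^{k+1}Q$ directly and make explicit the covering by dyadic cubes needed to pass back to the $(l^qL^p_\tau)_{\mathbb{Z}_+}$ norm; both variants yield the same estimate.
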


\begin{proof}
Let $Q\in\mathscr{Q}$.
Let $B:=B(x_0,R)\subset\mathbb{B}$ with $x_0\in\mathbb{R}^n$ and $R\in(0,\infty)$ such that $Q\subset B \subset \sqrt{n}Q$.
For any given $j\in\mathbb{N}$, we decompose $f_{j,\ell}$ into
$
f_{j,\ell}=f_{j,\ell}^{(0)}+\sum_{k=1}^\infty f_{j,\ell}^{(k)},
$
where $f_{j,\ell}^{(0)}:=f_{j,\ell}\mathbf1_{2B}$ and, for any $k\in\mathbb{N}$,
$f_{j,\ell}^{(k)}:=f_{j,\ell}\mathbf1_{2^{k+1}B\setminus2^kB}$.
From this and the Minkowski inequality, we deduce that
$$
\left\{\sum_{\ell=1}^\infty
\left[\mathcal{M}(f_{j,\ell})
\right]^r\right\}^\frac1r\leq
\left\{\sum_{\ell=1}^\infty
\left[\mathcal{M}(f_{j,\ell}^{(0)})
\right]^r\right\}^\frac1r
+\sum_{k=1}^\infty\left\{
\sum_{\ell=1}^\infty\left[\mathcal{M}
(f_{j,\ell}^{(k)})\right]^r\right\}^\frac1r.
$$
For any given $\lambda\in(0,\infty)$, we find that
\begin{align*}
&\left\|\left\{\sum_{\ell=1}^\infty
\left[\mathcal{M}(f_{j,\ell})
\right]^r\right\}^\frac1r\right\|_{L^p(B)}\\
&\quad\leq
\left\|\left\{\sum_{\ell=1}^\infty[\mathcal{M}
(f_{j,\ell}^{(0)})(x)]^r\right\}^\frac1r\right\|_{L^p(B)}
+\left\|\sum_{k=1}^\infty\left\{\sum_{\ell=1
}^\infty[\mathcal{M}(f_{j,\ell}^{(k)})(x)]^r
\right\}^\frac1r\right\|_{L^p(B)}\\
&\quad\le
\left\|\left\{\sum_{\ell=1}^\infty[\mathcal{M}
(f_{j,\ell}^{(0)})(x)]^r\right\}^\frac1r\right\|_{L^p(B)}
+\sum_{k=1}^\infty\left\|
\left\{\sum_{\ell=1}^\infty\left[\mathcal{M}(f_{j,\ell}^{(k)})
\right]^r\right\}^\frac1r\right\|_{L^p(B)}\\
&\quad=:\mathrm{I}+\mathrm{II}.
\end{align*}
From the Fefferman--Stein vector-valued inequality (see \cite[Theorem 1(2)]{FS}), it follows that
\begin{align*}
\mathrm{I}\lesssim
\left\|\left[\sum_{\ell=1}^\infty
\left|f_{j,\ell}^{(0)}\right|^r\right]^\frac1r\right\|_{L^p(\mathbb{R}^n)}
=\left\|\left[\sum_{\ell=1}^\infty
\left|f_{j,\ell}\right|^r\right]^\frac1r\right\|_{L^p(2B)}.
\end{align*}
For any given $\ell,\ k\in\mathbb{N}$ and $x\in B$, it is easy to find that
\begin{align*}
\mathcal{M}(f_{j,\ell}^{(k)})(x)&=\sup_{t\in(0,\infty)}\frac1{|B(x,t)|}
\int_{B(x,t)}|f_{j,\ell}^{(k)}(y)|\,dy\\
&\sim\sup_{t>2^kR}\frac1{|B(x,t)|}\int_{B(x,t)}|f_{j,\ell}^{(k)}(y)|\,dy
\lesssim\left(2^{k}R\right)^{-n}\int_{\mathbb{R}^n}|f_{j,\ell}^{(k)}(y)|\,dy.
\end{align*}
From this and the Minkowski inequality, we deduce that, for any $k\in\mathbb{N}$ and $x\in B$,
\begin{align*}
\left\{\sum_{\ell=1}^\infty\left[\mathcal{M}(f_{j,\ell}^{(k)})
(x)\right]^r\right\}^\frac1r&\lesssim
\left\{\sum_{\ell=1}^\infty\left[
\left(2^kR\right)^{-n}\int_{\mathbb{R}^n}
\left|f_{j,\ell}^{(k)}(x)\right|\,dx
\right]^r\right\}^\frac1r\\
&\lesssim\left(2^kR\right)^{-n}
\int_{\mathbb{R}^n}\left[\sum_{\ell=1}^\infty
\left|f_{j,\ell}^{(k)}(x)
\right|^r\right]^\frac1r\,dx\lesssim
\left(2^kR\right)^{-\frac np}\left\|
\left[\sum_{\ell=1}^\infty\left|f_{j,\ell}
\right|^r\right]^\frac1r\right\|_{L^p(2^{k+1}B)},
\end{align*}
which implies that
\begin{align*}
\mathrm{II}\lesssim
\sum_{k=1}^\infty|B|^{\frac 1p}
\left(2^kR\right)^{-\frac np}\left\|\left[\sum_{\ell=1}^\infty
\left|f_{j,\ell}\right|^r\right]^\frac1r\right\|_{L^p(2^{k+1}B)}
\sim\sum_{k=1}^\infty2^{-\frac{kn}p}
\left\|\left[\sum_{\ell=1}^\infty
\left|f_{j,\ell}\right|^r\right]^\frac1r\right\|_{L^p(2^{k+1}B)}.
\end{align*}
By the estimates of $\mathrm{I}$ and $\mathrm{II}$, we conclude that
\begin{align*}
\frac{1}{|Q|^\tau}\left[
\sum^\infty_{j=j_Q\vee 0}
\left\|\left\{\sum_{\ell=1}^\infty
\left[\mathcal{M}(f_{j,\ell})
\right]^r\right\}^\frac1r\right\|_{L^p(Q)}^q\right]^{\frac{1}{q}}
&\lesssim
\sum_{k=0}^\infty
2^{-\frac{kn}{p}}2^{kn\tau}\frac{1}{|2^{k+1}B|^\tau}\left[
\sum^\infty_{j=j_Q\vee 0}
\left\|\left[\sum_{\ell=1}^\infty
\left|f_{j,\ell}\right|^r\right]^\frac1r
\right\|_{L^p(2^{k+1}B)}^q\right]^{\frac{1}{q}}
\\
&\lesssim\sum_{k=0}^\infty2^{-kn(\frac{1}{p}-\tau)}\left\|
\left[\sum_{j=1}^\infty\left|f_{j,\ell}\right|^r
\right]^\frac1r\right\|_{(l^qL^p_\tau)_{\mathbb{Z}_+}}\\
&\sim\left\|\left[\sum_{\ell=1}^\infty
\left|f_{j,\ell}\right|^r\right]^\frac1r\right\|_{(l^qL^p_\tau)_{\mathbb{Z}_+}}.
\end{align*}
This finishes the proof of Lemma \ref{pro822}.
\end{proof}

\begin{lemma}\label{as1123}
Let $\tau\in[0,1/p)$, $p\in(0,\infty)$, and
$q\in(0,\infty]$.
Then $(l^qL^p_\tau)_{\mathbb{Z}_+}$
satisfies Assumption \ref{a1}.
\end{lemma}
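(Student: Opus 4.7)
The plan is to verify Assumption \ref{a1} with the choice $u=1$; that is, to establish
$$
\left\|\left\{\sum_{k\in\mathbb N}\mathbf 1_{\beta B_{k,j}}\right\}_{j\in\mathbb Z_+}\right\|_{(l^qL^p_\tau)_{\mathbb Z_+}}\lesssim \left\|\left\{\sum_{k\in\mathbb N}\mathbf 1_{B_{k,j}}\right\}_{j\in\mathbb Z_+}\right\|_{(l^qL^p_\tau)_{\mathbb Z_+}}.
$$
The approach parallels the proof of Lemma \ref{as123jl}: dominate $\mathbf 1_{\beta B}$ by a high power of $\mathcal M(\mathbf 1_B)$ and then apply a vector-valued Fefferman--Stein inequality, which in the present Morrey-type setting is Lemma \ref{pro822}. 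The catch is that Lemma \ref{pro822} requires $p>1$ and $q\ge 1$, whereas $p\in(0,\infty)$ and $q\in(0,\infty]$ are arbitrary here, so a power-convexification of the target space is needed to make the hypotheses of Lemma \ref{pro822} fit.

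First, I will use the elementary pointwise estimate $\mathcal M(\mathbf 1_B)(x)\ge (1+\beta)^{-n}$ for $x\in\beta B$, which, for any chosen $r\in(1,\infty)$, gives $\mathbf 1_{\beta B}(x)\le C_{(\beta,r,n)}[\mathcal M(\mathbf 1_B)(x)]^r$. Summing over $k$ and taking $r$-th roots yields the pointwise bound
$$
\Bigl(\sum_{k\in\mathbb N}\mathbf 1_{\beta B_{k,j}}\Bigr)^{1/r}\le C^{1/r}\Bigl(\sum_{k\in\mathbb N}[\mathcal M(\mathbf 1_{B_{k,j}})]^r\Bigr)^{1/r}.
$$
I then choose $r\in(1,\infty)$ large enough that $pr>1$ and $qr\ge 1$ (with the convention $1/\infty=0$); combined with the standing hypothesis $\tau<1/p$, which is equivalent to $\tau/r<1/(pr)$, the triple $(pr,qr,\tau/r)$ falls within the range covered by Lemma \ref{pro822}. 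Applying that lemma in the space $(l^{qr}L^{pr}_{\tau/r})_{\mathbb Z_+}$ with exponent $r$, and using $\mathbf 1_{B_{k,j}}^r=\mathbf 1_{B_{k,j}}$, produces
$$
\left\|\Bigl\{\Bigl(\sum_{k}\mathbf 1_{\beta B_{k,j}}\Bigr)^{1/r}\Bigr\}_j\right\|_{(l^{qr}L^{pr}_{\tau/r})_{\mathbb Z_+}}\lesssim \left\|\Bigl\{\Bigl(\sum_{k}\mathbf 1_{B_{k,j}}\Bigr)^{1/r}\Bigr\}_j\right\|_{(l^{qr}L^{pr}_{\tau/r})_{\mathbb Z_+}}.
$$

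The desired bound then follows from the elementary identity
$$
\left\|\{g_j^{1/r}\}_j\right\|_{(l^{qr}L^{pr}_{\tau/r})_{\mathbb Z_+}}=\left\|\{g_j\}_j\right\|_{(l^qL^p_\tau)_{\mathbb Z_+}}^{1/r},
$$
valid for any nonnegative sequence $\{g_j\}_j$ (a direct computation using $\|h^{1/r}\|_{L^{pr}(Q)}^{qr}=\|h\|_{L^p(Q)}^q$ and pulling the $1/r$ outside the sup): substituting $g_j=\sum_k\mathbf 1_{\beta B_{k,j}}$ on the left and $\sum_k\mathbf 1_{B_{k,j}}$ on the right into the preceding display, and raising to the $r$-th power, delivers the claim. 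The only nontrivial point in the argument is the bookkeeping—that the chosen $r$ simultaneously satisfies $pr>1$, $qr\ge 1$, and $\tau/r<1/(pr)$—and this is possible precisely because the Morrey condition $\tau<1/p$ is scale-invariant under $p\mapsto pr$, $\tau\mapsto\tau/r$, so no hidden obstruction arises.
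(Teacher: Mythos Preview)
Your proof is correct and follows essentially the same approach as the paper: both take $u=1$, dominate $\mathbf 1_{\beta B}$ pointwise by a power $[\mathcal M(\mathbf 1_B)]^\alpha$, pass to the convexified space $(l^{\alpha q}L^{\alpha p}_{\tau/\alpha})_{\mathbb Z_+}$ so that Lemma~\ref{pro822} applies, and then undo the convexification. Your explicit statement of the identity $\|\{g_j^{1/r}\}_j\|_{(l^{qr}L^{pr}_{\tau/r})_{\mathbb Z_+}}=\|\{g_j\}_j\|_{(l^qL^p_\tau)_{\mathbb Z_+}}^{1/r}$ makes the bookkeeping a bit more transparent than the paper's version, but the argument is the same.
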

\begin{proof}
Let $X:=(l^qL^p_\tau)_{\mathbb{Z}_+}$ and $u:=1$.
Let $\alpha\in(1,\infty)$ be such that
$\alpha p>1$ and $\alpha q>1$.
From Lemma \ref{pro822},
It follows that
\begin{align*}
\left\|\left\{\sum_{k\in\mathbb N}\mathbf{1}_{
\beta B_{k,j}}
\right\}_{j\in\mathbb Z_+}\right\|^{\frac1{\alpha}}_{
X}&\lesssim
\sup_{Q\in\mathscr{Q}}
\left\{\frac{1}{|Q|^{\tau/\alpha}}\left[
\sum^\infty_{j=j_Q\vee 0}
\left\|\sum_{k\in\mathbb N}
\left[\mathcal M(\mathbf{1}_{B_{k,j}})
\right]^{\alpha}\right\|_{L^p(Q)}^{q}\right]^{
\frac{1}{\alpha q}}
\right\}\\
&\lesssim\sup_{Q\in\mathscr{Q}}
\left\{\frac{1}{|Q|^{\tau /\alpha}}\left[\sum_{j=j_Q\vee 0}^\infty
\left\|
\left\{
\sum_{k\in \mathbb N}
\left[\mathcal M(\mathbf{1}_{B_{k,j}})\right]^{\alpha}
\right\}^{\frac 1{\alpha}}
\right\|_{L^{\alpha p}(Q)}^{\alpha q}\right]^{
\frac{1}{\alpha q}}\right\}\\
&\lesssim\left\|
\left\{\sum_{k\in\mathbb N}\mathbf{1}_{B_{k,j}}
\right\}_{j\in\mathbb Z_+}\right\|_{X}^{\frac1{\alpha}},
\end{align*}
which completes the proof of Lemma \ref{as123}.
\end{proof}

Using Theorem \ref{thm-7-11}, Lemmas \ref{as2} and
\ref{as1123},  Proposition \ref{pp},
and Corollary \ref{pp2}, we obtain the following conclusions.

\begin{theorem}\label{fasnof}
Let $\tau\in[0,1/p)$, $s\in(0,\infty)$,
$p\in(0,\infty)$, and $q\in(0,\infty]$.
Assume that $r\in\mathbb N$ with $r>s$ and that
the regularity parameter $L\in\mathbb N$ of
the Daubechies wavelet system $\{\psi_\omega
\}_{\omega\in\Omega}$ satisfies \eqref{4.8}
and $L\geq r-1$. Let $\varphi$
be the same as in \eqref{fanofn}.
Then the following statements hold.
\begin{itemize}
\item[\rm(i)]
For any $f\in\Lambda_s$,
\begin{align*}
&\mathop\mathrm{\,dist\,}\left(f, B^{s,\tau
}_{p,q}\cap\Lambda_s
\right)_{\Lambda_s}\\
&\quad\sim
\inf\left\{\varepsilon\in(0,\infty):\sup_{Q\in\mathscr{Q}}
\frac{1}{|Q|^{\tau}}\left\{\sum_{j=j_Q\vee 0}^\infty\left[
\sum_{I \in W^0(s, f, \varepsilon)\cap\mathcal{D}_j}|I\cap Q|
\right]^{\frac qp}\right\}^{\frac 1q}<\infty\right\}
\end{align*}
with positive equivalence constants independent of $f$.
\item[\rm(ii)]
Let
$$\mathcal{A}:=\left\{\{a_k\}_{k\in
\mathbb N}\subset\mathbb Z^n:\left\|\left\{
\sum_{k\in\mathbb N}
\mathbf{1}_{I_{0,a_k}}
\right\}_{j\in\mathbb{Z}_+}
\right\|_{(l^qL^p_\tau)_{\mathbb{Z}_+}}=\infty
\right\}.$$
Then,
for any $f\in\Lambda_s$,
 \begin{align*}
&\mathop\mathrm{\,dist\,}\left(f, B^{s,
\tau}_{p,q}\cap\Lambda_s(
\mathbb{R}^n)\right)_{\Lambda_s}\\
&\quad\sim \inf\left\{\varepsilon\in(0,\infty):\sup_{Q\in\mathscr{Q}}
\frac{1}{|Q|^{\tau}}\left\{\sum_{j=j_Q\vee 0}^{\infty}
\left[\int_Q\int_{0}^\infty
\mathbf{1}_{S_{r,j}(s,f, \varepsilon)}
(x,y)\,\frac{dy}{y} \,dx\right]^{\frac qp}\right\}^{\frac1q}<\infty\right\}\\&\quad\quad+
\sup_{\{a_k\}_{k\in\mathbb N}\in\mathcal{A}}
\liminf_{k\rightarrow\infty}\left|\int_{\mathbb{R}^n}
\varphi(x-a_k)f(x)\,dx\right|
\end{align*}
with positive equivalence constants independent of $f$.
\end{itemize}
\end{theorem}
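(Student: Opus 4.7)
The plan is to deduce both parts from the general framework developed earlier, by identifying $B^{s,\tau}_{p,q}\cap\Lambda_s$ with the Daubechies $s$-Lipschitz $X$-based space for the quasi-normed lattice $X:=(l^qL^p_\tau)_{\mathbb{Z}_+}$. Under the regularity hypothesis \eqref{4.8}, Lemma \ref{as2} gives $B^{s,\tau}_{p,q}\cap\Lambda_s=\Lambda_X^s$ with equivalent quasi-norms, so the left-hand sides of both (i) and (ii) equal $\mathop\mathrm{dist}(f,\Lambda_X^s)_{\Lambda_s}$.

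For part (i), I would apply Theorem \ref{thm-2-6} directly and unfold the quasi-norm on $X$. Since the cubes in $\mathcal{D}_j$ are pairwise disjoint, the sequence $\sum_{I\in W_j^0(s,f,\varepsilon)\cap\mathcal{D}_j}\mathbf{1}_I$ is the indicator of a disjoint union, and therefore
\[\left\|\sum_{I\in W^0(s,f,\varepsilon)\cap\mathcal D_j}\mathbf{1}_I\right\|_{L^p(Q)}^q=\left[\sum_{I\in W^0(s,f,\varepsilon)\cap\mathcal D_j}|I\cap Q|\right]^{q/p}.\]
Substituting this into the definition of $\|\cdot\|_{(l^qL^p_\tau)_{\mathbb{Z}_+}}$ produces exactly the right-hand side of (i).

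For part (ii), Lemma \ref{as1123} verifies Assumption \ref{a1} for $X$ with a suitable $u\in(0,\infty)$, so that Theorem \ref{pppz23} applies and yields
\[\mathop\mathrm{dist}(f,\Lambda_X^s)_{\Lambda_s}\sim\varepsilon_X f+\inf\left\{\varepsilon\in(0,\infty):\left\|\left\{\sum_{I\in V_0(s,f,\varepsilon)}\mathbf{1}_I\delta_{0,j}\right\}_{j\in\mathbb{Z}_+}\right\|_X<\infty\right\}.\]
Proposition \ref{pp} rewrites the second summand as the $\sup_{\{a_k\}\in\mathcal{A}}\liminf$ expression in (ii). For the $\varepsilon_X f$ term, I would unfold the $X$-norm: the function $g_j(x):=\int_0^\infty\mathbf{1}_{S_{r,j}(s,f,\varepsilon)}(x,y)\,\frac{dy}{y}$ is automatically supported in $y\in(2^{-j-1},2^{-j}]$ and satisfies $0\le g_j\le\ln 2$. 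A key step is then to argue that $g_j$ and $g_j^{up}$ yield the same critical index $\varepsilon$: the upper bound $g_j^{up}\lesssim g_j$ is immediate from $g_j\le\ln 2$, while the converse is obtained via Theorem \ref{thm-3adf}, which shows that nonvanishing of $g_j^{(\varepsilon)}$ at $x$ forces $g_{j'}^{(\varepsilon')}(x)\gtrsim 1$ for some $j'\in\{j-1,j,j+1\}$ and any $\varepsilon'<\varepsilon$, hence the supports agree up to harmless shifts in $j$ and $\varepsilon$. The unfolded $\sup_Q|Q|^{-\tau}[\sum_j(\int_Q g_j\,dx)^{q/p}]^{1/q}$ then matches the Morrey-type critical-index quantity in (ii).

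The main obstacle will be verifying Assumption \ref{a1} for $X$ with a workable exponent $u$ in the quasi-Banach range $p\in(0,1)$ or $q\in(0,1)$, since this requires a Fefferman--Stein type vector-valued maximal inequality on the Morrey-type lattice $(l^qL^p_\tau)_{\mathbb{Z}_+}$. Lemma \ref{pro822} handles the Banach range $p>1$, $q\ge 1$; for smaller indices, a standard $u$-convexification argument (as in the proof of Lemma \ref{as1123}) allows one to choose $u$ small enough that $up$ and $uq$ both exceed $1$, thereby reducing to Lemma \ref{pro822}. Once this maximal inequality is in hand, all remaining steps consist of careful but routine unfolding of the Morrey-type quasi-norms and applications of the already-established Theorems \ref{thm-2-6}, \ref{pppz23}, and \ref{thm-3adf}.
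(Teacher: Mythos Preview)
Your proposal is correct and tracks the paper's own (one-line) argument: identify $B^{s,\tau}_{p,q}\cap\Lambda_s=\Lambda_X^s$ via Lemma~\ref{as2}, then apply Theorem~\ref{thm-2-6} for (i) and Theorem~\ref{pppz23} together with Lemma~\ref{as1123} and Proposition~\ref{pp} for (ii). The extra step you outline---using Theorem~\ref{thm-3adf} to reconcile the $L^p(Q)$ integrals arising in $\varepsilon_X f$ (Lemma~\ref{as1123} takes $u=1$, so $\|g_j\|_{L^p(Q)}^q$ appears) with the $L^1(Q)$ integrals in the stated formula of (ii)---is indeed needed and is glossed over in the paper; your only slip is the phrase ``$u$ small enough that $up,uq>1$,'' which is backwards, since Lemma~\ref{as1123} already covers all $p,q$ in the stated range by fixing $u=1$ and taking the auxiliary exponent $\alpha$ large.
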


\begin{remark}
To the best of our knowledge,
Theorem \ref{fasnof} is completely new.
\end{remark}

Using Theorem \ref{pppz2a}, Corollary \ref{pp2}, and
Lemmas \ref{as2} and \ref{as1123},
we obtain the following conclusions.

\begin{theorem}\label{fasnof3}
Let $\tau\in[0,\infty)$,  $s\in(0,\infty)$,
$p\in(0,\infty)$, and $q\in(0,\infty]$.
Assume that $r\in\mathbb N$ with $r>s$ and that
the regularity parameter $L\in\mathbb N$ of
the Daubechies wavelet system $\{\psi_\omega
\}_{\omega\in\Omega}$ satisfies \eqref{4.8}
and $L\geq r-1$. Let $\varphi$
be the same as in \eqref{fanofn}.
Then the following statements hold.
\begin{itemize}
\item[\rm(i)]
$f\in\overline{B^{s,\tau}_{p,q}\cap
\Lambda_s
}^{\Lambda_s}$
if and only if
$f\in \Lambda_s$ and,
for any $\varepsilon\in(0,\infty)$,
\begin{align*}
\sup_{Q\in\mathscr{Q}}
\frac{1}{|Q|^{\tau}}\left\{\sum_{j=j_Q
\vee 0}^\infty\left[
\sum_{I \in W^0(s, f, \varepsilon)\cap\mathcal{D}_j}|I\cap Q|
\right]^{\frac qp}\right\}^{\frac 1q}<\infty.
\end{align*}
\item[\rm(ii)]
Let
\begin{align}\label{dsfafg}
\mathcal{A}:=\left\{\{a_k\}_{k\in
\mathbb N}\subset\mathbb Z^n:\left\|\left\{
\sum_{k\in\mathbb N}
\mathbf{1}_{I_{0,a_k}}
\right\}_{j\in\mathbb{Z}_+}
\right\|_{(l^qL^p_\tau)_{\mathbb{Z}_+}}=\infty
\right\}.
\end{align}
Then $f\in\overline{B^{s,\tau}_{p,q}
\cap\Lambda_s
}^{\Lambda_s}$
if and only if
$f\in \Lambda_s$ and,
for any $\varepsilon\in(0,\infty)$,
 \begin{align*}
\sup_{Q\in\mathscr{Q}}
\frac{1}{|Q|^{\tau}}\left\{\sum_{j=j_Q
\vee 0}^{\infty}
\left[\int_Q\int_{0}^\infty
\mathbf{1}_{S_{r,j}(s,f, \varepsilon)}
(x,y)\,\frac{dy}{y} \,dx\right]^{\frac qp}\right\}^{\frac1q}<\infty
\end{align*}
and
\begin{align*}
\sup_{\{a_k\}_{k\in\mathbb N}\in\mathcal{A}}
\liminf_{k\rightarrow\infty}\left|\int_{\mathbb{R}^n}
\varphi(x-a_k)f(x)\,dx\right|=0.
\end{align*}
\end{itemize}
\end{theorem}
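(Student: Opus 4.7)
The plan is to mirror the short proofs of Theorem \ref{pppz2a} and Theorem \ref{thm-7-117}: reduce the closure characterization to a distance equals zero statement, and then invoke the appropriate distance formula. Concretely, for any subset $E\subset \Lambda_s$, we have the tautology
\begin{equation*}
f\in\overline{E}^{\Lambda_s}\iff f\in\Lambda_s\ \text{and}\ \mathop\mathrm{\,dist\,}(f,E)_{\Lambda_s}=0,
\end{equation*}
so it suffices to translate $\mathop\mathrm{\,dist\,}(f,B^{s,\tau}_{p,q}\cap\Lambda_s)_{\Lambda_s}=0$ into the two pointwise conditions claimed in (i) and (ii), using Theorem \ref{fasnof}(i) and Theorem \ref{fasnof}(ii) respectively.

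For part (i), I would apply Theorem \ref{fasnof}(i), which gives
\begin{equation*}
\mathop\mathrm{\,dist\,}\left(f,B^{s,\tau}_{p,q}\cap\Lambda_s\right)_{\Lambda_s}\sim \inf\left\{\varepsilon\in(0,\infty):\Phi(f,\varepsilon)<\infty\right\},
\end{equation*}
where $\Phi(f,\varepsilon):=\sup_{Q\in\mathscr Q}|Q|^{-\tau}\{\sum_{j\ge j_Q\vee 0}[\sum_{I\in W^0(s,f,\varepsilon)\cap\mathcal D_j}|I\cap Q|]^{q/p}\}^{1/q}$. Since $\Phi(f,\cdot)$ is non-increasing in $\varepsilon$, the infimum on the right is zero if and only if $\Phi(f,\varepsilon)<\infty$ for every $\varepsilon\in(0,\infty)$. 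This gives the stated equivalence for (i) immediately.

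For part (ii), I would apply Theorem \ref{fasnof}(ii), which expresses $\mathop\mathrm{\,dist\,}(f,B^{s,\tau}_{p,q}\cap\Lambda_s)_{\Lambda_s}$ as the sum of two non-negative quantities: an infimum in $\varepsilon$ involving the difference sets $S_{r,j}(s,f,\varepsilon)$, and the $\limsup$-type term $\sup_{\{a_k\}\in\mathcal A}\liminf_{k\to\infty}|\int\varphi(x-a_k)f(x)\,dx|$. The distance vanishes if and only if both summands vanish. The first vanishes, by monotonicity in $\varepsilon$, precisely when for every $\varepsilon>0$ the displayed finiteness condition holds; the second already has the form required in the statement. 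Combining these two equivalences gives (ii).

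The only non-routine point is checking that the displayed quantities in Theorem \ref{fasnof} are genuinely monotone non-increasing in $\varepsilon$, which is immediate from the inclusion $W^0(s,f,\varepsilon')\subset W^0(s,f,\varepsilon)$ and $S_{r,j}(s,f,\varepsilon')\subset S_{r,j}(s,f,\varepsilon)$ for $\varepsilon<\varepsilon'$; no real obstacle is expected. Thus the proof of Theorem \ref{fasnof3} reduces to two one-line applications of Theorem \ref{fasnof}, exactly parallel to how Theorem \ref{thm-2-622a}, Theorem \ref{pppz2a}, and Theorem \ref{thm-7-117} were deduced from their respective distance theorems.
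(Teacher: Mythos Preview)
Your approach is essentially the same as the paper's: both reduce the closure characterization to the already-established distance formula via the tautology $f\in\overline{E}^{\Lambda_s}\iff\mathop\mathrm{\,dist\,}(f,E)_{\Lambda_s}=0$. The paper phrases this as a direct application of the general closure theorem (Theorem \ref{pppz2a}) together with Lemmas \ref{as2} and \ref{as1123} and Corollary \ref{pp2}, while you route through Theorem \ref{fasnof} instead; since Theorem \ref{fasnof} was itself obtained from exactly those ingredients, the two derivations are equivalent.

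One point you should flag: Theorem \ref{fasnof3} is stated for $\tau\in[0,\infty)$, but Theorem \ref{fasnof}, which you invoke, is stated only for $\tau\in[0,1/p)$. The paper's own cited Lemma \ref{as1123} (verifying Assumption \ref{a1}) also carries the restriction $\tau\in[0,1/p)$, so the discrepancy appears to be a typo in the hypothesis of Theorem \ref{fasnof3} rather than a gap in your argument. Your proof is valid on the range $\tau\in[0,1/p)$, which is the range the paper's tools actually support; if the full range $\tau\in[0,\infty)$ is genuinely intended for part (i), one would bypass Theorem \ref{fasnof} and appeal directly to Theorem \ref{thm-2-622a} (which needs no Assumption \ref{a1}) combined with Lemma \ref{as2}.
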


\begin{remark}
To the best of our knowledge,
Theorem \ref{fasnof3} is completely new.
\end{remark}

Let  $\tau\in[0,1/p)$, $p\in(0,\infty)$, and
$q\in(0,\infty]$.
When $X:=(l^qL^p_\tau)_{\mathbb{Z}_+}$, for any
$f\in X$,
the
\emph{Lipschitz deviation degree} $\varepsilon^{\tau}_{p,q}f$ is defined by
setting
 $$\varepsilon^{\tau}_{p,q}f:=\inf\left\{\varepsilon
 \in(0,\infty):\left\|\left[\int_{0}^1 \mathbf{1}_{
S_r(s, f, \varepsilon)}(\cdot,y)\,\frac {dy}{y}
\right]^{\frac 1q}\right\|_{L^p}<\infty\right\}.$$
Applying Theorems \ref{fasnof} and \ref{fasnof3}, we can
characterize the Lipschitz
deviation degree $\varepsilon^{\tau}_{p,q}$ by using the distance in the
Lipschitz space
and we omit
the details of its proof.

\begin{theorem}
Let  $s,p,q\in(0,\infty)$
and $f\in\Lambda_s\cap \mathfrak{C}_0$.
\begin{itemize}
  \item[\rm (i)] If $\varepsilon\in(0,
  \varepsilon^{\tau}_{p,q}f)$, then
  $\sup_{Q\in\mathscr{Q}}
\frac{1}{|Q|^{\tau}}\{\sum_{j=j_Q
\vee 0}^\infty[
\sum_{I \in W^0(s, f, \varepsilon)\cap\mathcal{D}_j}|I\cap Q|
]^{\frac qp}\}^{\frac 1q}=\infty$
and,
  for any $(x,y)\in\mathbb{R}^n\times(0,1]$
  with
  $(x,y)\notin S_{r}(s, f,
\varepsilon)$,
  $\frac{\Delta_r f(x, y)}{y^s}\le\varepsilon^{\tau}_{p,q}f$;
  \item[\rm (ii)] If $\varepsilon\in(
\varepsilon^{\tau}_{p,q}f,\infty)$,
  then $\sup_{Q\in\mathscr{Q}}
\frac{1}{|Q|^{\tau}}\{\sum_{j=j_Q
\vee 0}^\infty[
\sum_{I \in W^0(s, f, \varepsilon)\cap\mathcal{D}_j}|I\cap Q|
]^{\frac qp}\}^{\frac 1q}<\infty$ and,
  for any
  $(x,y)\in S_{r}(s, f,
\varepsilon)$,
  $\frac{\Delta_r f(x, y)}{y^s}>\varepsilon^{\tau}_{p,q}f$.
 \item[\rm (iii)] $
\mathop\mathrm{\,dist\,}(f, B^{s,\tau}_{p,q}\cap
\Lambda_s)_{\Lambda_s}
\sim \varepsilon^{\tau}_{p,q}f
$
with positive equivalence constants independent of $f$.
\item[\rm(iv)]
$f\in\overline{B^{s,\tau}_{p,q}\cap\Lambda_s
}^{\Lambda_s}$
if and only if
$f\in \Lambda_s$, $\varepsilon^{\tau}_{p,q}f=0$,
and
\begin{align*}
\sup_{\{a_k\}_{k\in\mathbb N}\in\mathcal{A}}
\liminf_{k\rightarrow\infty}\left|\int_{\mathbb{R}^n}
\varphi(x-a_k)f(x)\,dx\right|=0,
\end{align*}
where $\mathcal{A}$ is the same as in \eqref{dsfafg}.
\end{itemize}
\end{theorem}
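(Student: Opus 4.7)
The plan is to derive all four parts from the characterizations already established in Theorems \ref{fasnof} and \ref{fasnof3}, in direct parallel with the proofs of Theorems \ref{t5.20} and \ref{t5.30}. No new difference/wavelet machinery should be needed; the entire argument reduces to unpacking the definition of the infimum $\varepsilon^{\tau}_{p,q}f$ and eliminating the extra ``tail at infinity'' term in Theorem \ref{fasnof}(ii) using $f\in\mathfrak{C}_0$.

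Parts (i) and (ii) are essentially a tautology about infima. Writing
$$\Phi(\varepsilon):=\sup_{Q\in\mathscr{Q}}\frac{1}{|Q|^{\tau}}\left\{\sum_{j=j_Q\vee 0}^{\infty}\left[\int_Q\int_{0}^{\infty}\mathbf{1}_{S_{r,j}(s,f,\varepsilon)}(x,y)\,\frac{dy}{y}\,dx\right]^{q/p}\right\}^{1/q},$$
the set $S_{r,j}(s,f,\varepsilon)$ shrinks as $\varepsilon$ increases, so $\Phi$ is non-increasing in $\varepsilon$ and $\varepsilon^{\tau}_{p,q}f=\inf\{\varepsilon>0:\Phi(\varepsilon)<\infty\}$ in the sense matching Theorem \ref{fasnof}(ii). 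Hence $\Phi(\varepsilon)=\infty$ for $\varepsilon<\varepsilon^{\tau}_{p,q}f$ and $\Phi(\varepsilon)<\infty$ for $\varepsilon>\varepsilon^{\tau}_{p,q}f$. The pointwise claims are immediate from the definition of $S_{r}(s,f,\varepsilon)$: outside the set one has $\Delta_r f(x,y)/y^s\leq\varepsilon$ and inside one has $\Delta_r f(x,y)/y^s>\varepsilon$; letting $\varepsilon\uparrow\varepsilon^{\tau}_{p,q}f$ (respectively $\varepsilon\downarrow\varepsilon^{\tau}_{p,q}f$) yields the desired bounds.

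For part (iii) I apply Theorem \ref{fasnof}(ii), which gives
$$\mathop\mathrm{\,dist\,}\left(f,B^{s,\tau}_{p,q}\cap\Lambda_s\right)_{\Lambda_s}\sim\varepsilon^{\tau}_{p,q}f+\sup_{\{a_k\}\in\mathcal{A}}\liminf_{k\to\infty}\left|\int_{\mathbb{R}^n}\varphi(x-a_k)f(x)\,dx\right|,$$
so it suffices to show the second summand vanishes for $f\in\mathfrak{C}_0\cap\Lambda_s$. If $\{a_k\}\in\mathcal{A}$, then, since each unit cube $I_{0,a_k}$ contributes a bounded amount to the indicator sum, the quasi-norm in $(l^qL^p_\tau)_{\mathbb{Z}_+}$ being infinite forces $\{a_k\}$ to contain infinitely many distinct points of $\mathbb{Z}^n$; passing to a subsequence we may assume $|a_k|\to\infty$. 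The translation $y=x-a_k$ and the compact support of $\varphi$ yield
$$\left|\int_{\mathbb{R}^n}\varphi(x-a_k)f(x)\,dx\right|=\left|\int_{\mathbb{R}^n}\varphi(y)f(y+a_k)\,dy\right|\leq\|\varphi\|_{L^1}\sup_{y\in\mathop\mathrm{\,supp\,}\varphi}\left|f(y+a_k)\right|,$$
which tends to zero along the subsequence because $f\in\mathfrak{C}_0$. Hence the $\liminf$ is zero for every $\{a_k\}\in\mathcal{A}$, and the supremum over $\mathcal{A}$ drops out.

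Finally, part (iv) follows by applying Theorem \ref{fasnof3}(ii): by the monotonicity of $\Phi$ from the first paragraph, $\varepsilon^{\tau}_{p,q}f=0$ is equivalent to $\Phi(\varepsilon)<\infty$ for every $\varepsilon\in(0,\infty)$, which is precisely the first condition appearing in Theorem \ref{fasnof3}(ii); the condition on $\varphi$ is listed verbatim as the second. I do not foresee any serious obstacle in this plan; the only mildly delicate point is the extraction of a divergent subsequence from $\{a_k\}\in\mathcal{A}$, which is handled as above from the indicator-sum structure of the $(l^qL^p_\tau)_{\mathbb{Z}_+}$ quasi-norm.
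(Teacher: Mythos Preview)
Your proposal is correct and follows exactly the approach the paper indicates: the paper states only that the result follows ``Applying Theorems \ref{fasnof} and \ref{fasnof3}'' and omits the details entirely. Your use of the hypothesis $f\in\mathfrak{C}_0$ together with the compact support of $\varphi$ to kill the tail term from Theorem \ref{fasnof}(ii) is precisely the missing step one has to supply, in parallel with Theorems \ref{dsa124}, \ref{t5.20}, and \ref{t5.30}.
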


\begin{theorem}\label{fwqf234}
Let $\tau\in[0,1/p)$,  $s\in(0,\infty)$,
$p\in(0,\infty)$, and $q\in(0,\infty]$. Then
$B^{s,\tau}_{p,q}\cap\Lambda_s\subsetneqq
\overline{B^{s,\tau}_{p,q}\cap\Lambda_s
}^{\Lambda_s}\subsetneqq\Lambda_s$.
\end{theorem}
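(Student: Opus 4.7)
The plan is to adapt the two-function strategy already used successfully in the proofs of Theorems \ref{fwqf}, \ref{fwqf33}, and \ref{fwqf26}, producing explicit test functions built from dyadic wavelet coefficients supported in $[0,1]^n$, and then reading off membership/non-membership from the wavelet characterization (Lemma \ref{as2}) and from Theorems \ref{fasnof} and \ref{fasnof3}.

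For the outer strict inclusion $\overline{B^{s,\tau}_{p,q}\cap\Lambda_s}^{\Lambda_s}\subsetneqq\Lambda_s$, I would put
\begin{equation*}
f_0:=\sum_{\{\omega\in\Omega:I_\omega\in\mathcal{D},\,I_\omega\subset[0,1]^n\}}|I_\omega|^{\frac{s}{n}+\frac12}\psi_\omega.
\end{equation*}
Lemma \ref{asqw} gives $f_0\in\Lambda_s$, and for every $I\in\mathcal{D}$ with $I\subset[0,1]^n$ one has $\max_{I_\omega=I}|\langle f_0,\psi_\omega\rangle|=|I|^{s/n+1/2}$, so $I\in W^0(s,f_0,\tfrac12)$ whenever $I\subset[0,1]^n$. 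Taking the supremum in Theorem \ref{fasnof}(i) over the single cube $Q=[0,1]^n\in\mathscr{Q}$ (hence $j_Q=0$ and $|Q|^\tau=1$), the inner sum $\sum_{I\in W^0(s,f_0,1/2)\cap\mathcal{D}_j}|I\cap Q|$ equals $|[0,1]^n|=1$ for every $j\in\mathbb{Z}_+$, and therefore the $\ell^q$-sum is infinite. Consequently $\mathrm{dist}(f_0,B^{s,\tau}_{p,q}\cap\Lambda_s)_{\Lambda_s}\gtrsim\tfrac12>0$, and $f_0\notin\overline{B^{s,\tau}_{p,q}\cap\Lambda_s}^{\Lambda_s}$.

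For the inner strict inclusion $B^{s,\tau}_{p,q}\cap\Lambda_s\subsetneqq\overline{B^{s,\tau}_{p,q}\cap\Lambda_s}^{\Lambda_s}$, I would take (with $q<\infty$; in the case $q=\infty$ one uses $a_j=1/\log(j+2)$ or similar)
\begin{equation*}
g_0:=\sum_{j=1}^\infty j^{-\frac{1}{q}}\sum_{\{\omega\in\Omega:I_\omega\in\mathcal{D}_j,\,I_\omega\subset[0,1]^n\}}|I_\omega|^{\frac{s}{n}+\frac12}\psi_\omega.
\end{equation*}
Again $g_0\in\Lambda_s$ by Lemma \ref{asqw} since the $b^s_{\infty,\infty}$-norm is $\sup_j j^{-1/q}<\infty$. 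For any fixed $\varepsilon>0$, one has $W^0(s,g_0,\varepsilon)\cap\mathcal{D}_j=\emptyset$ whenever $j^{-1/q}\le\varepsilon$, i.e.\ for $j\ge\varepsilon^{-q}$, so the characterization in Theorem \ref{fasnof3}(i) reduces to a \emph{finite} sum over $j<\varepsilon^{-q}$, which is trivially controlled uniformly in $Q\in\mathscr{Q}$ (since each inner term is bounded by $|Q|$, and $|Q|^{1-\tau p/q\cdot\text{something}}$ can be handled using $\tau<1/p$). Hence the condition of Theorem \ref{fasnof3}(i) holds for all $\varepsilon>0$, so $g_0\in\overline{B^{s,\tau}_{p,q}\cap\Lambda_s}^{\Lambda_s}$. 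On the other hand, applying the definition of $\|\cdot\|_{B^{s,\tau}_{p,q}}$ through the wavelet characterization with $Q=[0,1]^n$ gives a lower bound $\sim(\sum_{j=1}^\infty j^{-1})^{1/q}=\infty$, so $g_0\notin B^{s,\tau}_{p,q}$.

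The main technical obstacle is verifying the $\tau$-adjusted supremum in the second step uniformly over all $Q\in\mathscr{Q}$: when $Q$ is a small cube inside $[0,1]^n$ one must check that $|Q|^{-\tau}$ does not destroy the finiteness coming from the finiteness of $\{j:j<\varepsilon^{-q}\}$. This is where the assumption $\tau<1/p$ is crucial, since for $j_Q\ge0$ and $I\in\mathcal{D}_j$ with $j\ge j_Q$ one has $\sum_{I\subset Q}|I\cap Q|=|Q|$, so the inner $L^p$-type sum is bounded by $|Q|^{1/p}$, and $|Q|^{1/p-\tau}$ remains bounded as $|Q|\to 0$. Once this bookkeeping is carried out, the construction yields the required strict inclusion, completing the proof.
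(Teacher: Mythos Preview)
Your proposal is correct and follows the same approach as the paper's proof: both use the test function $f_0=\sum_{I_\omega\subset[0,1]^n}|I_\omega|^{s/n+1/2}\psi_\omega$ for the outer strict inclusion and its weighted variant with coefficients $j^{-1/q}$ for the inner one, reading off the conclusions from Theorem~\ref{fasnof}(i) and Theorem~\ref{fasnof3}(i) respectively. Your explicit bookkeeping for the supremum over $Q\in\mathscr{Q}$---observing that $\sum_{I\in\mathcal D_j,\,I\subset Q}|I\cap Q|\le|Q|$ and hence the bound is $\lesssim_\varepsilon |Q|^{1/p-\tau}$, which stays bounded precisely because $\tau<1/p$---is in fact more detailed than what the paper writes; the paper simply asserts the estimate $\lesssim\{\sum_{0\le j<\varepsilon^{-q}}1\}^{1/q}$ without spelling this out.
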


\begin{proof}
We first show that $\overline{B^{s,\tau}_{p,q}
\cap\Lambda_s}^{\Lambda_s}
\subsetneqq\Lambda_s$. Let
$f_0:=\sum_{\{\omega\in \Omega:I_\omega\in\mathcal{D},
I_\omega\subset[0,1]^n\}}
|I_\omega|^{\frac sn+\frac 12}\psi_{\omega}.$ From
Lemma \ref{asqw},
it follows that $f_0\in\Lambda_s$. Observe that,
for any
$I\in\mathcal{D}$ and $I\subset[0,1]^n$,
$$\max_{\{\omega\in\Omega:I_{\omega} =I\}}\left|\int_{
\mathbb{R}^n} f_0(x) \psi_{\omega}(x)\, dx\right|
= |I|^{\frac sn+\frac 12}.$$
By this, we conclude that
\begin{align*}
\sup_{Q\in\mathscr{Q}}
\frac{1}{|Q|^{\tau}}\left\{\sum_{j=j_Q
\vee 0}^\infty\left[
\sum_{I \in W^0(s, f_0, \frac12)\cap\mathcal{D}_j}|I\cap Q|
\right]^{\frac qp}\right\}^{\frac 1q}
&\geq \left\{\sum_{j=j_Q
\vee 0}^\infty1\right\}^{\frac 1q}=\infty,
\end{align*}
which, combined with Theorem \ref{fasnof},
further implies that
$
\mathop\mathrm{\,dist\,}
(f_0,  B^{s,\tau}_{p,q}\cap\Lambda_s)_{
\Lambda_s}\gtrsim\frac12.
$
From this, it follows that $f_0\notin\overline{
B^{s,\tau}_{p,q}\cap\Lambda_s}^{\Lambda_s}$.
Thus, $\overline{B^{s,\tau}_{p,q}\cap\Lambda_s}^{\Lambda_s}
  \subsetneqq\Lambda_s$.

Now, we show $B^{s,\tau}_{p,q}\cap\Lambda_s\subsetneqq
\overline{B^{s,\tau}_{p,q}\cap\Lambda_s}^{\Lambda_s}$.
Let
$f_0:=\sum_{j=0}^{\infty}j^{-\frac1q}
\sum_{\{\omega\in \Omega: I_\omega\in\mathcal{D}_j,\,
I_\omega\subset[0,1]^n\}}
|I_\omega|^{\frac sn+\frac 12}\psi_{\omega}$.
From  Lemma \ref{asqw},
it follows that $f_0\in\Lambda_s$. Observe that,
for any
$I\in\mathcal{D}_j$ and $I\subset[0,1]^n$,
$$\max_{\{\omega\in\Omega:I_{\omega} =I\}}\left|
\int_{\mathbb{R}^n} f_0(x) \psi_{\omega}(x)\,
dx\right|
=j^{-\frac1q}|I|^{\frac sn+\frac 12}.$$
By this, we conclude that, for any $\varepsilon
\in(0,\infty)$,
\begin{align*}
\sup_{Q\in\mathscr{Q}}
\frac{1}{|Q|^{\tau}}\left\{\sum_{j=j_Q
\vee 0 }^\infty\left[
\sum_{I \in W^0(s, f_0, \varepsilon)\cap\mathcal{D}_j}|I\cap Q|
\right]^{\frac qp}\right\}^{\frac 1q}&
\lesssim\left\{\sum_{0 \leq j<1/\varepsilon^q}1\right\}^{\frac 1q}
<\infty,
\end{align*}
which, combined with Theorem \ref{fasnof3},
further implies that $f_0\in\overline{B^{s,\tau}_{p,q}\cap\Lambda_s
}^{\Lambda_s}$.
Moreover,
\begin{align*}
\|f_0\|_{B^{s,\tau}_{p,q}}
&\sim\sup_{Q\in\mathscr{Q}}
\left\{\frac{1}{|Q|^\tau}\left[
\sum^\infty_{j=j_Q\vee 0}
\left\|\sum_{\{\omega\in\Omega:I_\omega\in\mathcal{D}_j,\,
I_\omega\subset[0,1]^n \}}
\mathbf{1}_{I_\omega}\right\|_{L^p(Q)}^q\right]^{\frac{1}{q}}
\right\}
\gtrsim\left\{\sum_{j=0}^{\infty}\int_{[0,1]^n}\frac1j
\,dx\right\}^{\frac 1q}
=\infty,
\end{align*}
which
further implies that $f_0\notin B^{s,\tau}_{p,q}\cap\Lambda_s$.
Thus, $ B^{s,\tau}_{p,q}\cap\Lambda_s\subsetneqq
\overline{B^{s,\tau}_{p,q}\cap\Lambda_s}^{\Lambda_s}
$,
which completes the proof of Theorem \ref{fwqf234}.
\end{proof}

\subsection{Triebel--Lizorkin-type spaces}\label{fwefm2}

Next, we present the concept of Triebel--Lizorkin spaces
as follows; see, for instance,
\cite{yy08,yy10,ysy10}.
 Let $\{\phi_j\}_{j\in\mathbb{Z}_+}$
be the same as in \eqref{eq-phi1} and \eqref{eq-phik}.
Let $s\in\mathbb{R}$, $p\in(0,\infty)$,
$q\in(0,\infty]$, and $\tau\in[0,\infty)$.
The \emph{space}
$(L^p_\tau l^q)_{\mathbb{Z}_+}$ is
defined to be the set of all
$\mathbf{f}:=\{f_j\}_{j\in\mathbb{Z}_+}\in
\mathscr{M}_{\mathbb{Z}_+}$ such that
\begin{align*}
\|\mathbf{f}\|_{(L^p_\tau l^q)_{\mathbb{Z}_+}}
:=\sup_{Q\in\mathscr{Q}}
\left\{\frac{1}{|Q|^\tau}
\left\|\left(\sum^\infty_{j=j_Q\vee 0}
\left|f_j\right|^q\right)^{\frac{1}{q}}
\right\|_{L^p(Q)}\right\}<\infty,
\end{align*}
where the usual modification is made when $q=\infty$.
The \emph{Triebel--Lizorkin-type
space $F^{s,\tau}_{p,q}$}
is defined to be the set of all
$f\in \mathcal{S}'$
such that
$\|f\|_{F^{s,\tau}_{p,q}}:=
\|\{2^{js}\phi_j*f\}_{j\in\mathbb{Z}_+}\|_{(L^p_\tau l^q)_{\mathbb{Z}_+}}<\infty.$
From these definitions, it is easy to deduce that
$(L^p_\tau l^q)_{\mathbb{Z}_+}$
is a quasi-normed lattice of function sequences.

The following lemma is about the wavelet
characterization of
Triebel--Lizorkin-type spaces  (see, for instance,
\cite[Section 4.2]{ysy10} or \cite[Theorem 6.3(i)]{lsuyy12}).

\begin{lemma}\label{as3}
Let  $s\in(0,\infty)$,
$p\in(0,\infty)$,
$q\in(0,\infty]$, and $\tau\in[0,\infty)$.
Assume that the regularity parameter $L\in\mathbb N$
of the Daubechies
wavelet system $\{\psi_\omega\}_{\omega\in\Omega}$ satisfies that
\begin{align}\label{4.9}
L&> \left[s+n\left(\tau+\frac{1}{1\wedge p\wedge q}
-\frac{1}{p\vee 1}\right)+\frac{n}{p\wedge q}\right]\nonumber\\
&\quad\vee \left[-s+n\left(\tau+
\frac{1}{1\wedge p\wedge q}+\frac{1}{p}
-1\right)+2\frac{n}{p\wedge q}\right].
\end{align}
Then $F^{s,\tau}_{p,q}\cap
\Lambda_s
=\Lambda_X^{s}$,
where $\Lambda_X^{s}$ is the
Daubechies $s$-Lipschitz $X$-based space
with $X:=(L^p_\tau l^q)_{\mathbb{Z}_+}$.
\end{lemma}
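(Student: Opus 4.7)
The plan is to deduce this identification from the known wavelet characterization of Triebel--Lizorkin-type spaces. Under the regularity condition \eqref{4.9}, \cite[Theorem 6.3(i)]{lsuyy12} (see also \cite[Section~4.2]{ysy10}) asserts that, for any tempered distribution $f$, one has $f \in F^{s,\tau}_{p,q}$ if and only if
\begin{equation*}
\left\|\left\{\sum_{\{\omega \in \Omega:\, I_\omega \in \mathcal{D}_j\}} |I_\omega|^{-\frac{s}{n}-\frac{1}{2}} |\langle f, \psi_\omega\rangle|\,\mathbf{1}_{I_\omega}\right\}_{j\in\mathbb{Z}_+}\right\|_{(L^p_\tau l^q)_{\mathbb{Z}_+}} < \infty,
\end{equation*}
and that this sequence-space expression is equivalent to $\|f\|_{F^{s,\tau}_{p,q}}$. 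Since the left-hand side is exactly $\|f\|_{\Lambda_X^s}$ when $X := (L^p_\tau l^q)_{\mathbb{Z}_+}$, the desired identification reduces to aligning the two function-space frameworks.

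To obtain the inclusion $\Lambda_X^s \subset F^{s,\tau}_{p,q} \cap \Lambda_s$, I would start from $f \in \Lambda_X^s$, which, by Definition \ref{asdfs}, automatically places $f$ in $\Lambda_s$ and hence in $L^\infty \subset \mathcal{S}'$; the finiteness of $\|f\|_{\Lambda_X^s}$ then coincides with the wavelet-side criterion cited above, so $f \in F^{s,\tau}_{p,q}$. For the opposite inclusion $F^{s,\tau}_{p,q} \cap \Lambda_s \subset \Lambda_X^s$, given such an $f$, the wavelet characterization yields finiteness of the same sequence-space expression, which is by definition $\|f\|_{\Lambda_X^s}$, and therefore $f \in \Lambda_X^s$. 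The quasi-norm equivalence $\|f\|_{F^{s,\tau}_{p,q}} \sim \|f\|_{\Lambda_X^s}$ is carried over directly from the cited equivalence.

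The main technical point is to verify that our regularity hypothesis \eqref{4.9} matches the threshold required in \cite[Theorem 6.3(i)]{lsuyy12}, which must simultaneously accommodate the smoothness index $s$, the integrability triple $(p, q, \tau)$, and the vanishing-moment and regularity requirements of the Daubechies wavelets. A term-by-term comparison confirms that \eqref{4.9} meets this threshold. One should also note that, since $f \in \Lambda_s \subset L^\infty$, the distributional pairing $\langle f, \psi_\omega\rangle$ agrees with the integral expression used in Definition \ref{asdfs}, and the unconditional convergence of the wavelet expansion in $\mathcal{S}'$ is standard for Daubechies wavelets of sufficient regularity. Hence no additional compatibility issue arises, and the proof reduces to a direct invocation of the cited result.
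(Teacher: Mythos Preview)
Your proposal is correct and matches the paper's approach: the paper does not give a self-contained proof of this lemma but simply records it as the wavelet characterization of $F^{s,\tau}_{p,q}$, citing \cite[Section~4.2]{ysy10} and \cite[Theorem~6.3(i)]{lsuyy12}. Your write-up is just an explicit unpacking of that citation via Definition~\ref{asdfs}, which is exactly what is intended.
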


\begin{lemma}\label{pro82223}
Let  $p\in(1,\infty)$, $q\in(1,\infty)$, $\tau\in[0,\frac1p)$,
and $r\in(1,\infty)$. Then there exists a positive constant $C$ such that,
for any $\{f_{j,\ell}\}_{j\in\mathbb{Z}_+}\in
\mathscr{M}_{\mathbb{Z}_+}$ with $\ell\in\mathbb{N}$,
$$
\left\|\left\{\left(\sum_{\ell=1}^\infty\left
[\mathcal{M}(f_{j,\ell})\right]^r\right)^\frac1r\right\}_{j\in\mathbb{Z}_+}
\right\|_{(L^p_\tau l^q)_{\mathbb{Z}_+}}\le
C\left\|
\left\{\left(\sum_{\ell=1}^\infty|f_{j,\ell}|^r\right)^\frac1r
\right\}_{j\in\mathbb{Z}_+}
\right\|_{(L^p_\tau l^q)_{\mathbb{Z}_+}}.
$$
\end{lemma}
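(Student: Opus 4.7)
The plan is to follow closely the blueprint of Lemma \ref{pro822}, replacing the roles of the $\ell^q$ and $L^p$ norms (which are permuted in the Besov-type case) by the present arrangement for the Triebel--Lizorkin-type norm. Fix $Q\in\mathscr{Q}$ and choose a ball $B=B(x_0,R)$ with $Q\subset B\subset\sqrt{n}\,Q$, so that $|B|\sim|Q|$. Decompose
$$
f_{j,\ell}=f_{j,\ell}^{(0)}+\sum_{k=1}^{\infty}f_{j,\ell}^{(k)},
$$
where $f_{j,\ell}^{(0)}:=f_{j,\ell}\mathbf{1}_{2B}$ and $f_{j,\ell}^{(k)}:=f_{j,\ell}\mathbf{1}_{2^{k+1}B\setminus 2^kB}$ for $k\in\mathbb{N}$. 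By the subadditivity of $\mathcal{M}$ and the Minkowski inequality in $\ell^r$, I can bound $(\sum_{\ell}[\mathcal{M}(f_{j,\ell})]^r)^{1/r}$ by the sum of the corresponding quantities for the pieces $f_{j,\ell}^{(k)}$, and then apply the Minkowski inequality in $L^p(\ell^q(Q))$ to split the estimate into the local part ($k=0$) plus the sum over $k\ge 1$ of the annular parts.

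For the local part, I would invoke the iterated Fefferman--Stein vector-valued maximal inequality on the mixed-norm Lebesgue space $L^p(\mathbb{R}^n;\ell^q(\ell^r))$, which is valid since $p,q,r\in(1,\infty)$ (apply the standard Fefferman--Stein inequality first in the $\ell^r$ variable and then in the $\ell^q$ variable, both allowed because $p,q,r>1$). This gives
$$
\left\|\left(\sum_{j=j_Q\vee 0}^{\infty}\left(\sum_{\ell=1}^{\infty}[\mathcal{M}(f_{j,\ell}^{(0)})]^{r}\right)^{q/r}\right)^{1/q}\right\|_{L^{p}(Q)}\lesssim\left\|\left(\sum_{j}\left(\sum_{\ell}|f_{j,\ell}|^{r}\right)^{q/r}\right)^{1/q}\right\|_{L^{p}(2B)}.
$$

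For each annular piece with $k\ge1$ and $x\in B$, I use the pointwise bound $\mathcal{M}(f_{j,\ell}^{(k)})(x)\lesssim(2^{k}R)^{-n}\int_{\mathbb{R}^{n}}|f_{j,\ell}^{(k)}(y)|\,dy$, since the center of any ball contributing to the supremum must have radius at least comparable to $2^{k}R$. Two successive applications of the Minkowski integral inequality (first in $\ell^r$ over $\ell$, then in $\ell^q$ over $j$, both valid since $r,q>1$) give, uniformly in $x\in B$,
$$
\left(\sum_{j}\left(\sum_{\ell}[\mathcal{M}(f_{j,\ell}^{(k)})(x)]^{r}\right)^{q/r}\right)^{1/q}\lesssim(2^{k}R)^{-n}\int_{2^{k+1}B}\left(\sum_{j}\left(\sum_{\ell}|f_{j,\ell}(y)|^{r}\right)^{q/r}\right)^{1/q}dy.
$$
Applying the Hölder inequality on the right-hand side in $L^p(2^{k+1}B)$ and then integrating the resulting constant over $Q$, I obtain
$$
\left\|\left(\sum_{j}\left(\sum_{\ell}[\mathcal{M}(f_{j,\ell}^{(k)})]^{r}\right)^{q/r}\right)^{1/q}\right\|_{L^{p}(Q)}\lesssim 2^{-kn/p}\left\|\left(\sum_{j}\left(\sum_{\ell}|f_{j,\ell}|^{r}\right)^{q/r}\right)^{1/q}\right\|_{L^{p}(2^{k+1}B)}.
$$

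Finally, dividing by $|Q|^{\tau}$, using $|Q|^{\tau}\sim 2^{-kn\tau}|2^{k+1}B|^{\tau}$, and summing the resulting factors $\sum_{k\ge 0}2^{-kn(1/p-\tau)}$, which converges by the hypothesis $\tau<1/p$, I can dominate the left-hand side of the claimed inequality on the cube $Q$ by the right-hand side. Taking the supremum over $Q\in\mathscr{Q}$ completes the proof. The main (minor) obstacle is bookkeeping the iterated Minkowski/Hölder applications and confirming that the iterated vector-valued Fefferman--Stein inequality applies in the mixed-norm setting $L^p(\ell^q(\ell^r))$; this is standard because all three exponents exceed one, but must be stated carefully since the inner structure here is an $\ell^q(\ell^r)$ norm rather than a single $\ell^q$ norm as in Lemma \ref{pro822}.
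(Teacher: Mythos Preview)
Your proposal is correct and follows essentially the same approach as the paper: the paper's proof of Lemma \ref{pro82223} explicitly says to repeat the decomposition and estimates for $\mathrm{I}$ and $\mathrm{II}$ from Lemma \ref{pro822}, replacing the classical Fefferman--Stein inequality by Bagby's mixed-norm extension \cite[Theorem 2]{ba}, which is precisely the $L^p(\ell^q(\ell^r))$ vector-valued maximal inequality you invoke under the name ``iterated Fefferman--Stein''. The handling of the annular pieces via the pointwise bound, the double Minkowski step in $\ell^r$ then $\ell^q$, and the final geometric summation over $k$ using $\tau<1/p$ all match the paper's argument.
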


\begin{proof}
Let $Q\in\mathscr{Q}$.
Let $B:=B(x_0,R)\subset\mathbb{B}$ with $x_0\in\mathbb{R}^n$ and $R\in(0,\infty)$
be such that $Q\subset B \subset \sqrt{n}Q$.
For any given $j\in\mathbb{N}$, we decompose $f_{j,\ell}$ into
$
f_{j,\ell}=f_{j,\ell}^{(0)}+\sum_{k=1}^\infty f_{j,\ell}^{(k)},
$
where $f_{j,\ell}^{(0)}:=f_{j,\ell}\mathbf1_{2B}$ and, for any $k\in\mathbb{N}$,
$f_{j,\ell}^{(k)}:=f_{j,\ell}\mathbf1_{2^{k+1}B\setminus2^kB}$.
Repeating the proof of
the estimations of $\mathrm{I}$ and $\mathrm{II}$ in the proof
of Lemma \ref{pro822} with
the Fefferman--Stein vector-valued inequality
replaced by \cite[Theorem 2]{ba},
we conclude that
\begin{align*}
\frac{1}{|Q|^\tau}
\left\|\left[
\sum^\infty_{j=j_Q\vee 0}\left\{\sum_{\ell=1}^\infty
\left[\mathcal{M}(f_{j,\ell})
\right]^r\right\}^\frac qr\right]^{\frac{1}{q}}\right\|_{L^p(Q)}
&\lesssim
\sum_{k=0}^\infty
2^{-\frac{kn}{p}}2^{kn\tau}\frac{1}{|2^{k+1}B|^\tau}
\left\|\left[
\sum^\infty_{j=j_Q\vee 0}\left[\sum_{\ell=1}^\infty
\left|f_{j,\ell}\right|^r\right]^\frac qr
\right]^{\frac{1}{q}}\right\|_{L^p(2^{k+1}B)}
\\
&\lesssim\sum_{k=0}^\infty2^{-kn(\frac{1}{p}-\tau)}\left\|
\left[\sum_{\ell=1}^\infty\left|f_{j,\ell}\right|^r
\right]^\frac1r\right\|_{(L^p_\tau l^q)_{\mathbb{Z}_+}}\\
&\sim\left\|\left[\sum_{\ell=1}^\infty
\left|f_{j,\ell}\right|^r\right]^\frac1r\right\|_{(L^p_\tau l^q)_{\mathbb{Z}_+}}.
\end{align*}
This finishes the proof of Lemma \ref{pro82223}.
\end{proof}

\begin{lemma}\label{as123j2}
Let  $p\in(0,\infty)$, $\tau\in[0,1/p)$,
and $q\in(0,\infty)$.
Then $(L^p_\tau l^q)_{\mathbb{Z}_+}$ satisfies
Assumption \ref{a1}.
\end{lemma}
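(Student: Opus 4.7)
The plan is to follow the template of Lemma \ref{as1123}, but with the Fefferman--Stein vector-valued maximal inequality on $(L^p_\tau l^q)_{\mathbb{Z}_+}$ supplied by Lemma \ref{pro82223} in place of Lemma \ref{pro822}. Set $X:=(L^p_\tau l^q)_{\mathbb{Z}_+}$ and take $u:=1$ in Assumption \ref{a1}. Fix $\beta\in(1,\infty)$ and pick $\alpha\in(1,\infty)$ large enough that $\alpha p>1$, $\alpha q>1$, and $\tau/\alpha<1/(\alpha p)$; the last inequality is equivalent to $\tau<1/p$, which is available by hypothesis. Since, for any Euclidean ball $B$, the bound $\mathbf{1}_{\beta B}(x)\lesssim[\mathcal{M}(\mathbf{1}_{B})(x)]^{\alpha}$ holds pointwise (with a constant depending on $\alpha$ and $\beta$), summation in $k$ yields
\begin{equation*}
\sum_{k\in\mathbb N}\mathbf{1}_{\beta B_{k,j}}(x)\lesssim \sum_{k\in\mathbb N}\left[\mathcal{M}(\mathbf{1}_{B_{k,j}})(x)\right]^{\alpha}
\end{equation*}
for any $j\in\mathbb Z_+$ and any $x\in\mathbb{R}^n$.

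Next I would use the standard convexification identity for Triebel--Lizorkin-type sequence spaces, namely
\begin{equation*}
\left\|\{g_j\}_{j\in\mathbb Z_+}\right\|_{(L^p_\tau l^q)_{\mathbb Z_+}} =\left\|\{|g_j|^{1/\alpha}\}_{j\in\mathbb Z_+}\right\|_{(L^{\alpha p}_{\tau/\alpha} l^{\alpha q})_{\mathbb Z_+}}^{\alpha},
\end{equation*}
which follows immediately from the definitions by pulling the exponent $1/\alpha$ through both the $\ell^q$-sum and the $L^p$-integration. Applying this identity to $g_j:=\sum_{k\in\mathbb N}[\mathcal{M}(\mathbf{1}_{B_{k,j}})]^{\alpha}$ and using the elementary estimate $(\sum_k a_k^{\alpha})^{1/\alpha}\le\sum_k a_k$ together with the reverse, or more precisely, the rewriting $g_j^{1/\alpha}=(\sum_k[\mathcal{M}(\mathbf{1}_{B_{k,j}})]^{\alpha})^{1/\alpha}$, I land exactly in the setting of Lemma \ref{pro82223} with parameters $(\alpha p,\alpha q,\tau/\alpha,\alpha)$ in place of $(p,q,\tau,r)$. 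Invoking that lemma (whose hypotheses are satisfied by the choice of $\alpha$) gives
\begin{equation*}
\left\|\left\{\left[\sum_{k\in\mathbb N}\left[\mathcal{M}(\mathbf{1}_{B_{k,j}})\right]^{\alpha}\right]^{1/\alpha}\right\}_{j\in\mathbb Z_+}\right\|_{(L^{\alpha p}_{\tau/\alpha} l^{\alpha q})_{\mathbb Z_+}} \lesssim \left\|\left\{\left[\sum_{k\in\mathbb N}\mathbf{1}_{B_{k,j}}\right]^{1/\alpha}\right\}_{j\in\mathbb Z_+}\right\|_{(L^{\alpha p}_{\tau/\alpha} l^{\alpha q})_{\mathbb Z_+}},
\end{equation*}
where the right-hand side uses $\mathbf{1}_{B_{k,j}}^{\alpha}=\mathbf{1}_{B_{k,j}}$.

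Raising both sides to the power $\alpha$ and applying the convexification identity in the opposite direction returns us to the $(L^p_\tau l^q)_{\mathbb Z_+}$ norm and yields
\begin{equation*}
\left\|\left\{\sum_{k\in\mathbb N}\left[\mathcal{M}(\mathbf{1}_{B_{k,j}})\right]^{\alpha}\right\}_{j\in\mathbb Z_+}\right\|_{X} \lesssim \left\|\left\{\sum_{k\in\mathbb N}\mathbf{1}_{B_{k,j}}\right\}_{j\in\mathbb Z_+}\right\|_{X}.
\end{equation*}
Combining this with the pointwise bound in the first paragraph then delivers the inequality required by Assumption \ref{a1} with $u=1$, completing the proof. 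There is no real obstacle: the only delicate point is ensuring that the parameter $\tau/\alpha$ in the convexified space stays strictly below $1/(\alpha p)$ so that Lemma \ref{pro82223} is applicable, which is exactly the role of the hypothesis $\tau<1/p$.
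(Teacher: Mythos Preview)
Your proof is correct and follows essentially the same route as the paper: the pointwise bound $\mathbf{1}_{\beta B}\lesssim[\mathcal M(\mathbf 1_B)]^{\alpha}$, a convexification step, and an application of Lemma~\ref{pro82223}. The only cosmetic difference is that the paper verifies Assumption~\ref{a1} with $u=1/q$ (working in the convexified space $X^{1/q}$, where the $\ell^q$ collapses to $\ell^1$), whereas you take $u=1$ and instead pass to $(L^{\alpha p}_{\tau/\alpha}\,l^{\alpha q})_{\mathbb Z_+}$ via the identity $\|\{g_j\}\|_{(L^p_\tau l^q)}=\|\{|g_j|^{1/\alpha}\}\|_{(L^{\alpha p}_{\tau/\alpha} l^{\alpha q})}^{\alpha}$; both reductions land in exactly the parameter range where Lemma~\ref{pro82223} applies.
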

\begin{proof}
Let $X:= (L^p_\tau l^q)_{\mathbb{Z}_+}$ and
$u:=1/q$. It follows that
\begin{align*}
\left\|\left\{\sum_{k\in\mathbb N}\mathbf{1}_{\beta B_{k,j}}
\right\}_{j\in\mathbb Z_+}\right\|_{X^{\frac1q}}&=
\sup_{Q\in\mathscr{Q}}
\left\{\frac{1}{|Q|^{\tau q}}
\left\|\sum^\infty_{j=j_Q\vee 0}
\sum_{k\in\mathbb N}\mathbf{1}_{\beta B_{k,j}}
\right\|_{L^{\frac{p}{q}}(Q)}\right\}.
\end{align*}
Let $\alpha\in(1,\infty)$ be such that
$\alpha p/q>1$.
Observe that
$
\mathbf{1}_{\beta B_{k,j}}
\lesssim [\mathcal M(\mathbf{1}_{
B_{k,j}})]^{\alpha}.
$
From this and Lemma \ref{pro82223},
we infer that
\begin{align*}
\left\|\left\{\sum_{k\in\mathbb N}\mathbf{1}_{\beta B_{k,j}}
\right\}_{j\in\mathbb Z_+}\right\|_{X^{\frac1q}}&\lesssim
\left\|\left\{\sum_{k\in\mathbb N}
\left[\mathcal M(\mathbf{1}_{B_{k,j}})
\right]^{\alpha}
\right\}_{j\in\mathbb Z_+}\right\|_{X^{\frac1q}}
\\&\lesssim\sup_{Q\in\mathscr{Q}}
\left\{\frac{1}{|Q|^{\tau q}}\left\|
\left\{\sum_{j=j_Q\vee 0}^\infty
\sum_{k\in \mathbb N}
\left[\mathcal M(\mathbf{1}_{B_{k,j}})\right]^{\alpha}
\right\}^{\frac 1{\alpha}}
\right\|_{L^{\frac{\alpha p}{q}}(Q)}^{\alpha}\right\}\\
&\lesssim\sup_{Q\in\mathscr{Q}}
\left\{\frac{1}{|Q|^{\tau q}}\left\|\left\{
\sum_{j=j_Q\vee 0}^\infty
\sum_{k\in \mathbb N}
\mathbf{1}_{B_{k,j}}\right\}^{\frac 1{\alpha}}
\right\|_{L^{\frac{\alpha p}{q}}(Q)}^{\alpha}\right\}
\lesssim\left\|\left\{\sum_{k\in\mathbb N}
\mathbf{1}_{B_{k,j}}
\right\}_{j\in\mathbb Z_+}\right\|_{X^{\frac1q}},
\end{align*}
which completes the proof of Lemma \ref{as123j2}.
\end{proof}

Using Theorem \ref{thm-7-11}, Lemmas \ref{as3}
and \ref{as123j2},
Proposition \ref{pp},
and  Corollary \ref{pp2}, we obtain the following conclusions.

\begin{theorem}\label{noanfs}
Let  $s\in(0,\infty)$, $\tau\in[0,1/p)$,
$p\in(0,\infty)$, and $q\in(0,\infty)$.
Assume that $r\in\mathbb N$ with $r>s$ and that
the regularity parameter $L\in\mathbb N$ of the
Daubechies wavelet system $\{\psi_\omega\}_{\omega\in\Omega}$
satisfies \eqref{4.9} and $L\geq r-1$. Let $\varphi$
be the same as in \eqref{fanofn}.
Then the following statements hold.
\begin{itemize}
\item[\rm(i)]
For any $f\in\Lambda_s$,
\begin{align*}
&\mathop\mathrm{\,dist\,}\left(f, F^{s,\tau}_{p,q}
\cap\Lambda_s
\right)_{\Lambda_s}\\
&\quad\sim\inf\left\{\varepsilon\in(0,\infty): \sup_{Q\in\mathscr{Q}}\frac{1}{|Q|^{\tau}}
\left\|\left[\sum_{j=j_Q\vee 0}^\infty
\sum_{I \in W^0(s, f, \varepsilon)\cap\mathcal{D}_j}
\mathbf{1}_{I}
\right]^{\frac 1q}\right\|_{L^p(Q)}
<\infty\right\}
\end{align*}
with positive equivalence constants independent of $f$.
\item[\rm(ii)]
Let
$$\mathcal{A}:=\left\{\{a_k\}_{k\in\mathbb N}
\subset\mathbb Z^n:\lim_{k\to\infty}|a_k|=\infty,\
\left\|\left\{
\sum_{k\in\mathbb N}\mathbf{1}_{I_{0,a_k}}
\right\}_{j\in\mathbb{Z}_+}\right
\|_{(L^p_\tau l^q)_{\mathbb{Z}_+}}=\infty\right\}.$$
Then, for any $f\in\Lambda_s$,
 \begin{align*}
&\mathop\mathrm{\,dist\,}\left(f, F^{s,\tau
}_{p,q}\cap\Lambda_s
\right)_{\Lambda_s}\\&\quad\sim \inf
\left\{\varepsilon\in(0,\infty):\sup_{Q\in\mathscr{Q}}
\frac{1}{|Q|^{\tau}}
\left\|\left[\int_{0}^{2^{-j_{Q} \vee 0}}
\mathbf{1}_{ S_{r}(s, f, \varepsilon)}(\cdot,y)\,\frac {dy}{y}
\right]^{\frac 1q}\right\|_{L^p(Q)}
<\infty\right\}\\&\quad\quad+
\sup_{\{a_k\}_{k\in\mathbb N}\in\mathcal{A}}
\liminf_{k\rightarrow\infty}\left|\int_{\mathbb{R}^n}
\varphi(x-a_k)f(x)\,dx\right|
\end{align*}
with positive equivalence constants independent of $f$.
\end{itemize}
\end{theorem}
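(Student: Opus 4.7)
The plan is to derive both parts of Theorem \ref{noanfs} by specializing the general framework results to the quasi-normed lattice $X := (L^p_\tau l^q)_{\mathbb{Z}_+}$, exactly as in the parallel treatment for Besov, Triebel--Lizorkin, and Besov-type spaces earlier in this section. The starting point for both parts is Lemma \ref{as3}, which, under the regularity assumption \eqref{4.9}, identifies $F^{s,\tau}_{p,q}\cap\Lambda_s$ with the Daubechies $s$-Lipschitz $X$-based space $\Lambda_X^{s}$. Once this identification is in hand, the two halves of the theorem reduce to applying the abstract wavelet and difference characterizations already established.

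For part (i), I will apply Theorem \ref{thm-2-6} with this choice of $X$. The abstract right-hand side involves
\begin{align*}
\left\|\left\{\sum_{I\in W_j^0(s,f,\varepsilon)}\mathbf{1}_I\right\}_{j\in\mathbb{Z}_+}\right\|_{(L^p_\tau l^q)_{\mathbb{Z}_+}}.
\end{align*}
Since the cubes in $W_j^0(s,f,\varepsilon)\subset\mathcal D_j$ are pairwise disjoint, taking the $\ell^q$-sum in $j$ at a point $x\in Q$ collapses the indicators and, after unfolding the definition of the $(L^p_\tau l^q)_{\mathbb{Z}_+}$-quasi-norm, yields exactly the $L^p(Q)$-expression in (i).

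For part (ii), the strategy is to combine Theorem \ref{pppz23} with Proposition \ref{pp}. Assumption \ref{a1} for $X$ is supplied by Lemma \ref{as123j2} with $u=1/q$, so Theorem \ref{pppz23} is applicable and yields
\begin{align*}
\mathop\mathrm{dist}(f,\Lambda_X^s)_{\Lambda_s}\sim \varepsilon_X f+\inf\left\{\varepsilon:\left\|\left\{\sum_{I\in V_0(s,f,\varepsilon)}\mathbf{1}_I\delta_{0,j}\right\}_{j\in\mathbb{Z}_+}\right\|_X<\infty\right\}.
\end{align*}
The key observation is that $S_{r,j}(s,f,\varepsilon)$ is supported in the dyadic strip $y\in(2^{-j-1},2^{-j}]$, so that for every $x$,
\begin{align*}
\sum_{j=j_Q\vee 0}^{\infty}\int_0^1\mathbf{1}_{S_{r,j}(s,f,\varepsilon)}(x,y)\,\frac{dy}{y}=\int_0^{2^{-j_Q\vee 0}}\mathbf{1}_{S_r(s,f,\varepsilon)}(x,y)\,\frac{dy}{y}.
\end{align*}
Unfolding the $(L^p_\tau l^q)_{\mathbb{Z}_+}$-quasi-norm (with $u=1/q$) in $\varepsilon_X f$ and using this identity converts the abstract difference term into the first infimum in (ii). The remaining $V_0$-term is then rewritten via Proposition \ref{pp}, which gives exactly the supremum-liminf expression involving $\varphi$ and the admissible set $\mathcal{A}$.

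The main technical obstacle is the verification of Lemma \ref{as123j2} (Assumption \ref{a1} for $(L^p_\tau l^q)_{\mathbb{Z}_+}$), which rests on the vector-valued Hardy--Littlewood maximal inequality in the Morrey-type setting supplied by Lemma \ref{pro82223}. A secondary but also delicate point is justifying that the admissible class $\mathcal{A}$ produced by Proposition \ref{pp} may, in the present lattice, be restricted to sequences with $\lim_{k\to\infty}|a_k|=\infty$; this follows because any bounded sequence in $\mathbb{Z}^n$ with the relevant divergent indicator norm must already contain a divergent subsequence, after which the $\liminf$ criterion is unaffected by the finitely many bounded indices. Once these points are in place, the rest of the argument is a routine unfolding of definitions and an application of the two framework theorems.
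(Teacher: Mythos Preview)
Your proposal is correct and follows essentially the same route as the paper, which simply cites the framework results (Theorem \ref{thm-2-6} for (i), Theorem \ref{pppz23} together with Proposition \ref{pp} for (ii)), the identification Lemma \ref{as3}, and the verification of Assumption \ref{a1} via Lemma \ref{as123j2}. One small remark: your ``secondary delicate point'' about restricting $\mathcal{A}$ to sequences with $\lim_{k\to\infty}|a_k|=\infty$ is unnecessary, since Proposition \ref{pp} already defines $\mathcal{A}$ with that constraint built in, so there is nothing further to justify there.
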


\begin{remark}
To the best of our knowledge,
Theorem \ref{noanfs} is completely new.
\end{remark}

Using Theorem \ref{pppz2a}, Corollary \ref{pp2}, and
Lemmas \ref{as3} and \ref{as123j2}, we obtain
the following conclusions.

\begin{theorem}\label{noanfs2}
Let  $s\in(0,\infty)$, $\tau\in[0,1/p)$,
$p\in(0,\infty)$, and $q\in(0,\infty)$.
Assume that $r\in\mathbb N$ with $r>s$ and that
the regularity parameter $L\in\mathbb N$ of the
Daubechies wavelet system $\{\psi_\omega\}_{\omega\in\Omega}$
satisfies \eqref{4.9} and $L\geq r-1$. Let $\varphi$
be the same as in \eqref{fanofn}.
Then the following statements hold.
\begin{itemize}
\item[\rm(i)]
$f\in\overline{F^{s,\tau}_{p,q}
\cap\Lambda_s}^{\Lambda_s}$
if and only if
$f\in \Lambda_s$ and,
for any $\varepsilon\in(0,\infty)$,
\begin{align*}
\sup_{Q\in\mathscr{Q}}\frac{1}{|Q|^{\tau}}
\left\|\left[\sum_{j=j_Q\vee 0}^\infty
\sum_{I \in W^0(s, f, \varepsilon)\cap
\mathcal{D}_j}\mathbf{1}_{I}
\right]^{\frac 1q}\right\|_{L^p(Q)}
<\infty.
\end{align*}
\item[\rm(ii)]
Let
\begin{align}\label{dsfafg2}
\mathcal{A}:=\left\{\{a_k\}_{k\in\mathbb N}
\subset\mathbb Z^n:\lim_{k\to\infty}|a_k|=\infty,\
\left\|\left\{
\sum_{k\in\mathbb N}\mathbf{1}_{I_{0,a_k}}
\right\}_{j\in\mathbb{Z}_+}\right
\|_{(L^p_\tau l^q)_{\mathbb{Z}_+}}=\infty\right\}.
\end{align}
Then $f\in\overline{F^{s,\tau}_{p,q}
\cap\Lambda_s}^{\Lambda_s}$
if and only if
$f\in \Lambda_s$ and,
for any $\varepsilon\in(0,\infty)$,
\begin{align*}
\sup_{Q\in\mathscr{Q}}
\frac{1}{|Q|^{\tau}}
\left\|\left[\int_{0}^{2^{-j_{Q} \vee 0}}
\mathbf{1}_{ S_{r}(s, f, \varepsilon)}(
\cdot,y)\,\frac {dy}{y}
\right]^{\frac 1q}\right\|_{L^p(Q)}<\infty
\end{align*}
and
\begin{align*}
\sup_{\{a_k\}_{k\in\mathbb N}\in\mathcal{A}}
\liminf_{k\rightarrow\infty}\left|\int_{
\mathbb{R}^n}
\varphi(x-a_k)f(x)\,dx\right|=0.
\end{align*}
\end{itemize}
\end{theorem}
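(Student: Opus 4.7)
The plan is to derive Theorem \ref{noanfs2} as a direct specialization of the general framework developed in Sections \ref{oijjl} and \ref{s3}, with the Daubechies $s$-Lipschitz $X$-based space $\Lambda_X^s$ being $F^{s,\tau}_{p,q}\cap\Lambda_s$ for $X=(L^p_\tau l^q)_{\mathbb{Z}_+}$. The identification $F^{s,\tau}_{p,q}\cap\Lambda_s=\Lambda_X^s$ is exactly Lemma \ref{as3} under the regularity condition \eqref{4.9}, and Lemma \ref{as123j2} verifies that this $X$ satisfies Assumption \ref{a1} with exponent $u=1/q$. The starting observation, already used in the proofs of Theorems \ref{thm-2-622a}, \ref{pppz2a}, and \ref{thm-7-117}, is that $f\in\overline{\Lambda_X^s}^{\Lambda_s}$ if and only if $f\in\Lambda_s$ and $\mathop{\mathrm{dist}}(f,\Lambda_X^s)_{\Lambda_s}=0$, which in turn is equivalent to demanding that the infimum characterizing this distance be $0$, i.e.\ the underlying quantity be finite for every $\varepsilon\in(0,\infty)$.

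For part (i), the plan is to apply Theorem \ref{thm-2-6} directly: it gives
\[\mathop{\mathrm{dist}}(f,\Lambda_X^s)_{\Lambda_s}\sim\inf\left\{\varepsilon\in(0,\infty):\Big\|\Big\{\sum_{I\in W_j^0(s,f,\varepsilon)}\mathbf{1}_I\Big\}_{j\in\mathbb{Z}_+}\Big\|_X<\infty\right\},\]
so the distance vanishes iff the displayed norm is finite for all $\varepsilon>0$. Unfolding the definition of $\|\cdot\|_{(L^p_\tau l^q)_{\mathbb{Z}_+}}$ and using the disjointness of the families $W_j^0(s,f,\varepsilon)\subset\mathcal D_j$ across $j$, this finiteness reads exactly as the condition
\[\sup_{Q\in\mathscr{Q}}\frac{1}{|Q|^\tau}\Big\|\Big[\sum_{j\geq j_Q\vee 0}\sum_{I\in W_j^0(s,f,\varepsilon)\cap\mathcal D_j}\mathbf{1}_I\Big]^{1/q}\Big\|_{L^p(Q)}<\infty,\]
which is (i). No case analysis is needed; the translation is purely definitional.

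For part (ii), the plan is to invoke Theorem \ref{pppz2a}, which is applicable because Assumption \ref{a1} has been verified. That theorem yields that $f\in\overline{\Lambda_X^s}^{\Lambda_s}$ is equivalent to $f\in\Lambda_s$ together with the finiteness, for every $\varepsilon>0$, of the two quantities
\[\Big\|\Big\{\Big|\int_0^\infty\mathbf{1}_{S_{r,j}(s,f,\varepsilon)}(\cdot,y)\frac{dy}{y}\Big|^{1/q}\Big\}_{j\in\mathbb{Z}_+}\Big\|_X^{q}\quad\text{and}\quad\Big\|\Big\{\sum_{I\in V_0(s,f,\varepsilon)}\mathbf{1}_I\delta_{0,j}\Big\}_{j\in\mathbb{Z}_+}\Big\|_X.\]
The first quantity, after expanding $\|\cdot\|_{(L^p_\tau l^q)_{\mathbb{Z}_+}}$ and using that the sets $S_{r,j}(s,f,\varepsilon)$ are disjoint in the $y$-variable (their $y$-slices lie in $(2^{-j-1},2^{-j}]$) so that $\sum_{j\geq j_Q\vee 0}\mathbf{1}_{S_{r,j}(s,f,\varepsilon)}(x,y)=\mathbf{1}_{S_r(s,f,\varepsilon)}(x,y)\mathbf{1}_{(0,\,2^{-j_Q\vee 0}]}(y)$, collapses to the supremum over $Q$ displayed in (ii). The $V_0$-term is handled by Proposition \ref{pp}, whose conclusion precisely identifies the critical $\varepsilon$ for which the $V_0$-norm becomes finite with $\sup_{\{a_k\}\in\mathcal A}\liminf_k|\int_{\mathbb{R}^n}\varphi(x-a_k)f(x)\,dx|$; demanding finiteness of the $V_0$-norm for every $\varepsilon>0$ is therefore equivalent to this supremum being $0$, with $\mathcal A$ as in \eqref{dsfafg2}.

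There is no substantial obstacle: the argument is an instance of the blueprint used to derive Theorems \ref{thm-2-622a}, \ref{pppz2a}, and \ref{thm-7-117}, and the only delicate point is keeping careful track of the exponent $u=1/q$ and of the layer decomposition in $j$ when translating the abstract $X$-norm into the concrete supremum over dyadic cubes. In particular one should verify once and for all that, because $j_Q$ can be negative for $Q\in\mathscr Q$ of edge length greater than $1$, the truncation $y\in(0,2^{-j_Q\vee 0}]$ (rather than $y\in(0,1]$) is what emerges from the sum $\sum_{j\geq j_Q\vee 0}\mathbf{1}_{S_{r,j}}$; this matches the statement of (ii).
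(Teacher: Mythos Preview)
Your proposal is correct and follows essentially the same approach as the paper: the paper derives Theorem \ref{noanfs2} by specializing the general closure characterizations (Theorems \ref{thm-2-622a} and \ref{pppz2a}) to $X=(L^p_\tau l^q)_{\mathbb{Z}_+}$, using Lemma \ref{as3} for the identification $F^{s,\tau}_{p,q}\cap\Lambda_s=\Lambda_X^s$, Lemma \ref{as123j2} to verify Assumption \ref{a1} with $u=1/q$, and Proposition \ref{pp} (via Corollary \ref{pp2}) to translate the $V_0$-term into the $\mathcal A$-condition. Your unpacking of the $(L^p_\tau l^q)_{\mathbb{Z}_+}$-norm and the observation that the disjoint $y$-slices collapse the $j$-sum to the integral over $(0,2^{-(j_Q\vee 0)}]$ are exactly the bookkeeping steps needed.
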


\begin{remark}
To the best of our knowledge,
Theorem \ref{noanfs2} is completely new.
\end{remark}

Let  $\tau\in[0,1/p)$, $p\in(0,\infty)$, and
$q\in(0,\infty]$.
When $X:=(L^p_\tau l^q)_{\mathbb{Z}_+}$, for any
$f\in X$,
the
\emph{Lipschitz deviation degree} $\varepsilon^{\tau}_{p,q}f$ is defined by
setting
 $$\varepsilon^{\tau}_{p,q}f:=\inf\left\{\varepsilon
 \in(0,\infty):\sup_{Q\in\mathscr{Q}}
\frac{1}{|Q|^{\tau}}
\left\|\left[\int_{0}^{2^{-j_{Q} \vee 0}}
\mathbf{1}_{ S_{r}(s, f, \varepsilon)}(
\cdot,y)\,\frac {dy}{y}
\right]^{\frac 1q}\right\|_{L^p(Q)}<\infty\right\}.$$
Applying Theorems \ref{noanfs} and \ref{noanfs2}, we can
characterize the Lipschitz
deviation degree $\varepsilon^{\tau}_{p,q}$ by using the distance in the
Lipschitz space
and we omit
the details of its proof.

\begin{theorem}
Let  $s\in(0,\infty)$, $\tau\in[0,1/p)$,
$p\in(0,\infty)$, $q\in(0,\infty)$,
and $f\in\Lambda_s\cap \mathfrak{C}_0$.
\begin{itemize}
  \item[\rm (i)] If $\varepsilon\in(0,
  \varepsilon^{\tau}_{p,q}f)$, then
  $\sup_{Q\in\mathscr{Q}}
\frac{1}{|Q|^{\tau}}
\|[\int_{0}^{2^{-j_{Q} \vee 0}}
\mathbf{1}_{ S_{r}(s, f, \varepsilon)}(
\cdot,y)\,\frac {dy}{y}
]^{\frac 1q}\|_{L^p(Q)}=\infty$
and,
  for any $(x,y)\in\mathbb{R}^n\times(0,1]$
  with
  $(x,y)\notin S_{r}(s, f,
\varepsilon)$,
  $\frac{\Delta_r f(x, y)}{y^s}\le\varepsilon^{\tau}_{p,q}f$;
  \item[\rm (ii)] If $\varepsilon\in(
\varepsilon^{\tau}_{p,q}f,\infty)$,
  then $\sup_{Q\in\mathscr{Q}}
\frac{1}{|Q|^{\tau}}
\|[\int_{0}^{2^{-j_{Q} \vee 0}}
\mathbf{1}_{ S_{r}(s, f, \varepsilon)}(\cdot,y)\,\frac {dy}{y}
]^{\frac 1q}\|_{L^p(Q)}$ and,
  for any
  $(x,y)\in S_{r}(s, f,
\varepsilon)$,
  $\frac{\Delta_r f(x, y)}{y^s}>\varepsilon^{\tau}_{p,q}f$.
 \item[\rm (iii)] $
\mathop\mathrm{\,dist\,}(f, F^{s,\tau}_{p,q}\cap
\Lambda_s)_{\Lambda_s}
\sim \varepsilon^{\tau}_{p,q}f
$
with positive equivalence constants independent of $f$.
\item[\rm(iv)]
$f\in\overline{F^{s,\tau}_{p,q}\cap\Lambda_s
}^{\Lambda_s}$
if and only if
$f\in \Lambda_s$, $\varepsilon^{\tau}_{p,q}f=0$,
and
\begin{align*}
\sup_{\{a_k\}_{k\in\mathbb N}\in\mathcal{A}}
\liminf_{k\rightarrow\infty}\left|\int_{
\mathbb{R}^n}
\varphi(x-a_k)f(x)\,dx\right|=0,
\end{align*}
where $\mathcal{A}$ is the same as in \eqref{dsfafg2}.
\end{itemize}
\end{theorem}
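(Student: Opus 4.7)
The plan is to deduce the theorem directly from Theorems~\ref{noanfs} and~\ref{noanfs2}, following the template of the analogous Theorems~\ref{t5.20}, \ref{t5.30}, and \ref{t5.31} whose detailed proofs were likewise omitted. The arguments rely only on three ingredients: the definition of $\varepsilon^\tau_{p,q}f$ as an infimum, the set-monotonicity $S_r(s,f,\varepsilon_2)\subset S_r(s,f,\varepsilon_1)$ whenever $\varepsilon_1<\varepsilon_2$, and one elementary observation about translations and the space $\mathfrak{C}_0$.

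For parts (i) and (ii) I would proceed directly from the definition of the infimum. In (i), if $\varepsilon\in(0,\varepsilon^\tau_{p,q}f)$, then by the very definition of $\varepsilon^\tau_{p,q}f$ the quantity $\sup_{Q\in\mathscr{Q}}|Q|^{-\tau}\|[\int_0^{2^{-j_{Q}\vee 0}}\mathbf{1}_{S_r(s,f,\varepsilon)}(\cdot,y)\,dy/y]^{1/q}\|_{L^p(Q)}$ is infinite, and any $(x,y)\notin S_r(s,f,\varepsilon)$ satisfies $\Delta_r f(x,y)/y^s\le\varepsilon<\varepsilon^\tau_{p,q}f$. Part (ii) is symmetric: for $\varepsilon>\varepsilon^\tau_{p,q}f$ one chooses an intermediate $\varepsilon'\in(\varepsilon^\tau_{p,q}f,\varepsilon)$; finiteness of the sup at $\varepsilon'$, combined with the inclusion $S_r(s,f,\varepsilon)\subset S_r(s,f,\varepsilon')$ and monotonicity of the $L^p(Q)$-norm, yields finiteness at $\varepsilon$, while any $(x,y)\in S_r(s,f,\varepsilon)$ obeys $\Delta_r f(x,y)/y^s>\varepsilon>\varepsilon^\tau_{p,q}f$.

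For (iii) I would invoke Theorem~\ref{noanfs}(ii). Since $\varphi$ in \eqref{fanofn} has compact support and $f\in\mathfrak{C}_0$, for any sequence $\{a_k\}_{k\in\mathbb N}\subset\mathbb Z^n$ with $|a_k|\to\infty$ a change of variables gives $|\int_{\mathbb R^n}\varphi(x-a_k)f(x)\,dx|=|\int_{\mathrm{supp}\,\varphi}\varphi(y)f(y+a_k)\,dy|\to 0$ by the dominated convergence theorem (the integrands are dominated by $\|\varphi\|_{L^\infty}\|f\|_{L^\infty}\mathbf{1}_{\mathrm{supp}\,\varphi}$, and pointwise $f(y+a_k)\to 0$). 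Hence the supremum--liminf term in Theorem~\ref{noanfs}(ii) vanishes and the distance reduces to exactly $\varepsilon^\tau_{p,q}f$. Part (iv) follows in the same spirit: $f$ lies in the closure iff $\mathop\mathrm{\,dist\,}(f,F^{s,\tau}_{p,q}\cap\Lambda_s)_{\Lambda_s}=0$, and either (iii) or a direct invocation of Theorem~\ref{noanfs2}(ii) (again using the vanishing of the extra $\mathcal{A}$-term on $\mathfrak{C}_0$) yields the stated equivalence.

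The only substantive point is the vanishing of the supremum over $\mathcal{A}$ of the liminf on $\mathfrak{C}_0$: this is the sole place where the hypothesis $f\in\mathfrak{C}_0$ (rather than $f\in\Lambda_s$ alone) is needed, and it is what prevents extending the clean equivalences in (iii) and (iv) to general $f\in\Lambda_s$. Everything else is a purely formal consequence of Theorems~\ref{noanfs} and~\ref{noanfs2} together with the infimum definition of $\varepsilon^\tau_{p,q}f$, so no new technical obstacles are expected.
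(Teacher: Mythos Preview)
Your proposal is correct and follows exactly the approach the paper indicates: the paper itself omits the proof, stating only that it follows from Theorems~\ref{noanfs} and~\ref{noanfs2}, and your argument does precisely this, with the key observation that the $\mathfrak{C}_0$ hypothesis forces the supremum--liminf term over $\mathcal{A}$ to vanish (since sequences in $\mathcal{A}$ as defined in \eqref{dsfafg2} satisfy $|a_k|\to\infty$). The only minor imprecision is in part~(ii), where you should say one chooses $\varepsilon'\in(\varepsilon^\tau_{p,q}f,\varepsilon)$ \emph{for which} the supremum is finite (such $\varepsilon'$ exists by definition of the infimum), rather than asserting finiteness for an arbitrary intermediate value; but this is cosmetic and the argument is sound.
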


\begin{theorem}\label{fwqf234z}
Let $\tau\in[0,1/p)$,  $s\in(0,\infty)$,
$p\in(0,\infty)$, and $q\in(0,\infty)$. Then
$F^{s,\tau}_{p,q}\cap\Lambda_s\subsetneqq
\overline{F^{s,\tau}_{p,q}\cap\Lambda_s
}^{\Lambda_s}\subsetneqq\Lambda_s$.
\end{theorem}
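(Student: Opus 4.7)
The plan is to mimic the strategy of Theorem \ref{fwqf234}, constructing two explicit wavelet lacunary functions that separate the three spaces, and then verifying membership/non-membership via the difference and wavelet characterizations from Theorems \ref{noanfs} and \ref{noanfs2}. Both counterexamples are of the form $f_0 = \sum_j a_j \sum_{\{\omega\in\Omega:I_\omega\in\mathcal D_j,\,I_\omega\subset[0,1]^n\}} |I_\omega|^{s/n+1/2}\psi_\omega$; by Lemma \ref{asqw} such an $f_0$ lies in $\Lambda_s$ provided $\{a_j\}_{j\in\mathbb Z_+}$ is bounded, so the verification always reduces to estimating an $(L^p_\tau l^q)_{\mathbb Z_+}$-norm of a sum of indicator functions of dyadic cubes in $[0,1]^n$.

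For the outer inclusion $\overline{F^{s,\tau}_{p,q}\cap\Lambda_s}^{\Lambda_s}\subsetneqq\Lambda_s$, I would take $a_j\equiv 1$, so that $f_0:=\sum_{\{\omega\in\Omega:I_\omega\in\mathcal D,\,I_\omega\subset[0,1]^n\}}|I_\omega|^{s/n+1/2}\psi_\omega\in\Lambda_s$. For any $I\in\mathcal D$ with $I\subset[0,1]^n$, $\max_{\{\omega\in\Omega:I_\omega=I\}}|\langle f_0,\psi_\omega\rangle|=|I|^{s/n+1/2}$, hence $W^0(s,f_0,1/2)\supset\{I\in\mathcal D:I\subset[0,1]^n\}$. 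Choosing the fixed cube $Q=[0,1]^n$ in the supremum defining the right-hand side of Theorem \ref{noanfs}(i), I get
\[
\frac{1}{|Q|^\tau}\left\|\left[\sum_{j=0}^\infty\sum_{I\in W^0(s,f_0,1/2)\cap\mathcal D_j}\mathbf 1_I\right]^{1/q}\right\|_{L^p(Q)}\geq\left\|\left[\sum_{j=0}^\infty\mathbf 1_{[0,1]^n}\right]^{1/q}\right\|_{L^p}=\infty,
\]
so $\mathop\mathrm{dist}(f_0,F^{s,\tau}_{p,q}\cap\Lambda_s)_{\Lambda_s}\gtrsim 1/2$, which gives $f_0\notin\overline{F^{s,\tau}_{p,q}\cap\Lambda_s}^{\Lambda_s}$.

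For the inner inclusion $F^{s,\tau}_{p,q}\cap\Lambda_s\subsetneqq\overline{F^{s,\tau}_{p,q}\cap\Lambda_s}^{\Lambda_s}$, I would take the lacunary-damped choice $a_j=j^{-1/q}$ (with $a_0:=1$), so that $\max_{\{\omega\in\Omega:I_\omega=I\}}|\langle f_0,\psi_\omega\rangle|=j^{-1/q}|I|^{s/n+1/2}$ for $I\in\mathcal D_j$, $I\subset[0,1]^n$. Then for any $\varepsilon\in(0,\infty)$ the set $W^0(s,f_0,\varepsilon)\cap\mathcal D_j$ is empty once $j\geq 1/\varepsilon^q$, so
\[
\sup_{Q\in\mathscr Q}\frac{1}{|Q|^\tau}\left\|\left[\sum_{j=j_Q\vee 0}^\infty\sum_{I\in W^0(s,f_0,\varepsilon)\cap\mathcal D_j}\mathbf 1_I\right]^{1/q}\right\|_{L^p(Q)}\lesssim\left\|\mathbf 1_{[0,1]^n}\right\|_{L^p}<\infty,
\]
so by Theorem \ref{noanfs2}(i), $f_0\in\overline{F^{s,\tau}_{p,q}\cap\Lambda_s}^{\Lambda_s}$. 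Meanwhile, using Lemma \ref{as3} to compute the Triebel--Lizorkin-type norm through the wavelet expansion, plugging in $Q=[0,1]^n$ yields
\[
\|f_0\|_{F^{s,\tau}_{p,q}}\gtrsim\left\|\left[\sum_{j=1}^\infty\frac{1}{j}\mathbf 1_{[0,1]^n}\right]^{1/q}\right\|_{L^p([0,1]^n)}=\infty,
\]
so $f_0\notin F^{s,\tau}_{p,q}\cap\Lambda_s$.

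The only nonroutine point is the wavelet computation of $\|f_0\|_{F^{s,\tau}_{p,q}}$ for the second example, which is where one must invoke Lemma \ref{as3} to identify $F^{s,\tau}_{p,q}\cap\Lambda_s$ with the Daubechies $s$-Lipschitz $X$-based space for $X=(L^p_\tau l^q)_{\mathbb Z_+}$ and then unfold the $(L^p_\tau l^q)_{\mathbb Z_+}$-norm; for the chosen $Q=[0,1]^n$ (so $j_Q=0$) the inner $\ell^q$-sum collapses to $(\sum_{j=1}^\infty 1/j)^{1/q}\mathbf 1_{[0,1]^n}$, giving divergence. All remaining steps are identical in spirit to the Besov-type case handled in Theorem \ref{fwqf234}, so the proof will essentially be a transcription with the $\ell^q$ and $L^p$ norms interchanged in the appropriate order.
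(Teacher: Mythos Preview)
Your proposal is correct and follows essentially the same argument as the paper's proof: the same two wavelet test functions (with $a_j\equiv 1$ and $a_j=j^{-1/q}$ respectively), the same appeals to Theorems \ref{noanfs} and \ref{noanfs2} for the distance/closure characterizations, and the same use of Lemma \ref{as3} to compute $\|f_0\|_{F^{s,\tau}_{p,q}}$ via the wavelet norm on $Q=[0,1]^n$. The only cosmetic difference is that you make the invocation of Lemma \ref{as3} explicit, whereas the paper writes the norm equivalence directly.
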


\begin{proof}
We first show that $\overline{F^{s,\tau}_{p,q}\cap\Lambda_s}^{\Lambda_s}
\subsetneqq\Lambda_s$. Let
$f_0:=\sum_{\{\omega\in \Omega: I_\omega\in\mathcal{D},\,
I_\omega\subset[0,1]^n\}}
|I_\omega|^{\frac sn+\frac 12}\psi_{\omega}.$ From
Lemma \ref{asqw},
it follows that $f_0\in\Lambda_s$. Observe that,
for any
$I\in\mathcal{D}$ and $I\subset[0,1]^n$,
$$\max_{\{\omega\in\Omega:I_{\omega} =I\}}\left|\int_{
\mathbb{R}^n} f_0(x) \psi_{\omega}(x)\, dx\right|
= |I|^{\frac sn+\frac 12}.$$
By this, we conclude that
\begin{align*}
\sup_{Q\in\mathscr{Q}}\frac{1}{|Q|^{\tau}}
\left\|\left[\sum_{j=j_Q\vee 0}^\infty
\sum_{I \in W^0(s, f, \frac12)\cap
\mathcal{D}_j}\mathbf{1}_{I}
\right]^{\frac 1q}\right\|_{L^p(Q)}
&\geq \left\|\left\{\sum_{j= 0}^\infty1\right\}^{\frac 1q}\right\|_{L^p([0,1]^n)}=\infty,
\end{align*}
which, combined with Theorem \ref{noanfs},
further implies that
$
\mathop\mathrm{\,dist\,}
(f_0,  F^{s,\tau}_{p,q}\cap\Lambda_s)_{
\Lambda_s}\gtrsim\frac12.
$
From this, it follows that $f_0\notin\overline{
F^{s,\tau}_{p,q}\cap\Lambda_s}^{\Lambda_s}$.
Thus, $\overline{F^{s,\tau}_{p,q}\cap\Lambda_s}^{\Lambda_s}
  \subsetneqq\Lambda_s$.

  Now, we show $F^{s,\tau}_{p,q}\cap\Lambda_s\subsetneqq
\overline{F^{s,\tau}_{p,q}\cap\Lambda_s}^{\Lambda_s}$.
Let
$f_0:=\sum_{j=0}^{\infty}j^{-\frac1q}
\sum_{\{\omega\in\Omega:I_\omega\in\mathcal{D}_j,
I_\omega\subset[0,1]^n \}}
|I_\omega|^{\frac sn+\frac 12}\psi_{\omega}$.
From  Lemma \ref{asqw},
it follows that $f_0\in\Lambda_s$. Observe that,
for any
$I\in\mathcal{D}_j$ and $I\subset[0,1]^n$,
$$\max_{\{\omega\in\Omega:I_{\omega} =I\}}\left|
\int_{\mathbb{R}^n} f_0(x) \psi_{\omega}(x)\,
dx\right|
=j^{-\frac1q}|I|^{\frac sn+\frac 12}.$$
By this, we conclude that, for any $\varepsilon
\in(0,\infty)$,
\begin{align*}
\sup_{Q\in\mathscr{Q}}\frac{1}{|Q|^{\tau}}
\left\|\left[\sum_{j=j_Q\vee 0}^\infty
\sum_{I \in W^0(s, f, \varepsilon)\cap
\mathcal{D}_j}\mathbf{1}_{I}
\right]^{\frac 1q}\right\|_{L^p(Q)}&
\lesssim\left\|\left\{\sum_{0 \leq j<1/\varepsilon^q}1\right\}^{\frac 1q}\right\|_{L^p([0,1]^n)}
<\infty,
\end{align*}
which, combined with Theorem \ref{noanfs2},
further implies that $f_0\in\overline{F^{s,\tau}_{p,q}\cap\Lambda_s
}^{\Lambda_s}$.
Moreover,
\begin{align*}
\|f_0\|_{F^{s,\tau}_{p,q}}
\sim\sup_{Q\in\mathscr{Q}}
\left\{\frac{1}{|Q|^\tau}\left\|\left[
\sum^\infty_{j=j_Q\vee 0}\frac1j
\sum_{\{\omega\in \Omega:I_\omega\in\mathcal{D}_j,\,
I_\omega\subset[0,1]^n\}}
\mathbf{1}_{I_\omega}\right]^{\frac 1q}\right\|_{L^p(Q)}
\right\}\gtrsim\left\|\left[\sum_{j=0}^{\infty}\frac1j
\right]^{\frac 1q}\right\|_{L^p(Q)}
=\infty,
\end{align*}
which
further implies that $f_0\notin F^{s,\tau}_{p,q}\cap\Lambda_s$.
Thus, $ F^{s,\tau}_{p,q}\cap\Lambda_s\subsetneqq
\overline{F^{s,\tau}_{p,q}\cap\Lambda_s}^{\Lambda_s}
$,
which completes the proof of Theorem \ref{fwqf234z}.
\end{proof}

\begin{appendices}
\section{Hyperbolic metrics}

In this Appendix, we review and explore
some basic properties of hyperbolic metrics
in the upper half space, especially in view of
readers not so familiar with the use of
hyperbolic metrics in analysis. Many of the
statements below are certainly well known, but
some of our quantitative estimates could be of
independent interest and  useful in other
applications on function spaces. We  refer
e.g. to the books \cite{bp,r} for general
treatments of hyperbolic spaces and metric.

\begin{definition} Let $a,b\in{\mathbb R}$.
Assume that
$$
\gamma:=\left\{\gamma(t):=\left(
\gamma_1(t),\ldots, \gamma_{n+1}(t)\right)
\in\mathbb{R}_+^{n+1}:t\in [a,b]\right\}
$$
is a piecewise $\mathfrak{C}^1$-curve.
Then the \emph{hyperbolic length} $L(\gamma)$
 of $\gamma$  is defined by setting
$$
L(\gamma) :=\int_a^b \sqrt{ \sum_{j=1}^{n+1} |
\gamma_j'(t)|^2 }\,\frac {dt} {\gamma_{n+1}(t)}
=\int_a^b\frac {|\gamma'(t)|} {\gamma_{n+1}(t)}\,dt.
$$
\end{definition}

In what follows, let $e_{n+1}:=(0,\ldots, 0, 1)
\in\mathbb{R}^{n+1}$
(that is, only the last entry is $1$).
\begin{definition}\label{asldj}
\begin{itemize}

\item[\rm (i)]
For any given $x_0\in\mathbb{R}^n$, let
$V_{x_0}:=\{(x_0, y)\in\mathbb{R}_+^{n+1}:y\in(0,\infty)\}.$

\item[\rm (ii)]
Recall that $\mathbb{S}^{n-1}$ denotes the
unit sphere of $\mathbb{R}^n$.
For any given $x_0\in\mathbb{R}^n$,
$ \xi\in\mathbb{S}^{n-1}$, and  $r\in(0,\infty)$,
let
$ C(x_0, \xi, r):=\left\{ (x_0,0)+r\cos
\theta(\xi,0)+r\sin \theta e_{n+1}:
\   \theta\in (0, \pi)\right\},$
which is the upper half of the circle in the
hyperplane $(x_0, 0)+\operatorname{span}
\{(\xi,0), e_{n+1}\}$  with center $(x_0, 0)$
and radius $r$.

\item[\rm (iii)]
The \emph{geodesics class} $G(\mathbb{R}^{n+1}_+)$
is defined to be the set of all vertical
lines $V_{x_0}$ with $x_0\in\mathbb{R}^n$ and
all  upper half circles $C(x_0, \xi, r)$ with
$x_0\in\mathbb{R}^n$, $\xi\in\mathbb{S}^{n-1}$,
and $r\in(0,\infty)$.
\end{itemize}
\end{definition}

By Definition \ref{asldj}, we have the following
result and we omit
the details of its proof.

\begin{proposition}
For any given distinct points $\mathbf p,
\mathbf q\in \mathbb{R}_+^{n+1}$,
there exists a  unique geodesic
$v\in G(\mathbb{R}^{n+1}_+)$ connecting
$\mathbf p$ and $\mathbf q$.
\end{proposition}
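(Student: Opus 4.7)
The plan is to argue by case analysis on whether the two points share the same horizontal projection onto $\mathbb{R}^n\times\{0\}$. Write $\mathbf{p}=(p,p_{n+1})$ and $\mathbf{q}=(q,q_{n+1})$ with $p,q\in\mathbb{R}^n$ and $p_{n+1},q_{n+1}\in(0,\infty)$. In each case I will exhibit a curve from $G(\mathbb{R}^{n+1}_+)$ passing through both points, and then rule out all other candidates.

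\emph{Case 1:} $p=q$, so necessarily $p_{n+1}\neq q_{n+1}$. The vertical line $V_p$ obviously contains both points. For uniqueness, I would first observe that any two distinct vertical lines $V_{x_0}$ and $V_{x_0'}$ are disjoint, so $V_p$ is the only vertical-line candidate. Then I would rule out any upper half-circle $C(x_0,\xi,r)$: parametrized by $\theta\in(0,\pi)$, its horizontal coordinate is $x_0+r\cos\theta\,\xi$, and since $\cos$ is injective on $(0,\pi)$, two distinct points on such a half-circle cannot share a horizontal projection.

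\emph{Case 2:} $p\neq q$. No vertical line contains both points, so the candidate must be an upper half-circle $C(x_0,\xi,r)$. The requirement that the vertical half-plane $(x_0,0)+\mathrm{span}\{(\xi,0),e_{n+1}\}$ contain both $\mathbf{p}$ and $\mathbf{q}$ forces $\xi$ to be collinear with $q-p$ and $x_0$ to lie on the affine line through $p$ and $q$ in $\mathbb{R}^n$. Writing $x_0=p+t(q-p)$ with $t\in\mathbb{R}$ and imposing the two distance equations
\[
|p-x_0|^2+p_{n+1}^2=r^2=|q-x_0|^2+q_{n+1}^2
\]
reduces, after subtracting, to a single linear equation in $t$ whose unique solution is
\[
t=\frac{1}{2}+\frac{q_{n+1}^2-p_{n+1}^2}{2|q-p|^2};
\]
this then determines $r>0$ uniquely. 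Both points automatically correspond to parameters $\theta\in(0,\pi)$ because $p_{n+1},q_{n+1}>0$. The apparent ambiguity $\xi\mapsto-\xi$ together with $\theta\mapsto\pi-\theta$ yields the same subset of $\mathbb{R}^{n+1}_+$, so the half-circle, and hence the geodesic in $G(\mathbb{R}^{n+1}_+)$, is unique.

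The main obstacle, such as it is, will be the careful bookkeeping in Case 2: confirming that the $\xi\leftrightarrow-\xi$ redundancy and the scalar parameter $t$ collapse to a single geometric object, and checking that the distinguished value of $t$ indeed yields a positive radius $r$. Since the underlying equations are linear in $t$ and quadratic in elementary quantities, no substantive analytic difficulty arises beyond routine verification.
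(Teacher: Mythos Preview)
Your argument is correct and is the standard elementary verification. The paper in fact omits the proof of this proposition entirely, stating only that it follows from the definition; your case analysis is precisely the routine computation the authors chose not to spell out.
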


We use $\gamma(\mathbf p,\mathbf q)$ to denote
the
unique geodesic $v\in G(\mathbb{R}^{n+1}_+)$
connecting $\mathbf p$ and $\mathbf q$ and use
$\gamma_{\mathbf  p, \mathbf q}$ to
denote the \emph{standard parametric
representation} of $\gamma(\mathbf  p,
\mathbf  q)$. On the hyperbolic length
$L(\gamma(\mathbf p,\mathbf  q))$, we have
the following conclusion.

\begin{theorem}\label{thm-2-8}
For any $\mathbf p, \mathbf q\in\mathbb{R}_+^{n+1}$,
$L(\gamma(\mathbf p,\mathbf  q)) =\rho(
\mathbf p,\mathbf q)=\inf\left\{ L(\gamma): \gamma\in\Gamma(\mathbf p,\mathbf q)\right\},$
where $\Gamma(\mathbf p,\mathbf q)$ denotes the
set of all piecewise $\mathfrak{C}^1$-curves in
$\mathbb{R}_+^{n+1}$ that connect $\mathbf p,
\mathbf q\in\mathbb{R}_+^{n+1}$.
\end{theorem}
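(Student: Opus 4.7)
The plan is to establish both equalities by reducing to the case of vertical geodesics via conformal symmetries of the upper half-space, which is the standard strategy for the Poincaré model.

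First I would verify that the differential $ds = |d\mathbf{z}|/z_{n+1}$ (and hence the length functional $L$) is invariant under the following transformations of $\mathbb{R}_+^{n+1}$: horizontal translations $(x,x_{n+1})\mapsto(x+a,x_{n+1})$ with $a\in\mathbb{R}^n$; dilations $(x,x_{n+1})\mapsto(\lambda x,\lambda x_{n+1})$ with $\lambda>0$; orthogonal rotations $(x,x_{n+1})\mapsto(Rx,x_{n+1})$ with $R\in O(n)$; and inversions
\[
T_{a,r}(\mathbf{z}) := a+\frac{r^2(\mathbf{z}-a)}{|\mathbf{z}-a|^2},\qquad a\in\mathbb{R}^n\times\{0\},\ r>0.
\]
The first three are immediate; for the inversion, a direct computation gives the conformal Jacobian $\|DT_{a,r}(\mathbf{z})\|_{\mathrm{op}}=r^2/|\mathbf{z}-a|^2$, while the last coordinate transforms as $(T_{a,r}(\mathbf{z}))_{n+1}=r^2 z_{n+1}/|\mathbf{z}-a|^2$, so the two factors cancel in $ds$. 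Noting that each element of $G(\mathbb{R}^{n+1}_+)$ is mapped into $G(\mathbb{R}^{n+1}_+)$ by these transformations, the collection $G(\mathbb{R}^{n+1}_+)$ is preserved.

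Next I would establish the vertical case. Suppose $\mathbf{p}=(x_0,a)$ and $\mathbf{q}=(x_0,b)$ with $0<a\le b$. The geodesic $\gamma(\mathbf{p},\mathbf{q})$ is the vertical segment, so
\[
L(\gamma(\mathbf{p},\mathbf{q}))=\int_a^b\frac{dt}{t}=\ln(b/a),
\]
while $\cosh(\ln(b/a))=(a^2+b^2)/(2ab)=1+(b-a)^2/(2ab)$, giving $\rho(\mathbf{p},\mathbf{q})=\ln(b/a)$. For any piecewise $\mathfrak{C}^1$ curve $\gamma=(\gamma_1,\dots,\gamma_{n+1}):[\alpha,\beta]\to\mathbb{R}_+^{n+1}$ joining $\mathbf{p}$ to $\mathbf{q}$, the pointwise bound $|\gamma'(t)|\ge|\gamma_{n+1}'(t)|$ and the fundamental theorem of calculus yield
\[
L(\gamma)=\int_\alpha^\beta\frac{|\gamma'(t)|}{\gamma_{n+1}(t)}\,dt\;\ge\;\left|\int_\alpha^\beta\frac{\gamma_{n+1}'(t)}{\gamma_{n+1}(t)}\,dt\right|=\ln(b/a),
\]
with equality iff $\gamma$ is a monotone reparametrization of the vertical segment. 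This settles both identities when $\mathbf{p},\mathbf{q}$ lie on a common vertical line.

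For the general case, if $\mathbf{p}$ and $\mathbf{q}$ do not lie on a vertical line, the geodesic $\gamma(\mathbf{p},\mathbf{q})$ is an upper half-circle of some $C(x_1,\xi,r)$. Choosing $a\in\mathbb{R}^n\times\{0\}$ to be one of its two boundary endpoints and applying $T_{a,1}$ (followed, if convenient, by a translation and rotation) maps this semicircle onto a vertical line $V_{x_0'}$ and sends $\mathbf{p},\mathbf{q}$ to two points of $V_{x_0'}$. By the symmetry statement, $L$ is preserved on every curve from $\mathbf{p}$ to $\mathbf{q}$; in particular, the geodesic is mapped to the new geodesic with the same length. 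To conclude one must also verify that $\rho$ is invariant under $T_{a,r}$, which reduces to the algebraic identity
\[
\frac{|T_{a,r}(\mathbf{p})-T_{a,r}(\mathbf{q})|^2}{(T_{a,r}(\mathbf{p}))_{n+1}(T_{a,r}(\mathbf{q}))_{n+1}}=\frac{|\mathbf{p}-\mathbf{q}|^2}{p_{n+1}q_{n+1}},
\]
obtained from the classical inversion identity $|T_{a,r}(\mathbf{p})-T_{a,r}(\mathbf{q})|=r^2|\mathbf{p}-\mathbf{q}|/(|\mathbf{p}-a||\mathbf{q}-a|)$ combined with the formula for $(T_{a,r})_{n+1}$. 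Applying Step 2 to the images then gives $L(\gamma(\mathbf{p},\mathbf{q}))=\rho(\mathbf{p},\mathbf{q})=\inf_\Gamma L(\gamma)$.

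The main technical obstacle is the last invariance computation: one has to check cleanly that the Möbius group of $\mathbb{R}_+^{n+1}$ preserves the explicit formula $\operatorname{arccosh}(1+|\mathbf{p}-\mathbf{q}|^2/(2 p_{n+1}q_{n+1}))$ for $\rho$. A convenient bypass is to \emph{define} the hyperbolic distance by the infimum of $L$ over piecewise $\mathfrak{C}^1$ curves; then isometry-invariance is automatic from Step 1, and the explicit arccosh formula only needs to be verified in the vertical case (Step 2), which requires just the elementary identity $\cosh\ln t=(t+t^{-1})/2$.
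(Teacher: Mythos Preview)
Your proposal is correct and follows essentially the same route as the paper: both treat the vertical case by the elementary estimate $|\gamma'|\ge|\gamma_{n+1}'|$, and both reduce the semicircle case to the vertical one by applying an inversion that preserves $L$ and $\rho$. The only cosmetic difference is that the paper fixes a single inversion $\Phi(\mathbf z)=\mathbf z/|\mathbf z|^2$ (after translating one endpoint of the semicircle to the origin) and records its invariance properties as two preparatory lemmas, whereas you work with the general inversion $T_{a,r}$ centered at a boundary endpoint and check the invariance of $\rho$ via the classical identity for $|T_{a,r}(\mathbf p)-T_{a,r}(\mathbf q)|$; the underlying computation is the same.
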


A straightforward
calculation shows the following two lemmas
and we omit the
details of their proofs.

\begin{lemma}\label{dsfa}
For any $\mathbf p \in\mathbb{R}_+^{n+1}$, let
$
\Phi(\mathbf p ) := \frac{\mathbf p} {|\mathbf p|^2}.
$
Then,
for any $\mathbf{p}, \mathbf{q}\in\mathbb{R}_+^{n+1}$,
$
\rho\left(\Phi(\mathbf p), \Phi(\mathbf  q)\right)
=\rho(\mathbf  p, \mathbf  q).
$
Moreover, for  any piecewise $\mathfrak{C}^1$-curve with
parametric representation $\gamma$,
$
L\left(\gamma\right)=L( \Phi\circ \gamma).
$
\end{lemma}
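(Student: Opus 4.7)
The plan is to establish the infinitesimal (length-preserving) statement first and then deduce the distance-preserving statement by invoking Theorem \ref{thm-2-8}. To begin, I would note that $\Phi$ is an involution of $\mathbb{R}_+^{n+1}$ onto itself: since $|\Phi(\mathbf{p})|=1/|\mathbf{p}|$ one checks directly that $\Phi\circ\Phi=\mathrm{id}$, and since $\Phi(\mathbf{p})_{n+1}=\mathbf{p}_{n+1}/|\mathbf{p}|^2$ the upper half-space is preserved. In particular, $\Phi$ is a smooth diffeomorphism that maps $\Gamma(\mathbf{p},\mathbf{q})$ bijectively onto $\Gamma(\Phi(\mathbf{p}),\Phi(\mathbf{q}))$.

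Next I would compute the differential. Treating $\mathbf{p}$ as a column vector, a direct calculation gives
\[
D\Phi(\mathbf{p})=\frac{1}{|\mathbf{p}|^2}\left(I-\frac{2\mathbf{p}\mathbf{p}^{\top}}{|\mathbf{p}|^2}\right),
\]
where $I$ is the identity on $\mathbb{R}^{n+1}$. The bracketed matrix is a Householder reflection (through the hyperplane orthogonal to $\mathbf{p}$), hence orthogonal. Consequently, for every $v\in\mathbb{R}^{n+1}$,
\[
|D\Phi(\mathbf{p})v|=\frac{|v|}{|\mathbf{p}|^2}.
\]

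Now, for any piecewise $\mathfrak{C}^1$-curve $\gamma\colon[a,b]\to\mathbb{R}_+^{n+1}$, the chain rule gives $(\Phi\circ\gamma)'(t)=D\Phi(\gamma(t))\gamma'(t)$, while $(\Phi\circ\gamma)_{n+1}(t)=\gamma_{n+1}(t)/|\gamma(t)|^2$. Dividing these two identities, the common factors of $|\gamma(t)|^{-2}$ cancel, so
\[
\frac{|(\Phi\circ\gamma)'(t)|}{(\Phi\circ\gamma)_{n+1}(t)}=\frac{|\gamma'(t)|}{\gamma_{n+1}(t)},
\]
and integration over $[a,b]$ yields $L(\Phi\circ\gamma)=L(\gamma)$, which is the second assertion.

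For the first assertion, I would apply Theorem \ref{thm-2-8} at both pairs $(\mathbf{p},\mathbf{q})$ and $(\Phi(\mathbf{p}),\Phi(\mathbf{q}))$: since the bijection $\gamma\mapsto\Phi\circ\gamma$ between $\Gamma(\mathbf{p},\mathbf{q})$ and $\Gamma(\Phi(\mathbf{p}),\Phi(\mathbf{q}))$ preserves hyperbolic length, taking infima on both sides gives $\rho(\Phi(\mathbf{p}),\Phi(\mathbf{q}))=\rho(\mathbf{p},\mathbf{q})$. No step here is genuinely hard; the only real task is the bookkeeping in the computation of $D\Phi$ and recognizing the Householder reflection as orthogonal, which is elementary linear algebra.
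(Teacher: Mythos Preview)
Your argument for the length identity $L(\gamma)=L(\Phi\circ\gamma)$ is correct and clean: the Jacobian computation and the recognition of the Householder reflection are exactly the right ingredients, and this part stands on its own.

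The problem is with the first assertion. You deduce $\rho(\Phi(\mathbf p),\Phi(\mathbf q))=\rho(\mathbf p,\mathbf q)$ by invoking Theorem~\ref{thm-2-8}, but in the paper's logical order Lemma~\ref{dsfa} comes \emph{before} Theorem~\ref{thm-2-8}, and the proof of Theorem~\ref{thm-2-8} (in the case $p\neq q$) explicitly uses both conclusions of Lemma~\ref{dsfa}. So your argument is circular within the paper's development.

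The intended route---what the paper means by ``a straightforward calculation''---is to verify the distance identity directly from the defining formula~\eqref{metric}. One checks that
\[
|\Phi(\mathbf p)-\Phi(\mathbf q)|^2
=\frac{|\mathbf p-\mathbf q|^2}{|\mathbf p|^2|\mathbf q|^2},
\qquad
\Phi(\mathbf p)_{n+1}\,\Phi(\mathbf q)_{n+1}
=\frac{p_{n+1}q_{n+1}}{|\mathbf p|^2|\mathbf q|^2},
\]
so the ratio $|\mathbf p-\mathbf q|^2/(2p_{n+1}q_{n+1})$ appearing inside $\operatorname{arccosh}$ is invariant under $\Phi$, and hence so is $\rho$. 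This is a two-line algebraic check and avoids any appeal to Theorem~\ref{thm-2-8}. With this replacement your proof is complete.
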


\begin{lemma}\label{dpfjp}
Let   $\xi\in\mathbb{S}^{n-1}$ and  $r\in(0,\infty)$.
Let $\Gamma$ denote the upper half circle with center
$(r\xi,0)$ and radius $r$ in the plane
$\operatorname{span}\{\xi, e_{d+1}\}$ defined by setting
$$
\Gamma:= \left\{ (x\xi, y):(x-r)^2 +y^2 =r^2\
\text{and}\ x, y\in(0,\infty)\right\}.
$$
Then $\Phi(\Gamma)$ is a vertical line and,
for any $(x\xi, y)\in \Gamma$,
$\Phi((x\xi, y)) = (\frac \xi {2r}, \frac 1{2r}
\frac yx).$
\end{lemma}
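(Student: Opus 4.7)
The plan is to verify Lemma \ref{dpfjp} by a direct computation, exploiting the fact that a point on the circle $\Gamma$ lies at a distance from the origin that is easy to evaluate via the defining equation of the circle. First, I would rewrite the constraint $(x-r)^2 + y^2 = r^2$ by expanding, which gives $x^2 + y^2 = 2rx$. Since $\xi$ is a unit vector, we also have $|x\xi|^2 = x^2$, hence
\[
|(x\xi, y)|^2 = x^2 + y^2 = 2rx
\]
for every $(x\xi, y) \in \Gamma$ (note that $x>0$, so this quantity is strictly positive and $\Phi$ is well defined).

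Second, I would plug this into the definition of $\Phi$ from Lemma \ref{dsfa}, obtaining
\[
\Phi((x\xi, y)) = \frac{(x\xi, y)}{|(x\xi, y)|^2} = \frac{(x\xi, y)}{2rx} = \left( \frac{\xi}{2r},\ \frac{1}{2r}\frac{y}{x} \right),
\]
which is exactly the asserted formula. The first $n$ coordinates form the constant vector $\xi/(2r) \in \mathbb{R}^n$ that does not depend on the choice of $(x\xi,y)\in\Gamma$, so $\Phi(\Gamma)$ is contained in the vertical line $\{\xi/(2r)\} \times (0,\infty)$.

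Finally, to confirm that $\Phi(\Gamma)$ is the entire vertical line (and not merely a subset), I would check the range of the last coordinate $y/(2rx)$ as $(x\xi,y)$ traverses $\Gamma$. From $y^2 = 2rx - x^2 = x(2r-x)$ with $x\in (0,2r)$, one sees that $y/(2rx) = \sqrt{(2r-x)/(4r^2 x)}$, which tends to $+\infty$ as $x\to 0^+$ and to $0^+$ as $x\to (2r)^-$, and varies continuously in between. Since there are no subtle obstacles here, I do not expect any step to be difficult; the whole argument is essentially one algebraic identity plus a monotonicity observation.
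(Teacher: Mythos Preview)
Your argument is correct and is exactly the direct computation the paper has in mind; in fact the paper omits the proof entirely, stating that ``a straightforward calculation shows'' the lemma. Your expansion of the circle equation to get $|(x\xi,y)|^2=2rx$ and the ensuing division are precisely that calculation, and your final check that $y/(2rx)$ sweeps all of $(0,\infty)$ justifies the claim that $\Phi(\Gamma)$ is the full vertical line rather than just a subset.
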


A straightforward calculation also shows that,
for any $\mathbf{x}:=( x, x_{n+1}),\mathbf{y}:=
( y, y_{n+1}) \in \mathbb{R}_+^{n+1}$,
\begin{align}\label{2-1eq}
\rho(\mathbf x, \mathbf y) &=2\ln \left(\frac {
|\mathbf x-\mathbf y|  +|\mathbf x-\widetilde{
\mathbf y}|}{ 2 \sqrt{x_{n+1} y_{n+1}} }\right),
\end{align}
where $\widetilde{ \mathbf y} =(y, -y_{n+1})$.

\begin{proof}[Proof of Theorem \ref{thm-2-8}]
Let $\mathbf  p:=(p, p_{n+1})$ and $
\mathbf q=(q, q_{n+1})$.
We now consider the following two cases.
If $p=q$,
in this case, $\gamma(\mathbf  p, \mathbf q)$ is
a vertical line segment.
Without loss of generality, we may assume $p_{n+1}>q_{n+1}$.
Let $\gamma\in \Gamma(\mathbf p,\mathbf  q)$
be such that $\gamma(0)=\mathbf  p$ and $\gamma(1)=\mathbf q$. Then, by \eqref{2-1eq}, we obtain
\begin{align}\label{dsafsd}
L(\gamma)&=\int_0^1 \frac {|\gamma'(t)|}{\gamma_{n+1}(t)}\,
dt \ge \int_0^1 \frac {\gamma_{n+1}'(t)}{\gamma_{n+1}(t)} \,
dt
=\ln \frac {\gamma_{n+1}(1)}{\gamma_{n+1}(0)}=\ln \frac {
q_{n+1}}{p_{n+1}} =\rho(\mathbf  p,\mathbf q).
\end{align}
Moreover, for any $t\in[q_{n+1},p_{n+1}]$, let
$\gamma_{\mathbf p, \mathbf  q}(t):=( p, t).$
Then we have
\begin{align*}
L(\gamma_{(\mathbf  p, \mathbf q)})&=\int_{p_{n+1}
}^{q_{n+1}}
\frac {1}{t}\, dt =\ln \frac {  q_{n+1}}{p_{n+1}}
=\rho(\mathbf  p,\mathbf q),
\end{align*}
which, combined with \eqref{dsafsd}, further implies
the desired conclusion.

If $p\neq q$,
in this case, $\gamma(\mathbf  p,\mathbf q)$ has a
parametric representation:
for any $t\in[\alpha,\beta]$ with $0<\alpha< \beta<
\pi$,
$
\gamma_{\mathbf p, \mathbf  q}(t):=  \left( x_0+[r\cos t
] \xi, r\sin t\right),
$
where $x_0\in\mathbb{R}^n$, $\xi\in\mathbb{S}^{n-1}$,
 $r\in(0,\infty)$, $\gamma_{\mathbf p, \mathbf  q}(
 \alpha)=\mathbf  p$, and $\gamma_{\mathbf p,
 \mathbf  q}(\beta)=\mathbf  q$.
Without loss of generality, we may assume $x_0:=r\xi$.
Then, by Lemma \ref{dpfjp}, we conclude that, for any
$t\in[\alpha,\beta]$,
\begin{align*}
\Phi\circ \gamma_{\mathbf p,\mathbf q}(t)=\left(
\frac \xi {2r},
\frac {\sin t}{2r[1+\cos t]}\right),
\end{align*}
which, together with the fact that,  for any $t\in[\alpha,\beta]$,
$(\Phi\circ \gamma_{\mathbf p,\mathbf q})_{n+1}'(t)\in(0,\infty)$,
further implies that
\begin{align*}
L(\Phi\circ \gamma_{\mathbf p,\mathbf q}(t))&=\int_{
\alpha}^{\beta}
\frac {|(\Phi\circ \gamma_{\mathbf p,\mathbf q})'(t)|
}{(\Phi\circ \gamma_{\mathbf p,\mathbf q})_{n+1}(t)}\, dt =
\ln \frac {(\Phi\circ \gamma_{\mathbf p,\mathbf q})_{n+1}(\alpha)}{(\Phi\circ
\gamma_{\mathbf p,\mathbf q})_{n+1}(\beta)}=
\rho(\Phi(\mathbf p),\Phi(\mathbf q)).
\end{align*}
From this and Lemma \ref{dsfa}, we deduce that,
for any $\gamma\in\Gamma(\mathbf p,\mathbf q)$,
\begin{align*}
L(\gamma) =L(\Phi\circ \gamma) \ge \rho(\Phi(
\mathbf p),\Phi(\mathbf q))=
L( \Phi\circ \gamma_{\mathbf p,\mathbf q})=L(
\gamma_{\mathbf p,\mathbf q})
=\rho(\mathbf p,\mathbf q).
\end{align*}
This finishes the proof of Theorem \ref{thm-2-8}.
\end{proof}

By Theorem \ref{thm-2-8}, we have the following
result immediately
and we omit the
details of its proof.

\begin{corollary}\label{mosf}
The function $\rho$ defined in \eqref{metric} is
 a metric on $\mathbb{R}_+^{n+1}$.
\end{corollary}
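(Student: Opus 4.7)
The plan is to verify the three defining axioms of a metric for the function $\rho$ in \eqref{metric}, namely positivity with non-degeneracy, symmetry, and the triangle inequality. The first two will fall out immediately from the explicit formula \eqref{metric}, while the triangle inequality will be deduced from Theorem \ref{thm-2-8}, which identifies $\rho(\mathbf{p},\mathbf{q})$ with the infimum of hyperbolic lengths of piecewise $\mathfrak{C}^1$-curves joining $\mathbf{p}$ and $\mathbf{q}$.

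First, I would note that $\operatorname{arccosh}$ is defined, non-negative, and strictly increasing on $[1,\infty)$, with $\operatorname{arccosh}(1)=0$. Since $\frac{|\mathbf{x}-\mathbf{y}|^2}{2x_{n+1}y_{n+1}}\ge 0$ whenever $\mathbf{x},\mathbf{y}\in\mathbb{R}^{n+1}_+$, it follows that $\rho(\mathbf{x},\mathbf{y})\ge 0$, with equality if and only if $|\mathbf{x}-\mathbf{y}|=0$, i.e.\ $\mathbf{x}=\mathbf{y}$. Symmetry $\rho(\mathbf{x},\mathbf{y})=\rho(\mathbf{y},\mathbf{x})$ is evident because the argument of $\operatorname{arccosh}$ is symmetric in $\mathbf{x}$ and $\mathbf{y}$.

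For the triangle inequality, fix three points $\mathbf{x},\mathbf{y},\mathbf{z}\in\mathbb{R}^{n+1}_+$ and let $\varepsilon\in(0,\infty)$. By Theorem \ref{thm-2-8} there exist piecewise $\mathfrak{C}^1$-curves $\gamma_1\in\Gamma(\mathbf{x},\mathbf{y})$ and $\gamma_2\in\Gamma(\mathbf{y},\mathbf{z})$ with
\begin{equation*}
L(\gamma_1)\le \rho(\mathbf{x},\mathbf{y})+\frac{\varepsilon}{2}\
\text{and}\
L(\gamma_2)\le \rho(\mathbf{y},\mathbf{z})+\frac{\varepsilon}{2}.
\end{equation*}
Reparametrising if necessary, the concatenation $\gamma:=\gamma_1\vee\gamma_2$ is a piecewise $\mathfrak{C}^1$-curve in $\Gamma(\mathbf{x},\mathbf{z})$ whose hyperbolic length is additive along the join, namely $L(\gamma)=L(\gamma_1)+L(\gamma_2)$. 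Applying the infimum characterisation in Theorem \ref{thm-2-8} once more, we obtain
\begin{equation*}
\rho(\mathbf{x},\mathbf{z})\le L(\gamma)
=L(\gamma_1)+L(\gamma_2)
\le \rho(\mathbf{x},\mathbf{y})+\rho(\mathbf{y},\mathbf{z})+\varepsilon.
\end{equation*}
Letting $\varepsilon\to 0^+$ yields the triangle inequality and completes the verification that $\rho$ is a metric.

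The only subtle point I anticipate is the verification that the concatenation of two piecewise $\mathfrak{C}^1$-curves is again piecewise $\mathfrak{C}^1$ and that the hyperbolic length is additive along the join; both are routine once one writes the concatenation using a common parameter interval and observes that the integrand $|\gamma'(t)|/\gamma_{n+1}(t)$ is simply piecewise continuous on the joined interval. Thus the main work is already carried out in Theorem \ref{thm-2-8}, and the present corollary is essentially a bookkeeping consequence of that theorem together with an elementary inspection of formula \eqref{metric}.
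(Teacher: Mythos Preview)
Your proposal is correct and follows precisely the route the paper indicates: the paper omits the proof but states that the corollary follows immediately from Theorem \ref{thm-2-8}, and your argument makes this explicit by reading off positivity, non-degeneracy, and symmetry from formula \eqref{metric} and deducing the triangle inequality from the infimum-over-curves characterisation via concatenation. The only minor remark is that, since Theorem \ref{thm-2-8} actually asserts the infimum is attained by the geodesic $\gamma(\mathbf{p},\mathbf{q})$, you could dispense with the $\varepsilon$-approximation and concatenate the two geodesics directly; your version is equally valid and does no harm.
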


Now, we establish the following properties about
the Poincar\'e hyperbolic metric.
For any $t\in(0,\infty)$ and $\mathbf{z}:=
(z,z_{n+1})\in\mathbb{R}_+^{n+1}$, let
$
B_\rho(\mathbf{z}, t):=\{ \mathbf{y}\in\mathbb{R
}_+^{n+1}:\rho(\mathbf{z}, \mathbf{y})< t\}.
$

\begin{lemma}\label{ddaf}
\begin{itemize}
\item[\rm(i)]
For any $t\in(0,1/4]$ and $\mathbf{z}\in\mathbb{
R}^{n+1}_+$,
\begin{equation}\label{dafg}
T\left(\mathbf{z}, \frac t4\right) \subset B_\rho(
\mathbf{z}, t)\subset
	T(\mathbf{z}, 2t),
\end{equation}
where
$
T(\mathbf{z}, t):= B(z, tz_{n+1})\times [(1-t) z_{
n+1},  (1+t) z_{n+1}].
$
\item[\rm(ii)]
Let $\mathbf{z}:=(z,z_{n+1})\in \mathbb{R}^{
n+1}_+$, $t\in(0,1/4]$, and $\mathbf{x}:=(x,
x_{n+1})\in B_\rho(\mathbf{z},t)$.
Then $z_{n+1}\sim x_{n+1}$ and
\begin{equation}\label{5-1-0}
|B_\rho(\mathbf{x},t) |\sim |B_\rho(\mathbf{z},t)|
\end{equation}
with positive equivalence constants independent of
$\mathbf{z},\mathbf{x}$, and $t$.
\item[\rm(iii)]
Let
$d\mu (x,x_{n+1}):=\frac {dx\,dx_{n+1}} {x_{n+1}}.$
Then, for any $\delta\in(0,1/4]$ and $\mathbf{z}
\in\mathbb{R}^{n+1}_+$,
$
\mu(B_\rho(\mathbf{z},\delta))\sim \delta(z_{n+1}
\delta)^n
$
with positive equivalence constants independent of
$\mathbf{z}$ and $t$.
\item[\rm(iv)]
Let $\delta\in(0,1/4]$
and $R\in(1,\infty)$.
Then, for any $\mathbf{z}\in\mathbb{R}^{n+1}_+$,
$
\mu(B_\rho(\mathbf{z},R))\lesssim\mu(B_\rho(\mathbf{z},\delta))
$
with the implicit positive constant independent of $\mathbf{z}$.
\end{itemize}
\end{lemma}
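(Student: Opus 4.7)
I would base everything on the closed formula \eqref{metric},
\[\rho(\mathbf{x}, \mathbf{z}) = \mathrm{arccosh}\left(1 + \frac{|\mathbf{x}-\mathbf{z}|^2}{2x_{n+1}z_{n+1}}\right),\]
which is equivalent to $|\mathbf{x}-\mathbf{z}|^2 = 2x_{n+1}z_{n+1}(\cosh\rho(\mathbf{x},\mathbf{z}) - 1)$. All four claims then reduce to elementary calculus on $\cosh$, exploiting the fact that for $t \leq 1/4$ one has $\cosh t - 1 \sim t^2/2$.

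For the right inclusion in (i), I would assume $\rho(\mathbf{x}, \mathbf{z}) < t$ and set $r := x_{n+1}/z_{n+1}$. Using $|x_{n+1}-z_{n+1}|^2 \leq |\mathbf{x}-\mathbf{z}|^2 < 2x_{n+1}z_{n+1}(\cosh t - 1)$ gives the quadratic inequality $r^2 - 2r\cosh t + 1 < 0$, whose solutions satisfy $e^{-t} < r < e^t$; for $t \leq 1/4$ this forces $r \in [1-2t, 1+2t]$. Plugging this bound on $x_{n+1}z_{n+1}$ back into $|x-z|^2 \leq |\mathbf{x}-\mathbf{z}|^2 < 2x_{n+1}z_{n+1}(\cosh t - 1)$ yields $|x-z| < 2tz_{n+1}$, so $\mathbf{x} \in T(\mathbf{z}, 2t)$. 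The left inclusion in (i) is a direct substitution: if $\mathbf{x} \in T(\mathbf{z}, t/4)$ then $|\mathbf{x}-\mathbf{z}|^2 \leq 2(t/4)^2 z_{n+1}^2$ and $x_{n+1}z_{n+1} \geq (3/4)z_{n+1}^2$, so the argument of $\mathrm{arccosh}$ is at most $1 + t^2/12 \leq \cosh t$, giving $\rho(\mathbf{x},\mathbf{z}) < t$. Claim (ii) is then an immediate corollary: the comparability $z_{n+1} \sim x_{n+1}$ is the vertical coordinate of the right inclusion in (i), and \eqref{5-1-0} follows because (i) gives $|B_\rho(\mathbf{y}, t)| \sim (ty_{n+1})^{n+1}$ for any $\mathbf{y}$ and any $t \leq 1/4$.

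For (iii), the sandwich from (i) yields
\[\mu(B_\rho(\mathbf{z}, \delta)) \sim \int_{B(z,\delta z_{n+1})} dx \int_{(1-c\delta)z_{n+1}}^{(1+c\delta)z_{n+1}} \frac{dx_{n+1}}{x_{n+1}} \sim (\delta z_{n+1})^n \cdot \log\frac{1+c\delta}{1-c\delta},\]
and a Taylor expansion of the logarithm gives the asserted equivalence $\mu(B_\rho(\mathbf{z},\delta)) \sim \delta(\delta z_{n+1})^n$. For (iv), repeating the quadratic computation from the proof of (i), but for an arbitrary $R \in (1,\infty)$ in place of $t$, shows that any $\mathbf{x} \in B_\rho(\mathbf{z}, R)$ satisfies $z_{n+1}e^{-R} \leq x_{n+1} \leq z_{n+1}e^R$ and $|x-z|^2 \leq 2e^R(\cosh R - 1)z_{n+1}^2$; hence $B_\rho(\mathbf{z}, R) \subset B(z, C_R z_{n+1}) \times [C_R^{-1}z_{n+1}, C_R z_{n+1}]$ with $C_R$ depending only on $R$. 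Integrating $d\mu$ over this box gives $\mu(B_\rho(\mathbf{z}, R)) \lesssim C_R^n z_{n+1}^n \cdot \log(C_R^2) \lesssim z_{n+1}^n$, and combining with (iii) produces $\mu(B_\rho(\mathbf{z},R))/\mu(B_\rho(\mathbf{z},\delta)) \lesssim 1/\delta^{n+1}$, which is a constant independent of $\mathbf{z}$ as required.

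I do not expect a serious obstacle. The only subtlety worth flagging is in (iv): one must keep in mind that $\mu = dx\,dx_{n+1}/x_{n+1}$ is not the invariant hyperbolic volume $dx\,dx_{n+1}/x_{n+1}^{n+1}$, so the measure is not translation-invariant in the hyperbolic sense. What saves the argument is that both $\mu(B_\rho(\mathbf{z},R))$ and $\mu(B_\rho(\mathbf{z},\delta))$ are homogeneous of degree $n$ in $z_{n+1}$, so the $z_{n+1}$-dependences cancel and only a purely $R$- and $\delta$-dependent ratio remains; this same homogeneity is what makes the uniformity in $\mathbf{z}$ in the statement of (iv) possible to begin with.
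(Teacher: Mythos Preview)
Your proposal is correct and follows the same route as the paper: both derive the box--ball comparison in (i) from the formula $|\mathbf{x}-\mathbf{z}|^2=2x_{n+1}z_{n+1}(\cosh\rho-1)$ and then read off (ii)--(iv). Your execution is in fact slightly cleaner in two places. In (i) you extract the sharp vertical bound $e^{-t}<x_{n+1}/z_{n+1}<e^{t}$ directly by factoring the quadratic $r^2-2r\cosh t+1<0$, whereas the paper derives the coarser estimate $(1+2\rho)^{-1}\le z_{n+1}/x_{n+1}\le 1+2\rho$ through intermediate inequalities. In (iv) you simply integrate $d\mu$ over a single box $B(z,C_Rz_{n+1})\times[C_R^{-1}z_{n+1},C_Rz_{n+1}]$, while the paper uses the finer bound $|x-z|^2\lesssim z_{n+1}x_{n+1}$ and sums a dyadic decomposition in $x_{n+1}$; both arrive at $\mu(B_\rho(\mathbf{z},R))\lesssim_R z_{n+1}^n$, so your shortcut loses nothing.
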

\begin{proof}
We first show (i).
We claim that,
for any
$\mathbf{x}\in\mathbb{R}^{n+1}_+$ with $\rho(
\mathbf{x},\mathbf{z})\le1/2$,
\begin{equation}\label{dafg1}
(1+2\rho(\mathbf{x},\mathbf{z}))^{-1}  x_{n+1} \leq z_{n+1}\leq
(1+2\rho(\mathbf{x},\mathbf{z}))  x_{n+1}
\end{equation}
and
\begin{equation}\label{dafg2}
\frac12 {|\mathbf{x}-\mathbf{z}| }\leq { x_{n+1}}
\rho(\mathbf{x},\mathbf{z}) \leq 2 |\mathbf{x}-\mathbf{z}|.
\end{equation}
Indeed, by the definition of $\rho$,
we conclude that, for any
$\mathbf{x}\in\mathbb{R}^{n+1}_+$,
$
\rho(\mathbf{x},\mathbf{z})
=\rho( \frac {\mathbf{x}} {x_{n+1}}, \frac {
\mathbf{z}} {x_{n+1}}),
$
which further implies that
\begin{align}\label{dafjp}
\left|\frac {\mathbf{x}} {x_{n+1}}- \frac {
\mathbf{z}} {x_{n+1}}\right|^2= 2 \frac {z_{n+1}}{x_{n+1}}
(\cosh \rho(\mathbf{x},\mathbf{z})-1).
\end{align}
Observe that, for any $t\in\mathbb{R}$ ,
\begin{align}\label{dafjp2}
\cosh t -1=\frac 12 e^t+\frac12 e^{-t} -1=
\sum_{n=1}^\infty \frac {t^{2n}}{(2n)!},
\end{align}
which further implies that, for any $t\in(0,
 \frac12]$,
$\frac {t^2} 2\leq  \cosh t-1\leq t^2$.
From this and \eqref{dafjp},
it follows  that
\begin{equation}\label{1-5b}
\frac {z_{n+1}}{x_{n+1}} \left[\rho(\mathbf{
x},\mathbf{z})
\right]^2 \leq \left| \frac {\mathbf{x}} {x_{
n+1}}-\frac {\mathbf{z}} {x_{n+1}}\right|^2\leq
2 \frac {z_{n+1}}{x_{n+1}} \left[\rho(\mathbf{x},
\mathbf{z})\right]^2,
\end{equation}
which further implies $|1-\frac {z_{n+1}}{x_{n+1}
}|^2\leq 2 \left[\rho(\mathbf{x},\mathbf{z})
\right]^2 \frac {z_{n+1}}{x_{n+1}}$.
Thus, after some simple computations, we find that
\begin{equation}\label{eqeq}
\frac 1 {1+2\rho(\mathbf{x},\mathbf{z})} \leq \frac {
z_{n+1}}{x_{n+1}}\leq 1+2\rho(\mathbf{x},\mathbf{z})
\end{equation}
and hence \eqref{dafg1} holds.
Using $\rho(\mathbf{x},\mathbf{z})\le1/2$,
\eqref{1-5b}, and \eqref{dafg1},
we obtain
\begin{align*}
\frac 12 \rho(\mathbf{x},\mathbf{z})\leq \frac {\rho(\mathbf{x},\mathbf{z})}{\sqrt{1+2\rho(\mathbf{
x},\mathbf{z})}}\leq \left| \frac {\mathbf{x}} {
x_{n+1}}-\frac {\mathbf{z}} {x_{n+1}}\right|
\leq \sqrt{2(1+2\rho(\mathbf{x},\mathbf{z}))} \rho(
\mathbf{x},\mathbf{z})\leq 2\rho(\mathbf{x},\mathbf{z}).
\end{align*}
This proves  \eqref{dafg2} and hence the above
claim holds.
By \eqref{dafg2}, we conclude that, for any $
\mathbf{x}\in B_\rho(\mathbf{z}, t)$,
$|\mathbf{x}-\mathbf{z}| \leq 2 x_{n+1} t.$
This further implies that
\begin{align}\label{fasldjf1}
B_\rho(\mathbf{z}, t)\subset
T(\mathbf{z}, 2t).
\end{align}
Now, we show that
\begin{align}\label{fasldjf}
T\left(\mathbf{z}, \frac t4\right) \subset B_\rho
(\mathbf{z}, t).
\end{align}
Observe that, for any $\mathbf{x}\in T(\mathbf{
z}, \frac t4)$,
$
|x-z|\leq \frac t4 z_{n+1}$ and $|x_{n+1}-z_{
n+1}|\leq \frac t4 z_{n+1},
 $
which, together with \eqref{dafg2}, further
implies that
$\rho(\mathbf{x},\mathbf{z}) \leq \frac 2{z_{
n+1}}|\mathbf{x}-\mathbf{z}| \leq t.$
Thus, \eqref{fasldjf} holds, which, combined with
\eqref{fasldjf1}, then completes the proof of (i).

As for (ii), from \eqref{dafg1} and (i), we infer
immediately that
(ii) holds.

Now, we prove (iii).
By (ii), we conclude that, for any $\mathbf{x}\in
B_\rho(\mathbf{z},\delta)$, $x_{n+1}\sim z_{n+1}$,
which further implies that
 \begin{align}\label{fasdo}
\mu \left( B_\rho(\mathbf{z}, \delta) \right)\leq
\int_{B_\rho(\mathbf{z}, \delta)}\,\frac
{dx\,dx_{n+1}}{x_{n+1}} \lesssim  z_{n+1}^{-1}
|B_\rho(\mathbf{z}, \delta)|\lesssim z_{n+1}^{-1
}|T(a,2\delta)|\lesssim \delta (z_{n+1}\delta)^{n}.
 \end{align}
From (i),
 we deduce that
 \begin{align*}
 \mu \left( B_\rho(\mathbf{z}, \delta) \right)\ge
 \mu \left( T(\mathbf{z}, \delta/4) \right)\ge
 \int_{B(z, \frac \delta4 z_{n+1})} \int_{(1-
 \frac \delta4) z_{n+1}}^{z_{n+1}} \,\frac {
 dx_{n+1}}{x_{n+1}}\, dx \sim \delta (z_{n+1} \delta)^n,
 \end{align*}
which, together with \eqref{fasdo}, further implies that
(iii) holds.

As for (iv), by \eqref{dafjp2}, we conclude that,
for any $\mathbf{x} \in B_\rho(\mathbf{z}, R)$,
$$
\left|\mathbf{z}-\mathbf{x}\right|^2=2 z_{n+1}x_{
n+1}\sum_{n=1}^\infty \frac {[\rho(\mathbf{x
},\mathbf{z})]^{2n}}{(2n)!},
$$
which further implies that
\begin{align}\label{fadsfb}
\left|z_{n+1}-x_{n+1}\right|^2\lesssim z_{n+1
}x_{n+1}
\end{align}
and
\begin{align}\label{fadsfb2}
|z-x|^2\lesssim z_{n+1}x_{n+1}.
\end{align}
From \eqref{fadsfb} and \eqref{fadsfb2}, it
follows that
there exists a positive constant $C_{(R)}$,
depending on $R$, such that
$\frac {x_{n+1}}{z_{n+1}}\leq C_{(R)}$
and $|z-x|\le C_{(R)}(z_{n+1}x_{n+1})^{\frac 12},$
which, combined with (iii), further implies that
\begin{align*}
\mu(B_\rho(\mathbf{z}, R)) &\leq \iint_{B_\rho(
\mathbf{z}, R)} \,\frac {dx\,dx_{n+1}}{x_{n+1}}\\
&\leq
\sum_{\{j\in\mathbb Z:2^j\leq C_{(R)}\}}\int_{
2^j z_{n+1}}^{2^{j+1} z_{n+1}}
\left[ \int_{|z-x|\leq C_{(R)} 2^{\frac{(j+1)}2} z_{
n+1}}\,dx \right]\,\frac {dx_{n+1}}{x_{n+1}}\\
&\lesssim z_{n+1}^n \sum_{\{j\in\mathbb Z:2^j \leq C_{
(R)}\}} 2^{\frac{jn}2}\lesssim z_{n+1}^n\lesssim \mu(
B_\rho(\mathbf{z}, \delta)).
\end{align*}
This finishes the proof of (iv) and hence
Lemma \ref{ddaf}.
\end{proof}

\begin{lemma}\label{fadsfb3}
For any given distinct points $\mathbf x,\mathbf y
\in \mathbb{R}_+^{n+1}$,
if $\mathbf p$ is a point on the arc $\gamma(
\mathbf x, \mathbf y)$, then
$$
\rho(\mathbf x, \mathbf y)=\rho(\mathbf x, \mathbf p
)+\rho(\mathbf p, \mathbf y).
$$
\end{lemma}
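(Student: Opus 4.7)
The plan is to exploit two facts already established in the excerpt: that $\rho$ is a metric (Corollary~\ref{mosf}), and that for any two points the hyperbolic distance equals the length of the unique geodesic between them and is the infimum of lengths over all piecewise $\mathfrak{C}^1$-curves joining them (Theorem~\ref{thm-2-8}). Combining these ingredients should yield the claim as a two-sided inequality.

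For the ``$\leq$'' direction, I would invoke the triangle inequality directly: since $\rho$ is a metric, we automatically have
\[
\rho(\mathbf{x}, \mathbf{y}) \leq \rho(\mathbf{x}, \mathbf{p}) + \rho(\mathbf{p}, \mathbf{y}).
\]

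For the opposite direction, I would parametrize $\gamma(\mathbf{x}, \mathbf{y})$ by its standard parametric representation $\gamma_{\mathbf{x}, \mathbf{y}}:[\alpha,\beta]\to\mathbb{R}_{+}^{n+1}$ (either a vertical line segment or a sub-arc of an upper half circle, depending on whether $\mathbf{x}$ and $\mathbf{y}$ have the same first $n$ coordinates). Since $\mathbf{p}$ lies on this arc, there is a unique $\theta\in[\alpha,\beta]$ with $\gamma_{\mathbf{x},\mathbf{y}}(\theta)=\mathbf{p}$. Restricting the parametrization to $[\alpha,\theta]$ and to $[\theta,\beta]$ produces two piecewise $\mathfrak{C}^1$-curves joining $\mathbf{x}$ to $\mathbf{p}$ and $\mathbf{p}$ to $\mathbf{y}$, respectively. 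Applying Theorem~\ref{thm-2-8} to each (as the infimum of lengths over admissible curves) and then using additivity of the arc-length integral
\[
L(\gamma(\mathbf{x}, \mathbf{y})) = \int_{\alpha}^{\beta} \frac{|\gamma_{\mathbf{x}, \mathbf{y}}'(t)|}{[\gamma_{\mathbf{x}, \mathbf{y}}(t)]_{n+1}}\, dt = \int_{\alpha}^{\theta} + \int_{\theta}^{\beta},
\]
I obtain
\[
\rho(\mathbf{x}, \mathbf{p}) + \rho(\mathbf{p}, \mathbf{y}) \leq L\!\left(\gamma_{\mathbf{x},\mathbf{y}}|_{[\alpha,\theta]}\right) + L\!\left(\gamma_{\mathbf{x},\mathbf{y}}|_{[\theta,\beta]}\right) = L(\gamma(\mathbf{x}, \mathbf{y})) = \rho(\mathbf{x}, \mathbf{y}),
\]
where the last equality is again from Theorem~\ref{thm-2-8}. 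Combining the two inequalities finishes the proof.

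No step of this argument looks difficult: the only thing one must be careful about is to make sure that the restricted parametrizations are genuine piecewise $\mathfrak{C}^1$-curves (which is immediate from the explicit form of $\gamma_{\mathbf{x},\mathbf{y}}$ given in the proof of Theorem~\ref{thm-2-8}) so that Theorem~\ref{thm-2-8} applies to them. Alternatively, one can note that each sub-arc lies in the same member of $G(\mathbb{R}_{+}^{n+1})$ as $\gamma(\mathbf{x},\mathbf{y})$ and therefore is itself the geodesic connecting its endpoints, making the inequality above an equality from the outset.
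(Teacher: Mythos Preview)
Your proof is correct and follows essentially the same approach as the paper: parametrize $\gamma(\mathbf{x},\mathbf{y})$, split the length integral at the parameter value corresponding to $\mathbf{p}$, and invoke Theorem~\ref{thm-2-8}. The paper compresses the argument into a single chain of equalities (implicitly using that each sub-arc is itself the geodesic between its endpoints, which is your ``alternative'' remark), whereas you present it as a two-sided inequality with the triangle inequality on one side and the infimum characterization on the other; this is a minor stylistic difference, not a substantive one.
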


\begin{proof}
Let $\gamma(\mathbf x, \mathbf y)$ have a parametric
representation $
\gamma_{\mathbf x, \mathbf y}(t)$
for any $t\in[\alpha,\beta]$ with $0<\alpha< \beta<\pi$
satisfying
$\gamma_{\mathbf x, \mathbf y}(\alpha)=\mathbf x$,
$\gamma_{\mathbf x, \mathbf y}(\beta)=\mathbf  y$,
and
$\gamma_{\mathbf x, \mathbf y}(\theta)=\mathbf  p$.
From this and Theorem \ref{thm-2-8}, it follows that
\begin{align*}
\rho(\mathbf x, \mathbf y)&=
L( \gamma_{\mathbf x, \mathbf y}(t))=\int_{\alpha}^{\beta}
\frac {|( \gamma_{\mathbf x, \mathbf y})'(t)|}{( \gamma_{\mathbf x, \mathbf y})_{n+1}(t)}\, dt \\
&=
\int_{\alpha}^{\theta}
\frac {|( \gamma_{\mathbf x, \mathbf y})'(t)|}{( \gamma_{\mathbf x, \mathbf y})_{n+1}(t)}\, dt+\int_{\theta}^{\beta}
\frac {|( \gamma_{\mathbf x, \mathbf y})'(t)|}{( \gamma_{\mathbf x, \mathbf y})_{n+1}(t)}\, dt=
\rho(\mathbf x, \mathbf p)+\rho(\mathbf p,\mathbf y).
\end{align*}
This finishes the proof of
Lemma \ref{fadsfb3}.
\end{proof}

\begin{lemma}\label{dda2f}
\begin{itemize}
\item[\rm(i)]
If $\{A^\alpha\}_{\alpha\in I}$ is a collection of  subsets of $\mathbb{R}^{n+1}_+$  having the property \eqref{5-1-00},
then the union  $\bigcup_{\alpha\in I} A^\alpha$ satisfies \eqref{5-1-00}.
\item[\rm(ii)]
For any cube $I\in\mathbb{R}^n$, $T(I)$ satisfies \eqref{5-1-00}.
\item[\rm(iii)]
For any $S\subset\mathbb{R}^{n+1}_+$, $S_\delta$ satisfies \eqref{5-1-00}.
\end{itemize}
\end{lemma}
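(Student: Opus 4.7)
Part (i) is immediate: if every $A^\alpha$ satisfies \eqref{5-1-00} with common constants $\delta, \delta' \in (0, 1/10)$, then any $\mathbf{z} \in \bigcup_\alpha A^\alpha$ belongs to some $A^{\alpha_0}$, and monotonicity of Lebesgue measure gives $|B_\rho(\mathbf{z}, \delta) \cap \bigcup_\alpha A^\alpha| \geq |B_\rho(\mathbf{z}, \delta) \cap A^{\alpha_0}| \geq \delta' |B_\rho(\mathbf{z}, \delta)|$.

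For part (ii), I take $\delta_0 := 1/100$ and exploit the sandwich $T(\mathbf{z}, \delta_0/4) \subset B_\rho(\mathbf{z}, \delta_0) \subset T(\mathbf{z}, 2\delta_0)$ from Lemma \ref{ddaf}(i), where $T(\mathbf{z}, r) = B(z, r z_{n+1}) \times [(1-r)z_{n+1}, (1+r)z_{n+1}]$ for $\mathbf{z} = (z, z_{n+1})$. Since $z_{n+1} \leq \ell(I)$, the Euclidean spatial radius $\delta_0 z_{n+1}/4 \leq \ell(I)/400$ is much smaller than $\ell(I)$, so in the worst case (when $z$ is at a vertex of $I$) exactly one Euclidean orthant of $B(z, \delta_0 z_{n+1}/4)$ lies in $I$, yielding $|B(z, \delta_0 z_{n+1}/4) \cap I| \geq 2^{-n} |B(z, \delta_0 z_{n+1}/4)|$. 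The vertical intersection $[(1-\delta_0/4)z_{n+1}, (1+\delta_0/4)z_{n+1}] \cap (\ell(I)/2, \ell(I)]$ has length at least half the full interval length (worst case being $z_{n+1}$ near $\ell(I)$ or approaching $\ell(I)/2$). Multiplying these bounds gives $|T(\mathbf{z}, \delta_0/4) \cap T(I)| \geq 2^{-(n+1)} |T(\mathbf{z}, \delta_0/4)|$, and combined with the scaling $|T(\mathbf{z}, 2\delta_0)|/|T(\mathbf{z}, \delta_0/4)| = 8^{n+1}$, the sandwich delivers \eqref{5-1-00} with constants $\delta_0 = 1/100$ and $\delta' := 1/16^{n+1} \in (0, 1/10)$.

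For part (iii), set $\delta_0 := \min(\delta/2, 1/100)$ and, given $\mathbf{z} \in S_\delta$, write $r := \rho(\mathbf{z}, S) \in [0, \delta)$. If $r \leq \delta/2$, then for any $\mathbf{y} \in B_\rho(\mathbf{z}, \delta_0)$ the triangle inequality gives $\rho(\mathbf{y}, S) \leq \delta_0 + r \leq \delta_0 + \delta/2 \leq \delta$, so $B_\rho(\mathbf{z}, \delta_0) \subset S_\delta$ and \eqref{5-1-00} holds trivially. Otherwise $r \in (\delta/2, \delta)$; choose $\mathbf{p} \in S$ with $\rho(\mathbf{z}, \mathbf{p}) < r + (\delta - r)/4 =: r' < \delta$ and let $\mathbf{q}$ be the point on the hyperbolic geodesic $\gamma(\mathbf{z}, \mathbf{p})$ (which exists and is unique by Theorem \ref{thm-2-8}) with $\rho(\mathbf{z}, \mathbf{q}) = \delta_0$ (valid since $r' > r > \delta_0$). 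By Lemma \ref{fadsfb3}, $\rho(\mathbf{q}, \mathbf{p}) = r' - \delta_0$, so for any $\mathbf{y} \in B_\rho(\mathbf{q}, \delta_0/2)$ one has $\rho(\mathbf{y}, S) \leq \rho(\mathbf{y}, \mathbf{q}) + \rho(\mathbf{q}, \mathbf{p}) < \delta_0/2 + r' - \delta_0 < \delta$, giving $B_\rho(\mathbf{q}, \delta_0/2) \subset S_\delta$. Since $B_\rho(\mathbf{q}, \delta_0/2) \subset B_\rho(\mathbf{z}, 3\delta_0/2)$ and Lemmas \ref{ddaf}(ii)--(iii) yield $|B_\rho(\mathbf{q}, \delta_0/2)| \geq c_0 |B_\rho(\mathbf{z}, 3\delta_0/2)|$ for a universal constant $c_0 > 0$, we may shrink $c_0$ if necessary to land in $(0, 1/10)$, thus establishing \eqref{5-1-00} for $S_\delta$ with parameters $3\delta_0/2$ and $c_0$.

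The main technical hurdle is the worst-case geometry in part (ii): both the spatial configuration (where $z$ can lie at a vertex of $I$) and the vertical configuration (where $z_{n+1}$ can lie near either endpoint of $(\ell(I)/2, \ell(I)]$) must be controlled simultaneously to yield a purely dimensional constant $\delta' \in (0, 1/10)$. Part (iii) is conceptually cleaner once one invokes the additivity of the hyperbolic metric along geodesics (Lemma \ref{fadsfb3}) to manufacture the auxiliary point $\mathbf{q}$ whose neighborhood sits safely inside $S_\delta$.
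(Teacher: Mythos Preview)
Your proofs of (i) and (ii) match the paper's approach; you are simply more explicit about the constants, which is fine.

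For (iii) your argument is correct but takes a different route from the paper. The paper observes that $S_\delta=\bigcup_{\mathbf p\in S} B_\rho(\mathbf p,\delta)$ and then invokes part (i), so it suffices to verify property \eqref{5-1-00} for a \emph{single} hyperbolic ball $B_\rho(\mathbf p,\delta)$. Given $\mathbf z\in B_\rho(\mathbf p,\delta)$, the paper takes $\mathbf q$ to be the geodesic midpoint of $\gamma(\mathbf z,\mathbf p)$ and notes $B_\rho(\mathbf q,\delta/2)\subset B_\rho(\mathbf z,\delta)\cap B_\rho(\mathbf p,\delta)$, finishing in one line. Your direct case split on whether $\rho(\mathbf z,S)\le \delta/2$ also works, but it forgoes the reduction to (i) and therefore has to manufacture the auxiliary point $\mathbf q$ and manage the near/far cases separately. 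One small slip: you write $\rho(\mathbf q,\mathbf p)=r'-\delta_0$, but Lemma \ref{fadsfb3} gives $\rho(\mathbf q,\mathbf p)=\rho(\mathbf z,\mathbf p)-\delta_0<r'-\delta_0$; only the inequality is needed, so the argument is unaffected. Also, since your two cases end with different radii ($\delta_0$ versus $3\delta_0/2$), you should state explicitly that Case~1 also yields the bound at radius $3\delta_0/2$ via the volume comparison in Lemma \ref{ddaf}(iii); this is implicit but worth one sentence.
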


\begin{proof}
We first show (i).
Observe that,
for any $\mathbf{z}\in \bigcup_{\alpha\in I} A^\alpha$,
\begin{align*}
	\left|B_\rho(\mathbf{z}, \delta)\cap\left[\bigcup_{\alpha\in I} A^\alpha\right]\right|
=\left|\bigcup_{\alpha\in I}
B_\rho(\mathbf{z}, \delta)\cap A^{\alpha_0}\right|
	\ge |B_\rho(\mathbf{z},\delta)\cap A^{\alpha_0}|
\gtrsim|B_\rho(\mathbf{z}, \delta)|,
\end{align*}
where  $\alpha_0\in I$ satisfies $\mathbf{z}\in A^{\alpha_0}$.
This further implies that $\bigcup_{\alpha\in I} A^\alpha$ satisfies \eqref{5-1-00}.
Thus, (i) holds.

Next, we prove (ii).
By Lemma \ref{dafg}(i), we conclude that, for any $\mathrm{z}=(z, z_{n+1})\in T(I)$, $\frac 12 \ell(I) \leq z_{n+1}\leq \ell(I)$ and
$T(\mathrm{z}, \delta/2)\subset B_\rho(\mathrm{z}, \delta)$, which further implies that
\begin{align*}
|B_\rho(\mathrm{z}, \delta)\cap T(I)|&\ge |T(\mathrm{z}, \delta/2) \cap T(I)|\gtrsim |T(\mathrm{z}, \delta )|\gtrsim |B_\rho(\mathrm{z}, \delta)|.
\end{align*}
Therefore, (ii) holds.

By the fact $S_\delta=\bigcup_{\mathbf{p}\in S}B_\rho(\mathbf{p}, \delta)$, we
conclude that, to show (iii), it suffices to prove, for any $\mathbf{p}\in S $,
$B_\rho(\mathbf{p}, \delta)$ satisfies \eqref{5-1-0}.
Indeed, fix  $\mathbf{z}\in B_\rho(\mathbf{p}, \delta)$.
By Lemma \ref{fadsfb3}, we conclude that
there exists $\mathbf{q}\in \gamma(\mathbf{p}, \mathbf{z})$ such that
$\rho(\mathbf{p}, \mathbf{q})=\rho(\mathbf{q},\mathbf{z})=\frac12 \rho(\mathbf{p}, \mathbf{z})\leq \frac12\delta.$
Then we have
$B_\rho( \mathbf{q}, \frac 12\delta) \subset B_\rho(\mathbf{z},\delta) \cap B_\rho(\mathbf{p},\delta)$, which further implies that
\begin{align*}
|B_\rho(\mathbf{z},\delta) \cap B_\rho(\mathbf{p},\delta)|\ge
\left|B_\rho\left( \mathbf{q}, \frac{\delta}{2}\right)\right|	\ge  \left|	T\left( \mathbf{q}, \frac{\delta}{8}  \right)\right|
\sim (\delta q_{n+1})^{n+1}\sim|B_\rho(\mathbf{z},\delta)|.
\end{align*}
Therefore, (iii) holds.
This finishes the proof of Lemma \ref{dda2f}.
\end{proof}

\begin{lemma}\label{lem-3-2-0}
There
exists a positive constant $C$ such that, for any $R\in(1,\infty)$ and
$(x,y)\in{\mathbb R}_+^{n+1}$,
\begin{align*}
\left\{ (u,v)\in{\mathbb R}_+^{n+1}:|u-x|\leq   R y,\  R^{-1}y \leq v \leq Ry \right\}
\subset B_\rho\left( (x,y), CR\right).
\end{align*}
\end{lemma}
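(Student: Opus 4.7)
The natural plan is to compute $\rho((x,y),(u,v))$ directly from the closed-form expression \eqref{2-1eq} and show that, under the hypotheses $|u-x|\le Ry$ and $R^{-1}y\le v\le Ry$, the right-hand side is bounded by a universal constant times $\log R$, which in turn is at most $CR$ for $R\in(1,\infty)$.

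First, I would estimate the two Euclidean distances appearing in \eqref{2-1eq}. Writing $\mathbf x:=(x,y)$, $\mathbf z:=(u,v)$, and $\widetilde{\mathbf z}:=(u,-v)$, the hypotheses immediately give
\[
|\mathbf x-\mathbf z|^2=|x-u|^2+|y-v|^2\le R^2y^2+(Ry)^2\le 2R^2y^2,
\]
and similarly
\[
|\mathbf x-\widetilde{\mathbf z}|^2=|x-u|^2+(y+v)^2\le R^2y^2+(1+R)^2y^2\le 5R^2y^2.
\]
For the denominator, $\sqrt{yv}\ge\sqrt{y\cdot R^{-1}y}=y/\sqrt{R}$.

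Plugging these bounds into \eqref{2-1eq}, I obtain
\[
\rho(\mathbf x,\mathbf z)=2\ln\!\left(\frac{|\mathbf x-\mathbf z|+|\mathbf x-\widetilde{\mathbf z}|}{2\sqrt{yv}}\right)\le 2\ln\!\left(\frac{(\sqrt 2+\sqrt 5)R\,y}{2y/\sqrt R}\right)=2\ln\!\left(\frac{\sqrt 2+\sqrt 5}{2}R^{3/2}\right),
\]
which is bounded by $C_0(1+\log R)$ for some absolute constant $C_0$. Since $R>1$ implies $1+\log R\le 2R$, we conclude that $\rho(\mathbf x,\mathbf z)\le CR$ with $C:=2C_0$, hence $\mathbf z\in B_\rho(\mathbf x,CR)$.

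There is essentially no obstacle here; the only care needed is the lower bound on $\sqrt{yv}$ (which is why the interval $[R^{-1}y,Ry]$ appears symmetrically in the hypothesis) and the routine observation that the logarithmic growth in $R$ is dominated by the linear growth $CR$ asserted in the statement. In fact the proof yields the sharper bound $\rho\le C\log R$, but the linear-in-$R$ form is what is used in the applications of this lemma earlier in the paper.
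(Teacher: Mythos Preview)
Your proof is correct and follows essentially the same approach as the paper: bound $|u-x|^2+|v-y|^2\le 2R^2y^2$, use $v\ge R^{-1}y$ to convert this to a bound involving $yv$, and plug into a closed-form expression for $\rho$. The only cosmetic difference is that you invoke the logarithmic formula \eqref{2-1eq}, while the paper works directly with the $\mathrm{arccosh}$ definition \eqref{metric}, choosing $\widetilde C$ so that $\cosh(\widetilde C R)-1\ge R^3$; both routes amount to the same estimate and yield the same (in fact logarithmic) bound on $\rho$.
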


\begin{proof}
Let $(u,v)\in{\mathbb R}_+^{n+1}$ satisfy  $|u-x|\leq   R y$  and  $R^{-1}y \leq v \leq Ry$.
Observe that there
exists a positive constant $\widetilde{C}$ such that, for any $R\in(1,\infty)$,
$$
 \cosh\left(\widetilde{C} R\right)  -1\ge e^{\widetilde{C} R} \left(\frac{1- e^{- \widetilde{C} R}}2\right)  \ge  R^3.
$$
From this,
we deduce that
\begin{align*}
&|u-x|^2 +|v-y|^2 \leq 2 R^2 y^2 \leq 2 R^3 yv\leq 2 yv \left[\cosh \left(\widetilde{C} R\right)-1\right],
\end{align*}
which, combined with the definition of $\rho$, further implies that
$(u,v)\in B_\rho( (x,y), \widetilde{C} R).$
This finishes the proof of Lemma \ref{lem-3-2-0}.
\end{proof}

\begin{lemma}\label{lem-3-24}
Let $R\in(0,\infty)$.
Then there
exists a positive constant $C_{(R)}$, depending only on $R$, such that, for any
$(x,y)\in{\mathbb R}_+^{n+1}$,
\begin{align*}
B_\rho\left( (x,y), R\right)
\subset\left\{ (u,v)\in{\mathbb R}_+^{n+1}:|u-x|\leq   C_{(R)}y,\  C_{(R)}^{-1}y \leq v \leq C_{(R)}y \right\}.
\end{align*}
\end{lemma}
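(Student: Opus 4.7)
The plan is to read off the conclusion directly from the explicit formula \eqref{metric} defining $\rho$, which gives $\mathrm{arccosh}(1 + \frac{|\mathbf{x}-\mathbf{y}|^2}{2 x_{n+1} y_{n+1}}) = \rho(\mathbf{x},\mathbf{y})$. Equivalently, if $(u,v) \in B_\rho((x,y), R)$ then
\begin{equation*}
|u-x|^2 + |v-y|^2 \leq 2 y v (\cosh R - 1).
\end{equation*}
The strategy is to extract the vertical bound on $v$ from this inequality first, and then use it to extract the horizontal bound on $|u-x|$.

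For the vertical bound, I would discard $|u-x|^2 \geq 0$ on the left, leaving $|v-y|^2 \leq 2 y v(\cosh R - 1)$. Expanding and rearranging yields the quadratic inequality (in $v$)
\begin{equation*}
v^2 - 2 v y \cosh R + y^2 \leq 0,
\end{equation*}
whose roots are $y(\cosh R \pm \sinh R) = y e^{\pm R}$. Hence $y e^{-R} \leq v \leq y e^{R}$, which is exactly the vertical half of the claim with constant $e^R$.

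For the horizontal bound, I would now plug this vertical control back into the original inequality: $|u-x|^2 \leq 2 y v (\cosh R - 1) \leq 2 y^2 e^R (\cosh R - 1),$ so $|u-x| \leq y \sqrt{2 e^R (\cosh R - 1)}$. Choosing $C_{(R)} := \max\{e^R, \sqrt{2 e^R(\cosh R - 1)}\}$ (which depends only on $R$) gives both bounds simultaneously, completing the lemma. There is essentially no obstacle here; the whole lemma is a direct computation from the explicit form of $\rho$, and the argument in fact parallels the estimates already carried out in the proof of Lemma \ref{ddaf}(iv) (via \eqref{fadsfb} and \eqref{fadsfb2}), only now we track the dependence of the constants on $R$ explicitly rather than absorbing them into $\lesssim$.
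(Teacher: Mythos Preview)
Your proof is correct and follows essentially the same strategy as the paper: start from the identity $|u-x|^2+|v-y|^2=2yv(\cosh\rho-1)$ implied by \eqref{metric}, first extract the vertical control on $v/y$, then feed that back in to bound $|u-x|$. The only difference is cosmetic: the paper cites the quadratic manipulation from the proof of Lemma~\ref{ddaf} (around \eqref{eqeq}) to obtain $\frac{1}{1+2\sqrt{A_R}}\le v/y\le 1+2\sqrt{A_R}$ with $A_R=\cosh R-1$, whereas you solve the quadratic $v^2-2vy\cosh R+y^2\le 0$ exactly and get the sharp constants $e^{\pm R}$.
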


\begin{proof}
Let $\mathbf{x}:=(x,y)$.
By the definition of $\rho$,
we conclude that, for any
$\mathbf{z}:=(z,z_{n+1})\in\mathbb{R}^{n+1}_+$,
$$
\rho(\mathbf{x},\mathbf{z})=\rho\left( \frac {\mathbf{x}} {x}, \frac {\mathbf{z}} {x}\right),
$$
which further implies that
$|\frac {\mathbf{x}} {x}- \frac {\mathbf{z}} {x}|^2= 2 \frac {z_{n+1}}{x}
[\cosh \rho(\mathbf{x},\mathbf{z})-1].$
From this,
it follows  that, for any $\mathbf{z}\in B_\rho\left( (x,y), R\right)$,
\begin{equation}\label{1-5bs}
\left| \frac {\mathbf{x}} {x}-\frac {\mathbf{z}} {x}\right|^2\leq
2\frac {z_{n+1}}{x} A_{R},
\end{equation}
where $A_R:=\cosh R-1$. By  an argument
similar to that used in the estimation of \eqref{eqeq}, we obtain
\begin{equation*}
\frac 1 {1+2\sqrt{A_R}} \leq \frac {z_{n+1}}{x}\leq 1+2\sqrt{A_R},
\end{equation*}
which further implies that there exists a positive constant $C$ such that, for any
$\mathbf{z}\in\mathbb{R}^{n+1}_+$,
\begin{equation}\label{1-5bss}
C^{-1}x \leq z_{n+1}\leq Cx.
\end{equation}
From this and \eqref{1-5bs},
we infer that
$\left|\mathbf{x}-\mathbf{z}\right|^2\lesssim x z_{n+1}\lesssim x^2,$
which, together with \eqref{1-5bss}, then completes
the proof of Lemma \ref{lem-3-24}.
\end{proof}
\end{appendices}

\noindent\textbf{Acknowledgements}.\quad The authors would like
to thank Professor Hasi Wulan
for some helpful discussions on the topic of this article.

\bigskip

\smallskip

\noindent Feng Dai

\smallskip

\noindent Department of Mathematical and
Statistical Sciences, University of Alberta
Edmonton, Alberta T6G 2G1, Canada

\smallskip

\noindent {\it E-mail}: \texttt{fdai@ualberta.ca}

\bigskip

\noindent Eero Saksman

\smallskip

\noindent Department of Mathematics and Statistics, University of Helsinki,
P. O. Box 68, FIN-00014,
Helsinki, Finland

\smallskip

\noindent {\it E-mail}: \texttt{eero.saksman@helsinki.fi}

\bigskip

\noindent Dachun Yang, Wen Yuan and Yangyang Zhang

\smallskip

\noindent Laboratory of Mathematics and Complex Systems
(Ministry of Education of China),
School of Mathematical Sciences, Beijing Normal University,
Beijing 100875, The People's Republic of China

\smallskip

\noindent{\it E-mails:} \texttt{dcyang@bnu.edu.cn} (D. Yang)

\noindent\phantom{{\it E-mails:}} \texttt{wenyuan@bnu.edu.cn} (W. Yuan)

\noindent\phantom{{\it E-mails:}} \texttt{yangyzhang@bnu.edu.cn} (Y. Zhang)

\end{document}